\documentclass[11pt]{amsart}

\usepackage{amssymb,epsfig,mathrsfs}
\usepackage{amsmath,verbatim,psfrag}
\usepackage[all]{xy}
\usepackage{pinlabel,url}

\usepackage[hmargin=1.375in, vmargin=1.125in]{geometry}
\usepackage[font=small,format=plain,labelfont=bf,up,textfont=it,up]{caption}

\newcommand{\nc}{\newcommand}
\nc{\nt}{\newtheorem}
\nc{\dmo}{\DeclareMathOperator}

\nt{theorem}{Theorem}[section]
\nt{prop}[theorem]{Proposition}
\nt{lemma}[theorem]{Lemma}
\nt{fact}[theorem]{Fact}
\nt{cor}[theorem]{Corollary}
\nt{conjecture}[theorem]{Conjecture}
\nt{question}[theorem]{Question}
\nt{problem}[theorem]{Problem}
\nt*{metaconjecture}{Metaconjecture}
\nt*{genmetaconjecture}{Generalized Metaconjecture}
\nt*{main}{Main Theorem}
\nt*{remain}{Main Theorem (Restated)}

\dmo{\MCG}{Mod^\pm}
\dmo{\Mod}{Mod}
\dmo{\Out}{Out}
\dmo{\PMCG}{PMod^\pm}
\dmo{\PMod}{PMod}
\dmo{\PMF}{PMF}
\dmo{\Homeo}{Homeo}
\dmo{\Span}{Span}
\dmo{\Supp}{Supp}
\dmo{\Hom}{Hom}
\dmo{\Exchange}{Ex}

\dmo{\Comm}{Comm}

\dmo{\Aut}{Aut}
\dmo{\Inn}{Inn}

\dmo{\Lk}{Lk}

\nc{\overbar}[1]{\mkern 1.5mu\overline{\mkern-1.5mu#1\mkern-1.5mu}\mkern 1.5mu}

\nc{\C}{\mathcal{C}}
\nc{\D}{\mathcal{D}}
\nc{\N}{\mathcal{N}}
\nc{\G}{\mathcal{G}}
\nc{\DS}{\mathcal{D}}
\nc{\A}{\mathcal{A}}
\nc{\M}{\mathcal{M}}
\nc{\X}{\mathcal{X}}
\nc{\K}{\mathcal{K}}
\nc{\T}{\mathcal{T}}

\newcommand{\BigFreeProd}{\mathop{\mbox{\Huge{$\ast$}}}}

\dmo{\RC}{RC}
\dmo{\BRC}{BRC}

\dmo{\GL}{GL}

\nc{\Side}{\mathscr{S}}
\nc{\B}{\mathscr{B}}
\nc{\R}{\mathcal{R}}

\nc{\I}{\mathcal{I}}

\nc{\Z}{\mathbb{Z}}
\nc{\Q}{\mathbb{Q}}

\nc{\p}[1]{\medskip\paragraph{{\bf #1}}}
\nc{\margin}[1]{\marginpar{\scriptsize #1}}

\nc{\bl}{ \begin{list}{$\cdot$}{
\setlength{\leftmargin}{.5in}
\setlength{\rightmargin}{.5in}
\setlength{\parsep}{0.5ex plus .2ex minus 0ex}
\setlength{\itemsep}{0.2ex plus 0.2ex minus 0ex}
}
}

\nc{\el}{\end{list}}

\title[Normal subgroups of mapping class groups]{Normal subgroups of mapping class groups and the metaconjecture of Ivanov}

\begin{document}
	
\input{epsf.sty}

\author{Tara E. Brendle}

\author{Dan Margalit}

\address{Tara E. Brendle \\School of Mathematics \& Statistics \\ University Place \\ University of Glasgow \\ G12 8SQ \\ tara.brendle@glasgow.ac.uk}

\address{Dan Margalit \\ School of Mathematics\\ Georgia Institute of Technology \\ 686 Cherry St. \\ Atlanta, GA 30332 \\  margalit@math.gatech.edu}

\thanks{This material is based upon work supported by the EPSRC under grant EP/J019593/1 and the National Science Foundation under Grant Nos. DMS - 1057874 and DMS - 1510556.}

\keywords{complex of curves, mapping class groups, automorphisms}

\subjclass[2000]{Primary: 20F36; Secondary: 57M07}

\begin{abstract}
We prove that if a normal subgroup of the extended mapping class group of a closed surface has an element of sufficiently small support then its automorphism group and abstract commensurator group are both isomorphic to the extended mapping class group.  The proof relies on another theorem we prove, which states that many simplicial complexes associated to a closed surface have automorphism group isomorphic to the extended mapping class group.  These results resolve the metaconjecture of N.V. Ivanov, which asserts that any ``sufficiently rich'' object associated to a surface has automorphism group isomorphic to the extended mapping class group, for a broad class of such objects.  As applications, we show: (1) right-angled Artin groups and surface groups cannot be isomorphic to normal subgroups of mapping class groups containing elements of small support, (2) normal subgroups of distinct mapping class groups cannot be isomorphic if they both have elements of small support, and (3)~distinct normal subgroups of the mapping class group with elements of small support are not isomorphic. Our results also suggest a new framework for the classification of normal subgroups of the mapping class group.
\end{abstract}

\maketitle

\section{Introduction}
\label{sec:intro}

The mapping class group $\Mod(S_g)$ of a closed, orientable surface $S_g$ of genus $g$ is the group of homotopy classes of orientation-preserving homeomorphisms of $S_g$.  The extended mapping class group $\MCG(S_g)$ is the group of homotopy classes of all homeomorphisms of $S_g$.

\p{Normal subgroups.} The main result of this paper, Theorem~\ref{main:normal}, gives a general condition for a normal subgroup of $\Mod(S_g)$ or $\MCG(S_g)$ to have automorphism group and abstract commensurator group isomorphic to $\MCG(S_g)$.  Previously this result was only known for very specific subgroups, namely, the Torelli group and its variants.  Our general condition, which is that the normal subgroup contains an element of ``small'' support, is easy to verify and applies to most natural normal subgroups, including the Torelli groups and their variants, as well as many others. 

Farb has posed the problem of computing the abstract commensurators for various subgroups of $\Mod(S_g)$ \cite[Problem 2.2]{farb}.  Our theorem solves this problem in many cases. It also addresses the so-called metaconjecture of Ivanov; see below.

An observation of L. Chen further implies that each normal subgroup with an element of small support is unique in that no other normal subgroup of $\MCG(S_g)$ is isomorphic to it; see Corollary~\ref{cor:chen}.  So, for example, the terms of the Johnson filtration, the terms of the Magnus filtration, the level $m$ congruence subgroups together form a collection of pairwise non-isomorphic subgroups of the mapping class group.

Two further applications of our theorem are restrictions on the isomorphism types of subgroups of mapping class groups.  For example, if $g \neq g'$ a normal subgroup of $\Mod(S_g)$ with an element of small support cannot be isomorphic to a normal subgroup of $\Mod(S_{g'})$ with an element of small support; see Corollary~\ref{cor:notiso}.  Also, a normal subgroup of $\Mod(S_g)$ with an element of small support cannot be isomorphic to a surface group or a right-angled Artin group; see Corollary~\ref{cor:raag}.  The key idea for both of these applications is to use the automorphism group as an invariant of the isomorphism class of a group.

Our results suggest a dichotomy for the normal subgroups of mapping class groups, namely, into those that have automorphism group isomorphic to the extended mapping class group and those that do not contain elements of small support; see Conjecture~\ref{conj:dichotomy} below.  As discussed by Farb \cite{farb}, a traditional classification theorem for normal subgroups of mapping class groups, in the form of a complete list of isomorphism types, is almost certainly out of reach.  However, our conjecture provides a new framework for a coarser classification of normal subgroups of mapping class groups.

\p{Simplicial complexes.} We prove our main result about normal subgroups of mapping class groups by reducing it to a problem about automorphisms of simplicial complexes.  To this end, we consider simplicial complexes whose vertices correspond to connected subsurfaces of the ambient surface and whose edges correspond to disjointness.  Our main theorem about automorphisms of simplicial complexes, Theorem~\ref{main:complex}, gives general conditions for such a simplicial complex to have automorphism group isomorphic to the extended mapping class group.

Our result applies to many natural simplicial complexes associated to a surface, including some that were already known to have automorphism group the extended mapping class group.  Our theorem is the first to address infinitely many complexes with a single argument, and indeed it applies to a wide class.

\p{Ivanov's metaconjecture} Our work in this paper has its origins in the seminal work of N. V. Ivanov \cite{ivanov}.  He proved that for $g$ at least 3, the automorphism group of the complex of curves $\C(S_g)$ is isomorphic to $\MCG(S_g)$ (see also \cite{korkmaz,luo}).  As one application, he proved that the automorphism group of $\Mod(S_g)$---and also the abstract commensurator group of $\Mod(S_g)$---is isomorphic to $\MCG(S_g)$ (cf. \cite{mccarthy,tchangang}).  Ivanov's work inspired a number of theorems of the following form:
\begin{enumerate}
\item the automorphism group of some particular simplicial complex associated to a surface $S$ is isomorphic to $\MCG(S)$, and
\item the automorphism group of some particular normal subgroup of $\Mod(S)$ is isomorphic to $\MCG(S)$.
\end{enumerate}
Many are found among the 96 (and counting) citations on MathSciNet for the aforementioned paper of Ivanov. In response, Ivanov posed the following \cite{ivanov15}.

\begin{metaconjecture}
Every object naturally associated to a surface $S$ and having a sufficiently rich structure has $\MCG(S)$ as its group of automorphisms.  Moreover, this can be proved by a reduction to the theorem about the automorphisms of $\C(S)$.
\end{metaconjecture}
There are many results supporting Ivanov's metaconjecture, some quite classical, going back to the work of Dehn \cite[Paper 8]{dehn} and Nielsen \cite{nielsen} in the 1920s.  Also, in the 1930s Teichm\"uller's showed that the group of automorphisms of the universal curve over Teichm\"uller space is the extended mapping class group \cite{TeichTrans,teich}; this theorem was put into a more general framework by Grothendieck \cite[Theorem 3.1]{grothendieck}.  For an overview of other related results, see our survey paper \cite{survey} or the one by McCarthy--Papadopoulos \cite{mcpap}.  In our survey we suggest a generalization of Ivanov's metaconjecture, from surfaces to other spaces.

Our results may be viewed as a resolution of Ivanov's metaconjecture for a wide class of normal subgroups of $\Mod(S_g)$ and a wide class of simplicial complexes associated to $S_g$.  Ivanov's metaconjecture is deliberately vague: the terms ``object,''  ``naturally,'' and ``sufficiently rich'' are left open to interpretation.  In this paper we formulate his metaconjecture into two precise statements about normal subgroups and simplicial complexes (the objects at hand) by finding appropriate notions of sufficient richness in each case.

Our work follows Ivanov in the sense that we reduce our problem about normal subgroups of the mapping class group to a problem about automorphisms of simplicial complexes.  In his work, as well as in the other subsequent works, there is a single group being considered, and a single simplicial complex.  A central challenge in this paper, and one of the main departures from Ivanov's work, is that we consider many groups all at the same time, each requiring its own simplicial complex.  As such, we need to prove that all of these simplicial complexes have automorphism group isomorphic to the extended mapping class group.  Also, when it comes to the normal subgroups we consider we do not have full information about which elements are in, and are not in, our given subgroups; we are only given the information that there is a single element whose support is small (in the precise sense defined below).


\subsection{Results on normal subgroups}

In order to state our main theorem about normal subgroups of the mapping class group, Theorem~\ref{main:normal}, we require several definitions.

\p{Small components.} Let $f \in \Mod(S_g)$ be a pure mapping class.  Briefly, this means that $f$ is a product of partial pseudo-Anosov mapping classes and Dehn twists, all with disjoint supports; see Section~\ref{section:normal} for details.  We will define  a certain measure of complexity $\hat g(f)$ for $f$.  First, a \emph{region} in $S_g$ is a compact, connected subsurface with no boundary component homotopic to a point in $S_g$.  Next, a \emph{fitting region} for $f$ is a region $Q$ in $S_g$ so that some Nielsen--Thurston component of (some representative of) $f$ has support that is non-peripheral in $Q$ and so that the other Nielsen--Thurston components of $f$ have support disjoint from $Q$.  Finally we define $\hat g(f)$ to be the smallest number $k$ so that there is a region of $S_g$ that has genus $k$ and connected (possibly empty) boundary and that contains a fitting region for $f$.

We will say that a pure element $f$ of $\Mod(S_g)$ has a \emph{small component} if $\hat g(f) < g/3$.  The main hypothesis of Theorem~\ref{main:normal} is that the normal subgroup under consideration has a nontrivial element with a small component.  

For example, if $f$ has a partial pseudo-Anosov Nielsen--Thurston component whose support is contained in a region of genus $k$ with connected boundary then $\hat g(f)$ is at most $k$.  Also if the entire support of $f$ is contained in a region of genus $k$ with connected boundary then $\hat g(f)$ is at most $k+1$ (for instance if $f$ is a Dehn twist about a separating curve of genus $k$ then $\hat g(f) = k+1$). 

For a subgroup $N$ of $\MCG(S_g)$, we define $\hat g(N)$ to be the minimum of $\hat g (f)$ for nontrivial pure $f$ in $N$ (by default pure elements lie in $\Mod(S_g)$).  See Section~\ref{section:normal} for the definitions of pure elements and Nielsen--Thurston components.

\p{Abstract commensurators.} The \emph{abstract commensurator group} of a group $G$ is the group of equivalence classes of isomorphisms between finite-index subgroups of $G$, where two isomorphisms are equivalent if they agree on some finite-index subgroup of $G$.

\p{Natural maps} Let $N$ be a normal subgroup of $\Mod(S_g)$.  There is a natural homomorphism $\Mod(S_g) \to \Aut N$ where $f \in \Mod(S_g)$ maps to the element of $\Aut N$ given by conjugation by $f$; there is a similar map if $N$ is normal in $\MCG(S_g)$.  Also, there is a natural homomorphism $\Aut N \to \Comm N$ where an automorphism maps to its equivalence class.  

There is one more natural homomorphism in the statement of Theorem~\ref{main:normal}.  If $N$ is a subgroup of $\MCG(S_g)$, there is a map $\MCG(S_g) \to \Hom(N,\MCG(S_g))$ where $f \in \MCG(S_g)$ maps to the homomorphism taking $n$ to $fnf^{-1}$.  If for each $f \in \MCG(S_g)$ there is a finite-index subgroup $H$ of $N$ so that $fHf^{-1}$ has finite index in $N$ then we may regard the map $\MCG(S_g) \to \Hom(N,\MCG(S_g))$ as a map $\MCG(S_g) \to \Comm N$.  When this map exists, we call it the natural map $\MCG(S_g) \to \Comm N$.

\p{Statement of our main theorem about normal subgroups.} Our first main theorem describes the automorphism group and the abstract commensurator groups of all normal subgroups of $\Mod(S_g)$ and $\MCG(S_g)$ that contain elements of small support.  So each of these subgroups remembers the structure of the full mapping class group.

\begin{theorem}
\label{main:normal} Let $N$ be a normal subgroup of either $\MCG(S_g)$ or $\Mod(S_g)$ with $g \geq 3 \hat g(N) + 1$.
\begin{enumerate}
\item\label{normal emcg} If $N$ is normal in $\MCG(S_g)$ then the natural maps 
\[ \MCG(S_g) \to \Aut N \to \Comm N  \]
are isomorphisms.
\item\label{normal mcg} If $N$ is normal in $\Mod(S_g)$ but not in $\MCG(S_g)$ then there is a natural map $\Comm N \to \MCG(S_g)$, the natural maps
\[ \Mod(S_g) \to \Aut N \to \Comm N \to \MCG(S_g) \]
are injective, the first map is an isomorphism, and the composition is the inclusion.  In particular $\Comm N$ is isomorphic to either $\Mod(S_g)$ or to $\MCG(S_g)$.  In the first case the second map is an isomorphism.  In the second case the inverse of the isomorphism $\Comm N \to \MCG(S_g)$ is the natural map $\MCG(S_g) \to \Comm N$.
\end{enumerate}
\end{theorem}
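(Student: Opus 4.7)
The plan is to reduce the theorem to Theorem~\ref{main:complex} by constructing, for each normal subgroup $N$ satisfying the hypothesis, a simplicial complex $\K_N$ of subsurfaces of $S_g$ on which $\Aut N$ and $\Comm N$ act naturally. Set $k = \hat g(N)$, so that the hypothesis reads $g \geq 3k+1$. The vertices of $\K_N$ are isotopy classes of regions of genus at most $k$ with connected boundary, and edges record disjointness; the vertex set is chosen so that the complex lies in the class to which Theorem~\ref{main:complex} applies, giving $\Aut \K_N \cong \MCG(S_g)$.

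The central step is to build natural homomorphisms
\[ \Aut N \longrightarrow \Aut \K_N \quad \text{and} \quad \Comm N \longrightarrow \Aut \K_N. \]
To each vertex $v$ of $\K_N$ coming from a region $R$ one associates the subgroup $N_R = N \cap \PMod(R)$. An automorphism $\phi$ of $N$ is then shown to permute the family $\{N_R\}$, and disjointness of regions $R, R'$ is encoded algebraically by commutation $[N_R, N_{R'}] = 1$, so $\phi$ induces a simplicial automorphism of $\K_N$. For $\Comm N$ the same scheme works with finite-index subgroups in place of the $N_R$, since each is determined up to commensuration by the same algebraic data. The main obstacle here, and the main departure from Ivanov's original work, is characterizing $N_R$ algebraically from within $N$: the hypothesis guarantees only a single small-support element in $N$, so one must use normality of $N$ in $\Mod(S_g)$ to spread its conjugates across the surface and then extract enough algebraic data (via centralizer conditions, ranks of abelian subgroups, and commuting relations) to recover each $N_R$ intrinsically.

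With the simplicial maps in hand, the rest is bookkeeping. Composing with the isomorphism $\Aut \K_N \cong \MCG(S_g)$ from Theorem~\ref{main:complex} yields maps $\Aut N \to \MCG(S_g)$ and $\Comm N \to \MCG(S_g)$ that are inverse to the natural maps, because a mapping class acts on regions of $S_g$ in the tautological way. Injectivity of $\MCG(S_g) \to \Aut N$ reduces to triviality of the centralizer of $N$ in $\MCG(S_g)$, which follows from the presence of elements of small support together with standard facts on centralizers of pure mapping classes; this gives part (\ref{normal emcg}).

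For part (\ref{normal mcg}), the same construction produces an injection $\Comm N \hookrightarrow \MCG(S_g)$ extending the inclusion $\Mod(S_g) \hookrightarrow \MCG(S_g)$. The image either coincides with $\Mod(S_g)$ or is all of $\MCG(S_g)$, depending on whether some orientation-reversing homeomorphism of $S_g$ induces a commensuration of $N$. These alternatives correspond exactly to the two cases in the statement, and in the second the inverse of $\Comm N \to \MCG(S_g)$ is the natural map the other way by construction. I expect the main technical difficulty throughout to be the intrinsic characterization of the $N_R$ from $N$ under the minimal hypothesis of a single small-support element; everything else is functorial once that characterization is in place.
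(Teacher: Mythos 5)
Your high-level strategy—associate to $N$ a complex of regions, show $\Aut N$ and $\Comm N$ act on it, and push through Theorem~\ref{main:complex}—is the same as the paper's. But the construction of the complex is different in a way that leaves a genuine gap, and that gap is exactly where the paper's main new idea lives.

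You define $\K_N$ with a fixed vertex set (all regions of genus $\leq k$ with connected boundary), assign to each region $R$ the subgroup $N_R = N \cap \PMod(R)$, and assert that an automorphism $\phi$ of $N$ ``is then shown to permute the family $\{N_R\}$'' by recovering ``each $N_R$ intrinsically'' from ``centralizer conditions, ranks of abelian subgroups, and commuting relations.'' This is not a proof step—it is a restatement of the problem. There is no reason a priori that $\phi$ should send $N_R$ to some $N_{R'}$: the hypothesis gives you a single small-support element, and for a generic region $R$ of genus $\leq k$ the group $N_R$ may be trivial, abelian, or otherwise algebraically indistinguishable from subgroups not of this form. Starting from a geometrically defined collection and then trying to characterize it algebraically is the wrong direction, and the characterization you gesture at (``ranks of abelian subgroups,'' etc.) is not shown to succeed.

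The paper inverts this logic. It first defines a purely algebraic class of subgroups—\emph{basic subgroups}, i.e.\ non-abelian subgroups of $N$ that are minimal in the strict partial order $H \prec H'$ iff $C_G(H') \subsetneq C_G(H)$—which are automatically permuted by $\Aut N$ and respected by commensurations. It then proves (Lemma~\ref{lemma:basic}) that basic subgroups have \emph{connected} non-annular support, that small ones exist, and that $\MCG(S_g)$ acts on the set of their supports. The complex is then built from these supports, so the action of $\Aut N$ on it is free. This requires a nontrivial input, the ``commutator trick'' (Lemma~\ref{comm trick}), which uses normality of $N$ to manufacture non-abelian subgroups supported inside prescribed regions—an ingredient absent from your sketch. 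The paper must also repair the resulting complex (take the filling to kill holes, pass to the component containing the small vertices) before Theorem~\ref{main:complex} applies, and must separately argue injectivity of $\Aut N \to \Comm N$ via a pseudo-Anosov/$\PMF$ density argument; neither of these appears in your plan. In short: your proposal correctly identifies the crux (intrinsic algebraic recognition of topologically defined subgroups) but proposes to attack it from the side that does not work, whereas the paper's contribution is precisely to define the complex from the algebra outward.
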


Most of the well-studied normal subgroups of the mapping class group---for instance the Torelli group and the terms of the Johnson filtration---are normal in the extended mapping class group, and so the first statement of Theorem~\ref{main:normal} applies.    We expect that there are subgroups $N$ that are normal in $\Mod(S_g)$ but not $\MCG(S_g)$ and that satisfy $\Comm N \cong \MCG(S_g)$.  Examples in the case $g=1$ were explained to us by Jones \cite{jones}. 

By Lemma~\ref{comm trick} below, any normal subgroup of $\Mod(S_g)$ or $\MCG(S_g)$ that has a pure element with a small component also has a pure element whose entire support is small, meaning that the support is contained as a nonperipheral subsurface in a subsurface of $S_g$ with connected boundary and genus $k < g/3$. Therefore, the hypothesis on $N$ in Theorem~\ref{main:normal} is equivalent to the hypothesis that $N$ has a nontrivial element with small support.

The hypothesis of small supports in Theorem~\ref{main:normal} is certainly not optimal.  Indeed, if we take $N=\Mod(S_g)$, then Theorem~\ref{main:normal}\eqref{normal emcg} implies that $\Aut \Mod(S_g)$ is isomorphic to $\MCG(S_g)$ when $g \geq 4$.  On the other hand, Ivanov already proved this result for $g \geq 3$.

\p{Exotic normal subgroups.}   One might hope that all normal subgroups of $\Mod(S_g)$ have automorphism group $\MCG(S_g)$, in other words that the hypothesis on $\hat g(N)$ in Theorem~\ref{main:normal} is not necessary.  However, this is certainly not the case: Dahmani, Guirardel, and Osin \cite{dgo} proved that there are normal subgroups of $\Mod(S_g)$ isomorphic to infinitely generated free groups; see also the recent work of Clay, Mangahas, and the second author \cite{cmm}.  Each nontrivial element in the Dahmani--Guirardel--Osin subgroups is pseudo-Anosov.  The hypotheses of Theorem~\ref{main:normal} exactly rule out this type of example, as $\hat g(f) = g$ for any pseudo-Anosov $f$.

\p{Prior results.}  Our Theorem~\ref{main:normal} recovers many previously known results.  After Ivanov's original work, Farb and Ivanov \cite{farbivanovannounce,farbivanov} proved that the automorphism group and the abstract commensurator group of the Torelli subgroup of $\Mod(S_g)$ is isomorphic to $\MCG(S_g)$, and the authors of this paper proved \cite{kg} that the automorphism group  and the abstract commensurator group of the Johnson kernel, an infinite index subgroup of the Torelli group, is isomorphic to $\MCG(S_g)$.  Bridson, Pettet, and Souto \cite{bps} then announced the following result: every normal subgroup of the extended mapping class group that is contained in the Torelli group and has the property that each subsurface of Euler characteristic $-2$ supports a non-abelian free subgroup has automorphism group and abstract commensurator group isomorphic to $\MCG(S_g)$.  In particular for $g \geq 4$ this applies to every term of the Johnson filtration of $\Mod(S_g)$.  The Johnson filtration is an infinite sequence of nested normal subgroups of $\Mod(S_g)$ whose intersection is the trivial subgroup; the first two groups in the sequence are the Torelli group and the Johnson kernel.     Theorem~\ref{main:normal} implies each of the above results.

\p{Applications and examples}   Many natural subgroups of $\Mod(S_g)$ and $\MCG(S_g)$ come in families, meaning that there is one normal subgroup $N_g$ for each $g$.  Also, it is often the case that $\hat g(N_g)$ does not depend on $g$, and so Theorem~\ref{main:normal} applies to all members of the family once $g$ is large enough.

As one example the Torelli group $\I(S_g)$ is the normal subgroup of $\MCG(S_g)$ defined as the kernel of the action of $\Mod(S_g)$ on $H_1(S_g;\Z)$.  The Johnson kernel $\K(S_g)$ is the infinite-index subgroup of $\I(S_g)$ generated by Dehn twists about separating curves.  For all $g$ we have $\hat g(\I(S_g)) = \hat g(\K(S_g)) = 2$.  Theorem~\ref{main:normal}\eqref{normal emcg} applies to both, thus recovering our earlier result and the result of Farb and Ivanov for $g \geq 7$.  

Similarly, Theorem~\ref{main:normal}\eqref{normal emcg} applies to the $k$th term $N_g^k$ of the Johnson filtration, which is the kernel of the action (by outer automorphisms) of $\Mod(S_g)$ on $\pi/\pi_k$ where $\pi=\pi_1(S_g)$ and $\pi_k$ is the $k$th term of its lower central series.  We have $\hat g(N_g^k) = 2$ \cite[Proof of Theorem 5.10]{farb}.  In particular for $g \geq 7$ our theorem recovers the results announced by Bridson--Pettet--Souto. 

Beyond this, the terms of the derived series for the Torelli group, the terms of the lower central series of the Torelli group, the kernel of the Chillingworth homomorphism, and the kernel of the Birman--Craggs--Johnson homomorphism each have $\hat g = 2$ and so Theorem~\ref{main:normal} applies for $g \geq 7$.

A further application of our theorem is to the Magnus filtration of the Torelli group, defined by McNeill \cite{taylor}.  The $k$th term $M_g^k$ is the subgroup of $\Mod(S_g)$ acting trivially on $\pi/\pi_k'$ where $\pi_k'$ is the $k$th term of the lower central series of $[\pi,\pi]$.  The first term $M_g^1$ is the kernel of the Magnus representation of $\Mod(S_g)$, defined in the 1930s by Magnus \cite{magnus}.  McNeill proves that $\hat g(M_g^k) \leq 3$ for all $g \geq 3$ and $k \geq 1$ \cite[Lemma 5.2]{taylor}, and so Theorem~\ref{main:normal}\eqref{normal emcg} applies for $g \geq 10$. (McNeill discusses surfaces with boundary, but the capping homomorphism to $\Mod(S_g)$ respects the Magnus filtration.)

One may readily construct many other examples of normal subgroups satisfying the hypotheses of Theorem~\ref{main:normal}, for instance the group generated by $k$th powers of Dehn twists ($\hat g = 1$), the group generated by $k$th powers of Dehn twists about separating curves ($\hat g = 2$), the terms of the lower central series of the Torelli group ($\hat g = 2$), the normal closure of any partial pseudo-Anosov element supported on a torus with one boundary component ($\hat g = 1$), and the normal closure of any multitwist ($\hat g \leq 2$).  In the last case, to make an example with $\hat g = 2$ we should choose the support to be a pants decomposition where each curve is nonseparating.

Any normal subgroup of $\Mod(S_g)$ or $\MCG(S_g)$ containing one of the above groups automatically satisfies the hypothesis of Theorem~\ref{main:normal}.  For instance, if $N$ is the kernel of the \textrm{SU}(2)-TQFT representations of the mapping class group (see e.g. Funar \cite{funartqft}), then $N$ contains the group generated by $k$th powers of Dehn twists, and hence Theorem~\ref{main:normal} applies for $g \geq 4$.  The same applies to the subgroup of $\Mod(S_g)$ generated by the $k$th powers of all elements.

Finally, any normal subgroup of $\Mod(S_g)$ or $\MCG(S_g)$ that has finite index in a group satisfying the hypothesis of Theorem~\ref{main:normal}.  This includes, for example, the level $m$ congruence subgroups of $\Mod(S_g)$ and also the congruence subgroups defined by Ivanov via characteristic covers of surfaces \cite[Problem 1]{ivanov15}.   

\p{Chen's corollary} Ivanov--McCarthy proved that any injective map $\MCG(S_g) \to \MCG(S_g)$ is an inner automorphism \cite[Theorem 1]{im}.  As observed by Chen \cite{lei}, this theorem has the following corollary: if $N$ is a normal subgroup of $\MCG(S_g)$ where the natural map  $\MCG(S_g) \to \Aut N$ is an isomorphism then $N$ is unique in the sense that every normal subgroup of $\MCG(S_g)$ isomorphic to $N$ is equal to $N$.  This applies, for example, to the Torelli group, as well as all of the other subgroups discussed above.

Indeed, suppose that $M$ is a normal subgroup of $\MCG(S_g)$ isomorphic to $N$.  Consider the composition
\[
\Xi : \MCG(S_g) \stackrel{}{\to} \Aut M \stackrel{} \to \Aut N \stackrel{}{\to} \MCG(S_g),
\]
where the first map is the natural map given by conjugation, the second map is the isomorphism induced by any isomorphism $M \to N$, and the third map is the inverse of the natural map $\MCG(S_g) \to \Aut N$, which is an isomorphism by assumption.  All of the maps are injective (cf. Lemma~\ref{inj}) and hence the composition $\Xi$ is an injective map from $\MCG(S_g)$ to itself.  By the Ivanov--McCarthy result, $\Xi$ is an inner automorphism of $\MCG(S_g)$.  From the definitions of the three maps we observe that
\[
M \mapsto \Inn M \mapsto \Inn N \mapsto N
\]
and so $\Xi(M) = N$.  Since $\Xi$ is inner and $M$ is normal it follows that $M=N$, as desired.

Combining Chen's corollary with our main theorem we obtain the following corollary of Theorem~\ref{main:normal}.

\begin{cor}
\label{cor:chen}
Suppose $N$ is a normal subgroup of $\Mod(S_g)$ with $g \geq 3 \hat g(N) + 1$.  Any normal subgroup of $\Mod(S_g)$ isomorphic to $N$ is equal to $N$.
\end{cor}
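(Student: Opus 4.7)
The plan is to emulate the Chen-style argument spelled out in the excerpt, adapted to the case where $N$ need only be normal in $\Mod(S_g)$ (as opposed to $\MCG(S_g)$). Let $M$ be a normal subgroup of $\Mod(S_g)$ and let $\phi \colon M \to N$ be an isomorphism; the aim is to show $M = N$. Modeled on the excerpt, I would construct a composition
\[
\Xi \colon G \to \Aut M \to \Aut N \to G,
\]
where $G$ is either $\Mod(S_g)$ or $\MCG(S_g)$ depending on the case of Theorem~\ref{main:normal} that applies to $N$, apply an Ivanov--McCarthy-type rigidity theorem to conclude that $\Xi$ is an inner automorphism of $G$, and then trace through the subgroups $\Inn M \mapsto \Inn N$ to deduce $\Xi(M) = N$; normality of $M$ in $G$ then forces $M = N$.

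First I would fix $G$ using Theorem~\ref{main:normal} applied to $N$, so that the natural map $G \to \Aut N$ is an isomorphism: take $G = \MCG(S_g)$ if $N$ is normal in $\MCG(S_g)$, and $G = \Mod(S_g)$ otherwise. The middle arrow in $\Xi$ is the conjugation-by-$\phi$ isomorphism $\Aut M \to \Aut N$; the third arrow is the inverse of $G \to \Aut N$; and the first arrow is conjugation, well-defined on $G$ provided $M$ is normal in $G$. All three arrows are injective (cf. Lemma~\ref{inj}, using triviality of centralizers of nontrivial normal subgroups in $\Mod(S_g)$ for $g \geq 3$), so $\Xi$ is an injective endomorphism of $G$. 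By the Ivanov--McCarthy theorem (in the $\MCG$ case), or its direct analog for $\Mod(S_g)$ obtained by the same technique, $\Xi$ is conjugation by some $h \in G$. The computation $\Xi(c_m) = c_{\phi(m)}$ shows $\Inn M$ is sent to $\Inn N$, and since $G \to \Aut N$ restricts to an isomorphism from $N$ onto $\Inn N$, pulling back yields $\Xi(M) = N$. But also $\Xi(M) = hMh^{-1} = M$ by normality, giving $M = N$.

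The main obstacle is the internal consistency of this setup: namely (a) the hypothesis $g \geq 3\hat g(N)+1$ controls $\hat g(N)$ but not obviously $\hat g(M)$, since $\hat g$ depends on the embedding rather than the abstract isomorphism type, and (b) when $N$ is normal in $\MCG(S_g)$ while $M$ is only normal in $\Mod(S_g)$, the first arrow $\MCG(S_g) \to \Aut M$ is not defined by conjugation. For (b) I would argue that this mismatch cannot actually occur: for $\sigma \in \MCG(S_g) \setminus \Mod(S_g)$ the conjugate $\sigma M \sigma^{-1}$ is another normal subgroup of $\Mod(S_g)$ isomorphic to $M$, and a bootstrapped version of the Chen argument between $M$ and $\sigma M \sigma^{-1}$ should force $M = \sigma M \sigma^{-1}$ for all such $\sigma$, promoting $M$ to a normal subgroup of $\MCG(S_g)$. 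For (a), I would exploit that Theorem~\ref{main:normal} has already been applied to $N$ to identify $\Aut N$ with $G$, and then transport this structure across $\Aut M \cong \Aut N$ to locate suitable small-support elements inside $M$. Carefully executing these reductions, and invoking the appropriate version of Ivanov--McCarthy, is where the real work lies.
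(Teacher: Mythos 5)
Your overall framing matches the paper's intended proof (the paper only gives a one-sentence derivation from Chen's observation and Theorem~\ref{main:normal}), but both ``obstacles'' you raise are misdiagnosed, and your fix for (b) is circular. On (a): Chen's trick is designed precisely so that no control of $\hat g(M)$ is ever needed. The genus hypothesis is used once, to make $G \to \Aut N$ an isomorphism; about $M$ you need only that it is normal in $\Mod(S_g)$ (so conjugation defines $\Mod(S_g)\to\Aut M$) and that this action is faithful, which holds for any nontrivial normal subgroup by the argument of Lemma~\ref{inj}, with no genus hypothesis. Transporting small-support elements into $M$ across $\Aut M\cong\Aut N$ is unnecessary. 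On (b): your bootstrap between $M$ and $\sigma M\sigma^{-1}$ is circular, since running Chen's argument on that pair would require a Chen-type isomorphism $G'\to\Aut(\sigma M\sigma^{-1})$, which is exactly the structural information about $M$ you lack. The correct move avoids the mismatch entirely: take the domain of $\Xi$ to be $\Mod(S_g)$ in all cases, so that
\[
\Xi\colon \Mod(S_g)\to\Aut M\to\Aut N\to G\hookrightarrow\MCG(S_g)
\]
is an injective homomorphism from a finite-index subgroup of $\MCG(S_g)$ into $\MCG(S_g)$; Ivanov--McCarthy rigidity in that generality gives $\Xi$ as conjugation by some $h\in\MCG(S_g)$, so $hMh^{-1}=N$. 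When $N$ is normal in $\MCG(S_g)$ we immediately get $M=h^{-1}Nh=N$, and the ``mismatch'' case is a~posteriori vacuous.

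You do, however, miss the genuine subtlety, which is hidden in your assertion that ``its direct analog for $\Mod(S_g)$'' produces $h\in G$ when $G=\Mod(S_g)$. This is false: $\Out\Mod(S_g)\cong\Z/2$, so an injective endomorphism of $\Mod(S_g)$ may be conjugation by an orientation-reversing element of $\MCG(S_g)\setminus\Mod(S_g)$. So in case~(2) of Theorem~\ref{main:normal}---where $N$ is normal in $\Mod(S_g)$ but not in $\MCG(S_g)$---the argument yields only $hMh^{-1}=N$ with $h$ possibly orientation-reversing, and normality of $M$ in $\Mod(S_g)$ alone does not let you conclude $M=N$. Indeed for such an $N$ and any orientation-reversing $\sigma$, the subgroup $\sigma N\sigma^{-1}$ is normal in $\Mod(S_g)$, isomorphic to $N$, yet distinct from $N$, since $\Mod(S_g)$ is then the full normalizer of $N$ in $\MCG(S_g)$. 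Chen's argument therefore establishes the corollary when $N$ is normal in $\MCG(S_g)$---exactly the setting in which the paper phrases its ``Chen's corollary'' paragraph---and any stronger claim requires either a separate argument or the a~priori knowledge that case~(2) is empty under the stated genus bound.
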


Here is a sample application of Corollary~\ref{cor:chen}.  Fix some $g \geq 4$.  For each natural number $k$ let $\T_k$ denote the subgroup of $\Mod(S_g)$ generated by all $k$th powers of Dehn twists.  For $k < \ell$ the subgroups $\T_k$ and $\T_{\ell}$ are not equal, since $\T_\ell$ lies in the level $\ell$ congruence subgroup of $\Mod(S_g)$ and $\T_k$ does not.  Thus by Corollary~\ref{cor:chen} the subgroup $\T_k$ is not isomorphic to $\T_\ell$.  So $\T_1,\T_2,\dots$ is an infinite sequence of pairwise non-isomorphic subgroups of $\Mod(S_g)$.  Similarly, the terms $\N_k(S_g)$ of the Johnson filtration are all non-isomorphic and each such term is not isomorphic to any $\T_\ell$, etc.

The reader should compare Corollary~\ref{cor:chen} with the theorem of Akin, which states that the point-pushing subgroup $\pi_1(S_g)$ is unique among normal subgroups of $\Mod(S_{g,1})$ in the same sense.  Akin's theorem is not implied by our corollary since our Theorem~\ref{main:normal} does not apply to punctured surfaces.  McLeay \cite{alan,alan2} has proved an analogue of Theorem~\ref{main:normal} for punctured surfaces; however Akin's group does not satisfy the hypotheses there.

\p{Application: non-commensurability of normal subgroups in different mapping class groups} One kind of application of Theorem~\ref{main:normal} is to show that certain normal subgroups of $\Mod(S_g)$ cannot be isomorphic to, or even commensurable to, certain normal subgroups of $\Mod(S_{g'})$ with $g \neq g'$.  Specifically we have the following corollary of Theorem~\ref{main:normal}.

\begin{cor}
\label{cor:notiso}
Suppose $N$ and $N'$ are normal subgroups of $\Mod(S_g)$ and $\Mod(S_{g'})$, respectively, with $3\hat g(N)+1 < g$ and $3 \hat g(N') +1 < g'$.  If $g \neq g'$ then $N$ is not abstractly commensurable to $N'$.  In particular, $N$ and $N'$ are not isomorphic. 
\end{cor}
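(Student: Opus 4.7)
The plan is to use the abstract commensurator group as a commensurability invariant and then apply Theorem~\ref{main:normal}. First I would recall that if two groups are abstractly commensurable then they have isomorphic abstract commensurator groups: for any group $G$ and any finite-index subgroup $H \leq G$ there is a canonical isomorphism $\Comm H \cong \Comm G$, so an isomorphism between finite-index subgroups $H \leq N$ and $H' \leq N'$ yields
\[
\Comm N \cong \Comm H \cong \Comm H' \cong \Comm N'.
\]
It therefore suffices to show $\Comm N \not\cong \Comm N'$ whenever $g \neq g'$.

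Next I would apply Theorem~\ref{main:normal} to both sides. The hypotheses $g \geq 3\hat g(N) + 1$ and $g' \geq 3\hat g(N') + 1$ ensure the theorem applies, and it concludes that $\Comm N$ is isomorphic to $\Mod(S_g)$ or $\MCG(S_g)$, and likewise $\Comm N'$ is isomorphic to $\Mod(S_{g'})$ or $\MCG(S_{g'})$. So the corollary reduces to the assertion that for $g \neq g'$, none of $\Mod(S_g)$, $\MCG(S_g)$ is isomorphic to either of $\Mod(S_{g'})$, $\MCG(S_{g'})$.

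Finally, I would distinguish these four groups by a numerical invariant depending only on the genus. The cleanest choice is the maximum rank of a free abelian subgroup, which by the theorem of Birman--Lubotzky--McCarthy equals $3g-3$ for $\Mod(S_g)$; the same value holds for $\MCG(S_g)$ because $\Mod(S_g)$ sits inside it with index two and any finite-index subgroup of $\Z^n$ is again isomorphic to $\Z^n$. Thus an isomorphism across genera would force $3g - 3 = 3g' - 3$, contradicting $g \neq g'$; equally well one could invoke Harer's virtual cohomological dimension $4g - 5$. There is no substantive obstacle here: once Theorem~\ref{main:normal} is in hand the corollary is an essentially immediate application of it together with standard structural facts about mapping class groups, and the ``in particular, $N$ and $N'$ are not isomorphic'' clause is just the weaker specialization.
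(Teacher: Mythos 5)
Your proof is correct and takes essentially the same approach as the paper: apply Theorem~\ref{main:normal} to identify $\Comm N$ and $\Comm N'$ as mapping class groups (possibly extended) of the respective genera, then separate them by the maximal rank $3g-3$ of an abelian subgroup. The paper proves the non-isomorphism statement separately via $\Aut N$ before turning to $\Comm N$, whereas you streamline by noting that non-commensurability already implies non-isomorphism; this is a harmless and slightly cleaner reorganization of the same argument.
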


Indeed, consider $N$ and $N'$ as in the corollary.  By Theorem~\ref{main:normal} we have that $\Aut N \cong \MCG(S_g)$ and $\Aut N' \cong \MCG(S_{g'})$.  Since $\MCG(S_g)$ is not isomorphic to $\MCG(S_{g'})$ when $g \neq g'$ (consider, for instance, the rank of a maximal abelian subgroup), it follows that $N$ and $N'$ are not isomorphic.  Moreover, since $\Comm N$ is isomorphic to $\Mod(S_g)$ or $\MCG(S_g)$ and $\Comm N'$ is isomorphic to $\Mod(S_{g'})$ or $\MCG(S_{g'})$ then, since the abstract commensurator group is an invariant of the abstract commensurability class, it similarly follows that $N$ is not commensurable to $N'$ (again use the ranks of maximal abelian subgroups).

To illustrate Corollary~\ref{cor:notiso}, consider the following normal subgroups of $\Mod(S_g)$ and $\Mod(S_{2g})$.  Let $N$ be the normal closure in $\Mod(S_g)$ of a partial pseudo-Anosov element supported on a torus with one boundary component and let $N'$ be the normal closure in $\Mod(S_{2g})$ of a partial pseudo-Anosov element supported on a subsurface of genus two with one boundary component.  If $g \geq 4$ then by Corollary~\ref{cor:notiso}, the groups $N$ and $N'$ are not abstractly commensurable.

We do not know a proof of the non-commensurability of such subgroups that is independent of Theorem~\ref{main:normal}.  For example, the groups $N$ and $N'$ above cannot be distinguished by their virtual cohomological dimensions or the maximal ranks of their abelian subgroups in any obvious way (both invariants are at least $g$ for both $N$ and $N'$, but their exact values seem hard to compute).

\p{Application: an obstruction theorem for normal subgroups} Another kind of application of Theorem~\ref{main:normal} is to rule out isomorphism types for certain normal subgroups of the mapping class group.  For example, if $N \trianglelefteq \Mod(S_g)$ contains a nontrivial pure element with a small component then $N$ cannot be isomorphic to---or even abstractly commensurable to---any group whose automorphism group or abstract commensurator group is not isomorphic to $\Mod(S_g)$ or to $\MCG(S_g)$.

There are many classes of groups where no member of the class has both its automorphism group and its abstract commensurator group isomorphic to a mapping class group of a closed surface.  For example, if $G = \pi_1(S_h)$ with $h \geq 2$ then $\Aut G$ is isomorphic to $\MCG(S_{h,1})$, the extended mapping class group of a punctured surface.  The group $\MCG(S_{h,1})$ is not isomorphic to any $\Mod(S_g)$ or $\MCG(S_g)$ (the rank of a maximal abelian subgroup is divisible by 3 in the closed case and not in the punctured case), and so $\Aut G$ is not isomorphic to any $\MCG(S_g)$ (even more, $\Comm G$ is quite large).

The same holds for all right-angled Artin groups.  The abstract commensurator group of an abelian right-angled Artin group is isomorphic to $\GL_n(\Q)$ for some $n$ (and anyway there are no infinite abelian normal subgroups of $\Mod(S_g)$).  Also, the abstract commensurator group of any non-abelian right-angled Artin group contains arbitrarily large finite groups, and $\MCG(S_g)$ does not have this property; see \cite{CLM}.  

We summarize the above discussion with the following corollary.

\begin{cor}
\label{cor:raag}
If $G$ is a group with $\Aut(G)$ or $\Comm(G)$ not isomorphic to $\Mod(S_g)$ or to $\MCG(S_g)$, and $N$ is a normal subgroup of $\Mod(S_g)$ with $g \geq 3 \hat g(N) + 1$, then $N$ is not isomorphic to $G$ and further $N$ is not abstractly commensurable to $G$.  In particular, this applies when $G$ is any surface group or right-angled Artin group.
\end{cor}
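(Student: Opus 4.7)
The plan is to deduce the corollary directly from Theorem~\ref{main:normal} together with the elementary fact that $\Aut$ is an invariant of the isomorphism class of a group and $\Comm$ is an invariant of the abstract commensurability class.

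First I would apply Theorem~\ref{main:normal} to the given $N$. In either case covered by that theorem --- $N$ normal in $\MCG(S_g)$ or $N$ normal in $\Mod(S_g)$ but not in $\MCG(S_g)$ --- the conclusion is that $\Aut(N)$ is isomorphic to one of $\Mod(S_g)$ or $\MCG(S_g)$, and likewise $\Comm(N)$ is isomorphic to one of these two groups. Now suppose toward contradiction that $N \cong G$; then $\Aut(N) \cong \Aut(G)$, so $\Aut(G)$ would be isomorphic to $\Mod(S_g)$ or $\MCG(S_g)$, contradicting the hypothesis on $G$. The identical argument using $\Comm(N) \cong \Comm(G)$ rules out abstract commensurability. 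This would establish the main statement.

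For the ``in particular'' clause I would verify the hypothesis on $G$ in each named case. If $G = \pi_1(S_h)$ with $h \geq 2$, then by Dehn--Nielsen--Baer one has $\Aut(G) \cong \MCG(S_{h,1})$, which fails to be isomorphic to any $\Mod(S_g)$ or $\MCG(S_g)$ by the divisibility argument on the rank of a maximal abelian subgroup noted in the excerpt (this rank is $3g-3$ in the closed case, which is divisible by $3$, whereas it is not in the once-punctured case). If $G$ is a right-angled Artin group, I would split into two subcases. In the abelian case $G \cong \Z^n$, one has $\Comm(G) \cong \GL_n(\Q)$, which is not isomorphic to any mapping class group of a closed surface; and as an additional safeguard, $\Mod(S_g)$ has no infinite abelian normal subgroups, so this case is also vacuous. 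In the non-abelian case, the result of \cite{CLM} gives that $\Comm(G)$ contains arbitrarily large finite subgroups, while $\MCG(S_g)$ has a uniform bound on orders of finite subgroups for fixed $g$, so $\Comm(G)$ cannot be isomorphic to $\Mod(S_g)$ or $\MCG(S_g)$ either.

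There is no real obstacle here beyond bookkeeping: once Theorem~\ref{main:normal} is in hand, the corollary is a formal consequence of the invariance of $\Aut$ and $\Comm$. The only step requiring care is to ensure that the hypothesis on $G$ rules out both $\Mod(S_g)$ and $\MCG(S_g)$ simultaneously as possible values of $\Aut(G)$ (for the isomorphism claim) and of $\Comm(G)$ (for the commensurability claim), which is precisely how the hypothesis is phrased.
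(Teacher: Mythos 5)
Your proof is correct and follows essentially the same route the paper does: apply Theorem~\ref{main:normal} to pin down $\Aut N$ and $\Comm N$, then invoke the invariance of $\Aut$ under isomorphism and of $\Comm$ under abstract commensurability, and finally verify the hypothesis on $G$ for surface groups (via Dehn--Nielsen--Baer and the divisibility-by-3 criterion on maximal abelian rank) and for right-angled Artin groups (via $\GL_n(\Q)$ in the abelian case and the arbitrarily large finite subgroups of $\Comm$ in the non-abelian case, citing \cite{CLM}). These are precisely the paper's arguments, laid out in the two paragraphs immediately preceding Corollary~\ref{cor:raag}.
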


It was, for instance, a folk conjecture  that the normal subgroup $T_g^k$ of $\Mod(S_g)$ generated by all $k$th powers of Dehn twists is a right-angled Artin group \cite{funar}, but this is false for $g \geq 4$ since $\hat g(T_g^k) = 1$.

As a consequence of Corollary~\ref{cor:raag}, we see that all normal right-angled Artin subgroups of $\Mod(S_g)$ and all surface subgroups of $\Mod(S_g)$ must be like the Dahmani--Guirardel--Osin examples in that the support of every nontrivial Nielsen--Thurston component of every element must be large.  In this direction, Clay, Mangahas, and the second author of this paper have produced normal right-angled Artin groups of $\Mod(S_g)$ where the support of each element is large (but not all pseudo-Anosov as in the Dahmani--Guirardel--Osin examples) \cite{cmm}.  

\p{A conjectural sharpening of our theorem.} As mentioned, the hypothesis $\hat g(N) < g/3$ in Theorem~\ref{main:normal} is not optimal.  We conjecture that the $g/3$ can be improved to $g/2$.

\begin{conjecture}
If $N$ is a normal subgroup of $\MCG(S_g)$ with $g \geq 2 \hat g(N) + 1$ then the natural maps 
\[ \MCG(S_g) \to \Aut N \to \Comm N  \]
are isomorphisms.
\end{conjecture}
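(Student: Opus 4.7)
The plan is to follow the two-stage strategy of Theorem~\ref{main:normal}(\ref{normal emcg}): first reduce the statements about $\Aut N$ and $\Comm N$ to a rigidity statement for a simplicial complex $\K = \K(N)$ canonically associated to $N$, then prove the rigidity statement. My goal is to trace the factor of $3$ appearing in $g \geq 3\hat g(N)+1$ through each stage of the existing argument and to reduce it to $2$ at both stages.

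The factor of $3$ enters first through Lemma~\ref{comm trick}, which is used to promote an element of $N$ with a small component to one with entirely small support. The trick requires room for three roughly disjoint copies of a genus-$\hat g(N)$ region in $S_g$, forcing $g \geq 3\hat g(N)+1$. I would try to bypass the lemma by working directly with pure elements that have a small component but possibly large total support, recovering the small component purely group-theoretically inside $N$. Concretely, to a pure $f \in N$ with small component supported in a region $Q$ of genus $\hat g(N)$, one can attempt to associate the stable value of the sequence of iterated commutators of $f$ with its conjugates $hfh^{-1}$ as $h$ ranges over mapping classes fixing $Q$; this ought to isolate the subgroup of $N$ supported in $Q$, giving an $\MCG(S_g)$-equivariant assignment from pure small-component elements of $N$ to the vertex set of a complex of regions.

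The factor of $3$ also appears in the proof of Theorem~\ref{main:complex}, which presumably needs, given two disjoint genus-$\hat g(N)$ regions, to produce a third such region disjoint from both for an inductive step. Under the weaker hypothesis $g \geq 2\hat g(N)+1$, disjoint pairs of genus-$\hat g(N)$ regions still exist but triples do not, so the rigidity argument must be refined. A natural route is to embed the complex of genus-$\hat g(N)$ regions into the curve complex $\C(S_g)$ via the map sending a region to its boundary curve, verify that an automorphism of $\K(N)$ preserves this image, and then appeal directly to Ivanov's theorem on $\Aut \C(S_g)$. This would match the extremal case $N = \Mod(S_g)$, $\hat g(N) = 1$, $g = 3$, which is the boundary case $g = 2\hat g(N)+1$ and is exactly where Ivanov's original theorem operates.

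The main obstacle, as I see it, is the first stage. Without Lemma~\ref{comm trick}, a pure element of $N$ may carry arbitrarily large Nielsen--Thurston components beside the small one, and the iterated-commutator assignment must be shown both to stabilize and to faithfully distinguish distinct small regions; either property could fail, for instance if the large components get dragged by conjugation and swamp the commutator's restriction to $Q$. A secondary obstacle is that Theorem~\ref{main:complex} may genuinely require the $3\hat g(N)+1$ bound for the complexes currently covered, in which case one must identify a finer complex on which the induced action of $\Aut N$ naturally lands and prove a new rigidity statement at the threshold $g = 2\hat g(N)+1$. It is also conceivable that the conjecture is slightly optimistic, for example when the minimum-$\hat g$ element of $N$ is a separating twist: there the boundary of $Q$ already consumes a curve, and the sharp bound may be $g \geq 2\hat g(N)+2$.
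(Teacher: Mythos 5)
This statement is one of the paper's explicit \emph{conjectures} (the ``conjectural sharpening'' following Corollary~\ref{cor:raag} in Section~\ref{sec:intro}); the authors do not prove it, so there is no proof in the paper to compare against. What you have written is a research sketch rather than a proof, and you yourself flag that both stages have unresolved obstacles and that the conjecture may even be slightly optimistic. The meaningful question is therefore whether your plan correctly locates what would have to change to lower the bound from $3\hat g(N)+1$ to $2\hat g(N)+1$, and on that point the proposal contains a concrete error.

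Your diagnosis misattributes the source of the factor of $3$. You assert that Lemma~\ref{comm trick} ``requires room for three roughly disjoint copies of a genus-$\hat g(N)$ region,'' but that lemma is stated and proved for all $g\ge 0$: the commutator $[f,h]$ it produces is supported inside the single region $R$, and no auxiliary disjoint copies are needed. The only place nearby where disjointness of copies of the small region is used is in showing that $J$ has a proper centralizer (proof of Lemma~\ref{lemma:basic}\eqref{basic region}), and that needs only a factor of $2$, not $3$. The paper itself tells you where the $3$ actually lives: immediately after Theorem~\ref{theorem:sep k} it says ``the bounds on genus in Theorems~\ref{main:normal} and~\ref{main:complex} are derived from the bound on $g$ in Theorem~\ref{theorem:sep k}.'' That bound comes from the sharing-pair machinery of Section~\ref{sec:sep} --- specifically from Lemmas~\ref{lemma:sharing}, \ref{lemma:triples}, and especially \ref{lemma:four}, whose proof uses $g \ge 3\check g(D)+1$ to guarantee that a maximal linear simplex has at least five vertices --- and is then merely carried through Theorems~\ref{theorem:multi k} and \ref{main:complex}. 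An attack on the conjecture must therefore engage the $g\ge 3k+1$ hypothesis in Theorem~\ref{theorem:sep k}, which your proposal never touches. Your second-stage idea of mapping each region to its boundary curve and appealing to Ivanov is not an alternative to the paper's method but a description of it (the chain $\Aut\C_k(S_g)\to\cdots\to\Aut\C(S_g)\to\MCG(S_g)$ in the ``bird's-eye view''), and the known construction of that chain is exactly what requires the $3k+1$ hypothesis. The first-stage ``iterated commutator'' device is left entirely unspecified, so the failure modes you list cannot be checked. In short: right high-level strategy (improve the bound stage by stage), wrong identification of the controlling stage, and no new idea at the stage that actually controls it.
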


Even better, we expect that one can replace the hypothesis $\hat g(N) < g/2$ with the hypothesis that $N$ contains a nontrivial pure element with a component whose support takes up less than half of $S_g$ in the sense that it is homeomorphic to a proper subsurface of its complement.  

\p{A conjectural dichotomy.} Combining our Theorem~\ref{main:normal} with the exotic subgroups produced by Dahmani--Guirardel--Osin and those constructed by Clay, Mangahas, and the second author, we are led to a conjectural dichotomy for normal subgroups of the mapping class group.  

\begin{conjecture}
\label{conj:dichotomy}
Let $N$ be a normal subgroup of either $\Mod(S_g)$ or $\MCG(S_g)$.  Then either $\Aut N \cong \MCG(S_g)$ or $N$ contains an infinitely generated right-angled Artin group with finite index.  
\end{conjecture}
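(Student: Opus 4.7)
The plan is to prove the dichotomy by pushing the threshold in Theorem~\ref{main:normal} as far as possible and then showing that in the complementary regime $N$ must contain a large right-angled Artin subgroup.  The first step is to sharpen Theorem~\ref{main:normal} at least to the bound $\hat g(N) \leq g/2$ anticipated in the preceding conjecture, and ideally to the sharp threshold at which a region and its complement become homeomorphic.  The method is to feed the simplicial-complex machinery behind Theorem~\ref{main:complex} a larger class of regions as vertices and to verify that the resulting complex still has $\MCG(S_g)$ as its automorphism group; any $N$ whose $\hat g$ lies below this sharpened threshold then satisfies the first alternative of the dichotomy.

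In the complementary regime, every nontrivial pure element $f \in N$ has $\hat g(f)$ large, so each Nielsen--Thurston component of $f$ acts on a subsurface that cannot be engulfed by the complementary half of $S_g$.  Here the plan is to leverage the acylindrical hyperbolicity of $\Mod(S_g)$: sufficiently deep powers of such components should act as WPD elements on an appropriate hyperbolic $\Mod(S_g)$-space, such as the curve graph, an arc-and-curve graph of a large subsurface, or a projection complex in the sense of Bestvina--Bromberg--Fujiwara.  The goal is to upgrade the theorem of Dahmani--Guirardel--Osin~\cite{dgo}, which produces infinitely generated free normal subgroups all of whose nontrivial elements are loxodromic, to a statement taking as input a normal subgroup all of whose elements act loxodromically on the relevant spaces, and outputting an infinitely generated right-angled Artin subgroup of finite index.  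The target structure and generators should be modeled on the examples of \cite{dgo} and \cite{cmm}, with generators corresponding to high powers of partial pseudo-Anosovs on an equivariant family of essential subsurfaces.

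I expect the main obstacle to be the passage to \emph{finite index} on the right-angled Artin side.  The standard rotating-family and projection-complex technology produces a right-angled Artin subgroup inside $N$, but in general only of infinite index, because $N$ can contain pure reducible mapping classes whose several Nielsen--Thurston components act on disjoint subsurfaces with combinatorics richer than any single projection complex records; one would need a simultaneous version of the theory, organized by hierarchies of essential subsurfaces in the sense of Masur--Minsky, to capture all of $N$ up to finite index.  A secondary obstacle is patching the two ranges together: there may be a gap between the regime where the sharpened Theorem~\ref{main:normal} applies and the regime where every element of $N$ is sufficiently loxodromic, and closing this gap may require showing that any $N$ falling into it already contains an element with a small component, and hence lies in the first case of the dichotomy after all.
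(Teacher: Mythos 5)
The statement you are addressing is Conjecture~\ref{conj:dichotomy}, which the paper poses explicitly as an open conjecture; no proof appears in the paper, and the authors offer only partial evidence in its favor (Theorem~\ref{main:normal} for the first alternative, the Dahmani--Guirardel--Osin and Clay--Mangahas--Margalit examples for the second). So your proposal is not being measured against an existing argument but against the difficulty of the problem itself, and on that score your own closing assessment is accurate: what you have written is a strategy outline, not a proof, and the two obstacles you name are the genuine ones rather than technicalities.

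Concretely: sharpening $\hat g(N) < g/3$ to $\hat g(N) < g/2$, or further to the hypothesis that some component of some pure element of $N$ has support homeomorphic to a proper subsurface of its complement, is exactly the conjectural improvement the paper itself proposes in the unnumbered conjecture preceding Conjecture~\ref{conj:dichotomy}; that part of the plan is well aligned with the paper's framing. But even granted, it leaves precisely the gap you identify. The negation of that sharpened hypothesis is that every component of every nontrivial pure element of $N$ has support occupying at least ``half'' of $S_g$, and this does not by itself force uniformly loxodromic or WPD behavior on any single hyperbolic $\Mod(S_g)$-space: a pure reducible element all of whose components have half-surface support need not act loxodromically on the curve graph or on any one Bestvina--Bromberg--Fujiwara projection complex, because the components live on distinct subsurfaces whose combinatorics no single such complex records. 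Your remark about needing a ``simultaneous'' hierarchical version of the theory is pointing at exactly this, but no such theory is currently known to deliver the conclusion you want. And finite index is the crux: the DGO and CMM constructions \emph{build} $N$ so that it is itself the free group or RAAG, they do not locate a finite-index RAAG inside an arbitrary normal $N$ satisfying a support condition, and the passage from ``$N$ contains a RAAG'' to ``$N$ contains a RAAG of finite index'' is where any argument of this type is most likely to fail. The proposal is a reasonable research plan for an open problem, but it should not be read as closing either alternative of the dichotomy.
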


A further conjecture of Clay, Mangahas, and the second author \cite{cmm} is that if $N$ is a normal, right-angled Artin subgroup of $\Mod(S_g)$ then $N$ is isomorphic to a free product of groups from the following list:
\[ F_\infty \, , \qquad \displaystyle\BigFreeProd_\infty \left(F_\infty \times F_\infty\right)\, , \qquad \displaystyle\BigFreeProd_\infty \left(F_\infty \times \Z\right)\, ,\quad  \text{ and } \quad \displaystyle\BigFreeProd_\infty \left(F_\infty \times F_\infty \times \Z\right).\]

\p{Mapping class groups versus lattices} Ivanov's original motivation for the study of the automorphism group of the complex of curves stems from the analogous work about lattices and arithmetic groups.  For example, the fundamental theorem of projective geometry states that  for $k$ a field and $n \geq 3$ the automorphism group of the Tits building for $k^n$ (the poset of nontrivial proper subspaces) is the group of projective semilinear automorphisms of $k^n$ (see \cite{ftpg}).

Ivanov's work on abstract commensurators also is inspired by the theory of lattices.  By the work of Margulis, an irreducible lattice in a connected semisimple noncompact Lie group with finite center is arithmetic if and only if it has infinite index in its abstract commensurator \cite{margulis}.  As observed by Ivanov \cite{ivanov}, this implies that mapping class groups are not arithmetic as (for most surfaces) mapping class groups have finite index in their abstract commensurator.  Since arithmetic groups are not normal subgroups of their abstract commensurators, our Theorem~\ref{main:normal} gives a new point of contrast between arithmetic groups and normal subgroups of mapping class groups.


\subsection{Results on complexes of regions}\label{sec:complex}

Our next goal is to state our results about automorphisms of simplicial complexes associated to a surface.  We begin by describing a class of simplicial complexes first defined by McCarthy and Papadopoulos \cite{mcpap}. 

\p{Complexes of regions.}  By a \emph{subsurface} of a compact surface $S$ we will always mean a compact subsurface $R$ where no component of $\partial R$ is homotopic to a point in $S$.  And (as above) a \emph{region} is a connected, non-peripheral subsurface.  A \emph{complex of regions} for $S$ is any nonempty simplicial flag complex that has vertices corresponding to homotopy classes of regions in $S$ and edges corresponding to vertices with disjoint representatives and that admits an action of $\MCG(S)$.

For the purposes of this paper, the difference between a graph and a flag complex is purely cosmetic, since the automorphism group of a flag complex is completely determined by the 1-skeleton.  In other words, we could replace complexes of regions with graphs of regions without affecting the theory.  The only difference is that we will use the term ``simplex'' instead of ``clique'' etc.  On the other hand, it might be an interesting problem to understand the topological properties of the complexes of regions as defined.

Let $\R(S)$ be the set of $\MCG(S)$-orbits of homotopy classes of regions in $S$.  For any subset $A$ of $\R(S)$ we denote the associated complex of regions by $\C_A(S)$.  One can recover traditional complexes of curves in this context by using an annulus as a proxy for a curve: if $A$ consists of all orbits of annuli, then $\C_A(S)$ is isomorphic to the usual complex of curves.

\p{Prior results}  There are several examples of complexes of regions that have been shown to have automorphism group isomorphic to the extended mapping class group: the complex of nonseparating curves by Irmak \cite{irmak}, the complex of separating curves by the authors of this paper \cite{kg}, the truncated complex of domains by McCarthy and Papadopoulos \cite{mcpap}, the arc complex by Irmak and McCarthy and by Disarlo \cite{irmakmac,disarlo}, the arc and curve complex by Korkmaz and Papadopoulos \cite{korkpap}, and the complex of strongly separating curves by Bowditch \cite{bowditch}.  

There are also more general theorems characterizing isomorphisms---and injective maps---between different complexes; see for instance the work of Aramayona \cite{aramayona}, Aramayona--Leininger \cite{al}, Bavard--Dowdall--Rafi \cite{bdr}, Birman--Broaddus--Menasco \cite{bbm}, Hern\'andez \cite{jhh2,jhh3}, Irmak \cite{irmak1,irmak2}, and Shackleton \cite{shack}.


\p{Pathologies} In spite of all of the aforementioned positive results, there are many natural complexes of regions on which $\MCG(S)$ acts but where the full group of automorphisms is much larger than $\MCG(S_g)$.  There are two immediate problems:
\begin{enumerate}
\item $\C_A(S_g)$ might be disconnected, and
\item $\C_A(S_g)$ might admit an \emph{exchange automorphism}, that is, an automorphism that interchanges two vertices and fixes all others.
\end{enumerate}
Typically, a disconnected complex of regions has automorphism group larger than $\MCG(S_g)$.  For instance, if $\C_A(S_g)$ has infinitely many isomorphic components (like the complex of curves for the torus or the complex of nonseparating curves for the multi-punctured torus) then the automorphism group contains an infinite permutation group.  

Also, an element of $\MCG(S_g)$ cannot act on $\C_A(S_g)$ by an exchange automorphism.  Indeed, if a mapping class fixes all but finitely many vertices of $\C_A(S_g)$ then it must be the identity (cf. Lemma~\ref{inj} below).  McCarthy and Papadopoulos were the first to address the issue of exchange automorphisms; they showed that the complex of domains $\C_{\R(S)}(S)$ admits exchange automorphisms when $S$ has more than one boundary component.

We would like to rule out these two types of pathologies.  First we will list two situations that give rise to exchange automorphisms---holes and corks---and later in Section~\ref{sec:suff rich} we will prove that all exchange automorphisms arise in this way.

\p{Holes and corks} Let $\C_A(S_g)$ be a complex of regions.  First, we say that a vertex $v$ of $\C_A(S_g)$ has a \emph{hole} if a representative region $R$ has a complementary region $Q$ with the property that no vertex of $\C_A(S_g)$ is represented by a subsurface of $Q$ (we refer to $Q$ as the hole).  Note that annular vertices cannot have holes.  Indeed, if $R$ represents an annular vertex, then there is an annulus parallel to $R$ in every complementary region.

Next, we say a vertex $v$ of $\C_A(S_g)$ is a \emph{cork} if (1) $v$ is represented by an annulus $A$, (2) one complementary region $R$ of $A$ represents a vertex $w$ of $\C_A(S_g)$, and (3) no proper, non-peripheral subsurface of $R$ represents a vertex of $\C_A(S_g)$.  Any such pair $\{v,w\}$ will be referred to as a \emph{cork pair}.  

\begin{figure}
\includegraphics[scale=.15]{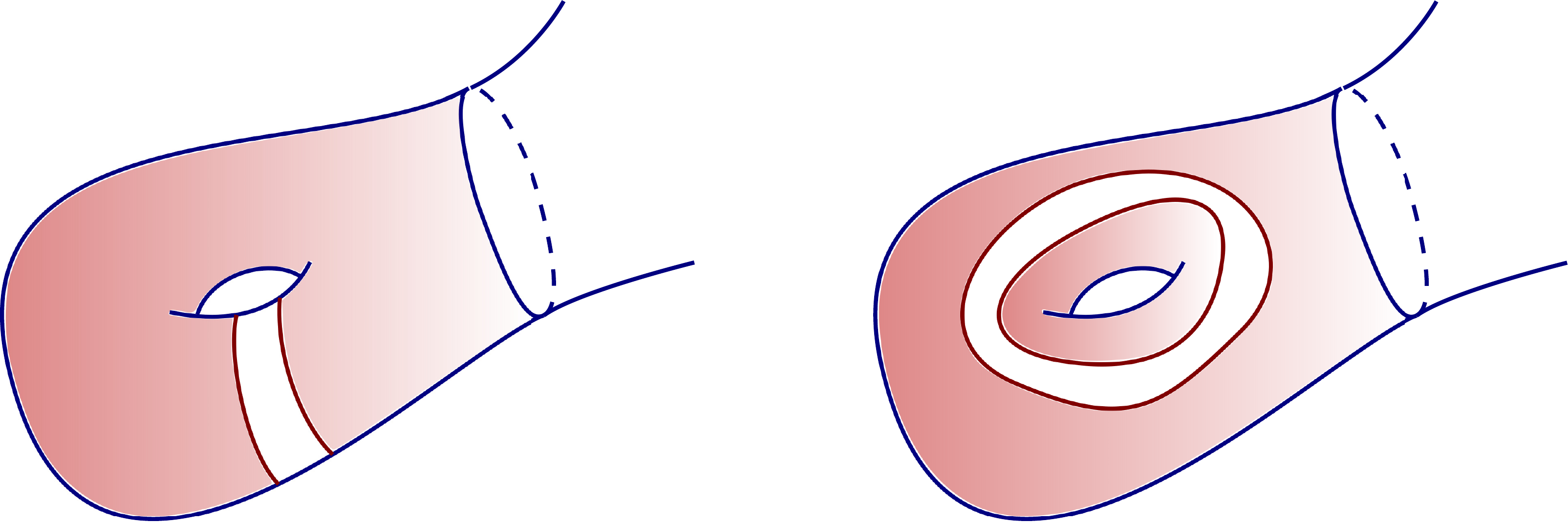}
\hspace*{.5in}
\includegraphics[scale=.15]{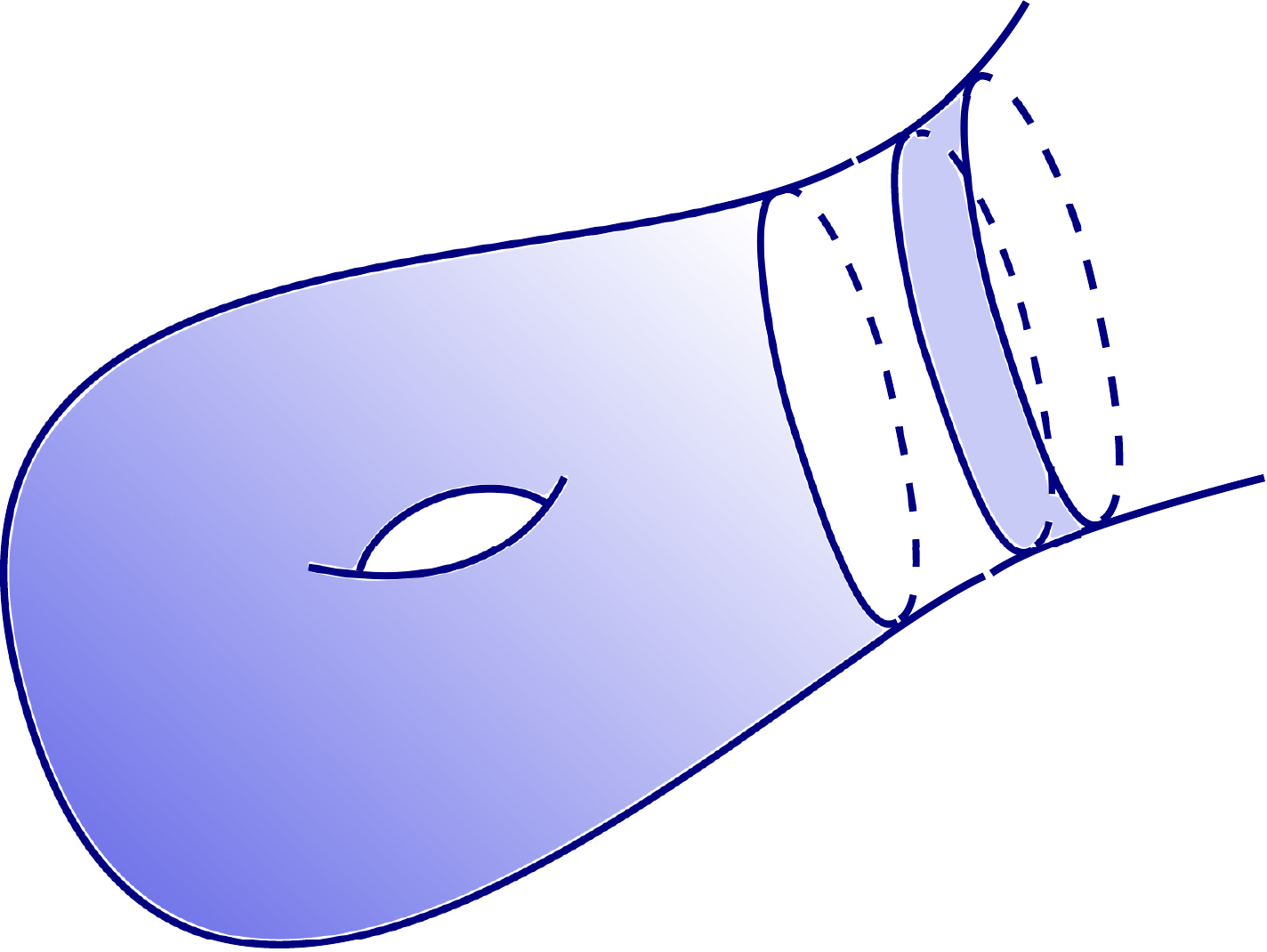}
\caption{\emph{Left:} vertices with holes; \emph{Right:} a cork pair}
\label{fig:bow}
\end{figure}

An example of a complex of regions that has a vertex with a hole is the complex whose vertices correspond to bow-legged pairs of pants, that is, pairs of pants that are embedded in such a way that two of the boundary components are parallel in the surface (the hole is the annulus between these two boundary components).  Any two vertices with representatives contained in the same handle (torus with one boundary component) can be exchanged by an automorphism of this complex; see the left-hand side of Figure~\ref{fig:bow}.  

An example of a complex of regions with a cork is the complex of regions that includes all regions except the nonseparating annuli and the bow-legged pairs of pants; in this case the corks are the separating annuli that cut off a handle.  These vertices can be exchanged with the vertices corresponding to the handles they cut off.

In Section~\ref{sec:suff rich} below we will show that a connected complex of regions admits an exchange automorphism if and only if it has a hole or a cork (meaning that it has a vertex with a hole or a vertex that is a cork).  

\p{Statement of the main theorem about complexes} For a subsurface $R$ of $S_g$ we define $\bar g(R)$ to be the smallest number $k$ so that $R$ is contained in a subsurface of $S_g$ of genus $k$ with connected boundary (we allow for the possibility that $\bar g(R) = g$).  We define $\bar g(A)$ to be the minimum of $\bar g(R)$ where $R$ represents an element of $A$.  The definitions of $\hat g(f)$ above and $\bar g(R)$ here are similar in spirit, although an important difference is that in the present case $R$ is not required to be non-peripheral in the subsurface of genus $k$.  As such, we use different notations to avoid confusion.

\begin{theorem}
\label{main:complex}
Let $\C_A(S_g)$ be a complex of regions that is connected and has no holes or corks and assume that $g \geq 3\bar g(A) + 1$.  Then the  natural map
\[ \MCG(S_g) \to \Aut \C_A(S_g) \]
is an isomorphism.
\end{theorem}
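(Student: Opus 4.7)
The plan is to follow Ivanov's strategy: reduce the statement to the classical theorem that $\Aut\C(S_g)\cong\MCG(S_g)$ (valid for $g\geq 3$), then transfer the conclusion back to $\C_A(S_g)$. Injectivity of the natural map $\MCG(S_g)\to\Aut\C_A(S_g)$ is the easier half. Any mapping class $f$ fixing every vertex of $\C_A(S_g)$ must, in particular, fix every region of minimal genus $\bar g(A)$, and since $g\geq 3\bar g(A)+1$ leaves ample room to place such regions around any prescribed curve, the forthcoming Lemma~\ref{inj} will imply that $f$ fixes every isotopy class of simple closed curve and hence is trivial.

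For surjectivity, let $\phi\in\Aut\C_A(S_g)$. The main task is to construct a canonical automorphism $\bar\phi$ of the curve complex $\C(S_g)$. The first substep is to show that $\phi$ preserves the topological type of each vertex, that is, the homeomorphism class of a representative region together with its embedding data (such as whether a boundary component is separating). This is typically accomplished by reading off invariants such as the dimension of the maximal simplex containing a vertex, the structure of its link and star, and ``common dominance'' relations that detect when two vertices see exactly the same neighbors. The hypotheses that $\C_A(S_g)$ is connected and free of holes and corks are decisive here, since these are precisely the combinatorial obstructions identified earlier to distinguishing topologically distinct vertices from one another; without them, exchange automorphisms interfere with this step.

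Next, I would reconstruct the action on curves by assigning to each isotopy class $c$ of simple closed curve a combinatorial ``witness'' in $\C_A(S_g)$ that canonically determines $c$. A natural witness for $c$ is the collection of vertices whose representatives are disjoint from $c$, together with those whose boundary contains $c$; the no-holes condition ensures that every such curve is genuinely detected by some vertex, and the no-corks condition ensures that an annular neighborhood of $c$ is not conflated with a one-handle it might cut off. Defining $\bar\phi(c)$ to be the unique curve whose witness is the $\phi$-image of the witness of $c$, and checking that $\bar\phi$ respects disjointness, produces a simplicial automorphism of $\C(S_g)$; Ivanov's theorem then yields $f\in\MCG(S_g)$ with $\bar\phi=f_*$.

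The final step is to verify that this $f$ realizes $\phi$ on all of $\C_A(S_g)$, not merely on the curves extracted from it. Since each region is determined up to isotopy by the collection of curves it contains and by its boundary curves, all of which are tracked by $f_*=\bar\phi$, this verification is essentially formal. The main obstacle throughout is the canonical construction of $\bar\phi$ and the proof that it is well-defined: one must rule out exchange-type pathologies at each stage, which the no-holes and no-corks hypotheses are precisely designed to prevent, and the genus condition $g\geq 3\bar g(A)+1$ is used to guarantee enough room in $S_g$ for the combinatorial recognition arguments, for instance the ability to fit three disjoint copies of minimal-genus subsurfaces inside $S_g$.
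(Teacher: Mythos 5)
Your proposal follows a different and, as sketched, incomplete route from the paper's. The paper does not attempt to show that an automorphism $\phi\in\Aut\C_A(S_g)$ preserves the topological type of each vertex of $\C_A(S_g)$, nor does it try to read off individual simple closed curves directly from $\C_A(S_g)$. Instead it passes through an intermediate object, the complex of dividing sets $\C_{\delta A}(S_g)$, using the correspondence between maximal joins in $\C_A(S_g)$ and vertices of $\C_{\delta A}(S_g)$ (Lemma~\ref{lemma:bijection}), shows the induced map $\delta:\Aut\C_A(S_g)\to\Aut\C_{\delta A}(S_g)$ is injective, and then invokes Theorem~\ref{theorem:multi k}, which in turn reduces by induction to the complex of separating curves $\C_k(S_g)$ and ultimately to the classical $k=1$ case. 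Your witness construction is in spirit an informal version of the maximal-join correspondence, but it is not the same reduction.

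The concrete gap is in the step ``reconstruct the action on curves.'' A witness for a curve $c$ only has a recognizable combinatorial shape when $c$ is separating: in that case the collection of vertices of $\C_A(S_g)$ disjoint from $c$ is a join (the two sides of $c$), and joins are exactly the structures $\phi$ preserves. For a \emph{nonseparating} curve $c$ the complement is connected, the set of vertices disjoint from $c$ is not a join, and there is no obvious reason that $\phi$ should carry this set to the analogous set for some other curve. Since $\C_A(S_g)$ need not contain any annular vertices at all (consider the complex of two-holed genus-two subsurfaces), you cannot in general detect a nonseparating curve from $\C_A(S_g)$ and then appeal to Ivanov's theorem for the full curve complex $\C(S_g)$. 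This is precisely why the paper targets the complex of separating curves $\C_k(S_g)$ (Theorem~\ref{theorem:sep k}), not $\C(S_g)$, and why the nonseparating-annulus case is split off and handled separately via Irmak's theorem for the complex of nonseparating curves. Similarly, the claim that $\phi$ preserves the homeomorphism type and embedding data of each region is not established in the paper at the level of $\C_A(S_g)$ and is not needed; the analogous type-preservation statements (types N, S, M) are proved only for $\C_{\delta A}(S_g)$, where they can be characterized via linear simplices and exceptional edges. Your sketch asserts these preservation properties but offers no mechanism to prove them, and this is where all the technical weight of the actual argument lies.
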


Again the hypothesis on $g$ is not sharp.  In the case where $\C_A(S_g)$ is the complex of curves, Theorem~\ref{main:complex} says that $\Aut \C_A(S_g) \cong \MCG(S_g)$ when $g \geq 4$, while on the other hand this isomorphism is known to hold for $g \geq 3$.  

\p{Applications.} Theorem~\ref{main:complex} is the first result to address infinitely many distinct complexes with a unified argument.  It covers many of the previously studied examples of simplicial complexes with automorphism group $\MCG(S_g)$, such as the complex of curves, the complex of separating curves, the complex of nonseparating curves, and the Bridson--Pettet--Souto complex of four-holed spheres and two-holed tori.  It is also easy to construct new examples, such as the complex of handles, the complex of separating curves of odd genus, and the complex of nonseparating seven-holed tori, etc.

\p{Automorphisms in the presence of holes and corks.} In the case where $\C_A(S_g)$ has a hole or a cork, we can still describe its automorphism group.  Let $\Exchange \C_A(S_g)$ denote the normal subgroup of $\Aut \C_A(S_g)$ generated by all exchange automorphisms.  

\begin{theorem}
\label{ex thm}
Let $g \geq 3$.  Let $\C_A(S_g)$ be a complex of regions that is connected and satisfies $g \geq 3 \bar g(A)+1$.  Then
\[
\Aut \C_A(S_g) \cong \Exchange \C_A(S_g) \rtimes \MCG(S_g). 
\]
\end{theorem}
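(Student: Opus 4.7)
The plan is to reduce Theorem~\ref{ex thm} to Theorem~\ref{main:complex} by collapsing $\C_A(S_g)$ along the equivalence relation generated by exchange automorphisms. Declare two vertices $u,v$ of $\C_A(S_g)$ to be \emph{exchange-equivalent}, written $u \sim v$, if some element of $\Exchange \C_A(S_g)$ sends $u$ to $v$; because conjugates of exchange automorphisms are again exchange automorphisms, this is an equivalence relation. Any two exchange-equivalent vertices have identical links, so the quotient $\C_{A'}(S_g)$ obtained by collapsing each $\sim$-class to a point is a well-defined simplicial flag complex carrying an induced $\MCG(S_g)$-action, and choosing one representative region from each equivalence class realizes it as a complex of regions for some $A' \subseteq \R(S_g)$.

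The first main step is to verify that $\C_{A'}(S_g)$ satisfies the hypotheses of Theorem~\ref{main:complex}. Connectedness is inherited from $\C_A(S_g)$ through the surjective quotient map on vertices, and each equivalence class contains a region from $A$, so $\bar g(A') \leq \bar g(A)$ and the inequality $g \geq 3\bar g(A')+1$ persists. The crucial point is that $\C_{A'}(S_g)$ has no holes and no corks: by the classification from Section~\ref{sec:suff rich}, every exchange automorphism of a connected complex of regions is associated to a hole or a cork, so a surviving hole or cork in $\C_{A'}(S_g)$ would produce a nontrivial exchange automorphism there, identifying two distinct $\sim$-classes and contradicting the construction. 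Applying Theorem~\ref{main:complex} to $\C_{A'}(S_g)$ then yields an isomorphism $\MCG(S_g) \to \Aut \C_{A'}(S_g)$.

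Next I would analyze the quotient homomorphism $q \colon \Aut \C_A(S_g) \to \Aut \C_{A'}(S_g)$. Each $\phi \in \Aut \C_A(S_g)$ permutes $\sim$-classes, since it conjugates exchange automorphisms to exchange automorphisms, and so descends to $\bar \phi \in \Aut \C_{A'}(S_g)$. Composing $q$ with the inverse of the isomorphism from Theorem~\ref{main:complex} gives a surjection $\Aut \C_A(S_g) \twoheadrightarrow \MCG(S_g)$ which is split by the natural map $\MCG(S_g) \to \Aut \C_A(S_g)$; this splitting meets $\Exchange \C_A(S_g)$ trivially, because by Lemma~\ref{inj} no nontrivial mapping class can fix all but finitely many vertices of $\C_A(S_g)$ and therefore cannot be an exchange. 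It remains to identify $\ker q$ with $\Exchange \C_A(S_g)$: the inclusion $\Exchange \subseteq \ker q$ is immediate, and for the reverse I would argue that any $\phi \in \ker q$ preserves each $\sim$-class setwise and, by the local structure provided by the hole/cork classification, its action on each class is realized by a product of exchanges internal to that class.

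The hardest step, and the main technical input, is the hole-and-cork classification of exchange automorphisms promised in Section~\ref{sec:suff rich}: it drives both the absence of holes and corks in the quotient $\C_{A'}(S_g)$ and the identification of $\ker q$ with $\Exchange \C_A(S_g)$. Once this classification is in hand and the descent $q$ is constructed, the split short exact sequence
\[ 1 \to \Exchange \C_A(S_g) \to \Aut \C_A(S_g) \to \MCG(S_g) \to 1 \]
produces the desired decomposition $\Aut \C_A(S_g) \cong \Exchange \C_A(S_g) \rtimes \MCG(S_g)$.
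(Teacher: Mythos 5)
Your strategy (pass to a quotient complex with no holes or corks, apply Theorem~\ref{main:complex}, and package the result as a split short exact sequence) is the same as the paper's, but the specific quotient you propose is wrong, and the argument that it has no holes or corks is a non-sequitur. You assert that a hole or cork in $\C_{A'}(S_g)$ would produce an exchange automorphism ``identifying two distinct $\sim$-classes''; but an exchange automorphism of $\C_{A'}(S_g)$ is a permutation of the vertices of $\C_{A'}(S_g)$, not of $\C_A(S_g)$, and there is no reason it should lift to an exchange of $\C_A(S_g)$ or otherwise force any further identification. In fact, collapsing only exchange-equivalence classes can leave corks behind. Take $\C_A(S_g)$ with vertices the separating annuli of genus one together with the bow-legged pairs of pants contained in handles. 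If $v$ is the boundary annulus of a handle $H$ and $w$ is a bow-legged pair of pants in $H$, then $v$ and $w$ are \emph{not} exchangeable in $\C_A(S_g)$ (other translates of $w$ with the same filling $H$ are connected to $v$ but not to $w$), so $[v]$ and $[w]$ remain distinct in your $\C_{A'}(S_g)$; but if you represent $[w]$ by its filling $H$ (the only $\MCG(S_g)$-equivariant choice making $\C_{A'}(S_g)$ a complex of regions), then $\{[v],[w]\}$ is a cork pair in $\C_{A'}(S_g)$.

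The paper handles this by a two-stage construction: first pass to the filling $\C_{\bar A}(S_g)$ (Lemmas~\ref{filling} and~\ref{small filling}), and then delete all corks from $\C_{\bar A}(S_g)$ to obtain $\C_{\bar A'}(S_g)$. This quotient is strictly coarser than collapsing exchange-equivalence classes of $\C_A(S_g)$: in the example above it fuses $[v]$ and $[w]$ into a single vertex even though $v$ and $w$ are not exchangeable. Even so, the kernel of the induced map $\Aut\C_A(S_g)\to\Aut\C_{\bar A'}(S_g)$ is exactly $\Exchange\C_A(S_g)$, because within any fiber the vertices split into link-isomorphism classes that an automorphism must preserve, and those classes are permuted by exchanges. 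Establishing that $\C_{\bar A'}(S_g)$ has no new holes or corks (removing corks can a priori create both), that the small vertex survives, and that the kernel is exactly $\Exchange\C_A(S_g)$ are precisely the nontrivial steps of the paper's proof, and none of them is dispatched by the exchange-classification alone as you suggest. Your sketch also takes $\bar g(A')\le\bar g(A)$ for granted, which requires Lemma~\ref{small filling} since the representative of a $\sim$-class is a filling and may be strictly larger than any region in $A$.
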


We will prove Theorem~\ref{ex thm} in Section~\ref{sec:suff rich}.  McCarthy and Papadopoulos proved Theorem~\ref{ex thm} in the case where $\C_A(S_g)$ is the complex of domains \cite[Theorem 8.9]{mcpap}.

\p{A conjectural sharpening of the theorem.} We conjecture that the condition on $\hat g(A)$ in Theorem~\ref{main:complex} is not necessary.

\begin{conjecture}
\label{crconj}
Let $\C_A(S_g)$ be a complex of regions that is connected and has no holes or corks.  Then the natural map
\[ \MCG(S_g) \to \Aut \C_A(S_g) \]
is an isomorphism.
\end{conjecture}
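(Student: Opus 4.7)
The plan is to reduce Conjecture~\ref{crconj} to Theorem~\ref{main:complex} by extending automorphisms to a richer complex. Specifically, given a complex of regions $\C_A(S_g)$ satisfying the hypotheses of the conjecture, let $A' = A \cup \{[\alpha]\}$, where $[\alpha]$ is the $\MCG(S_g)$-orbit of a nonseparating annulus. Then $\bar g(A') = 1$, so Theorem~\ref{main:complex} applies to $\C_{A'}(S_g)$ for all $g \geq 4$ (and Theorem~\ref{ex thm} describes $\Aut \C_{A'}(S_g)$ in any case where new holes or corks appear after the enlargement). If one can promote each automorphism $\phi$ of $\C_A(S_g)$ to an automorphism $\phi'$ of $\C_{A'}(S_g)$ via a canonical, injective homomorphism $\Aut \C_A(S_g) \hookrightarrow \Aut \C_{A'}(S_g)$, then $\phi'$ comes from a unique $f \in \MCG(S_g)$, and $f$ must realize $\phi$ on the subcomplex $\C_A(S_g)$ as well.

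The crucial technical step is a combinatorial reconstruction of annular vertices from the data of $\C_A(S_g)$ alone. The natural strategy is to associate to each simple closed curve $c$ in $S_g$ the set $V_c$ of vertices whose representatives have $c$ as a boundary component, and to characterize $V_c$ as a maximal collection of vertices whose pairwise links contain a distinguished common subsimplex. Following the methods used by Irmak, McCarthy--Papadopoulos, and Disarlo in their respective complexes, one expects that the \emph{no hole} hypothesis ensures that every nonperipheral boundary curve of a vertex $v$ is ``witnessed'' by some other vertex of $\C_A(S_g)$ adjacent to $v$, while the \emph{no cork} hypothesis rules out ambiguity between boundary curves and their annular thickenings. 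The automorphism $\phi$ must then permute the sets $V_c$ among themselves, inducing a simplicial permutation of curves that extends $\phi$ to the annular vertices.

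The main obstacle will be producing such a combinatorial characterization of the $V_c$ uniformly in $A$, with no hypothesis relating $g$ to $\bar g(A)$. When $\bar g(A)$ is close to $g$, the vertices of $\C_A(S_g)$ occupy most of the surface and their complementary subsurfaces are small, so the link of a vertex in $\C_A(S_g)$ may contain too few other vertices to combinatorially distinguish its boundary curves. A natural inductive approach is to induct on the quantity $g - \bar g(A)$, using the no-hole hypothesis at each stage to isolate, for each vertex $v$ and each nonperipheral boundary curve $c$ of $v$, at least one witnessing vertex supported in the complementary region bounded by $c$, and then to pass to the smaller subcomplex of vertices disjoint from $v$. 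The base case and the verification that induction actually closes up---together with a careful accounting of which corks and holes can arise in $\C_{A'}(S_g)$ that were not present in $\C_A(S_g)$---appear to be the essential difficulties, and will likely require a case analysis driven by the topological types of the vertices appearing in $A$.
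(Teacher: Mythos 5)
This statement is Conjecture~\ref{crconj}, which the paper explicitly leaves open: it is the authors' conjectural sharpening of Theorem~\ref{main:complex} obtained by dropping the hypothesis $g \geq 3\bar g(A)+1$. There is no proof of it in the paper, so there is nothing to compare your attempt against; the relevant question is simply whether your proposal closes the gap, and it does not.

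The enlargement step itself is sound as far as it goes: adjoining the orbit of nonseparating annuli to $A$ cannot disconnect the complex, cannot create holes (holes are created by the absence of vertices, not their presence), cannot create corks (the defining condition of a cork only becomes harder to satisfy as vertices are added), and drops $\bar g$ to $1$, so Theorem~\ref{main:complex} --- or its special case Proposition~\ref{prop:ca easy} --- gives $\Aut \C_{A'}(S_g) \cong \MCG(S_g)$ for $g \geq 4$. But the entire content of the conjecture is concentrated in the step you flag as the ``crucial technical step'' and then defer: producing an injective homomorphism $\Aut \C_A(S_g) \to \Aut \C_{A'}(S_g)$. To define it you must reconstruct, from the abstract simplicial structure of $\C_A(S_g)$, the isotopy class of each nonseparating curve that should become a new vertex. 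Your proposed device --- the sets $V_c$ of vertices whose representatives have $c$ on their boundary --- can be empty for every nonseparating $c$ (take $A$ to consist only of regions with separating boundary, e.g.\ the complex of separating curves), so the sets $V_c$ cannot serve as a uniform combinatorial proxy for curves. In the regime the conjecture actually adds, $\bar g(A)$ is close to $g$, the vertices of $\C_A(S_g)$ are large, their links are small, and it is exactly here that one loses the combinatorial witnesses needed to distinguish boundary curves. This is not incidental: the bound $g \geq 3\bar g(A)+1$ in Theorem~\ref{main:complex} is precisely what makes the paper's curve-reconstruction machinery (maximal joins $\leftrightarrow$ dividing sets, Lemma~\ref{lemma:bijection}, followed by Theorem~\ref{theorem:multi k} and the inductive Theorem~\ref{theorem:sep k}) go through. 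Your proposal in effect reorganizes that machinery without removing the place where the bound is used, and your own closing paragraph correctly identifies this as the unresolved obstruction. As written, this is a research plan with an acknowledged gap, not a proof.
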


\p{Further possible generalizations} There are other ways that one might extend our Theorem~\ref{main:complex}, for instance by generalizing the definition of a complex of regions.  There are several examples of simplicial complexes that do not satisfy our definition of a complex of regions but still have automorphism group isomorphic to the extended mapping class group.  For example, the systolic complex of curves, studied by Schmutz--Schaller \cite{pss} has edges that do not correspond to disjointness.  The Torelli complex, studied by Farb and Ivanov \cite{farbivanov} (see also \cite{kg}), has vertices corresponding to disconnected subsurfaces.  And the pants complex, studied by the second author \cite{pants}, has both deficiencies: its vertices correspond to disconnected subsurfaces and its edges do not correspond to disjointness.  On the other hand, all of these complexes have automorphism group the extended mapping class group.  

Other natural directions are to study the analogs for punctured surfaces, surfaces of infinite type, and other manifolds.  Work in these directions has already been done by McLeay \cite{alan,alan2} and Scott \cite{shane}.


\subsection{Plan of the paper} We now give a summary of the remaining five sections of the paper.  Along the way, we explain how the various sections fit together to prove our main results.  

\p{Exchange automorphisms} As mentioned, Section~\ref{sec:suff rich} is devoted to the classification of exchange automorphisms  and to the determination of the automorphism group of a connected complex with exchange automorphisms.  More precisely, we prove Theorem~\ref{theorem:suff rich} below, which states that all exchange automorphisms arise from holes and corks, and we prove Theorem~\ref{ex thm} above, which gives a semi-direct product decomposition of the group of automorphisms into the extended mapping class group and the group of exchange automorphisms.

\p{Complexes of separating curves} In Section~\ref{sec:sep} we extend previous work of the authors on the complex of separating curves. This section is the only part of the paper that closely parallels earlier work in the subject.

Recall that the complex of curves $\C(S_g)$ is the simplicial flag complex with vertices corresponding to homotopy classes of simple closed curves in $S_g$ and edges connecting vertices with disjoint representatives.  Vertices of $\C(S_g)$ can be separating or nonseparating, meaning that they have a representative that is such.

The \emph{genus} of a separating curve is the minimum of the genera of the two complementary regions of $S_g$.  Let $\C_k(S_g)$ be the subcomplex of $\C(S_g)$ spanned by all vertices represented by separating curves of genus at least $k$.  The first ingredient in our proof of Theorem~\ref{main:complex} is the following.

\begin{theorem}
\label{theorem:sep k}
Let $k \geq 1$ and let $g \geq 3k+1$. The natural map
\[ \MCG(S_g) \to \Aut(\C_k(S_g)) \]
is an isomorphism.
\end{theorem}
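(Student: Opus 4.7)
The plan is to reduce Theorem~\ref{theorem:sep k} to the authors' earlier result \cite{kg}, which establishes the case $k=1$ (automorphisms of the full complex of separating curves $\C_1(S_g)$). Concretely, I would build an extension map $\Aut \C_k(S_g) \to \Aut \C_1(S_g)$ that is compatible with the natural maps from $\MCG(S_g)$; combined with the case $k=1$, this gives surjectivity. Injectivity of $\MCG(S_g) \to \Aut \C_k(S_g)$ is the standard argument via Lemma~\ref{inj}: a mapping class acting trivially on $\C_k(S_g)$ fixes every separating curve of genus at least $k$, and when $g \ge 3k+1$ these curves fill $S_g$, so the mapping class is trivial.

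To define the extension of $\phi \in \Aut \C_k(S_g)$ to all separating curves, I would first establish that $\phi$ preserves the genus of vertices. This is done by combinatorial arguments in $\C_k(S_g)$ in the spirit of Ivanov's work on $\C(S_g)$ and the authors' work on $\C_1(S_g)$: the genus of a vertex is detected by local invariants such as the maximal dimension and structural properties of simplices in its link. Then, for a separating curve $c$ of genus $j$ with $1 \le j < k$, I would identify $c$ by the set $L(c)$ of vertices of $\C_k(S_g)$ represented by subsurfaces of the large complementary region of $c$, which has genus $g - j \ge 2k + 2$. The bound $g \ge 3k+1$ ensures this region is large enough that $L(c)$ pins down $c$ among genus-$j$ separating curves. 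The key step is a purely combinatorial characterization of which subsets of the vertex set of $\C_k(S_g)$ arise as $L(c)$ for some such $c$, so that $\phi$ sends such sets to sets of the same type; this defines $\tilde\phi(c)$ as the unique curve with link $\phi(L(c))$. Disjointness of $\tilde\phi(c)$ and $\tilde\phi(c')$ follows by encoding disjointness of $c$ and $c'$ in terms of the intersection of their links in $\C_k(S_g)$, which $\phi$ preserves.

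The main obstacle is the combinatorial characterization of the configurations $L(c)$: one must recognize them inside $\C_k(S_g)$ using only intrinsic disjointness data, and prove the rigidity statement that $L(c)$ determines $c$. This is the same flavor of subsurface analysis carried out in \cite{kg}, adapted to the setting where vertices corresponding to low-genus separating curves have been removed; the hypothesis $g \ge 3k+1$ is exactly what is needed so that there is enough room in the large complementary region to make these arguments go through. Once the extension $\tilde\phi \in \Aut \C_1(S_g)$ is constructed, applying \cite{kg} produces a mapping class $f$ inducing $\tilde\phi$, and hence inducing $\phi$ on the subcomplex $\C_k(S_g)$, completing the proof.
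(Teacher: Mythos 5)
Your overall strategy — extend each $\phi \in \Aut\C_k(S_g)$ to an automorphism of a larger complex and invoke the base case from \cite{kg} — matches the paper's, but your proposed mechanism is genuinely different, and the central step is left as a gap that is in fact the mathematical content of the theorem. The paper does \emph{not} attempt a direct extension $\Aut\C_k \to \Aut\C_1$; it proceeds inductively one genus at a time, $\Aut\C_k \to \Aut\C_{k-1}$, and it recovers a genus-$(k-1)$ curve not from the global disjointness data $L(c)$ but from a tiny local configuration, a \emph{sharing pair}: two genus-$k$ curves meeting in two points whose union cuts off a genus-$(k-1)$ piece. Sharing pairs are characterized intrinsically using a finite list of conditions on a handful of auxiliary vertices together with subsurface projections to the complementary side of a fixed genus-$(k+1)$ vertex (Lemmas~\ref{lemma:joins}--\ref{lemma:sharing}); well-definedness of the induced map is handled by showing sharing triples are preserved (Lemma~\ref{lemma:triples}) and that the sharing pair graph is connected via Putman's trick (Lemma~\ref{lemma:sp graph connected}). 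The point is that a sharing pair is a bounded combinatorial object, so "recognize and preserve it" is a finite check.

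Your proposal replaces this with the problem of intrinsically characterizing the infinite sets $L(c)$ inside $\C_k(S_g)$ and proving that such a set determines $c$ uniquely. You identify this as "the main obstacle" and then do not resolve it, instead gesturing at "the same flavor of subsurface analysis." That is where the proof actually lives: you must (i) distinguish sets of the form $L(c)$ with $c$ of genus $j<k$ from the one-sided halves of links of genus-$\ge k$ vertices and from disjointness-sets of nonseparating curves, using only the simplicial structure of $\C_k(S_g)$; (ii) prove the rigidity that $L(c)$ fills the large side and so determines $c$; and (iii) verify that disjointness of $c,c'$ (including the nested case where both are low genus) is encoded by the pair $(L(c),L(c'))$. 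None of these is routine, and (iii) in particular is not a simple "intersection of links" statement, because for nested low-genus separating curves the two sets $L(c),L(c')$ can be comparable by inclusion in ways that do not cleanly distinguish disjoint from intersecting configurations. Without these steps your argument is a restatement of the theorem rather than a proof of it. As written, it is not a correct proof; it is an alternative plan whose hard part is unexecuted, whereas the paper's sharing-pair/induction route reduces that hard part to a concrete finite-configuration recognition problem and carries it out in detail.
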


We prove Theorem~\ref{theorem:sep k} in Section~\ref{sec:sep}.  The proof proceeds by induction on $k$.  The base case is $k=1$, in which case $\C_k(S_g)$ is the complex of separating curves.  This case was proved in our earlier work \cite{kg,kgadd}.  

The bounds on genus in Theorems~\ref{main:normal} and~\ref{main:complex} are derived from the bound on $g$ in Theorem~\ref{theorem:sep k}.  If the bound here can be improved, one would obtain improved versions of those theorems.

\p{Complex of dividing sets} In Section~\ref{sec:div} we apply Theorem~\ref{theorem:sep k} in order to determine the automorphism group of a different complex, which is of a different nature and requires a specialized set of tools and techniques. To state the theorem, we require some definitions.

A \emph{dividing set} in $S_g$ is a disjoint union of essential simple closed curves that divides $S_g$ into exactly two regions in such a way that each curve lies in the boundary of both regions.  We allow for the possibility that one of the two regions associated to a dividing set is an annulus.  We say that two dividing sets are \emph{nested} if one is contained entirely in a single region defined by the other.

Let $\D(S_g)$ denote the set of $\MCG(S_g)$-orbits of isotopy classes of dividing sets in $S_g$.  For any $D \subseteq \D(S_g)$ we define $\C_D(S_g)$ to be the abstract simplicial flag complex whose vertices correspond to isotopy classes of dividing sets in $S_g$ representing elements of $D$ and whose vertices are connected by an edge when they have nested representatives.  

We define a partial order on $\D(S_g)$ as follows: we say that $a \preceq b$ if $a$ and $b$ have nested representatives $A$ and $B$ and, of the two regions of $S_g$ complementary to $B$, the dividing set $A$ lies in one with minimal genus.  

For any set $X$ with a partial order, an \emph{upper set} is a subset $Y \subseteq X$ with the property that $y \in Y$ and $y \preceq z$ implies $z \in Y$.

Finally, we define $\check g(D)$ to be the minimum of the genera of the separating curves corresponding to elements of $D$.

\begin{theorem}
\label{theorem:multi k}
Fix a nonempty upper set $D \subseteq \D(S_g)$ so that $\C_D(S_g)$ is connected and assume that $g \geq 3 \check g(D)+1$.  Then the natural map 
\[ \MCG(S_g) \to \Aut \C_D(S_g) \]
is an isomorphism.
\end{theorem}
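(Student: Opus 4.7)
Let $k = \check g(D)$. The plan is to reduce the theorem to Theorem~\ref{theorem:sep k} (which applies since $g \geq 3k+1$) by locating a subcomplex $\Sigma \subseteq \C_D(S_g)$ naturally isomorphic to $\C_k(S_g)$ and preserved by every automorphism of $\C_D(S_g)$. First I show that $D$ contains every orbit of single separating curves of genus at least $k$: by definition of $\check g$, some orbit $a_0$ of genus-$k$ separating curves lies in $D$, and for any separating curve $b$ of genus $k' \geq k$ I can place a representative of $a_0$ as a non-boundary-parallel separating curve inside the minimum-genus side of $b$ (using $k \leq k'$), giving $a_0 \preceq b$ and hence $b \in D$ by the upper set property. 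Let $\Sigma \subseteq \C_D(S_g)$ denote the induced subcomplex on the single-separating-curve vertices. Any two disjoint separating curves lie in opposite complementary regions of one another, hence are nested; so the edge relations of $\Sigma$ and $\C_k(S_g)$ agree, and $\Sigma \cong \C_k(S_g)$ as flag complexes.

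The main obstacle is to show that any $\phi \in \Aut \C_D(S_g)$ preserves $\Sigma$, i.e., to give an intrinsic combinatorial characterization of single-separating-curve vertices inside $\C_D(S_g)$. A fundamental topological input is that in any dividing set with two or more curves, every individual curve is non-separating in $S_g$: if one were separating, the complement of the dividing set would have at least three components, contradicting the definition. Consequently, the complementary regions of a single separating curve each have one boundary component, while those of an $n$-curve dividing set each have $n \geq 2$. I would translate this topological distinction into a combinatorial one using either (i) the simplicial link of a vertex in $\C_D(S_g)$, which reflects the topology of its complementary regions and in particular detects the number of boundary components through the behavior of separating curves parallel to each boundary component, or (ii) the structure of descending chains in the partial order $\preceq$: a single separating curve of genus $k'$ has complementary regions of total genus $g$ and admits long refining chains of nested separating curves, whereas an $n$-curve dividing set has regions of total genus only $g - n + 1$ and thus supports shorter chains.

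Once $\phi$ preserves $\Sigma$, the restriction $\phi|_\Sigma$ is an automorphism of $\C_k(S_g)$ and, by Theorem~\ref{theorem:sep k}, is realized by a unique $f \in \MCG(S_g)$. Replacing $\phi$ by $f^{-1}\phi$, I assume $\phi$ fixes $\Sigma$ pointwise. To see that $\phi$ then fixes every vertex of $\C_D(S_g)$, note that any dividing set $a$ with complementary regions $R_1, R_2$ is determined by the unordered pair of sets of single-separating-curve vertices of $\C_D(S_g)$ contained in $R_1$ and in $R_2$ respectively: the hypothesis $g \geq 3k+1$ together with the connectedness of $\C_D(S_g)$ ensures that each region is complex enough to contain an abundance of genus-$\geq k$ separating curves that collectively pin down the region up to isotopy. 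Since $\phi$ fixes all such separating curves and preserves the nesting relation, it fixes the induced partition of the link of $a$ into curves nested in $R_1$ versus in $R_2$, and hence fixes $a$. Injectivity of the natural map $\MCG(S_g) \to \Aut \C_D(S_g)$ follows from Lemma~\ref{inj} and the faithful action of $\MCG(S_g)$ on isotopy classes of separating curves for $g \geq 3$.
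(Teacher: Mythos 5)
Your high-level strategy is the same as the paper's: show that $D$ contains all orbits of separating curves of genus at least $k=\check g(D)$, identify the resulting subcomplex $\Sigma$ with $\C_k(S_g)$, show that every automorphism of $\C_D(S_g)$ preserves $\Sigma$, apply Theorem~\ref{theorem:sep k}, and then prove that an automorphism fixing $\Sigma$ pointwise is the identity. Your Steps 1 and 2 are correct and match what the paper does implicitly, and your topological observation that every curve in a dividing set with two or more components is nonseparating in $S_g$ is also correct. However, the two steps carrying all the weight have genuine gaps.

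Step 3 is only a sketch. You correctly identify the problem (give an intrinsic combinatorial characterization of type-S vertices) but offer two unproved ideas in its place. Your idea (ii), about the length of ``refining chains,'' is close in spirit to the paper's notion of a linear simplex, and Lemma~\ref{lemma:four} does show that type-S vertices are extreme vertices of five-vertex linear simplices. But this criterion alone does not isolate type S: a type-M vertex can also be an extreme vertex of a long linear simplex. The paper needs the additional machinery of exceptional edges (Lemmas~\ref{lemma:tight edges} and~\ref{lemma:char}) precisely to separate those type-M vertices from type-S ones, together with a further special argument when $g=4$ and $\check g(D)=1$. Your proposal does not contain anything playing this role, and it is not clear how to finish without it. Note also that the automorphism group does not a priori see the partial order $\preceq$ on $\D(S_g)$; what it sees is the edge relation (nesting), and converting ``nesting chains'' into something recognizable by $\Aut\C_D(S_g)$ is exactly what Lemma~\ref{lemma:linear} accomplishes via the join structure of links. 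Separately, you do not address the case where $D$ contains the orbit of nonseparating curves (the only such upper set is $D=\D(S_g)$); the paper handles this separately in Proposition~\ref{prop:cd easy case}, and your type-S characterization would need to be modified there.

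Step 4 contains a concrete error. You assert that any dividing set $a$ is determined by the unordered pair of sets of type-S vertices contained in its two complementary regions $R_1,R_2$, because each region is ``complex enough to contain an abundance of genus-$\geq k$ separating curves.'' This fails. A dividing set of type $(h,n)$ with $h<k$ has a complementary region of genus $h<k$, and such a region contains no essential separating curve of genus $\geq k$ at all (the side of such a curve inside $R_i$ would have genus $\leq h<k$, forcing the curve to have genus $<k$). Type $(0,n)$ vertices are the extreme case: the genus-zero side contains no separating curves of $S_g$ whatsoever. So for many type-M vertices, one entry of your ``unordered pair'' is empty, and the argument cannot pin the vertex down. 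The paper avoids this by inducting on distance from $\C_{\check g(D)}(S_g)$ and using $\MCG(S_g)$-translates of already-fixed vertices of arbitrary type (not just type S) to fill the complementary regions, distinguishing between 1-sided and 2-sided vertices along the way. That mechanism is essential and is missing from your argument.
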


We prove Theorem~\ref{theorem:multi k} in Section~\ref{sec:div}.  The first observation is that since $D$ is an upper set the complex $\C_D(S_g)$ has a subcomplex isomorphic to $\C_{\check g(D)}(S_g)$.  The proof then proceeds by showing that an automorphism of $\C_{D}(S_g)$ induces an automorphism of $\C_{\check g(D)}(S_g)$ and then applying Theorem~\ref{theorem:sep k}.

The proof of Theorem~\ref{theorem:multi k} is more subtle than the previous theorems about automorphisms of curve complexes.  The first major distinction is that edges in $\C_D(S_g)$ do not correspond to disjointness, and so the usual arguments do not apply.  On top of this, the hypotheses of the theorem do not specify which dividing sets do, and do not, correspond to vertices of $\C_D(S_g)$; they only specify that $D$ is an upper set (compare this with the hypotheses of our Theorem~\ref{main:normal} about normal subgroups of $\Mod(S_g)$).

\p{Complexes of regions}  In Section~\ref{sec:reg} we derive Theorem~\ref{main:complex} from Theorem~\ref{theorem:multi k}.  The starting point is a correspondence
\[
\left\{  \text{maximal joins in } \C_A(S_g) \right\} \longleftrightarrow \left\{  \text{vertices of } \C_D(S_g) \right\}.
\]
We use this correspondence to show that an automorphism of $\C_A(S_g)$ induces an automorphism of some $\C_D(S_g)$, and then apply Theorem~\ref{theorem:multi k}.  The main work is in showing that the induced map $\Aut \C_A(S_g) \to \Aut \C_D(S_g)$ is injective.  Again a difficulty is that we do not have an explicit list of vertices of the complex.  

\p{Normal subgroups} In Section~\ref{section:normal} we prove Theorem~\ref{main:normal} using Theorem~\ref{main:complex}.  The core idea is that to a normal subgroup $N$ of $\Mod(S_g)$ or $\MCG(S_g)$ we associate a complex of regions whose vertices correspond to the supports of certain subgroups of $N$.  To this end, we introduce the notion of a basic subgroup of $N$, which is a non-abelian subgroup of $N$ whose centralizer in $N$ is maximal among non-abelian subgroups of $N$.

We consider the complex of regions whose vertices are the supports of the basic subgroups of $N$.  Since $N$ has a pure element with a small component, the complex of regions has a small vertex.    This construction enables us to extract the necessary topological data from $N$ without knowing any specific information about its elements.  After possibly modifying the complex of regions so that it satisfies the other hypotheses of Theorem~\ref{main:complex}, we then show that an automorphism of $N$ gives rise to an automorphism of the complex of regions and then apply Theorem~\ref{main:complex}.

The proof then proceeds by analyzing separately the case where $N$ is normal in $\MCG(S_g)$ and the (harder) case where $N$ is normal in $\Mod(S_g)$.  

\p{Bird's-eye view of the proof} One interpretation of our proofs of Theorems~\ref{main:normal} and~\ref{main:complex} is that there is a sequence of maps 
\begin{align*}
 \Aut N \to \Aut &\ \C_A(S_g) \to \Aut \C_{D}(S_g) \to \Aut \C_k(S_g) \to \Aut \C_{k-1}(S_g) \to \\
  & \cdots \to \Aut \C_1(S_g) \to \Aut \C(S_g) \to \MCG(S_g)
\end{align*}
and that the appropriate compositions are inverse to the natural maps $\MCG(S_g) \to \Aut N$ and $\MCG(S_g) \to \Aut \C_A(S_g)$. 
Therefore, not only does our main theorem validate Ivanov's metaconjecture, but by going through Ivanov's original theorem the proof does as well.


\p{Acknowledgments} This project was begun in conjunction with the Mathematical Research Communities program on Geometric Group Theory in 2013.  We are grateful to the American Mathematical Society, the organizers, and the participants there, in particular Matthew Durham and Brian Mann, for helpful conversations and inspiration.  We would like to thank Justin Lanier, Alan McLeay, and Shane Scott for their comments on earlier drafts, and Martin Bridson, Lei Chen, Tom Church, Luis Paris, Alexandra Pettet, Juan Souto, Masaaki Suzuki, and Daniel Studenmud for helpful conversations.  We would like to thank Kevin Wortman for detailed discussions about arithmetic groups.   We are also grateful to Joan Birman, Benson Farb, and Andrew Putman for their encouragement on this project.  Finally we would like to express our deepest gratitude to Kathleen Margalit and Brendan Owens for their support throughout this project.


\section{Exchange automorphisms}
\label{sec:suff rich}

Before we proceed to the proofs of our main theorems, we prove two theorems that clarify the role that exchange automorphisms play in the theory of automorphisms of complexes of regions.  Theorem~\ref{theorem:suff rich} gives a complete characterization of exchange automorphisms in complexes of regions.  Theorem~\ref{ex thm} then describes the automorphism group of a connected complex of regions that has exchange automorphisms.

\subsection{Characterization of exchange automorphisms}\label{sec:sr} To state the first theorem about exchange automorphisms, we require a definition.  Let $\C_A(S_g)$ be a complex of regions, and let $v$ be a non-annular vertex.  Let $R$ be a representative of $v$ and let $Q_1,\dots,Q_n$ be the set of complementary regions of $R$ that do not contain representatives of vertices of $\C_A(S_g)$ (the $Q_i$ are the holes of $v$).  The \emph{filling} of $v$ is the homotopy class of regions represented by $R' = R \cup Q_1 \cup \cdots \cup Q_n$.  When $v$ has a hole (so $n > 0$), there are infinitely many vertices of $\C_A(S_g)$ with the same filling; these are the translates of $v$ under the elements of $\MCG(S_g)$ that preserve $R'$.  For convenience, we define the filling of an annular vertex $v$ to be $v$ itself.

\begin{theorem}
\label{theorem:suff rich}
Let $g \geq 3$.  Let $\C_A(S_g)$ be a complex of regions with no isolated vertices or edges.  Then $\C_A(S_g)$ admits an exchange automorphism if and only if it has a hole or a cork.  Moreover, two vertices can be interchanged by an exchange automorphism if and only if they are non-annular vertices with equal fillings or they form a cork pair.
\end{theorem}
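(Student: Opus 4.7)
The plan is to establish both directions of the biconditional by working directly with the link condition $\Lk(v)\setminus\{w\} = \Lk(w)\setminus\{v\}$ that characterizes exchangeable pairs in a flag complex, and to leverage the description of $\Lk(v)$ in terms of the filling of $v$.

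For the ``if'' direction, I would treat the hole case and cork case separately. If $\{v,w\}$ is a cork pair with $v$ represented by an annulus $A$, $w$ represented by a complementary region $R$, and no proper non-peripheral subsurface of $R$ representing a vertex, then I would verify directly that $\Lk(v)\setminus\{w\}$ and $\Lk(w)\setminus\{v\}$ both equal the set of vertices of $\C_A(S_g)$ with a representative in the other complementary region $R'$ of $A$: condition (3) kills any other vertex with representative in $R$, and a vertex disjoint from $R$ must either be $v$ or have representative in $R'$. Hence the transposition $v\leftrightarrow w$ extends to an automorphism. For the hole case, if $v$ is a non-annular vertex with filling $F$ strictly larger than any representative $R_v$, I would produce a vertex $v'\neq v$ with the same filling $F$ by applying a suitable mapping class supported in $F$ (e.g.\ a Dehn twist about an essential curve in $F$ crossing $\partial R_v$) and showing it moves $v$; then both $\Lk(v)$ and $\Lk(v')$ coincide with the set of vertices whose representatives lie in $S_g\setminus F$, because the holes are by definition empty of vertices. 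This gives the desired exchange.

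For the ``only if'' direction, suppose $\phi$ is an exchange automorphism swapping $v$ and $w$, so that $\Lk(v)\setminus\{w\} = \Lk(w)\setminus\{v\}$. I would split into three cases by annular/non-annular type. When both are non-annular, I would show $F_v = F_w$. Assume for contradiction they differ; then WLOG there is an essential subsurface (or simple closed curve) of $F_v$ that cannot be isotoped off of $F_w$. The strategy is to produce a vertex $u$ of $\C_A(S_g)$ that witnesses this asymmetry, i.e.\ lies in one link but not the other; this forces us to rule out the case $u=v$ or $u=w$. The existence of such a $u$ uses the $\MCG(S_g)$-equivariance of $A\subseteq\R(S_g)$ (if a particular isotopy class is a vertex, so is every mapping class translate) together with the no-isolated-vertices-or-edges hypothesis, which guarantees enough vertices of $\C_A(S_g)$ near the offending curve. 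When exactly one of $\{v,w\}$ is annular (say $v$ represented by $A$), I would first show $v$ and $w$ must be adjacent (non-adjacency forces $\Lk(v)=\Lk(w)$, which is too restrictive unless $A$ and $R_w$ coexist in the same complementary region, eventually yielding the cork structure anyway). Adjacency places $R_w$ in a complementary region $R$ of $A$; then condition (3) of the cork definition holds because any proper non-peripheral vertex-subsurface of $R$ would be adjacent to $v$ but not to $w$; and condition (2) (namely $R_w=R$) follows by an analogous argument showing that if $R_w\subsetneq R$, one could construct a vertex of $\C_A(S_g)$ in $R\setminus R_w$ or crossing $\partial R_w$ that breaks the link equality. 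Finally, the doubly-annular case is ruled out directly: disjoint annular vertices $v,w$ with exchangeable links would force every vertex to be simultaneously disjoint from both cores, which is impossible in a $\MCG(S_g)$-invariant complex without isolated vertices when $g\geq 3$.

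The main obstacle is the construction step in the ``only if'' direction: when the fillings disagree, or when the cork conditions fail, one must build an explicit vertex of $\C_A(S_g)$ witnessing the failure of the link equality. The difficulty is that $\C_A(S_g)$ is an arbitrary complex of regions; we are only given that its vertex set is an $\MCG(S_g)$-orbit union and that the complex has no isolated vertices or edges, so we cannot assume that any particular topological type is present. Overcoming this requires a careful case analysis: starting from one known vertex (either $v$ or $w$, or a neighbor supplied by the no-isolated-edges hypothesis), I would apply partial mapping classes supported in the relevant region to transport that vertex into the offending position, using $g\geq 3$ to ensure enough room for the transport. Once this construction is carried out uniformly, the remaining verifications are direct consequences of the link description in terms of fillings.
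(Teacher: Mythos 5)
Your ``if'' direction is essentially the paper's argument (compare the stars/links via the filling and the complementary region of the cork annulus) and is fine as far as it goes.

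The ``only if'' direction is where your plan diverges from the paper, and where it has a gap. You organize by the annular type of $v$ and $w$, whereas the paper splits first on whether $v$ and $w$ have disjoint representatives. That choice matters. Your Case 1 (both non-annular) sets out to prove $F_v = F_w$ by producing ``an essential subsurface of $F_v$ that cannot be isotoped off of $F_w$'' and then a witness vertex in one link but not the other. This presupposes that $F_v$ and $F_w$ essentially intersect, which need not happen: two exchangeable non-annular vertices can have disjoint, unequal fillings. Concretely, let $g$ be even and let $A \subseteq \R(S_g)$ consist of the orbit of a genus-$(g/2)$ region with one boundary component together with the orbit of the separating annulus of genus $g/2$. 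Take $v$ and $w$ to be represented by the two complementary genus-$(g/2)$ regions $P$ and $Q$ of $S_g$ with common boundary curve, and let $u$ be the separating annular vertex parallel to that boundary. One checks $\Lk(v) = \{u, w\}$ and $\Lk(w) = \{u, v\}$, so $v$ and $w$ are exchangeable and the ``no isolated vertices or edges'' hypothesis holds; yet $F_v = P$ and $F_w = Q$ are disjoint, so there is no crossing subsurface and no witness for your Case~1 argument to find. The paper avoids this by isolating the disjoint case first: the only configurations of disjoint $v$, $w$ that survive its witness arguments are cork-flavored (and this particular one is exactly where the paper's proof has to reckon with a cork pair involving $u$), while the equal-fillings conclusion is sought only when $v$ and $w$ genuinely intersect, where a complementary region of $P$ crossing $Q$ is actually available to work with. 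Reorganizing your only-if direction around disjointness before you argue about fillings is the fix.

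Two smaller points. In Case 2 your claim that non-adjacency forces $\Lk(v) = \Lk(w)$ and that this is ``too restrictive'' is not a proof on its own; you still need the explicit translate argument the paper uses (take a vertex in a complementary region of the annulus, translate it across $w$ while staying off $v$). In Case 3, the link condition does not give that every vertex is disjoint from both cores, only that every other vertex is disjoint from both or from neither; you again need a translate to produce a vertex meeting one core and missing the other. Neither point is fatal, but they are not ``direct consequences of the link description'' as you suggest -- they rely on precisely the $\MCG(S_g)$-translation mechanism you flagged as the main obstacle, and it is that mechanism, not the filling bookkeeping, that carries the whole ``only if'' direction.
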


\begin{proof}

The first statement follows from the second statement and the fact that if a vertex has a hole then there is another vertex (in fact infinitely many) with the same filling.  Thus, it suffices to prove the latter statement. 

Suppose first that $v$ and $w$ form a cork pair in $\C_A(S_g)$, and say that $v$ is the annular vertex in the pair.  Clearly any vertex connected by an edge to $w$ must also be connected to $v$.  If there were a vertex of $\C_A(S_g)$ that was connected to $v$ but not $w$, then it would be represented by a subsurface of a representative of $w$.  By the definition of a cork, no such vertex exists.  Thus, the stars of $v$ and $w$ in $\C_A(S_g)$ are equal and so the two vertices can be interchanged by an exchange automorphism.

Now suppose that $v$ is a vertex with a hole and that $v$ and $w$ have equal fillings.  Say that $R$ is a representative of the filling.  By the definition of a filling, any vertex of $\C_A(S_g)$ that is connected to $v$ by an edge must be represented in the complement of $R$.  But then this vertex is also connected to $w$ by an edge.  It then follows that $v$ and $w$ have equal links and can be interchanged by an exchange automorphism.

For the other direction of the theorem, we will show that two vertices of $\C_A(S_g)$ either have equal fillings, form a cork pair, or cannot be interchanged by an exchange automorphism.  Let $v$ and $w$ be two vertices of $\C_A(S_g)$ and say they are represented by regions $P$ and $Q$.  First we treat the case where $v$ and $w$ are connected by an edge, so $P$ and $Q$ are disjoint.   There are three subcases.  

The first subcase is where there is a component of the boundary of $P$ that is not parallel to the boundary of $Q$.  In this case there is a region $R$ of $S_g$ that contains $P$ as a proper, non-peripheral subsurface and is disjoint from $Q$.  From this it follows that there are $\MCG(S_g)$-translates of $v$ that are connected by an edge to $w$ but not to $v$.  Therefore $v$ and $w$ cannot be exchanged.

The second subcase is where $P$ and $Q$ are complementary regions in $S_g$.  If there is a vertex of $\C_A(S_g)$ corresponding to a proper, non-peripheral subsurface of either $P$ or $Q$ then this vertex would be connected by an edge to one of $v$ or $w$ but not the other, and we would have that $v$ and $w$ were not exchangeable.  So we may assume there is no such vertex.  It follows that $P$ and $Q$ are homeomorphic.  We may also assume that there is a vertex $u$ of $\C_A(S_g)$ corresponding to a component of the boundary of $P$, for otherwise $v$ and $w$ would span an isolated edge in $\C_A(S_g)$.  If $u$ is a nonseparating annular vertex then since there are no vertices of $\C_A(S_g)$ represented by proper, non-peripheral subsurfaces of $P$ or $Q$, it follows that $P$ and $Q$ are pairs of pants and so $g=2$, a contradiction.  If $u$ is a separating annular vertex, then $u$ and $v$ form a cork pair.  

The third and final subcase is where $P$ has connected boundary and $Q$ is an annulus parallel to the boundary of $P$.  If there is a vertex $u$ of $\C_A(S_g)$ represented by a proper, essential subsurface of $P$ then $v$ and $w$ cannot be exchanged ($u$ is connected by an edge to $w$ but not $v$).  If there is no such vertex $u$ then $v$ and $w$ form a cork pair.  

We now proceed to the case where $v$ and $w$ are not connected by an edge.  Here there are two subcases, according to whether or not one of the two vertices is annular.  Again let $P$ and $Q$ be representatives of $v$ and $w$; in this case, $P$ and $Q$ have essential intersection.

The first subcase is where $P$ is annular.  The complement of $P$ in $S_g$ consists of either one or two regions; the region $Q$ has essential intersection with each such region.  Since $v$ is not an isolated vertex of $\C_A(S_g)$, there is a vertex $u$ of $\C_A(S_g)$ represented in a region of $S_g$ complementary to $P$.  Some $\MCG(S_g)$-translate of $u$ is connected by an edge to $v$ but not $w$, and so $v$ and $w$ are not exchangeable.

The second and final subcase is where neither $P$ or $Q$ is annular.  Since $v$ and $w$ are distinct, there must be (after possibly renaming the vertices) a complementary region $R$ of $P$ that has essential intersection with $Q$.  If we assume that $v$ and $w$ are exchangeable, then $R$ must represent a hole for $P$ (otherwise we would find a vertex connected by an edge to $v$ but not $w$).  It follows that after filling $v$ and $w$, they do not intersect each other's complementary regions.  In other words, $v$ and $w$ have equal fillings.  This completes the proof.
\end{proof}

\subsection{Automorphism groups in the presence of exchange automorphisms} In this section we prove Theorem~\ref{ex thm} which describes the group of automorphisms of a general complex of regions, possibly with holes and corks.

We will require a sequence of lemmas about fillings (these lemmas will also be used in Section~\ref{section:normal}).  In what follows, the \emph{filling} of a complex of regions $\C_A(S_g)$ is the complex of regions $\C_{\bar A}(S_g)$ obtained by replacing each vertex of $\C_A(S_g)$ with its filling.

\begin{lemma}
\label{filling}
Let $\C_A(S_g)$ be a complex of regions. Then its filling $\C_{\bar A}(S_g)$ has no holes.
\end{lemma}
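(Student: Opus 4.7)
The plan is to argue by contradiction: suppose some vertex $\bar v$ of $\C_{\bar A}(S_g)$ has a hole, and produce a vertex of $\C_{\bar A}(S_g)$ represented by a subsurface of that hole.

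First, I would unpack the setup. Write $\bar v$ as the filling of a vertex $v$ of $\C_A(S_g)$ with representative $R$, so $\bar v$ is represented by $\bar R = R \cup Q_1 \cup \cdots \cup Q_n$ where $Q_1,\dots,Q_n$ are the holes of $v$. A hole of $\bar v$ is then a complementary region $Q'$ of $\bar R$ in $S_g$ containing no subsurface representative of a vertex of $\C_{\bar A}(S_g)$. The key observation at this step is that $Q'$ is also a complementary region of $R$, simply one that was \emph{not} a hole of $v$ (the holes of $v$ were absorbed into $\bar R$). Hence there exists a vertex $u$ of $\C_A(S_g)$ with some representative $U \subseteq Q'$, and I would then examine its filling $\bar U$, which represents a vertex $\bar u$ of $\C_{\bar A}(S_g)$.

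The crux is to show $\bar U \subseteq Q'$, which contradicts the assumption that $Q'$ is a hole of $\bar v$. For this, I would analyze the complementary regions of $U$ in $S_g$. Let $V_0$ be the one containing $R$ (well-defined and unique since $R$ is connected and disjoint from $U \subseteq Q'$); the remaining complementary regions $V_1,\dots,V_m$ I claim are each contained in $Q'$. This will follow because $V_0$ contains all of $S_g \setminus Q'$: both $\bar R$ and each complementary region of $\bar R$ other than $Q'$ is a connected subset of $S_g \setminus U$ meeting $V_0$ (either directly via $R \subseteq V_0$ or along a boundary component of $\bar R$), so connectedness forces them into the same complementary region of $U$ as $R$. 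Consequently each $V_i$ with $i \geq 1$ lies entirely in $Q'$. Since $V_0$ contains $R$, which represents a vertex of $\C_A(S_g)$, $V_0$ is not a hole of $u$; thus all holes of $u$ lie among $V_1,\dots,V_m$, and $\bar U = U \cup (\text{holes of } u) \subseteq Q'$. This produces the desired vertex of $\C_{\bar A}(S_g)$ in $Q'$, contradicting the hypothesis.

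The main obstacle is the topological step isolating $V_0$ as the unique complementary region of $U$ that leaves $Q'$; everything else is bookkeeping with the definitions of filling and hole. Once this is in hand the contradiction is immediate, so no further work is required beyond verifying that $\bar U$ is a legitimate region (connected, with essential boundary in $S_g$), which is clear because $\bar U$ is obtained from the region $U$ by attaching complementary regions along boundary components.
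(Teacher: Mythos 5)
Your proof is correct and takes essentially the same approach as the paper's. You phrase it as a contradiction, whereas the paper argues directly that each complementary region of a vertex of $\C_{\bar A}(S_g)$ supports a vertex of $\C_{\bar A}(S_g)$, but the core step is identical: since the complement of the hole $Q'$ is connected and disjoint from $U$, it lies in a single complementary region of $U$, which contains $R$ and hence is not a hole of $u$, so the filling $\bar U$ stays inside $Q'$. The only minor omission is the degenerate case where $U$ is annular (so $\bar U = U$), which the paper dispatches explicitly; your argument still covers it trivially.
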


\begin{proof}

Let $R$ be a non-annular region of $S_g$ representing a vertex of $\C_{\bar A}(S_g)$.  Since the vertices of $\C_{\bar A}(S_g)$ are fillings of vertices of $\C_A(S_g)$, it follows that there is a non-annular region $Q$ in $R$ that represents a vertex of $\C_A(S_g)$ and so that the $R$-vertex of $\C_{\bar A}(S_g)$  is the filling of the $Q$-vertex of $\C_A(S_g)$.  Denote the complementary regions of $Q$ in $S_g$ by $P_1,\dots,P_n$ and say that $P_{m+1},\dots,P_n$ are the complementary regions corresponding to holes (i.e. the regions that do not support any vertex of $\C_A(S_g)$).  Then $R$ is represented by the union of $Q$ with $P_{m+1} \cup \cdots \cup P_n$ and so the complementary regions to $R$ are $P_{1},\dots,P_n$.  By assumption each of these regions supports a vertex of $\C_A(S_g)$.   

We must show that each of $P_1,\dots,P_n$ supports a vertex of $\C_{\bar A}(S_g)$.  Let $Q_{1}$ be a region in $P_{1}$ representing a vertex of $\C_A(S_g)$.  If $Q_1$ is annular then the filling of the $Q_1$-vertex is itself and there is nothing to do.  So we may assume $Q_1$ is not annular.  Since the complement of $P_1$ is connected, there is a single complementary region of $Q_{1}$ containing the complement of $P_1$.  Since this complementary region contains $Q$ it cannot represent a hole for $Q_{1}$.  Thus the filling of the $Q_{1}$-vertex is contained in $P_1$ as desired.  The same argument applies to $P_2,\dots,P_m$, and we have finished showing that $\C_{\bar A}(S_g)$ has no holes. 
\end{proof}

\begin{lemma}
\label{small filling}
Let $\C_A(S_g)$ be a complex of regions with a small vertex. Then its filling $\C_{\bar A}(S_g)$ has a small vertex.
\end{lemma}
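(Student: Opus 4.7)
The plan is to show that the filling $v'$ of the given small vertex $v$ of $\C_A(S_g)$ is itself a small vertex of $\C_{\bar A}(S_g)$; that is, if $R$ represents $v$ and $\bar g(R) = k < g/3$, then the filling region $R'$ also satisfies $\bar g(R') < g/3$. Fix a subsurface $T \subseteq S_g$ with connected boundary, genus $k$, and $R \subseteq T$, and let $P_*$ denote the unique complementary region of $R$ in $S_g$ containing $T^c = S_g \setminus \operatorname{int}(T)$ (unique since $T^c$ is connected). The entire argument reduces to showing that $P_*$ is not a hole of $v$: granting this, every hole $Q_i$ of $v$ is a complementary region of $R$ distinct from $P_*$ and therefore lies in $T$, so $R' = R \cup Q_1 \cup \cdots \cup Q_n$ lies in $T$, giving $\bar g(R') \le k < g/3$.

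To show $P_*$ is not a hole, I plan to produce an $\MCG(S_g)$-translate of $v$ whose representative is contained in $T^c \subseteq P_*$, using the change-of-coordinates principle. Since $k < g/3$, the complement $T^c$ has genus $g - k > 2k$, which leaves room for a subsurface $T' \subseteq \operatorname{int}(T^c)$ of genus $k$ with one boundary component. A direct Euler characteristic computation then shows that $S_g \setminus \operatorname{int}(T')$ is connected, of genus $g - k$, with one boundary component, and hence is homeomorphic to $T^c$. Therefore the pairs $(S_g, T)$ and $(S_g, T')$ are homeomorphic, and the change-of-coordinates principle supplies some $\phi \in \Homeo(S_g)$ with $\phi(T) = T'$. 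Then $\phi(R) \subseteq T^c \subseteq P_*$, and $\phi(v)$ is a vertex of $\C_A(S_g)$ by $\MCG(S_g)$-invariance, witnessing that $P_*$ supports a vertex and is thus not a hole.

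The only delicate point is the Euler characteristic matchup for $S_g \setminus \operatorname{int}(T')$ versus $T^c$, which amounts to checking that cutting a once-bordered genus-$k$ subsurface out of $T^c$ and regluing $T$ produces a once-bordered genus-$(g-k)$ surface; this is straightforward. In the edge case where $v$ is annular, the lemma is immediate since by convention the filling of $v$ is $v$ itself, and the hypothesis already gives that $v$ is small.
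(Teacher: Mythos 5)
Your proof is correct and follows the same approach as the paper: fix a genus-$k$ once-bordered subsurface $T$ containing the representative of the small vertex, observe that $T^c$ lies in a single complementary region $P_*$ of that representative, show $P_*$ is not a hole by exhibiting a translate of $v$ inside $T^c$, and conclude that the filling stays inside $T$. The paper simply asserts the existence of the $\MCG(S_g)$-translate inside $T^c$, whereas you spell out the change-of-coordinates and Euler characteristic justification, which is a legitimate (and slightly more careful) elaboration of the same step.
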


\begin{proof}

Let $v$ be a small vertex of $\C_A(S_g)$.  Let $R$ be a subsurface of genus less than $g/3$ and with connected boundary that contains a representative of $v$.  Let $Q$ denote the region complementary to $R$.  Note that $Q$ lies in a single complementary region for (a representative of) $v$.  Also note that $Q$ does not lie in a hole for $v$ since there are $\MCG(S_g)$-translates of $v$ that are represented in $Q$.  Therefore the filling of $v$ is represented in $R$.  The lemma follows.
\end{proof}

We are ready now for the proof of Theorem~\ref{ex thm}.  In the proof we will refer to the set of vertices of $\C_A(S_g)$ with a given filling as an \emph{equal filling set}.  As mentioned, if an equal filling set has more than one element, then it has infinitely many.

\begin{proof}[Proof of Theorem~\ref{ex thm}]

Let $\C_{\bar A}(S_g)$ be the filling of $\C_A(S_g)$ and let $\C_{\bar A'}(S_g)$ be the complex of regions obtained from $\C_{\bar A}(S_g)$ by removing all corks.   

The proof is divided into two parts.  The first part is to show that $\C_{\bar A'}(S_g)$ satisfies the hypotheses of Theorem~\ref{main:complex} and hence has automorphism group $\MCG(S_g)$.  The second part to show that there is a split short exact sequence
\[ 
1 \to \Exchange \C_A(S_g) \to \Aut \C_A(S_g) \to \Aut \C_{\bar A'}(S_g) \to 1.
\]

For the first part of the proof there are four steps, namely, to show that $\C_{\bar A'}(S_g)$ is connected, that is has a small vertex, that it has no corks, and that it has no holes.  We treat these four steps in order.  

The first step is to show that $\C_{\bar A'}(S_g)$ is connected.  There are natural maps
\[
\C_A^{(0)}(S_g) \to  \C_{\bar A}^{(0)}(S_g) \to  \C_{\bar A'}^{(0)}(S_g)
\]
defined as follows.  Under the first map each vertex of $\C_A(S_g)$ is sent to its filling.  Under the second map, each vertex that is not a cork is sent to itself and each cork is assigned to have the same image as the vertex for which is it a cork.  

We would like to show that both maps extend to simplicial maps of $\C_A(S_g)$ and $\C_{\bar A}(S_g)$, respectively.  For the first map, assume that $v$ and $w$ are vertices of $\C_A(S_g)$ that are connected by an edge.   By the definition of a hole, neither $v$ nor $w$ can lie in a hole for the other, and it follows that each is disjoint from the other's filling.  Thus, the images of $v$ and $w$ are connected by an edge, and so the map indeed extends.

For the second map, let us assume that $v$ and $w$ are vertices of $\C_{\bar A}(S_g)$ that are connected by an edge.  The only nontrivial case is where $v$ is a cork.  In this case $w$, being disjoint from $v$, either is equal to or is connected by an edge to the vertex that forms a cork pair with $v$, as desired.  Since the image of a connected complex under a simplicial map is connected, we have thus proven that $\C_{\bar A'}(S_g)$ is connected. 

The second step is to show that $\C_{\bar A'}(S_g)$ has a small vertex.  By assumption $\C_A(S_g)$ has a small vertex, and hence by Lemma~\ref{small filling} the filling $\C_{\bar A}(S_g)$ has a small vertex; call it $v$.  If $v$ does not represent a cork in $\C_{\bar A}(S_g)$ then it survives in $\C_{\bar A'}(S_g)$ and is the desired small vertex.  If $v$ does represent a cork in $\C_{\bar A}(S_g)$, then there is a vertex $w$ of $\C_{\bar A}(S_g)$ that it forms a cork pair with.  This $w$ is represented in $R$ and is the desired small vertex of $\C_{\bar A'}(S_g)$.

The third step is to show that $\C_{\bar A'}(S_g)$ has no corks.  Suppose for contradiction that $\C_{\bar A'}(S_g)$ had a cork pair corresponding to the non-annular region $R$ and the annular region $Q$.  All regions representing vertices of $\C_{\bar A'}(S_g)$ also represent vertices of the intermediate complex $\C_{\bar A}(S_g)$; in particular $R$ and $Q$ do.  By the definition of a cork pair, there is no vertex of $\C_{\bar A'}(S_g)$ represented by a non-peripheral, proper subsurface of $R$.  But since $R$ and $Q$ do not represent a cork pair for $\C_{\bar A}(S_g)$ (otherwise the $Q$-vertex would have been removed), there must be a vertex of $\C_{\bar A}(S_g)$ represented by a non-peripheral, proper subsurface $P$ of $R$.  Since $P$ does not represent a vertex of $\C_{\bar A'}(S_g)$, it must represent a cork in $\C_{\bar A}(S_g)$.  But then there is a region $P'$ so that the $P$- and $P'$-vertices form a cork pair in $\C_{\bar A}(S_g)$.  The region $P'$ must be contained in $R$, for otherwise the annular region $Q$ would prevent $P$ from being a cork.  The region $P'$ is further proper and non-peripheral in $R$, contradicting our assumption about $R$.  This completes the proof that $\C_{\bar A'}(S_g)$ has no corks.

The fourth step is to show that $\C_{\bar A'}(S_g)$ has no holes.   Suppose that $R$ is a non-annular region of $S_g$ representing a vertex of $\C_{\bar A'}(S_g)$.  Then $R$ also represents a vertex of $\C_{\bar A}(S_g)$.  Let $Q$ be a region of $S_g$ complementary to $R$.  We would like to show that $Q$ does not represent a hole for the $R$-vertex of  $\C_{\bar A'}(S_g)$.  By Lemma~\ref{filling} the complex $\C_{\bar A}(S_g)$ has no holes. Thus there is a region $P$ in $Q$ representing a vertex of $\C_{\bar A}(S_g)$.  If $P$ does not represent a cork for $\C_{\bar A}(S_g)$ then $P$ also represents a vertex of $\C_{\bar A'}(S_g)$ and we are done.  Now suppose that $P$ does represent a cork for $\C_{\bar A}(S_g)$.  If the $R$-vertex of $\C_{\bar A}(S_g)$ forms a cork pair with the $P$-vertex, then since $\C_{\bar A'}(S_g)$ has a small vertex the genus of $R$ is less than $g/3$.  As such, the complementary region $Q$ contains a $\MCG(S_g)$-translate of the $R$-vertex of $\C_{\bar A'}(S_g)$ and again we are done.  Finally, we are in the situation where $P$ represents a cork for $\C_{\bar A}(S_g)$ but $R$ does not represent the other vertex in the cork pair; say that $P'$ represents the other vertex of the cork pair.  By the definition of a cork, $R$ is not contained in $P'$.  The annulus $P$ only has two complementary regions, one of which is $P'$.  So it must be that $P'$ is the complementary region to $P$ not containing $R$.  It follows that $P'$ is contained in $Q$.  Hence $Q$ does not represent a hole for the $R$-vertex of $\C_{\bar A'}(S_g)$ and so $\C_{\bar A'}(S_g)$ has no holes. 

We have shown that $\C_{\bar A'}(S_g)$ is connected, has a small vertex, and has no holes or corks.  By Theorem~\ref{main:complex} we have
\[
\Aut \C_{\bar A'}(S_g) \cong \MCG(S_g).
\]

We now proceed to the second part of the proof, which is to show that there is a short exact sequence
\[ 
1 \to \Exchange \C_A(S_g) \to \Aut \C_A(S_g) \to \Aut \C_{\bar A'}(S_g) \to 1.
\]
We treat three steps in turn, namely, to show that there is a well-defined map $\Aut \C_A(S_g) \to \Aut \C_{\bar A'}(S_g)$, to show that this map has a right inverse, and then to show that the kernel is $\Exchange \C_A(S_g)$.

We begin with the first step, which is to show that there is a well-defined map $\Aut \C_A(S_g) \to \Aut \C_{\bar A'}(S_g)$.  Above we described simplicial maps 
\[
\C_A(S_g) \to  \C_{\bar A}(S_g) \to  \C_{\bar A'}(S_g).
\]
We would like to show that these simplicial maps induce well-defined maps
\[ 
\Aut \C_A(S_g) \to \Aut \C_{\bar A}(S_g) \to \Aut \C_{\bar A'}(S_g).
\]
For the first map we need to show that the image of an equal filling set under an automorphism of $\C_A(S_g)$ is another equal filling set.  But this is true by Theorem~\ref{theorem:suff rich}, which implies that a collection of vertices of $\C_A(S_g)$ is an equal filling set if and only if it is either a singleton or an infinite set on which $\Exchange \C_A(S_g)$ acts transitively.

For the second map we need to show that $\Aut \C_{\bar A}(S_g)$ preserves the set of cork pairs.  But again by Theorem~\ref{theorem:suff rich} a collection of vertices of $\C_{\bar A}(S_g)$ is a cork pair if and only if it is a set with two elements upon which $\Exchange \C_{\bar A}(S_g)$ acts transitively.

We will now show that the map $\Aut \C_A(S_g) \to \Aut \C_{\bar A'}(S_g)$ has a right inverse.  We already showed in the first part of the proof that the map $\MCG(S_g) \to \Aut \C_{\bar A'}(S_g)$ is an isomorphism and so the natural homomoprhism $\MCG(S_g) \to \Aut \C_A(S_g)$ is a candidate for a right inverse.  Because the processes of filling vertices and removing corks both commute with the action of $\MCG(S_g)$, this indeed gives the desired right inverse.

To complete the second part of the proof---and hence the theorem---it remains to show that the kernel of the composition $\Aut \C_A(S_g) \to \Aut \C_{\bar A'}(S_g)$ is indeed the group $\Exchange \C_A(S_g)$.  The kernel of the first map $\Aut \C_A(S_g) \to \Aut \C_{\bar A}(S_g)$ is clearly the subgroup of $\Exchange \C_A(S_g)$ generated by the permutations of equal filling sets.  Since $\C_{\bar A}(S_g)$ has no holes the kernel of the second map $\Aut \C_{\bar A}(S_g) \to \Aut \C_{\bar A'}(S_g)$ is equal to the subgroup of  $\Exchange \C_{\bar A}(S_g)$ generated by cork pair swaps and by Theorem~\ref{theorem:suff rich} this is all of $\Exchange \C_{\bar A}(S_g)$.

The statement that the kernel of $\Aut \C_A(S_g) \to \Aut \C_{\bar A'}(S_g)$ is $\Exchange \C_A(S_g)$ now amounts to the following statement: if $v$ and $w$ are vertices of $\C_A(S_g)$ whose images $\bar v$ and $\bar w$ in $C_{\bar A}(S_g)$ form a cork pair---say that $\bar v$ is the cork---then either $v$ and $w$ form a cork pair or they are not exchangeable.  Since $\bar v$ is annular, it follows that $v$ is represented by the same annulus.  Also, the filling of $w$ corresponds to $\bar w$.  If $w$ and $\bar w$  are represented by the same region, then $v$ and $w$ form a cork.  If not, then $w$ has a nontrivial filling, and there are infinitely many other vertices of $\C_A(S_g)$ with the same filling.  Each of these vertices is connected to $v$ by an edge but not to $w$.  It follows that $v$ and $w$ are not exchangeable.  This completes the proof.
\end{proof}


\section{Complexes of separating curves}
\label{sec:sep}

In this section we prove Theorem~\ref{theorem:sep k}. The ideas in this section build on our earlier work.  Indeed, the main tool in this section is a certain configuration of curves called a sharing pair, which was introduced in our previous paper \cite{kg}.  Our approach here puts our earlier work into a more conceptual framework, allowing for the more general argument.

As discussed in the introduction, Theorem~\ref{theorem:sep k} is proven by induction with the base case $k=1$ (again the base case was proved in our earlier work \cite{kg,kgadd}).  The main point of the inductive step is to show that there is a map
\begin{align*}
\Aut \C_k(S_g) &\to \Aut \C_{k-1}(S_g) \\
\phi &\mapsto \hat \phi
\end{align*}
so that each element $\phi$ of $\Aut \C_k(S_g)$ is the restriction to $\C_k(S_g)$ of its image $\hat \phi$ (we regard $\C_k(S_g)$ as a subcomplex of $\C_{k-1}(S_g)$).  Given an automorphism $\phi$ of $\C_k(S_g)$, we thus need to specify the action of $\hat \phi$ on the vertices of $\C_{k-1}(S_g)$ of genus $k-1$.

\begin{figure}
\begin{minipage}{5in}
  \centering
  $\vcenter{\hbox{
  \labellist
\small\hair 2pt
 \pinlabel {$k-1$} [ ] at 110 30
 \endlabellist  
  \includegraphics[scale=.35]{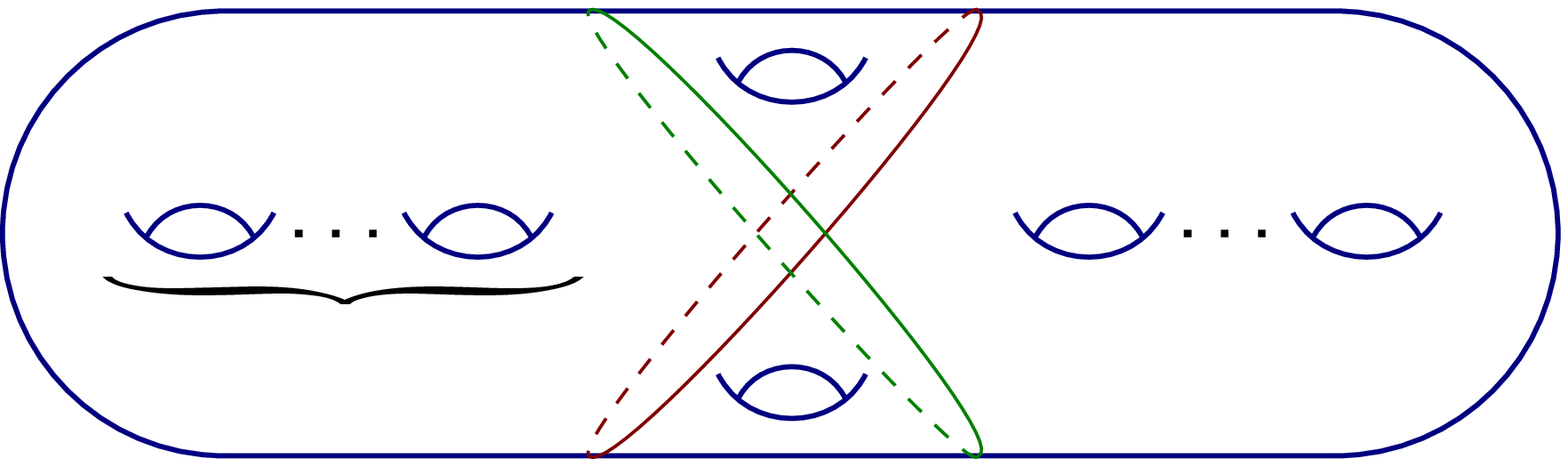}
  
  } 
  }$
  \hspace*{.2in}
  $\vcenter{\hbox{\includegraphics[scale=.35]{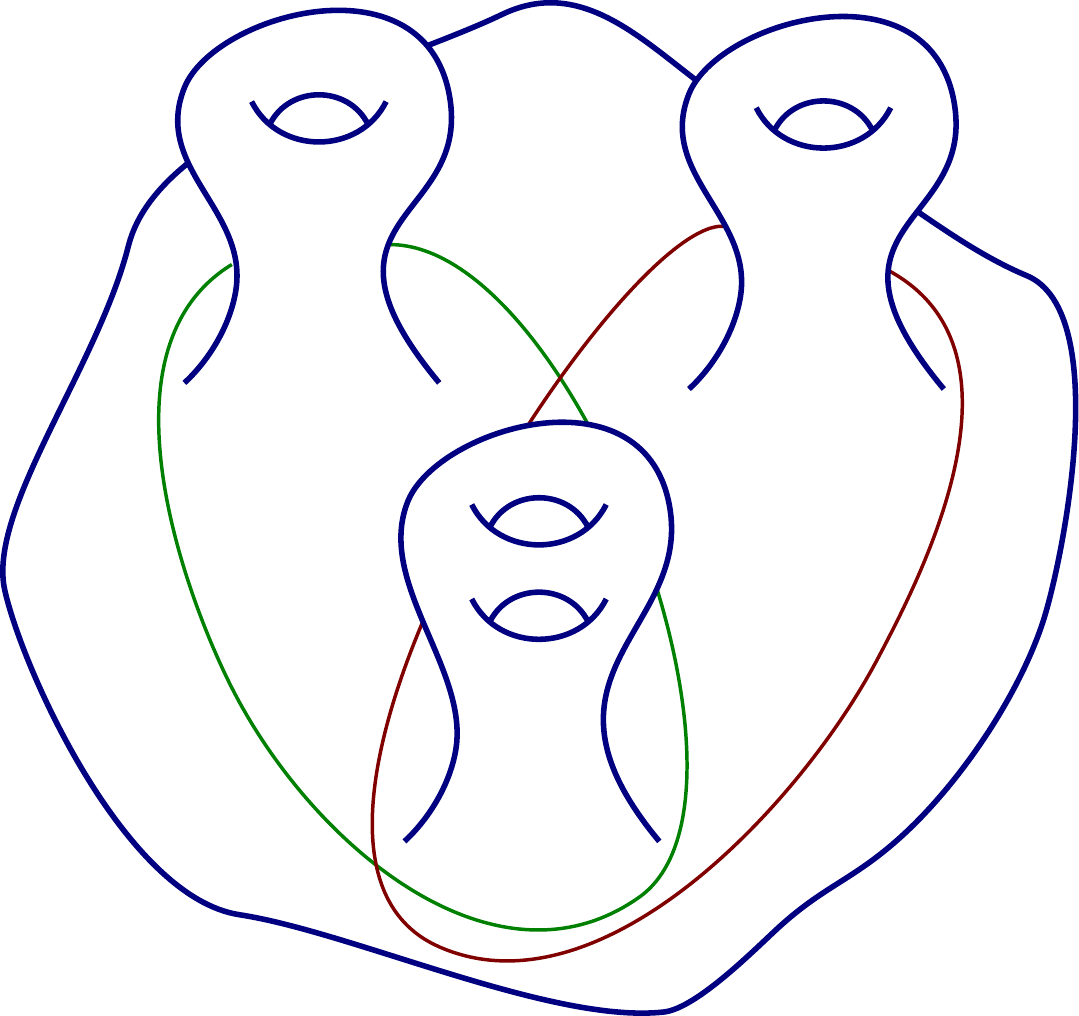}}}$
\end{minipage}
\caption{\emph{Left:} A sharing pair of genus $k$.  \emph{Right:} Another view of a sharing pair of genus three}
\label{fig:sp}
\end{figure}

To this end, we define a \emph{sharing pair of genus $k$} as a pair of vertices of $\C_k(S_g)$ that both have genus $k$ and are configured as in Figure~\ref{fig:sp}.  Specifically, if we choose representative curves in minimal position, then the two curves intersect in two points and  cut the surface into four regions, each with one boundary component, and the genera of the regions are $g-k-1$, $k-1$, 1, and 1.  The key point is that (as long as $g > 2k$) a sharing pair specifies a unique separating curve of genus $k-1$, namely, the boundary of the region of genus $k-1$ defined by the sharing pair.  

The main steps are to show for any automorphism $\phi$ of $\C_k(S_g)$ that
\begin{enumerate}
\item if two vertices of $\C_k(S_g)$ form a sharing pair their $\phi$-images do, and
\item if two sharing pairs in $\C_k(S_g)$ specify the same vertex of $\C_{k-1}(S_g)$ then their $\phi$-images do.
\end{enumerate}
Given these properties, we can define an automorphism $\hat \phi$ of $\C_{k-1}(S_g)$, whereby the action of $\hat \phi$ on a vertex of genus $k-1$ is dictated by the action of $\phi$ on sharing pairs.  

Here is the outline for the section.  First we begin with two preliminaries, dealing with basic separation properties (Lemma~\ref{lemma:joins}) and with subsurface projections (Lemma~\ref{lemma:vertex}).  Then we prove the two properties above (Lemmas~\ref{lemma:sharing} and Lemma~\ref{lemma:sp graph connected}) before finally finishing the proof.

\p{Separation properties.} We begin with some basic characterizations.  If $z$ is a vertex of $\C_k(S_g)$ of genus $h$, then any representative of $z$ divides $S_g$ into two regions, one of genus $h$ and one of genus $g-h$.  Both regions are well defined up to isotopy and refer to them as the \emph{sides} of $z$.  If $a$ is any other vertex connected to $z$ by an edge, then $a$ lies on one side of $z$; this side is well-defined.  

The \emph{join} of two simplicial complexes $X$ and $Y$ is the simplicial complex $X \ast Y$ whose simplices are the disjoint unions of the simplices in $X$ and $Y$. 

\begin{lemma}
\label{lemma:joins}
Let $k \geq 1$.  Let $\phi \in \Aut \C_{k}(S_g)$, let $z$ be a vertex of $\C_k(S_g)$, and let $a$ and $b$ be a vertices connected by edges to $z$.  Then
\begin{enumerate}
\item $z$ and $\phi(z)$ have the same genus, 
\item\label{small side} the sides of $z$ and $\phi(z)$ corresponding to $a$ and $\phi(a)$ have the same genus, and
\item $a$ and $b$ lie on the same side of $z$ if and only if $\phi(a)$ and $\phi(b)$ lie on the same side of $\phi(z)$.
\end{enumerate}
\end{lemma}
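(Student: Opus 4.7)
The strategy is to recover the two sides of $z$ intrinsically from the link $\Lk(z) \subset \C_k(S_g)$. For each side $i$ of $z$, let $X_i$ denote the subcomplex of $\Lk(z)$ spanned by vertices whose representatives lie on side $i$. Since curves on opposite sides of $z$ are disjoint, we obtain a join decomposition $\Lk(z) = X_1 \ast X_2$. The plan is to recover the partition $\{X_1, X_2\}$ combinatorially, from which all three statements of the lemma will follow.

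The key device is the intersection graph $\Gamma_z$ on $V(\Lk(z))$, where $a$ and $b$ are joined by an edge if they are non-adjacent in $\C_k(S_g)$, i.e., if their representatives intersect. No $\Gamma_z$-edge crosses sides, since every vertex of $X_1$ is adjacent in $\C_k(S_g)$ to every vertex of $X_2$. The main claim is that \emph{each nonempty $X_i$ is connected as a subgraph of $\Gamma_z$}. Granting this, the nonempty $X_i$ are exactly the connected components of $\Gamma_z$; since $\phi$ preserves adjacency and non-adjacency in $\C_k(S_g)$, it permutes the components of $\Gamma_z$, and statement (3) follows.

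To establish the connectedness claim, consider distinct $a, b \in X_i$ represented by separating curves in side $i$, a subsurface $\Sigma_i$ of genus $h_i$ with one boundary component. If $a$ and $b$ intersect then they are already $\Gamma_z$-adjacent. Otherwise I produce a curve $c \in X_i$ intersecting both via a surgery argument inside $\Sigma_i$: an appropriate band-sum of $a$ and $b$ along a well-chosen arc yields a separating curve whose inner complementary component can be arranged to have genus at least $k$. The ambient hypothesis $g \geq 3k+1$ in force in Theorem~\ref{theorem:sep k} forces $h_i \geq k+1$ whenever $X_i$ is nonempty and provides sufficient room in $\Sigma_i$ to carry this out, ensuring $c \in X_i$. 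This surgery step, particularly in degenerate configurations such as when $a$ and $b$ are nested or cobound an annulus in $\Sigma_i$, is the main obstacle of the proof.

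Statements (\ref{small side}) and (1) follow from (3). For (\ref{small side}), $\phi$ maps each nonempty $X_i$ isomorphically onto one of the two join factors of $\Lk(\phi(z))$, and the genus $h_i$ of side $i$ is encoded as a combinatorial invariant of $X_i$: for instance, the maximum dimension of a simplex in $X_i$ is a function of $h_i, k, g$ that separates different values of $h_i$ for fixed $k$ and $g$. Hence $\phi$ sends the side of $z$ of genus $h_i$ to a side of $\phi(z)$ of the same genus. Statement (1) is then immediate since the genus of $z$ equals $\min(h_1, h_2)$, which is preserved by $\phi$.
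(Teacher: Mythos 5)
Your argument takes essentially the same route as the paper's: the paper also recovers the two sides of $z$ as the unique nontrivial join decomposition of $\Lk(z)$, which is equivalent to your description of the sides as the connected components of the non-adjacency graph $\Gamma_z$, and the paper likewise reads off the genera from the dimensions of the two join factors. The key fact that you isolate --- that any two disjoint vertices on one side of $z$ admit a common $\Gamma_z$-neighbor in $\Lk(z)$ --- is the same fact the paper invokes (stated there as: any two vertices on the same side of $z$ have a common non-neighbor in the link), and the paper, like you, does not spell out the surgery construction. One small inaccuracy: the implication ``$X_i$ nonempty $\Rightarrow h_i \geq k+1$'' is automatic and has nothing to do with $g\geq 3k+1$; a genus-$\geq k$ separating curve that is non-peripheral in a one-boundary subsurface of genus $h_i$ forces $h_i\geq k+1$ directly. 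The role of the $g$-bound, to the extent the argument needs one, is elsewhere.
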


\begin{proof}

The link of $z$ in $\C_k(S_g)$ is the join of two subcomplexes, namely, the subcomplexes corresponding to the two sides of $z$.  Moreover, if we write the link of $z$ as the join of two subcomplexes, then those two subcomplexes must be the ones corresponding to the two sides.  This is a consequence of the fact that if $v$ and $w$ are two vertices of $\C_{k}(S_g)$ lying on the same side of $z$ then there is another vertex $x$ in the link of $z$ that is not connected by an edge to either $v$ or $w$ in $\C_{k}(S_g)$.

Say that the genus of $z$ is $h$.  The three statements of the lemma follow from the previous paragraph and the fact that the subcomplexes corresponding to the two sides of $z$ have dimensions $h-k-1$ and $g-h+k+1$.  
\end{proof}

\p{Projections} Before showing that automorphisms preserve sharing pairs, we need to introduce one further tool: subsurface projection maps.  This idea was introduced by Masur and Minsky in their work on the geometry of the complex of curves \cite{masurminsky}. 

Let $z$ be a vertex of $\C_k(S_g)$ whose genus is strictly less than $g/2$.  Let $R$ be a region of $S_g$ whose boundary represents $z$ and whose genus is equal to that of $z$.  Let $v$ be a vertex of $\C_k(S_g)$ that is not in the star of $z$ (that is, $v$ intersects $z$).  The projection $\pi_z(v)$ is a collection of homotopy classes of disjoint arcs in $R$ defined as follows: we choose a representative of $v$ that lies in minimal position with $\partial R$, take the intersection of this representative with $R$, and identify parallel arcs to a single arc.  

The projection $\pi_z(v)$ is a well-defined collection of homotopy classes of arcs in $R$.  We say that $\pi_z(v)$ is a \emph{nonseparating arc} if a representative has a single component and is nonseparating.  Next, we say that $\pi_z(v)$ and $\pi_z(w)$ are \emph{unlinked} if they have representatives that are disjoint and whose endpoints on $\partial R$ alternate.  Finally we say that $\pi_z(v)$ and $\pi_z(w)$ form a \emph{handle pair} if they have representatives that are distinct nonseparating arcs and so that the subsurface of $R$ filled by these projections is a surface of genus one with two boundary components.

\begin{lemma}
\label{lemma:vertex}
Let $k \geq 2$.  Let $z$ be a vertex of $\C_k(S_g)$ of genus $h$ where $k < h < g/2$.  Let $u$, $v$, and $w$ be vertices of $\C_k(S_g)$ that are not in the star of $z$, and suppose that $u$ is connected to $v$ by an edge.
\begin{enumerate}
\item If $\pi_z(v)$ is a nonseparating arc then $\pi_{\phi(z)}(\phi(v))$ is as well; 
\item if $\pi_z(v)$ and $\pi_z(w)$ are distinct nonseparating arcs then $\pi_{\phi(z)}(\phi(v))$ and $\pi_{\phi(z)}(\phi(w))$ are distinct nonseparating arcs;
\item if $\pi_z(v)$ and $\pi_z(w)$ form a handle pair then $\pi_{\phi(z)}(\phi(v))$ and $\pi_{\phi(z)}(\phi(w))$ form a handle pair.
\item if $\pi_z(u)$ and $\pi_z(v)$ are unlinked nonseparating arcs then $\pi_{\phi(z)}(\phi(u))$ and $\pi_{\phi(z)}(\phi(v))$ are unlinked nonseparating arcs; 
\end{enumerate}
\end{lemma}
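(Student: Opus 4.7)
The plan is to provide, for each of the four conditions in the statement, a purely simplicial characterization in terms of the complex $\C_k(S_g)$ together with the side/genus data preserved by Lemma~\ref{lemma:joins}. Once such a characterization is in hand, each conclusion is automatic because any automorphism of $\C_k(S_g)$ respects these data. In particular, $\phi$ sends genus-$k$ vertices on the $R$-side of $z$ bijectively to genus-$k$ vertices on the corresponding side of $\phi(z)$ and preserves adjacency among them. The hypothesis $k < h < g/2$ is used precisely to guarantee that both sides of $z$ contain enough genus-$k$ separating curves to witness the properties below.

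For part (1), I would characterize ``$\pi_z(v)$ is a nonseparating arc'' by the existence of a specific configuration of auxiliary genus-$k$ vertices. Concretely, $\pi_z(v)$ is a single nonseparating arc in $R$ if and only if $v$ is not in the star of $z$ and there exists a genus-$k$ vertex $y$ on the $R$-side of $z$, disjoint from $v$, which together with $v$ cobounds a subsurface whose topology is detected by the presence of further genus-$k$ vertices disjoint from $y \cup v \cup z$. If the projection were separating or had several non-parallel components, no such witnessing $y$ could satisfy the required combinatorial neighborhood condition. Part (2) then follows by applying (1) to each of $v$ and $w$ and noting that the two arcs are distinct if and only if there exists a genus-$k$ vertex on the $R$-side of $z$ adjacent to one of $v,w$ but not to the other, a property which is evidently preserved by $\phi$.

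Parts (3) and (4) proceed in the same spirit. A handle pair $\{\pi_z(v),\pi_z(w)\}$ is characterized by the existence of a genus-$(h-1)$ vertex on the $R^c$-side of $z$ whose intersection pattern with $v$ and $w$ records that $\pi_z(v) \cup \pi_z(w)$ fills a one-holed torus in $R$; equivalently, one can find genus-$k$ vertices in the link of such an auxiliary vertex that jointly bound off the required handle. For (4), given that $u$ and $v$ are already adjacent in $\C_k(S_g)$, unlinkedness of $\pi_z(u)$ and $\pi_z(v)$ is characterized by the fact that their disjoint representatives admit simultaneously disjoint projections, a condition expressed by the existence of a suitable pair of disjoint genus-$k$ vertices on the $R$-side of $z$ lying inside the complement of $u \cup v$ in the correct way. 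The main obstacle throughout is part (1): one must translate the purely topological fact that cutting $R$ along an arc leaves a connected subsurface into a finite combinatorial condition on vertices of $\C_k(S_g)$. Once (1) is established, parts (2)--(4) are refinements handled by the same auxiliary-vertex bookkeeping, and preservation under $\phi$ follows formally from Lemma~\ref{lemma:joins}.
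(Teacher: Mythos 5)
Your proposal correctly identifies the strategy the paper uses (reduce each projection condition to a combinatorial statement about $\C_k(S_g)$ that is visibly preserved by $\phi$, using Lemma~\ref{lemma:joins} to transport side and genus data), but the characterizations you offer are left at the level of a plan rather than actually nailed down, and in a couple of places the details go astray.

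The crux, which you yourself flag, is part (1). The paper's criterion is crisp and crucially quantitative: $\pi_z(v)$ is a nonseparating arc if and only if there is \emph{more than one} vertex of genus $h-1$ lying on the genus-$h$ side of $z$ and connected by an edge to $v$. The proof that this works hinges on a case analysis: cutting $R$ along a nonseparating arc gives a genus-$(h-1)$ surface with two boundary components (so infinitely many such vertices), while if the projection is a separating arc, or consists of more than one arc, one shows there is at most one such vertex. You instead gesture at genus-$k$ vertices ``cobounding'' a subsurface with $v$ whose ``topology is detected by'' further genus-$k$ vertices, but you never say what the combinatorial condition actually is, and it is not clear genus-$k$ vertices can even do the job: if $\pi_z(v)$ is a separating arc cutting $R$ into two pieces each of genus at least $k$, there will be infinitely many genus-$k$ vertices on the $R$-side disjoint from $v$, so a naive existence or counting criterion at genus $k$ does not distinguish the cases. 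Your part (2) inherits this vagueness (the paper keeps working at genus $h-1$, where the projection is genuinely encoded by the set of such vertices disjoint from $v$; your existence-of-a-distinguishing-genus-$k$-vertex criterion is neither stated precisely nor proved).

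There is also a concrete slip in part (3): you place the auxiliary genus-$(h-1)$ vertex on the $R^c$-side of $z$, but a vertex supported in the complement of $R$ is automatically disjoint from both projections (which live in $R$) and so detects nothing. The paper's characterization puts the genus-$(h-1)$ vertex on the genus-$h$ side of $z$, disjoint from both $v$ and $w$, which is exactly what records that the two arcs span a one-holed torus. Finally, for part (4) you attempt a direct characterization of unlinkedness, whereas the paper observes that for disjoint nonseparating projections, ``linked'' is equivalent to ``forming a handle pair,'' so part (4) follows immediately from parts (1) and (3) with no further work. In short: same strategy as the paper, but the decisive combinatorial criterion for part (1) is missing, part (3) has the wrong side, and part (4) overlooks the reduction that makes it trivial.
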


\begin{proof}

We fix a region $R$ of genus $h$ whose boundary represents $z$.  For the first statement, we claim that $\pi_z(v)$ is a nonseparating arc if and only if $\C_k(S_g)$ has more than one vertex of genus $h-1$ that lies on the genus $h$ side of $z$ and is connected by an edge to $v$.  The first statement will follow from the claim and Lemma~\ref{lemma:joins}.  

For the forward direction, assume that $\pi_z(v)$ is a single nonseparating arc.  If we cut $R$ along a representative of the arc $\pi_z(v)$ the resulting surface has genus $h-1$ and two boundary components.  There are infinitely many isotopy classes of simple closed curves in the cut surface that separate the cut surface into a pair of pants and a surface of genus $h-1$ with one boundary component.  Each of these corresponds to a vertex of genus $h-1$ in $\C_k(S_g)$ that is connected by an edge to $v$.

For the other direction, there are two cases: either $\pi_z(v)$ contains the homotopy class of a separating arc or $\pi_z(v)$ contains more than one homotopy class of nonseparating arcs.  In the first case, if we divide a representative of $R$ along such a separating arc, we obtain two surfaces of genus $h_1$ and $h_2$ with $h_2 > h_1 > 0$ and $h_1+h_2=h$.  In particular, $h_i \leq h-1$.  Any vertex of $\C_k(S_g)$ that has a representative in $R$ and is connected to $v$ by an edge must lie in one of these subsurfaces. If $h_1 =1$ then there is a unique such vertex; otherwise, there is no such vertex.  For the second case, we note that if we cut $R$ along two disjoint nonseparating arcs we either obtain a surface of genus $h-1$ with a single boundary component,  we obtain a surface of genus less than $h-1$, or we obtain two surfaces, each with two boundary components and genus less than $h-1$.  So either there is a single vertex of genus $h-1$ as in the claim or there are none.

The second statement follows from Lemma~\ref{lemma:joins} and the first statement, since $\pi_z(v)$ and $\pi_z(w)$ are determined by the vertices of genus $h-1$ that lie on the genus $h$ side of $z$ and are disjoint from $v_1$ and $v_2$, respectively.

We now proceed to the third statement.  Two distinct, nonseparating projections $\pi_z(v)$ and $\pi_z(w)$ form a handle pair if and only if there exists a vertex of genus $h-1$ in $\C_k(S_g)$ that lies on the genus $h$ side of $z$ and is connected by edges to both $v$ and $w$ in $\C_k(S_g)$.  The third statement then follows from Lemma~\ref{lemma:joins}.

Finally we prove the fourth statement.  Since $u$ and $v$ are connected by an edge, their projections are disjoint.  Also, by the first statement we know that the projections are nonseparating if and only if their images are.  It remains to characterize linking and unlinking for disjoint nonseparating projections.  But disjoint nonseparating projections $\pi_z(u)$ and $\pi_z(v)$ are linked if and only if they form a handle pair, and so an application of the third statement completes the proof.
\end{proof}

\p{Sharing pairs} We now show that automorphisms of $\C_k(S_g)$ preserve sharing pairs. As discussed at the start of this section, this is the main tool used to extend an automorphism $\phi$ of $\C_k(S_g)$ to an automorphism $\hat \phi$ of $\C_{k-1}(S_g)$.

\begin{lemma}
\label{lemma:sharing}
Let $k \geq 2$, let $g \geq 3k+1$, and let $a$ and $b$ be two vertices of $\C_k(S_g)$ that form a sharing pair of genus $k$.  If $\phi$ is an automorphism of $\C_k(S_g)$, then $\phi(a)$ and $\phi(b)$ form a sharing pair of genus $k$.
\end{lemma}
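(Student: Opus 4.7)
The approach is to characterize sharing pairs of genus $k$ in $\C_k(S_g)$ by a property that is preserved by $\phi$. By Lemma~\ref{lemma:joins}, $\phi(a)$ and $\phi(b)$ are vertices of $\C_k(S_g)$ of genus $k$, and since $a$ and $b$ are not joined by an edge, neither are $\phi(a)$ and $\phi(b)$. Thus $\phi(a)$ and $\phi(b)$ are intersecting separating curves of genus $k$, and the remaining task is to show that their intersection pattern is that of a sharing pair.

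I first associate to $\{a, b\}$ a canonical genus-$(k+1)$ enclosing vertex. The complementary region of $a \cup b$ of genus $g-k-1$ has connected boundary, and this boundary $c_0$ is a separating curve of genus $k+1$ in $S_g$ (using $g-k-1 \geq k+1$, which follows from $g \geq 3k+1$). Hence $c_0$ is a vertex of $\C_k(S_g)$ adjacent to both $a$ and $b$, with both curves on its genus-$(k+1)$ side. By Lemma~\ref{lemma:joins}, $\phi(c_0)$ has genus $k+1$, is adjacent to $\phi(a)$ and $\phi(b)$, and has them on its genus-$(k+1)$ side. So $\phi(a)$ and $\phi(b)$ are intersecting separating curves of genus $k$ contained in a subsurface $N'$ of $S_g$ of genus $k+1$ with connected boundary.

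To rule out alternative intersection patterns inside $N'$, I will select an auxiliary vertex $z$ of $\C_k(S_g)$ of genus $h$ with $k < h < g/2$ that intersects both $a$ and $b$, chosen so that $\pi_z(a)$ and $\pi_z(b)$ form a handle pair in the sense of Lemma~\ref{lemma:vertex}. The natural choice is a separating curve $z$ whose genus-$h$ subsurface consists of the two handles of the sharing pair together with enough adjacent region to reach genus $h > k$; then $a$ and $b$ each contribute a single nonseparating arc in that subsurface, and these two arcs together fill a torus with two holes. By Lemma~\ref{lemma:vertex}, $\pi_{\phi(z)}(\phi(a))$ and $\pi_{\phi(z)}(\phi(b))$ also form a handle pair. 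Combined with the constraint from $\phi(c_0)$, this forces $\phi(a)$ and $\phi(b)$ to intersect in exactly two points, with complementary region genera $g-k-1$, $k-1$, $1$, $1$; that is, $\{\phi(a), \phi(b)\}$ is a sharing pair of genus $k$.

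The principal obstacle is the construction of the auxiliary vertex $z$ of sufficiently large genus and the verification that the handle-pair projection data, together with the genus-$(k+1)$ enclosing vertex, suffice to distinguish sharing pairs from all other intersecting configurations of two genus-$k$ separating curves contained in a genus-$(k+1)$ subsurface. The guiding intuition is that a sharing pair has the minimal possible filling genus ($k+1$) among such configurations, and the handle pair combinatorially witnesses the two genus-one handles that are the defining feature of the sharing pair.
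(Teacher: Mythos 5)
Your high-level strategy---characterize sharing pairs by a combinatorial condition built from Lemmas~\ref{lemma:joins} and~\ref{lemma:vertex}, and then verify that condition is preserved by $\phi$---is exactly the paper's, and your first two paragraphs (using a genus-$(k+1)$ curve $c_0$ to confine $\phi(a)$ and $\phi(b)$ to a genus-$(k+1)$ subsurface) are fine. The gap is in the third paragraph, which you yourself flag as an ``obstacle.'' The most concrete problem: if $a$ is a separating curve meeting a separating curve $z$ in exactly two points, then the single arc of $a$ inside the genus-$h$ side $R$ of $z$ is necessarily a \emph{separating} arc of $R$ (any path in $R$ between the two sides of $a$ must cross it, and $\partial R = z$ reaches both sides of $a$), so $\pi_z(a)$ cannot be a nonseparating arc, hence cannot be half of a handle pair. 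To make $\pi_z(a)$ nonseparating you would need $a$ to meet $z$ in at least four points with parallel arcs, and you would then still need to check that the arcs of $a$ and $b$---which intersect each other---fill only a genus-one, two-boundary subsurface of $R$. None of this is carried out. Even granting the existence of such a $z$, the converse is left entirely open: you do not show that every pair of genus-$k$ separating curves in a genus-$(k+1)$ subsurface admitting such a $z$ is actually a sharing pair, which is the real content of the lemma.

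The paper avoids these issues by \emph{not} projecting $a$ and $b$ to anything. Its $z$ is your $c_0$ (genus $k+1$, with $a$ and $b$ in its link), and instead of $\pi_z(a),\pi_z(b)$ it introduces four auxiliary vertices $x_1,x_2,y_1,y_2$, each meeting $z$, with the $x_i$ disjoint from $a$ (but not $b$) and the $y_i$ disjoint from $b$ (but not $a$). The conditions are that $\pi_z(x_1),\pi_z(x_2)$ form a handle pair, $\pi_z(y_1),\pi_z(y_2)$ form a handle pair, and each $\pi_z(x_i)$ is unlinked from each $\pi_z(y_j)$. In the closed surface $\bar R$ obtained by collapsing $\partial R$, the $x_i$-arcs fill a genus-one subsurface $Q_x$ whose boundary must be $a$, and similarly the $y_i$-arcs fill $Q_y$ with boundary $b$; the unlinking and handle-pair conditions then force $Q_x \cap Q_y$ to be a disk, giving $i(a,b)=2$. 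Establishing that these six conditions actually characterize sharing pairs takes a page of work; your proposal, as written, does not contain an argument that plays that role.
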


\begin{figure}
  \labellist
\small\hair 2pt
 \pinlabel {$a$} [ ] at 170 185
 \pinlabel {$b$} [ ] at 290 185
 \pinlabel {$z$} [ ] at 315 185
 \pinlabel {$k-1$} [ ] at 90 65
 \pinlabel {$k-1$} [ ] at 585 125
 \pinlabel {$k-1$} [ ] at 585 15
 \pinlabel {$P$} [ ] at 585 180
 \pinlabel {$Q$} [ ] at 585 70
 \endlabellist  
\includegraphics[scale=.525]{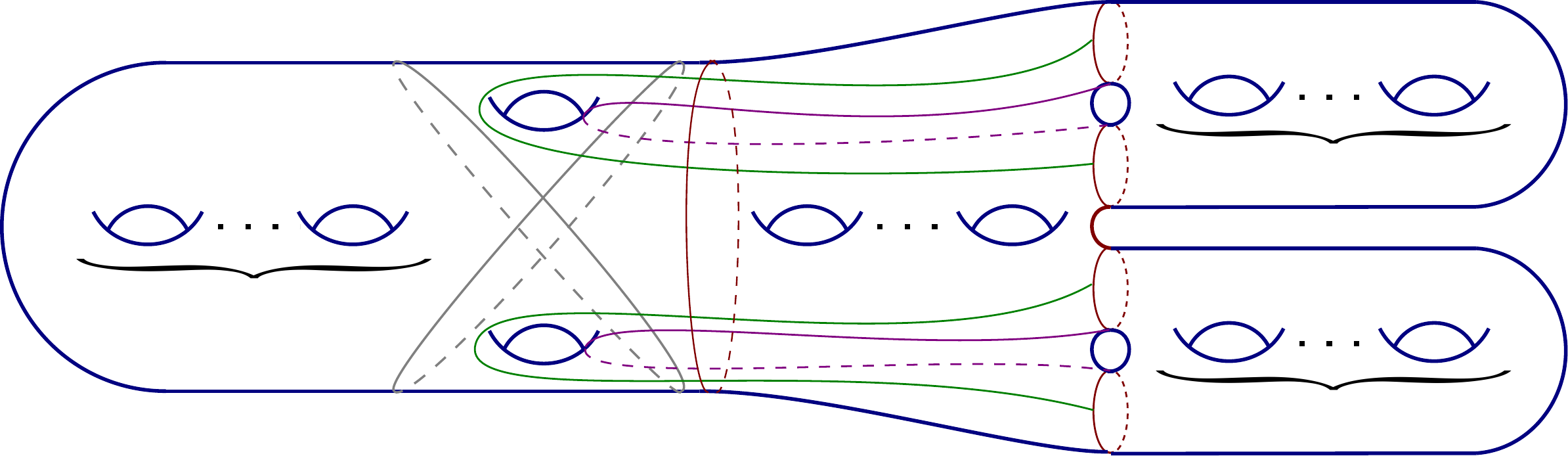}
\caption{Curves and arcs used to characterize a sharing pair}
\label{fig:sp soup}
\end{figure}

\begin{proof}

We will show that two vertices $a$ and $b$ of $\C_k(S_g)$ form a sharing pair of genus $k$ if and only if there are vertices $x_1$, $x_2$, $y_1$, $y_2$, and $z$ of $\C_k(S_g)$ with the following properties:
\begin{enumerate}
 \item\label{condition:genus} the genus of $z$ is $k+1$;
 \item\label{condition:small} $a$ and $b$ are vertices of genus $k$ lying on the genus $k+1$ side of $z$;
 \item\label{condition:penta} each $x_i$ is connected by edges to $a$, $y_1$, and $y_2$ but not to $b$;
 \item\label{condition:pentb} each $y_i$ is connected by edges to $b$, $x_1$, and $x_2$ but not to $a$; 
 \item\label{condition:handle} $\pi_z(x_1)$ and $\pi_z(x_2)$ form a handle pair and $\pi_z(y_1)$ and $\pi_z(y_2)$ form a handle pair; and
 \item\label{condition:unlink} each $\pi_z(x_i)$ is unlinked with each $\pi_z(y_j)$.
\end{enumerate}
(We have implicitly used the fact that $z$ has only one side of genus $k+1$, but this is implied by the conditions on $g$ and $k$.) Lemmas~\ref{lemma:joins} and ~\ref{lemma:vertex} together imply that $a$, $b$, the $x_i$, the $y_i$, and $z$ satisfy the given conditions if and only if their $\phi$-images do.  Therefore, it remains to prove that the existence of such $a$, $b$, the $x_i$, the $y_i$, and $z$ is equivalent to the condition that $a$ and $b$ form a sharing pair. 

The forward direction is given by explicit construction; refer to Figure~\ref{fig:sp soup}.  There is a unique configuration for $a$ and $b$ as in the figure.  The curve $z$ is shown.  For each $i$ the curve $x_i$ is obtained by taking the boundary of a regular neighborhood of the union of the region $P$ with one of the arcs in the picture with endpoints on $P$ (the order of the arcs does not matter).  Similarly each $y_i$ is obtained as the boundary of a regular neighborhood of the union of the region $Q$ with one of the arcs with endpoints on $Q$.  

We now proceed to the other implication.  Assume that $a$, $b$, $x_1$, $x_2$, $y_1$, $y_2$, and $z$ satisfy the conditions in the claim. Let $R$ be a region of $S_g$ representing the genus $k+1$ side of $z$, as per property \eqref{condition:genus}.  By property~\eqref{condition:small}, the vertices $a$ and $b$ have representatives in $R$.  Let $\bar R$ denote the closed surface obtained by collapsing the boundary of $R$ to a marked point.  Each of $a$ and $b$ separates $\bar R$ into a surface of genus $k$ with one boundary component and a surface of genus one with one boundary component and one marked point.

By property~\eqref{condition:handle}, the vertices $x_1$ and $x_2$ give rise to a pair of nonseparating arcs in $\bar R$ based at the marked point, and these arcs fill a subsurface $Q_x$ of $\bar R$ homeomorphic to a surface of genus one with one boundary component and one marked point.  Similarly, the $y_i$ give a pair of nonseparating arcs in $\bar R$ that fill a subsurface $Q_y$ with the same properties.

Since there is only one separating curve of genus $k$ disjoint from $Q_x$, namely the boundary of $Q_x$, property~\eqref{condition:penta} implies that $a$ is represented by the boundary of $Q_x$.  Similarly, property~\eqref{condition:pentb} implies that $b$ is represented by the boundary of $Q_y$.  Our main goal at this point is to show that the geometric intersection number $i(a,b)$  is 2, and so we have reduced this to a problem about the $x_i$ and $y_i$.

If we consider a small closed disk around the marked point of $\bar R$, the $x_i$-arcs and $y_i$-arcs give a collection of eight disjoint arcs connecting the marked point to the boundary of the disk, and since no triple among the $x_i$ and $y_i$ have pairwise nontrivial intersection, the eight arcs in the disk come in a well-defined cyclic order (depending only on the $x_i$, the $y_i$, and $z$).  Any homotopically distinct based simple loops in $Q_x$ must cross transversely at the base point (this follows from the identification of the set of oriented nonseparating simple closed curves in the punctured torus with the primitive elements of $\Z^2$).  It follows that the two $x_1$-arcs and the two $x_2$-arcs alternate in the cyclic order.  Then, since the $x_i$-arcs fill $Q_x$, it follows from property~\eqref{condition:unlink} that in the cyclic order the four $x_i$-arcs in the disk appear in order, followed by the four $y_i$-arcs.

Since the $x_i$-arcs fill $Q_x$ and since the $x_i$ are disjoint from the $y_i$ it follows from the previous paragraph that if we take the $y_i$-arcs in $\bar R$ and intersect them all with $Q_x$ we obtain a set of four parallel arcs connecting the marked point to the boundary.  As $Q_y$ is obtained from a regular neighborhood of the $y_i$-arcs, it follows that the intersection of $Q_x$ with $Q_y$ is a disk.  Since $a$ and $b$ are identified with the boundary components of $Q_x$ and $Q_y$, it follows that $i(a,b)=2$. There is only one possible configuration for two separating curves of genus $k$ in $R$ with intersection number two, and so $a$ and $b$ form a sharing pair, as desired.
\end{proof}

\p{Sharing triples} As discussed at the beginning of the section, Lemma~\ref{lemma:sharing} suggests a method for extending an automorphism of $\C_k(S_g)$ to an automorphism of $\C_{k-1}(S_g)$: the image of a vertex of genus $k-1$ is determined by the image of a corresponding sharing pair.  We need to show that this rule is well defined.  We will use sharing triples to address this issue.

\begin{figure}
\includegraphics[scale=.35]{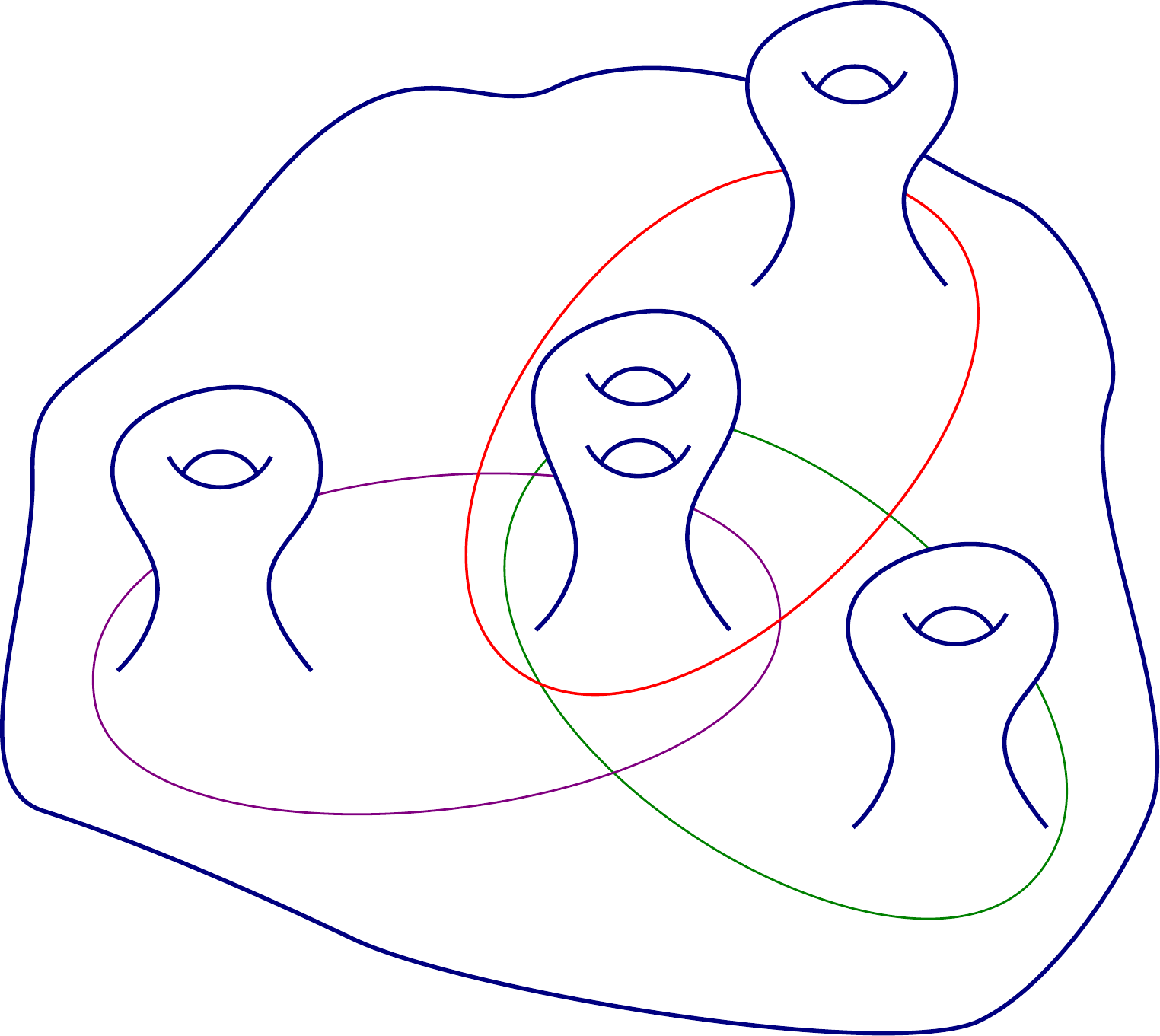}
\caption{A sharing triple of genus three}
\label{fig:triple}
\end{figure}

We say that three vertices of $\C_k(S_g)$ form a \emph{sharing triple of genus $k$} if they are configured as in Figure~\ref{fig:triple}.  Specifically, each pair of vertices in a sharing triple forms a sharing pair for the same vertex of $\C_{k-1}(S_g)$.  It is also true that if three vertices of $\C_k(S_g)$ pairwise form sharing pairs for the same vertex of $\C_{k-1}(S_g)$ then they form a sharing triple but we will not use this.

The next lemma tells us that if two sharing pairs belong to the same sharing triple, then their images under any automorphism of $\C_k(S_g)$ represent the same vertex of genus $k-1$.  Lemma~\ref{lemma:sp graph connected} below then tells us that if two arbitrary sharing pairs represent the same vertex of genus $k-1$, then their images under any automorphism of $\C_k(S_g)$ also represent the same vertex.

\begin{lemma}
\label{lemma:triples}
Let $k \geq 2$, let $g \geq 3k+1$, and let $\phi$ be an automorphism of $\C_k(S_g)$.   If $a$, $b$, and $c$ are three vertices of $\C_k(S_g)$ that form a sharing triple of genus $k$ then $\phi(a)$, $\phi(b)$, and $\phi(c)$ form a sharing triple of genus $k$.
\end{lemma}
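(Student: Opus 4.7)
The plan is to extend the combinatorial characterization developed in the proof of Lemma~\ref{lemma:sharing} to one that picks out sharing triples among all configurations of three pairwise sharing pairs. First, Lemma~\ref{lemma:sharing} immediately implies that each of the three pairs $(\phi(a),\phi(b))$, $(\phi(a),\phi(c))$, $(\phi(b),\phi(c))$ is a sharing pair of genus $k$. What remains is to verify that the three underlying genus $k-1$ separating curves in $S_g$ determined by these three sharing pairs all coincide, since this is the defining property of a sharing triple (as indicated in Figure~\ref{fig:triple}).

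The key auxiliary object will be a vertex $w$ of $\C_k(S_g)$ of genus $k+2$. Topologically, if $(a,b,c)$ is a sharing triple for a genus $k-1$ curve $z_0$, then $a \cup b \cup c$ is contained in a subsurface $R \subset S_g$ of genus $k+2$ with connected boundary, obtained as the union of the genus $k-1$ subsurface bounded by $z_0$ together with three handles (one per curve $a, b, c$); the boundary $\partial R$ is then represented by such a vertex $w$, and all of $a, b, c$ lie on its genus $k+2$ side. I would formulate a combinatorial condition asserting the existence of such a $w$, together with six witness vertices of $\C_k(S_g)$ playing the roles of the $x_i, y_i$ from the proof of Lemma~\ref{lemma:sharing}, arranged so that (a) for each of the three pairs $(a,b)$, $(a,c)$, $(b,c)$, the sharing-pair conditions of Lemma~\ref{lemma:sharing} are certified within the genus $k+2$ side of $w$, and (b) the handle pairs appearing in these three certifications, viewed through the subsurface projections $\pi_w(\cdot)$, correspond to three distinct handles attached to a common central genus $k-1$ subsurface of $R$. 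Condition (b) is the substantive addition over the naive ``three independent sharing pairs'' condition and is what forces the three underlying genus $k-1$ curves to be equal.

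Once the characterization is in place, invariance under $\phi$ is immediate. By Lemma~\ref{lemma:joins} the data of the genus of $w$ and the sides on which $a$, $b$, $c$ and the six witness vertices lie are preserved; and by Lemma~\ref{lemma:vertex}, applied with $w$ playing the role of $z$ and $h = k+2$, the data of nonseparating arcs, distinct nonseparating arcs, handle pairs, and unlinkedness among the various projections $\pi_w(\cdot)$ are all preserved. Hence the entire combinatorial characterization is preserved by $\phi$, so $\phi(a), \phi(b), \phi(c)$ also form a sharing triple of genus $k$.

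The main obstacle will be the reverse direction of the characterization: given a combinatorial configuration satisfying the conditions above, one must argue topologically inside the genus $k+2$ side of $w$ that the three pairwise genus $k-1$ sharing regions coincide as a single subsurface, rather than being three distinct genus $k-1$ subsurfaces sitting inside a common genus $k+2$ region. This amounts to a careful analysis of the intersection pattern of the handle-pair arcs in $R$ analogous to, but more elaborate than, the concluding argument in the proof of Lemma~\ref{lemma:sharing}; in particular one should show that a common genus $k-1$ core subsurface emerges from the combined handle-pair data by verifying that the six handle-witness arcs cluster (via their cyclic order at the boundary $\partial R$) into exactly three handle pairs sharing a common complement in $R$.
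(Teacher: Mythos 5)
Your overall strategy---find a combinatorial characterization of sharing triples that is manifestly preserved by automorphisms, using a genus $k+2$ anchor curve---matches the paper's, and your instinct that the delicate point is to rule out ``fake sharing triples'' (three vertices pairwise forming sharing pairs but with distinct underlying genus $k-1$ curves) is exactly right. But your proposed characterization is left unfinished at precisely the point where the work lies, and the paper closes that gap with a different, considerably simpler device than the one you propose.

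The paper's condition is not expressed in terms of subsurface projections or handle pairs at all. Instead, in addition to (i) the genus $k+2$ vertex $z$ with $a,b,c$ on its genus $k+2$ side and (ii) the three pairwise sharing pairs, the paper imposes a \emph{filling} condition: any vertex of $\C_k(S_g)$ connected by an edge to all three of $a$, $b$, $c$ must also be connected by an edge to $z$. This is a purely link-theoretic statement, so it is trivially preserved by any simplicial automorphism, requiring only Lemma~\ref{lemma:joins} and Lemma~\ref{lemma:sharing} for the invariance---Lemma~\ref{lemma:vertex} is not invoked in the proof of Lemma~\ref{lemma:triples} at all. In the reverse direction, this filling condition controls the position of $c$ relative to the region between $\partial R$ and $a\cup b$: after normal-positioning, the $c$-arcs in that genus-one region must cut off an annulus around $\partial R$, leaving only three possible arc configurations, which are then eliminated or confirmed by hand. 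That analysis is where the ``common genus $k-1$ core'' actually gets forced, and it is an arc-level case check in a fixed low-complexity subsurface rather than anything involving projections.

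By contrast, your condition (b)---that the various handle pairs ``correspond to three distinct handles attached to a common central genus $k-1$ subsurface of $R$''---is stated as a topological conclusion, not as a combinatorial condition in $\C_k(S_g)$. To make it usable you would need to express it through an explicit system of projections, handle pairs, and unlinking constraints (presumably relative to a single $z$ of genus $k+2$, which changes the shape of the projections from those used in the genus $k+1$ setting of Lemma~\ref{lemma:sharing}), and then prove the reverse implication, which you explicitly flag as the main obstacle without resolving it. This is a genuine gap: the heart of the lemma is exactly the step you have deferred. You are also missing the paper's fifth condition---an auxiliary vertex $d$ forming a sharing pair with $c$ and disjoint from $a$ and $b$---which is needed to kill one residual fake configuration (the two-parallel-separating-arcs case); even a correctly formalized version of your condition (b) would need something playing that role in the case $k=2$.
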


\begin{proof}

We claim that three vertices $a$, $b$, and $c$ form a sharing triple of genus $k$ if and only if there are vertices $z$ and $d$ of $\C_k(S_g)$ so that the following conditions hold:
\begin{enumerate}
\item the genus of $z$ is $k+2$, 
\item $a$, $b$, and $c$ all lie on a genus $k+2$ side of $z$,
\item any two of $a$, $b$, and $c$ form a sharing pair of genus $k$,
\item any vertex of $\C_k(S_g)$ that is connected by an edge to each of $a$, $b$, and $c$ must also be connected by an edge to $z$, and
\item $d$ forms a sharing pair with $c$ and is connected by edges to $a$ and $b$.
\end{enumerate}
(In the special case where $k=2$ and $g=7$ the first condition should be interpreted as saying that the genus of $z$ is 3.)  The lemma follows from the claim plus Lemmas~\ref{lemma:joins} and~\ref{lemma:sharing}.  The last condition in the claim is included only to rule out one configuration in the case $k=2$.  On the other hand for any $k \geq 2$ there are fake sharing triples that satisfy the first three conditions but not the fourth.  

\begin{figure}
  \labellist
\small\hair 2pt
 \pinlabel {$a$} [ ] at 370 70
 \pinlabel {$b$} [ ] at 345 305
 \pinlabel {$c$} [ ] at 285 70
 \pinlabel {$z$} [ ] at 240 305
 \pinlabel {$d$} [ ] at 115 70
 \endlabellist  
\includegraphics[scale=.35]{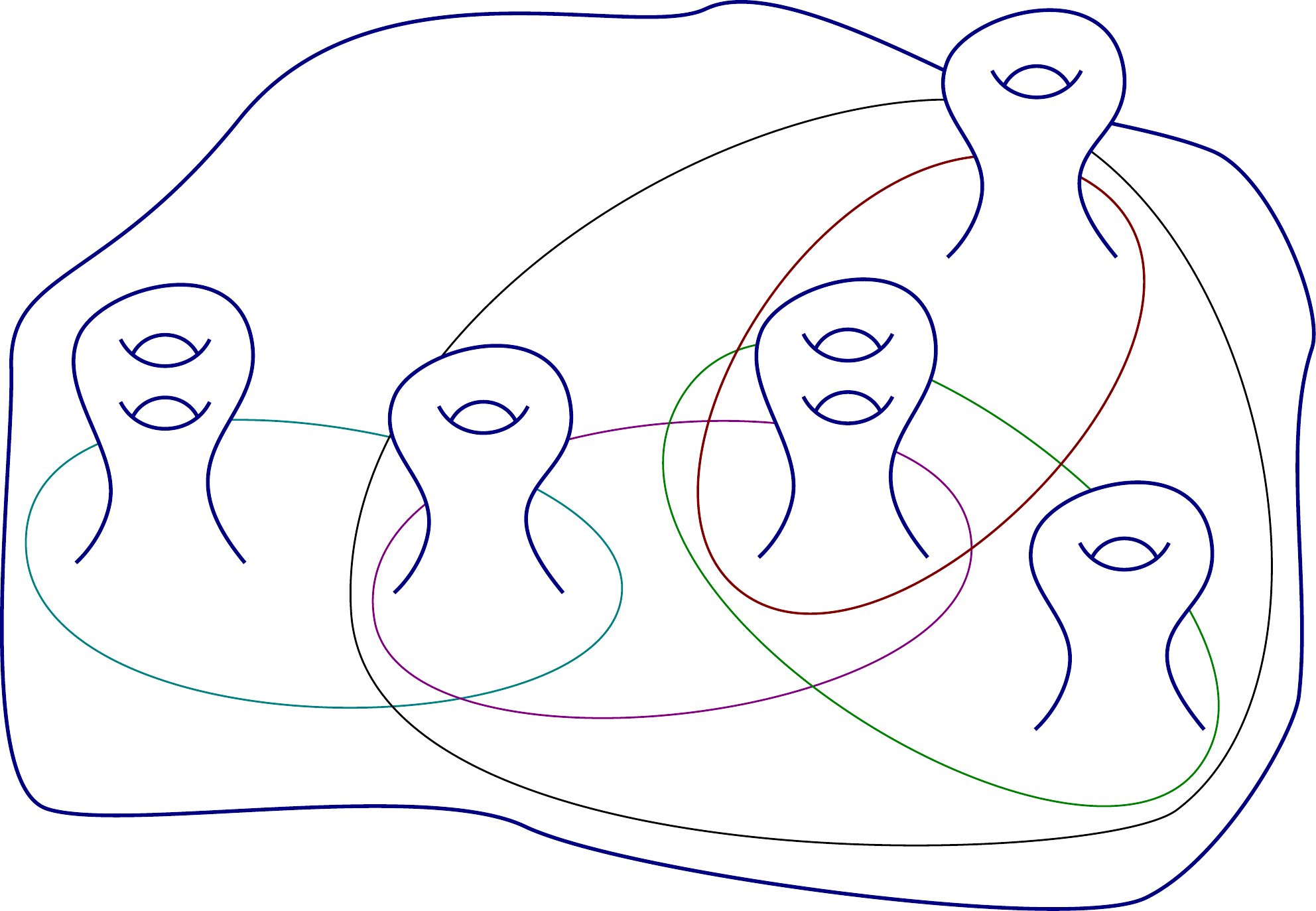}
\caption{The vertices $z$ and $d$ in the characterization of sharing triples.}
\label{fig:sp2}
\end{figure}

The forward direction of the claim is straightforward.  The construction of the vertices $z$ and $d$ is indicated in Figure~\ref{fig:sp2}.  Note that the vertex $d$ exists because $g \geq 2k+1$.

For the other direction, assume that $a$, $b$, and $c$ are three vertices of $\C_k(S_g)$ that satisfy the conditions of the claim.  We must show that $a$, $b$, and $c$ form a sharing triple of genus $k$.

Choose a vertex $z$ as in the claim, and choose a representative curve in $S_g$.  Let $R$ be the region of $S_g$ that is determined by this curve and contains representatives of $a$, $b$, and $c$.  Choose representative curves for $a$ and $b$ in $R$.  Since $a$ and $b$ form a sharing pair, we can assume that these curves are configured as in Figure~\ref{fig:sp}.

There is a region $Q$ of $R$ that lies between the boundary of $R$ and the union of the $a$-curve and the $b$-curve.  This region $Q$ is homeomorphic to a surface of genus one with two boundary components; one boundary component is the $z$-curve and the other boundary component is made up of one arc of the $a$-curve and one arc of the $b$-curve.  

Choose a curve in $R$ representing $c$, and take this curve to be in minimal position with the $a$-curve and the $b$-curve, and so that there are no triple intersections.  Since $c$ lies on the same side of $z$ as $a$ and $b$ and is not connected by an edge to $a$ or $b$, the intersection of the $c$-curve with $Q$ is a collection of disjoint arcs, each starting and ending on the boundary component coming from $a$ and $b$.  Since two curves in a sharing pair intersect in two points, there are at most two arcs.  We may further assume that no $c$-arc in $Q$ is peripheral, for in this case we can push this arc out of $Q$ without increasing the number of intersections of the $c$-curve with either the $a$-curve or the $b$-curve.

\begin{figure}
\includegraphics[scale=.4]{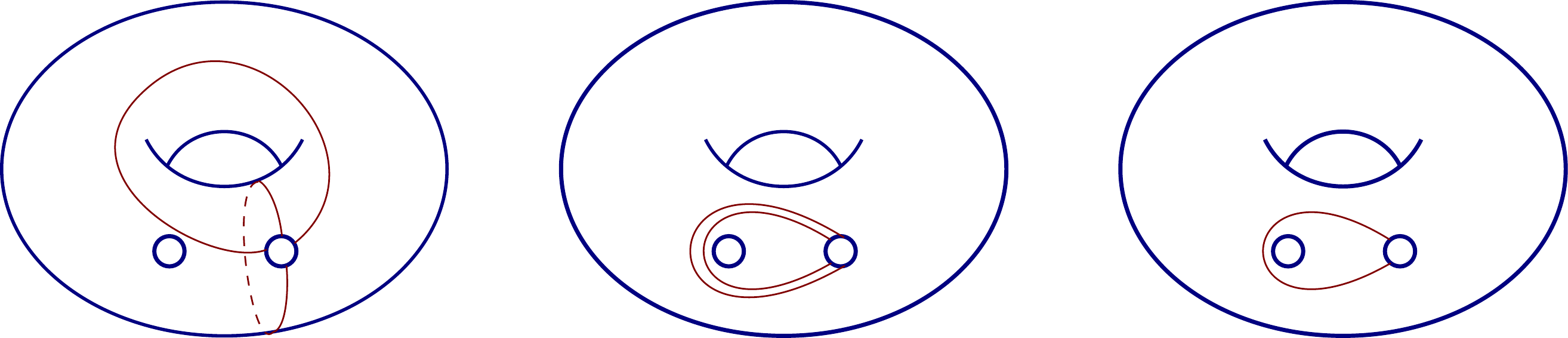}
\caption{Three possibilities for the intersection of $c$ with $Q$ in the proof of Lemma~\ref{lemma:triples}}
\label{fig:q}
\end{figure}

By the fourth condition of the claim, the $c$-arcs in $Q$ must have the following property: if we cut $Q$ along the $c$-arcs the component containing the $z$-curve must be an annulus.  We thus have the following possibilities:
\begin{enumerate}
\item there are two $c$-arcs in $Q$ that are not homotopic and nonseparating,
\item there are two $c$-arcs in $Q$ that are parallel and separating, or
\item there is a single $c$-arc in $Q$ that is separating;
\end{enumerate}
see Figure~\ref{fig:q}.

The first case can be ruled out because in this case the $c$-curve is nonseparating in $S_g$ (we can find a curve in $Q$---hence $S_g$---that intersects it in one point).

The second case can be ruled out as follows.  First, any vertex $v$ of $\C_k(S_g)$ that is connected by edges to $a$ and $b$ but not $c$ is necessarily not connected by an edge to $z$.  This is simply because $v$ has genus $k$ and the genus of $Q$, the region between $z$ and $a \cup b$, is only one.  It follows that the vertex $d$ from the fifth condition of the claim must intersect $z$.  But in the second case any curve that is disjoint from $a$ and $b$ but not $z$ must intersect $c$ in at least four points.  This contradicts the assertion that $c$ and $d$ form a sharing pair.  

We now consider the third case, where there is a single separating $c$-arc in $Q$.  In this case the $c$-arc in $Q$ is configured like the $c$-arc on the outside of $a$ and $b$ in Figure~\ref{fig:sp2}.  It remains to determine the configuration of $c$ in the union of the interiors of $a$ and $b$ in Figure~\ref{fig:sp2}.  The $c$-arcs in the genus $k$ sides of the $a$- and $b$-curves are separating and must cut off a subsurface of genus $k-1$ in each region.  The only possibility then is that $c$ is configured exactly as in Figure~\ref{fig:sp2} and hence forms a sharing triple with $a$ and $b$.  
\end{proof}

\p{The sharing pair graph} Assume $g \geq 2k$ and let $y$ be a vertex of $\C_{k-1}(S_g)$.  We define the \emph{sharing pair graph} for $y$ as the graph whose vertices correspond to sharing pairs of genus $k$ that specify $y$ and whose edges correspond to sharing pairs $\{a,b\}$ and $\{b,c\}$ with the property that $\{a,b,c\}$ is a sharing triple.

Let $\Mod(S_g,y)$ denote the subgroup of $\Mod(S_g)$ consisting of elements represented by homeomorphisms that act by the identity on the region of $S_g$ corresponding to the genus $k-1$ side of $y$.  Note that $\Mod(S_g,y)$ acts on the sharing pair graph for $y$ and acts transitively on the vertices.

\p{Putman's trick} If $G$ is a group with a generating set $\{g_i\}$ and $G$ acts on a graph $X$ with base point $v$ and $G$ acts transitively on the vertices of $X$ then $X$ is connected if for each $i$ the vertices $g_i \cdot v$ and $v$ lie in the same component of $X$.  As Putman explains \cite{putman}, this method is useful in the theory of mapping class groups because one can often choose the $v$ and the $g_i$ so that most of the $g_i \cdot v$ are equal to $v$ and the other $g_i \cdot v$ are very close to $v$.  We refer to this method as Putman's trick.  We will presently apply it to the action of $\Mod(S_g,y)$ on the sharing pair graph for $y$.

\begin{lemma}
\label{lemma:sp graph connected}
Let $k \geq 2$ and let $g \geq 2k$.  Let $y$ be a vertex of $\C_{k-1}(S_g)$.  The sharing pair graph for $y$ is connected.
\end{lemma}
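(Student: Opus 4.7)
The plan is to apply Putman's trick, as the preceding discussion strongly suggests.  Since transitivity of the $\Mod(S_g,y)$-action on the vertices of the sharing pair graph is already noted, it suffices to fix a base vertex $v_0=\{a_0,b_0\}$, choose a generating set $\{g_1,\ldots,g_n\}$ for $\Mod(S_g,y)$, and verify that $v_0$ and $g_i\cdot v_0$ lie in the same component of the sharing pair graph for every $i$.

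Let $R$ be the region of $S_g$ corresponding to the genus $g-k+1$ side of $y$, so $R$ is a surface of genus $g-k+1$ with one boundary component, and $\Mod(S_g,y)$ is naturally identified with the mapping class group of $R$ fixing $\partial R$.  I would take $v_0$ to be a model sharing pair drawn in $R$ and use a Humphries-style generating set consisting of Dehn twists about simple closed curves $c_1,\ldots,c_m$ in $R$.  The inequality $g\geq 2k$ gives $g-k+1\geq k+1$, which is enough room in $R$ to arrange the $c_i$ so that all but a bounded number of them are disjoint from $a_0\cup b_0$.  For each such ``away'' generator $T_{c_i}$ we have $T_{c_i}\cdot v_0=v_0$, which takes care of most of Putman's check for free.

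For each of the remaining generators $T_{c_j}$, where $c_j$ has essential intersection with $a_0\cup b_0$, I would exhibit an explicit short path in the sharing pair graph from $v_0$ to $T_{c_j}\cdot v_0$.  The local picture is that $c_j$ meets $a_0\cup b_0$ in very few points, so I would produce an auxiliary sharing pair $v_0'$ (or a short sequence of them) such that consecutive sharing pairs in the sequence form sharing triples.  Such an auxiliary sharing pair can be constructed inside a fixed regular neighborhood of $c_j\cup a_0\cup b_0$ in $R$, whose topology is bounded independently of $g$ and $k$; one then verifies the sharing triple condition by drawing the configuration explicitly and comparing to Figure~\ref{fig:triple}.

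The main obstacle is this last step: performing a case analysis of the crossing generators and, in each case, exhibiting an intermediate sharing pair whose sharing triple relations with its neighbors can be checked by inspection.  The hypothesis $g\geq 2k$ enters once more here, since we need $R$ to have enough genus to accommodate the auxiliary curves alongside $a_0$, $b_0$, and $c_j$ in the required configuration.
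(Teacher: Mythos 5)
Your proposal follows essentially the same strategy as the paper's proof: apply Putman's trick to the $\Mod(S_g,y)$-action, pick a base sharing pair $v$ and a generating set of Dehn twists about curves $d_i$ in the genus-$(g-k+1)$ side of $y$ arranged so that all but two of the $T_{d_i}$ fix $v$ outright, and then for the remaining pair of crossing generators exhibit a length-two path in the sharing pair graph by producing a sharing triple $\{a,b,c\}$ with $c$ disjoint from the crossing curve. The paper makes a specific choice (Johnson's generating set, with $d_3$ and $d_{2g'-1}$ as the two crossing generators, handled by a single symmetric argument), whereas you leave the final verification as an unspecified ``case analysis by inspection''; filling that in would reproduce the paper's argument.
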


\begin{proof}

Let $y$ be the vertex of $\C_{k-1}(S_g)$ shown in Figure~\ref{fig:hump}.  We enlist the sharing pair $\{a,b\}$ shown in Figure~\ref{fig:hump} to act as a base point $v$ for the sharing pair graph for $y$.  As mentioned, the group $\Mod(S_g,y)$ acts transitively on the vertices of this graph.  

Denote $g-k+1$ by $g'$.  The group $\Mod(S_g,y)$ is isomorphic to the mapping class group of the region of $S_g$ that lies on the genus $g'$ side of the $y$-curve.  As such it is generated by the Dehn twists about the curves $\{d_0,\dots,d_{2g'}\}$ indicated in Figure~\ref{fig:hump} for the case where $g=k+4$; see \cite[Theorem 1]{johnson}.

\begin{figure}
  \labellist
\small\hair 2pt
 \pinlabel {$b$} [ ] at 260 38
 \pinlabel {$a$} [ ] at 260 110
 \pinlabel {$y$} [ ] at 185 120
 \pinlabel {$d_0$} [ ] at 410 -10
 \pinlabel {$d_1$} [ ] at 290 -10
 \pinlabel {$d_2$} [ ] at 293 48
 \pinlabel {$d_3$} [ ] at 350 50
 \pinlabel {$d_4$} [ ] at 410 68
 \pinlabel {$d_5$} [ ] at 455 70
 \pinlabel {$d_6$} [ ] at 545 78
 \pinlabel {$d_7$} [ ] at 465 116
 \pinlabel {$d_8$} [ ] at 410 135
 \pinlabel {$d_9$} [ ] at 365 132
 \pinlabel {$d_{10}$} [ ] at 290 155
 \endlabellist  
\includegraphics[scale=.6]{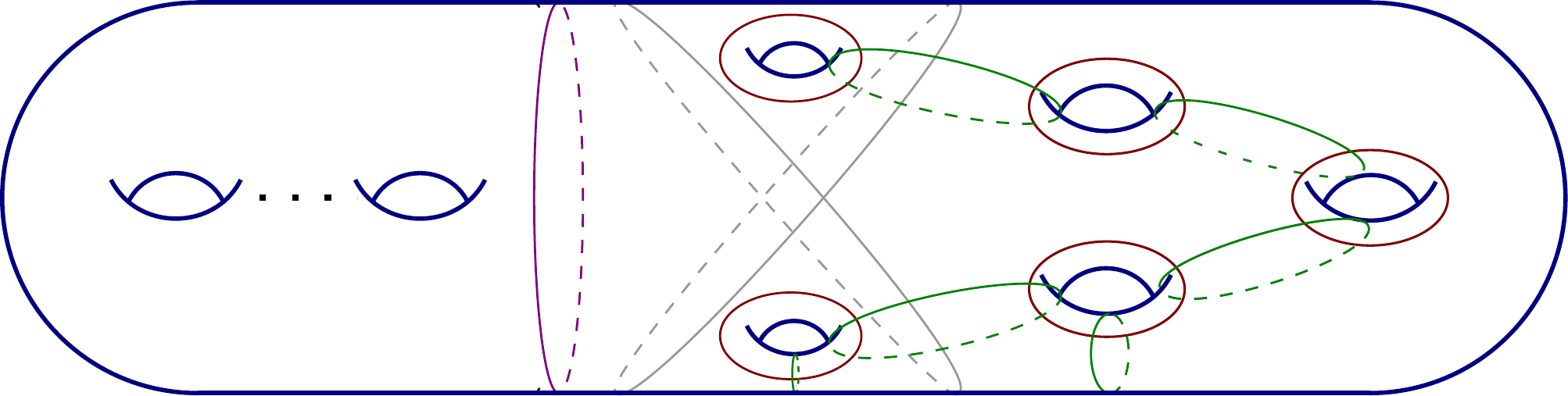}
\caption{Curves $d_i$ whose Dehn twists generate $\Mod(S_g,y)$}
\label{fig:hump}
\end{figure}

By Putman's trick, it is enough to show that each $T_{d_i}(v)$ lies in the same connected component of the sharing pair graph as $v$.  We have arranged things so that when $d_i \notin \{d_3,d_{2g'-1}\}$ we have $T_{v_i} \cdot v = v$ and there is nothing to check.  It remains to check that $T_{d_3} \cdot v$ and $T_{d_{2g'-1}} \cdot v$ lie in the same component as $v$.  The configuration $\{a,b,d_3\}$ is equivalent to the configuration $\{a,b,d_{2g'-1}\}$ and so it suffices to treat the case of $d_3$.

\begin{figure}
  \labellist
\small\hair 2pt
 \pinlabel {$a$} [ ] at 210 70
 \pinlabel {$b$} [ ] at 380 55
 \pinlabel {$c$} [ ] at 480 200
 \pinlabel {$d_3$} [ ] at 120 65
 \endlabellist  
\includegraphics[scale=.4]{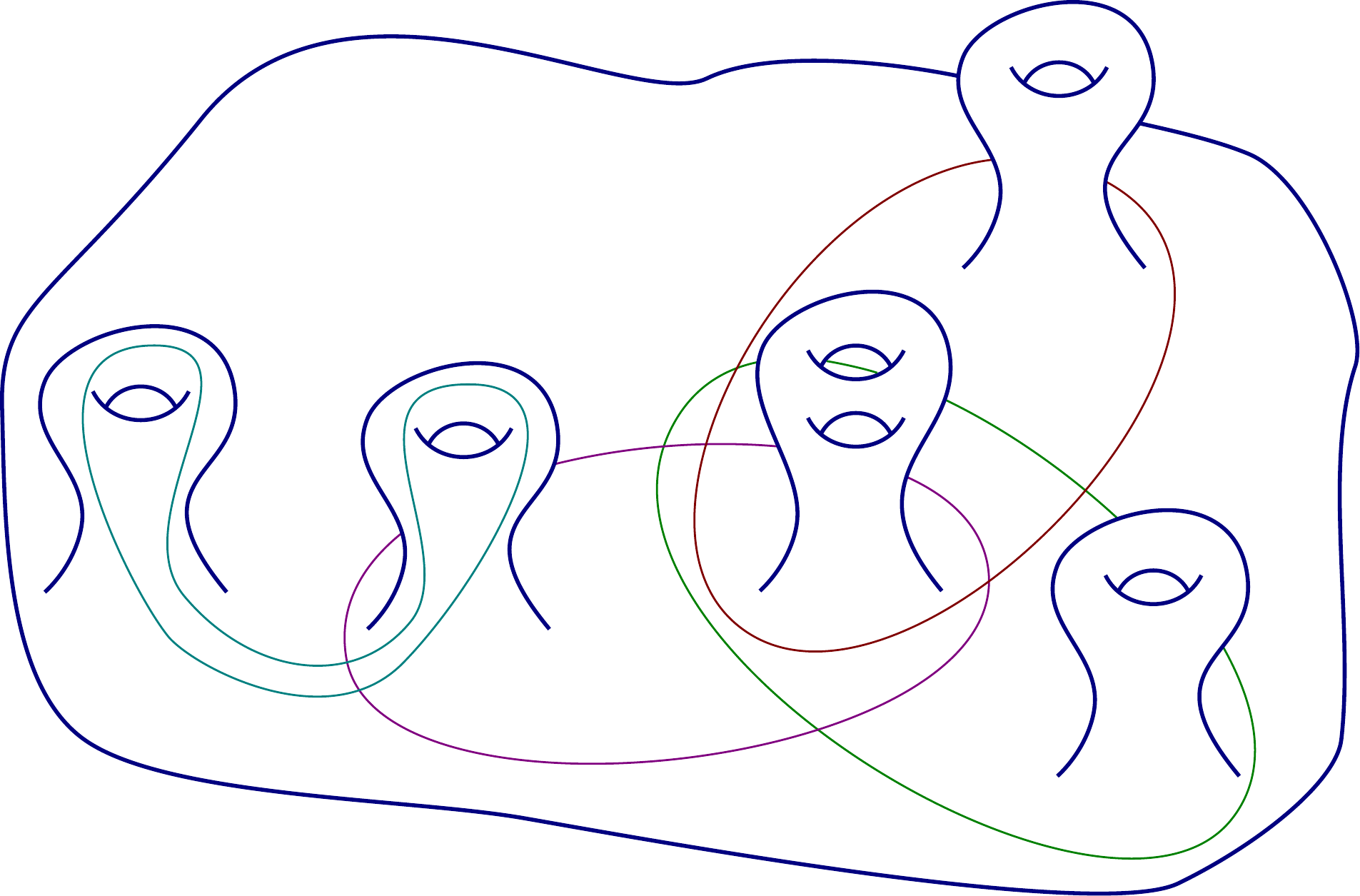}
\caption{The curves $a$, $b$, $c$, and $d_3$}
\label{fig:d3}
\end{figure}

There exists a vertex $c$ of $\C_k(S_g)$ so that $\{a,b,c\}$ forms a sharing triple of genus $k$ for $y$ and so that $i(c,d_3)=0$; this is clear from the point of view of Figure~\ref{fig:d3}.  We also see from Figure~\ref{fig:hump} that $i(b,d_3)=0$.  Since $\{a,b,c\}$ is a sharing triple, its $T_{d_3}$-image $\{T_{d_3}(a),b,c\}$ is a sharing triple as well.  Thus, $T_{d_3} \cdot v = \{T_{d_3}(a),T_{d_3}(b)\} = \{T_{d_3}(a),b\}$ is connected by an edge to $w=\{b,c\}$.  Since $v=\{a,b\}$ is also connected to $w$, we are done.
\end{proof}

\p{Finishing the proof} We are almost ready to finish the proof of Theorem~\ref{theorem:sep k}, which states that for $k \geq 2$ and $g \geq 3k+1$ the natural map $\MCG(S_g) \to \Aut \C_k(S_g)$ is an isomorphism.

\begin{lemma}
\label{inj}
Let $g \geq 3$ and let $X$ be one of the complexes $\C_k(S_g)$, $\C_D(S_g)$, or $\C_A(S_g)$, where $k \geq 1$, $D \subseteq \D(S_g)$ is nonempty, or $A \subseteq \R(S_g)$ is nonempty.  Then the natural map $\MCG(S_g) \to \Aut X$ is injective and the image contains no exchange automorphisms.
\end{lemma}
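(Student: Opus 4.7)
My plan is to show that both conclusions follow from a single claim: if $f \in \MCG(S_g)$ fixes all but at most two vertices of $X$, then $f = 1$. Indeed, the kernel of the natural map $\MCG(S_g) \to \Aut X$ corresponds to zero exceptions, while a preimage of a nontrivial exchange automorphism fixes every vertex except the swapped pair; in the latter case, concluding $f = 1$ will contradict the nontriviality of the exchange, so the claim establishes both parts of the lemma simultaneously.

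To prove this unified claim I would apply a version of the Alexander method. Since $X$ is nonempty it contains at least one vertex $v$, and in each of the three cases the orbit $\MCG(S_g) \cdot v$ is infinite; hence $f$ fixes infinitely many vertices within $\MCG(S_g) \cdot v$. Each such fixed vertex determines a fixed isotopy class of multicurve on $S_g$: a single separating simple closed curve in the $\C_k(S_g)$ case, the underlying dividing multicurve in the $\C_D(S_g)$ case, and the boundary multicurve of a representing subsurface in the $\C_A(S_g)$ case. The goal in each case is to extract from these a finite collection of individual isotopy classes of simple closed curves whose pointwise fixation by $f$ forces $f = 1$ via the classical Alexander method for $g \geq 3$.

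In the $\C_k$ case, fixed vertices already correspond to fixed single curves, and one selects a filling pinning configuration of separating curves of genus $\geq k$ entirely outside the finitely many exceptional vertices, which is possible by infinitude of the orbit. In the $\C_D$ and $\C_A$ cases, however, $f$ may a priori permute the components of a fixed multicurve, and the main obstacle will be to promote multicurve fixation to individual-curve fixation. My plan is: for each curve $c$ in a pre-selected filling pinning configuration, to locate several fixed vertices whose associated multicurves all contain $c$ as a component but share no other common component, so that the permutation of components induced by $f$ is forced to fix $c$ individually. The infinitude of the orbit, together with the $\MCG(S_g)$-equivariance of standard pinning configurations, will allow such overlapping multicurves to be found entirely within the $f$-fixed vertex set, and the Alexander method then yields $f = 1$.
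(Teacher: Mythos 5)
Your unification of the two conclusions into the single claim ``if $f$ fixes all but at most two vertices of $X$ then $f=1$'' is a nice observation, and is in effect what the paper also proves, but the route you take is genuinely different and, as written, has a gap at the last step. The paper avoids the Alexander method entirely: it sends each vertex of $X$ to a point of $\PMF(S_g)$ (the multicurve for $\C_k$ and $\C_D$, the boundary multicurve of a representing region for $\C_A$), observes that the image is a nonempty $\MCG(S_g)$-invariant subset, and invokes the FLP theorem that the $\MCG(S_g)$-action on $\PMF(S_g)$ is continuous with every orbit dense. An $f$ fixing all but at most two vertices therefore fixes a dense subset of $\PMF(S_g)$ pointwise, hence fixes all of $\PMF(S_g)$ by continuity, and is trivial because the kernel of the action on $\PMF(S_g)$ is trivial for $g\geq 3$. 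No filling configuration, no promotion from multicurves to curves.

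The gap in your argument is the sentence ``the Alexander method then yields $f=1$.'' Applied to a filling collection of simple closed curves on a \emph{closed} surface whose unoriented isotopy classes are each fixed by $f$, the Alexander method only yields that $f$ is isotopic to a homeomorphism preserving the union of the curves setwise; such a homeomorphism can a priori be a nontrivial finite-order symmetry of the filling graph. To get $f=1$ one also needs $f$ to act trivially on the combinatorics: preserve each curve with orientation, preserve each side, preserve each complementary disk. None of this follows from fixing isotopy classes alone, and it is exactly what the faithfulness of the $\MCG(S_g)$-action on $\PMF(S_g)$ (equivalently, on $\C(S_g)$) supplies for $g\geq 3$. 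You would also need to arrange the Alexander method's no-triple-coincidence hypothesis for your chosen curves, which is an additional technical burden, and in the $\C_D$ and $\C_A$ cases your ``overlapping multicurves'' device for pinning individual components would need to be verified for an arbitrary nonempty $D$ or $A$, not just generic ones. Replacing the Alexander-method finish with the $\PMF$ density argument closes the gap and makes the multicurve promotion step unnecessary.
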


\begin{proof}

A vertex of $X$ represents a point in $\PMF(S_g)$, the space of projective measured foliations on $S_g$.  Indeed, for $\C_k(S_g)$ and $\C_D(S_g)$ we take the usual inclusion of the set of multicurves in $S_g$ into $\PMF(S_g)$ \cite[Section 5.4]{flp}, and for $\C_A(S_g)$ we first take the boundary of the corresponding subsurface and then take the usual inclusion of the set of multicurves into $\PMF(S_g)$.  The action of $\MCG(S_g)$ on $X$ clearly agrees with the action on $\PMF(S_g)$.

The action of $\MCG(S_g)$ on $\PMF(S_g)$ is continuous and minimal, meaning the orbit of every point is dense; see \cite[Theorem 6.19]{flp}.  It follows that if an element of $\MCG(S_g)$ acts trivially on $X$ then it acts trivially on $\PMF(S_g)$.  But the kernel of the action of $\MCG(S_g)$ on $\PMF(S_g)$ is trivial for $g \geq 3$ (see \cite[Proof of Theorem 3.10]{primer}) and so the first statement follows.  The second statement also follows, since the complement of two points in a dense subset of $\PMF(S_g)$ is still dense.
\end{proof}

\begin{proof}[Proof of Theorem~\ref{theorem:sep k}]

Fix $g \geq 4$.  We would like to show that for each $k$ with $1 \leq k < g/3$ the automorphism group of $\C_k(S_g)$ is isomorphic to $\MCG(S_g)$.  We proceed by induction on $k$.  The case $k=1$ was proven in our earlier paper \cite{kg}.  So suppose $1 < k < g/3$, and assume that the natural map $\MCG(S_g) \to \Aut \C_{k-1}(S_g)$ is an isomorphism.  By Lemma~\ref{inj} it suffices to show that the natural map $\MCG(S_g) \to \Aut \C_k(S_g)$ is surjective.  Let $\phi \in \Aut \C_{k}(S_g)$.  By Lemmas~\ref{lemma:sharing} and~\ref{lemma:sp graph connected} there is a well-defined automorphism $\hat \phi$ of the 0-skeleton of $\C_{k-1}(S_g)$ that agrees with $\phi$ on the 0-skeleton of $\C_k(S_g)$.  Specifically, if $v$ is a vertex of $\C_{k-1}(S_g)$ that does not lie in $\C_k(S_g)$ then $\hat \phi(v)$ is the vertex of $\C_{k-1}(S_g)$ determined by the $\phi$-image of any sharing pair for $v$.

We claim  that $\hat \phi$ extends to the 1-skeleton of $\C_{k-1}(S_g)$.  To this end, let $v$ and $w$ be vertices of $\C_{k-1}(S_g)$.  If $v$ and $w$ both lie in the subcomplex $\C_k(S_g)$ of $\C_{k-1}(S_g)$, there is nothing to show since the restriction of $\hat \phi$ to $\C_k(S_g)$ is equal to $\phi$.  Next suppose that neither $v$ nor $w$ lies in $\C_k(S_g)$; in other words $v$ and $w$ both have genus $k-1$.  In this case since $g \geq 3k+1 \geq 2k+2$ we can find sharing pairs for $v$ and $w$ that are disjoint, and from this---and the fact that two disjoint separating curves of genus $k$ cut off disjoint regions of genus $k$---the result follows.  The final case is where $v$ lies in $\C_k(S_g)$ but $w$ does not.  If $v$ has genus $k$ and $w$ lies on the genus $k$ side of $v$ then $v$ lies in a sharing pair for $w$, and the claim follows.  In all other cases, we can choose a sharing pair for $w$ that is disjoint from $v$ and again the claim follows.

By induction there exists some $f \in \MCG(S_g)$ whose image in $\Aut \C_{k-1}(S_g)$ is precisely $\hat \phi$.  Since the restriction of $\hat \phi$ to $\C_{k}(S_g)$ is equal to $\phi$, it follows that the image of $f$ in $\Aut \C_{k}(S_g)$ is $\phi$, as desired.
\end{proof}


\section{Complexes of dividing sets}
\label{sec:div}

The goal of this section is to prove Theorem~\ref{theorem:multi k}, which states that whenever $D$ is an upper set in $\D(S_g)$ with $g \geq 3\check{g}(D)+1$, the group of automorphisms of $\C_D(S_g)$ is isomorphic to $\MCG(S_g)$.

We will say that a vertex of $\C_D(S_g)$ has \emph{type $(k,n)$} if one of the regions of $S_g$ determined by a representative of $v$ is a surface of genus $k$ with $n$ boundary components.  Because a dividing set determines two complementary regions of $S_g$, a vertex of type $(k,n)$ is also a vertex of type $(g-k-n+1,n)$.

Next, we say that a dividing set has...
\begin{enumerate}
\item \emph{type N} if it is of type $(0,2)$, 
\item \emph{type S} if it is of type $(k,1)$ for some $k$, and 
\item \emph{type M} otherwise
\end{enumerate}
(N, S, and M are for nonseparating curve, separating curve, and multicurve).  Vertices of type N and S correspond to vertices of the complex of curves.  Also, a dividing set is nested with a vertex of type N or S if and only if it is disjoint.  Therefore, the subgraph of $\C_D(S_g)$ spanned by vertices of type N and S is isomorphic to a subgraph of $\C(S_g)$.  The main idea of the proof is to show that the type of a vertex is invariant under automorphisms of $\D(S_g)$, and so an automorphism of $\D(S_g)$ induces an automorphism of either the complex of curves or---in the absence of vertices of type N---an automorphism of the complex of separating curves $\C_{\check g(D)}(S_g)$.

The only upper set in $\D(S_g)$ containing vertices of type N is the entire set $\D(S_g)$ itself.  We deal with this case first since it is especially easy, and also because in the absence of vertices of type N a simplex in $\C_D(S_g)$ has a normal form representative that is unique (Lemma~\ref{lemma:div set normal form} below).


\subsection{Complexes with vertices of type N}

Before dispensing with the case where $D = \D(S_g)$ (equivalently, where $D$ contains an element of type N), we require one new idea.

\p{Subordinacy} We say that a vertex $v$ of $\C_{\D(S_g)}(S_g)$ is \emph{subordinate} to a vertex $w$ if $v$ has a representative that is homotopic to a subset of a representative of $w$ (the homotopy might take distinct components to a single component).  We make the following observations.
\begin{enumerate}
\item If $v$ and $w$ are distinct vertices of $\C_{\D(S_g)}(S_g)$ with $v$ subordinate to $w$ then $v$ has type N and $w$ has type M.
\item If $v$ is a vertex of $\C_{\D(S_g)}(S_g)$ that has type N, then there exists another vertex $w$ so that $v$ is subordinate to $w$.  
\item If $w$ is a vertex of $\C_{\D(S_g)}(S_g)$ that has type M then there is another vertex $v$ subordinate to $w$.
\end{enumerate}
In other words, in $\C_{\D(S_g)}(S_g)$ the type of a given vertex is completely determined by the subordinacy relation.

\begin{lemma}
\label{subord}
Suppose that $g \geq 1$ and let $\phi$ be an automorphism of $\C_{\D(S_g)}$.  If $v$ and $w$ are vertices of $\C_{\D(S_g)}$ with $v$ subordinate to $w$ then $\phi(v)$ is subordinate to $\phi(w)$.
\end{lemma}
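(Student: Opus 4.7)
The plan is to give a combinatorial characterization of the subordinacy relation in terms of the edge structure (nestedness) of $\C_{\D(S_g)}$.  Since an automorphism $\phi$ preserves edges, any such characterization is automatically preserved by $\phi$, and hence so is subordinacy.  The case $v = w$ is immediate, so I will assume $v$ and $w$ are distinct; in this case, by the first observation in the text, $v$ must have type N and $w$ must have type M.

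The proposed characterization is:  for distinct vertices $v$ and $w$, the vertex $v$ is subordinate to $w$ if and only if $v \in \Lk(w)$ and every vertex $u \in \Lk(w)$ with $u \neq v$ also lies in $\Lk(v)$.  For the forward direction, suppose $v \subset w$, so the two parallel curves of $v$ are isotopic to some curve $c$ of $w$.  Pushing $v$ slightly off $c$ places $v$ in one of the two regions of $w$, showing $v \in \Lk(w)$.  Now let $u \in \Lk(w)$ with $u \neq v$.  Then $u$ is disjoint from $c$, so $i(u,v) = 0$.  Any components of $u$ isotopic to $c$ can be placed among the parallel copies of $c$ in a thin annular neighborhood of $c$; since $c$, the two curves of $v$, and any such components of $u$ are all mutually isotopic, we may arrange the components of $u$ to lie outside the thin annulus between $v$'s two curves---unless $u$ is entirely supported there, in which case $u = v$.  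The remaining components of $u$ are disjoint from $v$ and lie in the complementary region of $v$, so $u \in \Lk(v)$.

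For the backward direction, I argue the contrapositive:  suppose $v$ and $w$ are distinct and nested but $v \not\subset w$.  Choose representatives with $v$ in a region $R$ of $w$ (the other case, with $w$ in a region of $v$, is handled similarly).  Since $v \not\subset w$, at least one curve $c$ of $v$ is essential in $R$ and not boundary-parallel, so $R$ is neither a disk, annulus, nor pair of pants.  Hence $R$ admits essential simple closed curves in its interior that intersect $c$ transversely.  Pick such a curve $c'$ in the interior of $R$, and let $u = \{c', c''\}$, where $c''$ is a parallel push-off if $c'$ is nonseparating in $S_g$, or let $u = \{c'\}$ if $c'$ is separating in $S_g$.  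Either way, $u$ is a dividing set with representative in $R$, so $u \in \Lk(w)$; moreover $u$ intersects $v$ transversely, so $u \neq v$ and $u \notin \Lk(v)$, contradicting the combinatorial condition.

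The main obstacle is the case analysis in the backward direction:  verifying that, when $v \not\subset w$, the region $R$ really does have enough complexity to contain an interior curve $c'$ intersecting $c$ from which one can build a valid dividing set $u$.  Once this is handled, the combinatorial characterization involves only the edge relation of $\C_{\D(S_g)}$ and so is preserved by $\phi$; hence $\phi$ preserves subordinacy.
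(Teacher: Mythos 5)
Your proof is correct and takes essentially the same approach as the paper: characterize subordinacy combinatorially (you phrase it as a link condition, the paper phrases it equivalently as $\mathrm{Star}(w) \subseteq \mathrm{Star}(v)$) and then observe that this is automatically preserved by any automorphism. The only cosmetic difference is in the backward direction: you build a witness dividing set $u$ from scratch inside the region $R$ (permissible here since $D = \D(S_g)$ contains every dividing set), whereas the paper produces the witness as an $\MCG(S_g)$-translate $v'$ of $v$ itself; both routes yield the same conclusion.
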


\begin{proof}

We make the following claim: a vertex $v$ is subordinate to a vertex $w$ if and only if the star of $w$ is contained in the star of $v$.  Since automorphisms preserve stars, the lemma will follow from this.

For the forward direction of the claim, suppose that $v$ is subordinate to $w$.  As above, $v$ has type N.  Thus a vertex $u$ is connected to $v$ by an edge if and only if $u$ and $v$ have disjoint representatives.  Therefore, if a vertex $u$ is not connected to $v$ by an edge, then $u$ is not connected to $w$ by an edge (it intersects the component of $w$ corresponding to $v$); thus the star of $w$ is contained in the star of $v$.

Now suppose that $v$ is not subordinate to $w$.  If $v$ and $w$ are not joined by an edge, then $w$ is not contained in the star of $v$ and we are done.  If $v$ and $w$ are joined by an edge, then they have disjoint, nested representatives.  Since $v$ is not subordinate to $w$, the representative of $v$ has a component that is not homotopic to any component of $w$, that is, $v$ is represented by a non-peripheral dividing set in one of the two regions of $S_g$ determined by the representative of $w$.  It follows that there is a $\MCG(S_g)$-translate $v'$ of $v$ that is connected by an edge to $w$ and has essential intersection with $v$, so $v'$ is not connected by an edge to $v$.  This completes the proof. 
\end{proof}

\p{Finishing the proof in the easy case} The next proposition constitutes the special case of Theorem~\ref{theorem:multi k} in the case where $D =  \D(S_g)$.  

\begin{prop}
\label{prop:cd easy case}
Suppose that $g \geq 3$.  The natural map
\[  \MCG(S_g) \to \Aut \C_{\D(S_g)}(S_g) \]
is an isomorphism.
\end{prop}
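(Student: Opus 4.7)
The plan is to reduce to Ivanov's theorem that $\Aut\C(S_g)\cong\MCG(S_g)$ for $g\geq 3$. Injectivity of the natural map $\MCG(S_g)\to\Aut\C_{\D(S_g)}(S_g)$ is immediate from Lemma~\ref{inj}, so it suffices to establish surjectivity. I would fix $\phi\in\Aut\C_{\D(S_g)}(S_g)$ and then build an element $f\in\MCG(S_g)$ inducing $\phi$.

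The first step is to observe that the three observations preceding Lemma~\ref{subord} characterize each of the types N, S, and M of a vertex purely in terms of the subordinacy relation: a vertex has type N iff it is nontrivially subordinate to some other vertex; type M iff some other vertex is nontrivially subordinate to it; and type S iff neither. Since Lemma~\ref{subord} says $\phi$ preserves subordinacy, $\phi$ preserves each of the three type classes.

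Next I would pass to the full subcomplex $X\subseteq\C_{\D(S_g)}(S_g)$ spanned by the vertices of type N and type S. Every such vertex is represented by a single simple closed curve, and two distinct such vertices are nested in the sense of the definition of $\C_{\D(S_g)}(S_g)$ if and only if they have disjoint representatives. Hence, as flag complexes, $X$ is canonically isomorphic to $\C(S_g)$. By the preceding step, $\phi$ restricts to an automorphism of $X$, so by Ivanov's theorem there exists $f\in\MCG(S_g)$ whose natural action on $X$ agrees with $\phi|_X$.

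The remaining task is to check that $\phi$ and $f$ agree on every type M vertex $w$ as well. I would argue this from the fact that the set of type N vertices nontrivially subordinate to $w$ is precisely the set of components of $w$ viewed individually as nonseparating curves, and that a type M dividing set is determined by its set of components. Since $\phi$ preserves both subordinacy and type, $\phi$ sends this set bijectively to the set of type N vertices subordinate to $\phi(w)$, namely the components of $\phi(w)$; and since $\phi$ and $f$ already coincide on type N vertices, the components of $\phi(w)$ and of $f(w)$ match, forcing $\phi(w)=f(w)$. The only nontrivial ingredient beyond Ivanov is the characterization of type via subordinacy, which is already packaged in Lemma~\ref{subord}, so this step should not be the main obstacle.
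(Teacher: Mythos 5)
Your proposal is correct and follows essentially the same route as the paper: characterize vertex types N, S, M via the subordinacy relation, identify the subcomplex spanned by the type N and S vertices with $\C(S_g)$, invoke Ivanov, and then recover the action on type M vertices from the type N vertices subordinate to them. The paper phrases this as constructing an injective restriction map $\Aut \C_{\D(S_g)}(S_g) \to \Aut \C(S_g)$ and checking the natural map is a right inverse, while you argue element-by-element with a fixed $\phi$ and $f$; the content is the same.
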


\begin{proof}[Proof of Proposition~\ref{prop:cd easy case}]

We already explained that the type of a vertex in $\C_{\D(S_g)}(S_g)$ is completely determined by the subordinacy relation.  It follows from this and from Lemma~\ref{subord} that the vertices of type N, S, and M form three characteristic subsets of $\C_{\D(S_g)}(S_g)$ (meaning each subset is preserved by automorphisms).  Identifying $\C(S_g)$ with the subcomplex of $\C_{\D(S_g)}(S_g)$ spanned by vertices of type N and S, we thus obtain a homomorphism
\[ \Aut \C_{\D(S_g)}(S_g) \to \Aut \C(S_g) \]
given by restriction.  This map is injective because each vertex of type M is determined by the vertices of type N that are subordinate to it.  Thus the composition
\[ \Aut \C_{\D(S_g)}(S_g) \to \Aut \C(S_g) \stackrel{\cong}{\to} \MCG(S_g) \]
is injective.  It remains to check that the map $\MCG(S_g) \to \Aut \C_{\D(S_g)}(S_g)$ is a right inverse.  But this is true because the composition $\MCG(S_g) \to \Aut \C_{\D(S_g)}(S_g) \to \Aut \C(S_g)$ is equal to the natural map $\MCG(S_g) \to \Aut \C(S_g)$. 
\end{proof}


\subsection{Complexes without vertices of type N}

To prove Theorem~\ref{theorem:multi k}, it remains to deal with the case where $\C_D(S_g)$ has no vertices of type N.  For this case, there are three technical ingredients.  The first ingredient, which takes advantage of the assumption that there are no vertices of type N, is that every simplex of $\C_D(S_g)$ has a normal form representative that is unique up to isotopy in $S_g$ (Lemma~\ref{lemma:div set normal form}).  The second ingredient is the notion of a linear simplex in $\C_D(S_g)$, a special type of simplex in $\C_D(S_g)$.  The third ingredient is the idea of a exceptional edge, a certain type of configuration that only can involve vertices of type M.   Along the way we also give a topological characterization of upper sets in $\D(S_g)$.  Then we determine $\Aut \C_D(S_g)$ by showing that automorphisms of $\C_D(S_g)$ preserve the set of vertices of type S.   

\p{Normal form representatives.}  Let $\sigma$ be a simplex of $\C_D(S_g)$ with vertices $v_0,\dots,v_n$.  A \emph{normal form} representative  for $\sigma$ in $S_g$ is a collection of pairwise nested multicurves $m_0,  \dots, m_n$ where $m_i$ represents $v_i$.

\begin{lemma}\label{lemma:div set normal form}
Suppose that $D \subseteq \D(S_g)$ contains no elements of type N.  Every simplex of $\C_D(S_g)$ has a normal form representative, unique up to isotopy of $S_g$.
\end{lemma}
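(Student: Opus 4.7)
I would split the proof into existence and uniqueness, reducing both to the standard surface-topology fact that any finite collection of isotopy classes of multicurves in $S_g$ admitting \emph{pairwise} disjoint representatives can be simultaneously realized as a collection of pairwise disjoint multicurves, and that this realization is unique up to ambient isotopy of $S_g$.  The bridge between ``nested'' and ``disjoint'' is the following preliminary observation that I would establish first: two vertices $v,w$ of $\C_D(S_g)$ are joined by an edge if and only if \emph{every} pair of disjoint representatives of $v$ and $w$ is nested.  The nontrivial direction follows from the uniqueness of disjoint realizations of a pair of isotopy classes up to ambient isotopy of $S_g$: if one disjoint realization is nested, then every disjoint realization is, since ambient isotopy preserves the property of being contained in a complementary region.

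For existence, let $\sigma=\{v_0,\dots,v_n\}$ be a simplex of $\C_D(S_g)$.  Since every pair $v_i, v_j$ is joined by an edge, every pair admits disjoint representatives, and the standard simultaneous-disjointness result (proved by iterated bigon reduction) yields representatives $m_0,\dots,m_n$ of $v_0,\dots,v_n$ that are pairwise disjoint in $S_g$.  Applying the preliminary observation to each pair $(m_i,m_j)$ upgrades pairwise disjointness to pairwise nestedness, producing the desired normal form representative.  For uniqueness, suppose $(m_0,\dots,m_n)$ and $(m_0',\dots,m_n')$ are two normal form representatives of $\sigma$: both are pairwise disjoint, and $m_i$ is isotopic to $m_i'$ for each $i$.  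Uniqueness of simultaneous disjoint realizations then yields an ambient isotopy of $S_g$ carrying $m_i$ to $m_i'$ for every $i$.

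The hypothesis that $D$ contains no elements of type N enters exactly in order to match up labels in the uniqueness step: a type N vertex is the only kind whose two parallel non-separating components are individually isotopic in $S_g$, so that in its absence the unordered multicurve $m_0 \cup \dots \cup m_n$ canonically determines the labeled tuple $(m_0,\dots,m_n)$.  I expect the main delicate point to be the preliminary observation---ruling out that two disjoint representatives could fail to be nested because components of one multicurve lie on both sides of the other---and this is precisely the point at which the absence of type N vertices will have to be deployed carefully.  Once that observation is secured, the remainder of the argument is a routine application of standard surface-topology tools.
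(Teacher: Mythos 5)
Your ``preliminary observation'' — that two vertices of $\C_D(S_g)$ are joined by an edge if and only if \emph{every} pair of disjoint representatives is nested — is false, and this gap undermines both halves of your argument. The problem arises when two distinct multicurves $m_v$ and $m_w$ representing an edge share a component up to isotopy: the disjoint realization then contains a parallel family, and the two ways of ordering the parallel copies are \emph{not} related by an ambient isotopy of $S_g$ (one ordering gives a nested configuration, the other does not). Concretely, if $m_v = a \cup b$ and $m_w = a' \cup c$ with $a' \simeq a$ and $m_w$ containing $m_v$'s small side, then placing $a'$ on the ``inside'' of $a$ puts the components of $m_w$ on opposite sides of $m_v$, destroying nestedness. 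So you cannot ``upgrade pairwise disjointness to pairwise nestedness'' by simply noting that each pair is an edge; you must actively \emph{choose} the ordering of each parallel family, and then prove that a single choice works simultaneously for all pairs (i.e., that a certain relation is transitive). That choice-and-transitivity argument is the entire content of the paper's proof, and your proposal omits it.

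Your account of where the no-type-N hypothesis enters is also off the mark. You argue that a type-N vertex is the only kind of dividing set with parallel components \emph{within a single multicurve}, so that in its absence the unordered union determines the labeled tuple. But the labeling ambiguity in question comes from parallelism \emph{between different} $m_i$'s (shared components), which occurs freely even without type-N vertices, as in the example above. The hypothesis is actually needed for a different reason: to guarantee that for a nested pair $m_0, m_1$, the multicurve $m_0$ has at least one component $c$ not parallel to any component of $m_1$ (this is exactly the failure of subordinacy, which only happens for type-N vertices). That distinguished component $c$ is what pins down the unique ordering of the parallel families, since the $m_0$-curves must all lie on the side of $m_1$ containing $c$.
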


\begin{proof}

Let $\sigma$ be a simplex in $\C_D(S_g)$ with vertices $v_0, \dots, v_n$.  Choose representatives $m_i$ for the $v_i$ that are pairwise disjoint.  Such a collection $\cup m_i$ exists and is unique up to isotopy of $S_g$ and  reordering the curves in each parallel family; see, e.g., \cite[Section 1.2.4]{primer}.  It remains to show there is a unique choice of ordering of each parallel family so that the resulting representatives of the $v_i$ are pairwise nested, hence are in normal form.

We first deal with the case where $\sigma=\{v_0,v_1\}$ is an edge and then use this to prove the general case.  By the definition of $\C_D(S_g)$, the vertices $v_0$ and $v_1$ have nested representatives $m_0$ and $m_1$.  Such representatives are unique up to ambient isotopy in $S_g$ and reordering the connected components that come in parallel families.  Finally, there is a unique way to order a given pair of parallel curves: since $m_0$ has at least one component $c$ that is not parallel to any component of $m_1$, we must order the curves so that the $m_0$-curves lie on the side of $m_1$ containing $c$.

Now let $\sigma$ be an arbitrary simplex of dimension greater than one and again choose disjoint representatives $m_i$.  Consider one particular parallel family of curves in $\cup m_i$, and let $A$ be a (nonseparating) annulus in $S_g$ containing this family.  Arbitrarily name the two boundary components of $A$ as the left and right boundary components.  

For any $m_i$ and $m_j$ with components in $A$, we choose a normal form representative of the edge spanned by $v_i$ and $v_j$ (possibly modifying $m_i$ and $m_j$ in the process).  We can then declare $v_i$ to be to the left of $v_j$ in $A$ if the $m_i$-curve is closer to the left boundary of $A$ than the $m_j$-curve in $A$.  We need to show that this gives a total ordering on the set of vertices of $\sigma$ incident to $A$, that is, we need to show that the given relation is transitive.  

Suppose that $v_i$ is to the left of $v_j$ in $A$ and $v_j$ is to the left of $v_k$ in $A$.  This means that there are nested representatives $m_i$ and $m_j$ of $v_i$ and $v_j$ so that $m_i$ is to the left of $m_j$ in $A$, and nested representatives $m_j'$ and $m_k$ of $v_j$ and $v_k$ so that $m_j'$ is to the left of $m_k$ in $A$.  By the uniqueness mentioned earlier, we can apply an ambient isotopy to $m_j'$ and $m_k$ so that $m_j'$ is equal to $m_j$.  In particular, $m_j$---hence $m_i$---is to the left of $m_k$ in $A$, as desired.

After rearranging each parallel family of curves according to the above ordering, the resulting representative of $\sigma$ is a normal form representative, since a representative of a simplex is in normal form if and only if it restricts to a normal form representative for each edge.  This completes the proof.
\end{proof}

\p{A characterization of upper sets.} Assume that $\C_D(S_g)$ has no vertices of type N.  Let $\sigma = \{v_0,\dots,v_n\}$ be a simplex of $\C_D(S_g)$.  As a consequence of Lemma~\ref{lemma:div set normal form} it makes sense to say that a vertex $v_j$ lies \emph{between} $v_i$ and $v_k$ in $S_g$; specifically, this means that for any normal form representative $\{m_0,\dots,m_n\}$, the dividing sets $m_i$ and $m_k$ lie on opposite sides of $m_j$.  Equivalently, we can say that $v_i$ and $v_j$ lie \emph{on the same side} of $v_k$.

We say that a subset $D$ of $\D(S_g)$ is \emph{closed under separations} if whenever $u$ and $w$ are homotopy classes of dividing sets representing elements of $D$ and $v$ is a homotopy class of dividing sets lying between $u$ and $w$ we have that $v$ represents an element of $D$.  

\begin{lemma}
\label{upper set}
Let $g \geq 0$. Let $D \subseteq \D(S_g)$  be a subset with no element of type N.  Then $D$ is an upper set if and only if it is closed under separations.
\end{lemma}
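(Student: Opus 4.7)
The plan is to prove the two implications separately by translating between the ``one-sided'' condition built into the partial order $\preceq$ and the ``two-sided'' condition built into closure under separations.

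For the forward direction, I would assume $D$ is an upper set, take homotopy classes $u$ and $w$ representing elements of $D$, and let $v$ be a homotopy class lying between them. By the definition of ``between'' together with Lemma~\ref{lemma:div set normal form}, the three classes span a simplex of $\C_{\D(S_g)}(S_g)$ in which $u$ and $w$ lie in opposite complementary regions of $v$. Let $k_1 \le k_2$ denote the genera of these two complementary regions; after possibly swapping $u$ and $w$, I may assume $u$ lies on the genus-$k_1$ side, which is a minimum-genus complementary region of $v$. This is exactly the defining condition of $[u] \preceq [v]$, so the upper set hypothesis immediately gives $[v] \in D$. In the case $k_1 = k_2$, either of $u, w$ works.

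For the backward direction, I would assume $D$ is closed under separations and take $[a] \in D$ with $[a] \preceq [b]$. By the definition of $\preceq$ there are nested representatives $u$ of $[a]$ and $B$ of $[b]$ with $u$ contained in a minimum-genus complementary region $R_1$ of $B$; let $R_2$ denote the other complementary region. The plan is to produce a second representative $w$ of $[a]$ contained in $R_2$, for then the three classes $\{u, B, w\}$ are pairwise nested (since $u \subseteq R_1$ and $w \subseteq R_2$ are separated by $B$), hence span a 2-simplex of $\C_{\D(S_g)}(S_g)$ in which $B$ lies between $u$ and $w$. Closure of $D$ under separations would then force $[b] \in D$.

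The main obstacle is thus the construction of $w$. Two topological observations drive the construction: first, $R_1$ and $R_2$ are compact connected subsurfaces of $S_g$ with the same number of boundary components (equal to the number of components of $B$); second, $\mathrm{genus}(R_2) \ge \mathrm{genus}(R_1)$. Let $T$ denote the complementary region of $u$ in $S_g$ disjoint from $B$; it is a connected compact subsurface of $R_1$ of some topological type $(h_a, n_a)$, where $n_a$ is the number of components of $u$. Since $T$ embeds in $R_1$ as a subsurface with boundary disjoint from $\partial R_1$, a standard surface-topology argument produces a homeomorphic copy $T' \subseteq R_2$ with $\partial T'$ disjoint from $\partial R_2 = B$. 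Setting $w = \partial T'$ gives a multicurve in $R_2$; one checks that $w$ is in fact a dividing set of $S_g$, and an additivity computation for Euler characteristics shows that the two complementary regions of $w$ in $S_g$ have the same unordered pair of topological types as those of $u$. The change of coordinates principle for surfaces then yields $[w] = [u] = [a]$, completing the construction and the proof.
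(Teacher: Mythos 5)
Your proposal is correct and takes essentially the same approach as the paper: for the forward direction you observe that $v$ dominates one of $u,w$ in $\preceq$ and invoke the upper-set hypothesis, and for the backward direction you manufacture a $\MCG(S_g)$-translate $w$ of $u$ on the far side of $v$ and invoke closure under separations. The only difference is one of detail: the paper simply asserts that such a translate exists ``since $u \preceq v$,'' whereas you spell out the underlying change-of-coordinates/Euler-characteristic argument, which is a correct elaboration of the same step.
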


\begin{proof}

Let $D \subseteq \D(S_g)$.  Suppose first that $D$ is an upper set.   Let $u$ and $w$ be homotopy classes of dividing sets representing elements of $D$ and so that $v$ lies between $u$ and $w$; for concreteness we imagine that $u$ lies to the left of $v$ and $w$ lies to the right.  We will show that $v$ is either larger than $u$ or larger than $w$ in the partial order on $\D(S_g)$.  Suppose that $v$ is not larger than $w$.  Then the regions of $S_g$ to the left and right of $v$ have genus $g_1$ and $g_2$, respectively, with $g_1 < g_2$.  It follows that the regions of $S_g$ to the left and right of $u$ have genus $h_1$ and $h_2$, respectively, with $h_1 \leq g_1$ and $h_2 \geq g_2$.  It follows that $u \preceq v$.  Thus $v$ represents an element of $D$ and we have shown that $D$ is closed under separations.

For the other direction suppose that $D$ is closed under separations.  Let $u$ and $v$ be homotopy classes of dividing sets with $u \in D$ and with $u \preceq v$.  We would like to show that $v$ represents an element of $D$.  Say that $u$ lies to the left of $v$.  Since $u \preceq v$, there is a $\MCG(S_g)$-translate $w$ of $u$ that lies to the right of $v$.  This $w$ represents the same element of $D$ as $u$.  Since $D$ is closed under separations, $v$ represents an element of $D$, as desired.
\end{proof}

\p{Linear simplices.}  We say that a simplex $\sigma$ of $\C_D(S_g)$ is \emph{linear} if we can label the vertices of $\sigma$ by $v_0,\dots,v_n$ in such a way that $v_j$ lies between $v_i$ and $v_k$ whenever $i < j < k$.  We can think of a linear simplex as a sequence of cobordisms between 1-manifolds, starting and ending with the empty manifold.  We will refer to the vertices $v_0$ and $v_n$ as \emph{extreme} vertices.

\begin{lemma}
\label{lemma:linear}
Let $g \geq 0$.  Suppose that $D \subseteq \D(S_g)$ contains no elements of type N.  Let $D \subseteq \D(S_g)$, let $\sigma$   be a linear simplex of $\C_D(S_g)$ with ordered vertices $v_0,\dots,v_n$ and let $\phi$ be an automorphism of $\C_D(S_g)$.  Then $\phi(\sigma)$ is a linear simplex of $\C_D(S_g)$ with ordered vertices $\phi(v_0),\dots,\phi(v_n)$.
\end{lemma}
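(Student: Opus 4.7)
The plan is to reduce the statement to a combinatorial condition on 2-faces (triangles) of $\sigma$ that is manifestly preserved by automorphisms of $\C_D(S_g)$. By Lemma~\ref{lemma:div set normal form}, each simplex of $\C_D(S_g)$ has a unique normal form representative, and unwinding the definitions shows that $\sigma$ is linear with ordering $v_0,\dots,v_n$ iff for every $0 \le i<j<k \le n$, the vertex $v_j$ lies between $v_i$ and $v_k$ in the normal form of the 2-face $\{v_i,v_j,v_k\}$. Accordingly, it suffices to prove: if $v$ lies between $u$ and $w$ in a 2-face $\{u,v,w\}$ of $\sigma$, then $\phi(v)$ lies between $\phi(u)$ and $\phi(w)$ in the corresponding 2-face of $\phi(\sigma)$.

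The key ingredient is the following combinatorial characterization of the ``middle vertex'': in a 2-simplex $\{u,v,w\}$ of $\C_D(S_g)$, the vertex $v$ lies between $u$ and $w$ iff every vertex $x$ of $\C_D(S_g)$ for which $\{v,x\}$ is an edge has at least one of $\{u,x\}$, $\{w,x\}$ also an edge. Since this condition is stated purely in terms of the edge relation of $\C_D(S_g)$, it is preserved by $\phi$, and the lemma follows at once. The forward direction is immediate: if $v$ lies between $u$ and $w$ in the normal form, then the representative $m_v$ separates $S_g$ into two regions with $m_u$ in one and $m_w$ in the other, so any $x$ nested with $v$ lies entirely in one of these regions, is disjoint from whichever of $u,w$ lies in the opposite region, and so is nested with it.

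The reverse direction, which is the main obstacle of the proof, requires a topological construction. Suppose $v$ is not the middle vertex of $\{u,v,w\}$; then the normal form of the triangle places $u$ and $w$ in a common complementary region $A$ of $v$ in $S_g$. This occurs either when the triangle is linear with $u$ or $w$ central, or when the triangle is non-linear, meaning there is a central complementary region of $S_g$ bordered by all three of $m_u, m_v, m_w$. In each case I would produce a dividing set $x$ supported in $A$ whose representative essentially intersects both $u$ and $w$: concretely, $x$ can be taken to be a separating curve of $S_g$ lying in $A$ that meets each of $m_u, m_w$ transversely. The hypothesis $g \ge 3\check g(D)+1$ provides enough genus in $A$ to carry out this construction so that the smaller-genus side of $x$ in $S_g$ contains a subsurface supporting a dividing set of genus $\check g(D)$; since $D$ is an upper set (and, by Lemma~\ref{upper set}, closed under separations), such an $x$ itself represents an element of $D$, giving the desired vertex of $\C_D(S_g)$ nested with $v$ but nested with neither $u$ nor $w$.

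With the characterization established, the lemma follows directly: applying it to each triple $(v_i,v_j,v_k)$ with $i<j<k$ shows that $\phi(v_j)$ is the middle vertex of the 2-face $\{\phi(v_i),\phi(v_j),\phi(v_k)\}$, which is precisely the condition that $\phi(v_0),\dots,\phi(v_n)$ be a valid linear ordering of $\phi(\sigma)$.
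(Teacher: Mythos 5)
Your reduction to 2-faces and your combinatorial characterization of the middle vertex (``$v$ lies between $u$ and $w$ iff the link of $v$ is contained in the union of the links of $u$ and $w$'') is exactly the characterization the paper uses, and your forward direction matches the paper's. However, your reverse direction has a genuine gap: you invoke the hypothesis $g \ge 3\check g(D)+1$ and the assumption that $D$ is an upper set, but \emph{neither of these appears in the statement of Lemma~\ref{lemma:linear}}. The lemma only assumes $g \ge 0$ and that $D$ has no elements of type N. Your plan of building a fresh separating curve $x$ ``of genus $\check g(D)$'' and arguing it lies in $D$ via closure under separations simply does not have the hypotheses to support it; indeed without the upper-set assumption there is no reason a separating curve you construct should represent any element of $D$ at all.

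The correct argument avoids constructing a new dividing set from scratch. When $v$ does not lie between $u$ and $w$, the normal form places nonperipheral components of both $m_u$ and $m_w$ in the same complementary region $A$ of $m_v$. Now take a $\MCG(S_g)$-translate $u'$ of $u$ by a mapping class supported in $A$, chosen so that $u'$ essentially intersects both $m_u$ and $m_w$ (this is possible precisely because both have nonperipheral material in $A$). Since $u'$ is in the $\MCG(S_g)$-orbit of $u$ and $\C_D(S_g)$ is by definition $\MCG(S_g)$-invariant, $u'$ is automatically a vertex of $\C_D(S_g)$; it is nested with $v$ (it lives in $A$), but it is not nested with $u$ or $w$ (it crosses both). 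This gives the required vertex in $\Lk(v)$ but not in $\Lk(u) \cup \Lk(w)$, with no appeal to any genus bound or upper-set structure.
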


\begin{proof}

Let $u$, $v$, and $w$ be three vertices of an arbitrary simplex of $\C_D(S_g)$.  We will show that $v$ lies between $u$ and $w$ if and only if the link of $v$ in $\C_D(S_g)$ is contained in the union of the links of $u$ and $w$; in other words, if a vertex $z$ is not connected by an edge to either $u$ or $w$ then it is not connected by an edge to $v$ either.  This will imply the lemma.

First suppose that $v$ lies between $u$ and $w$ in $S_g$.  If $z$ is any vertex of $\C_D(S_g)$ connected by an edge to $v$, then $z$ is also connected by an edge to one of $u$ and $w$, specifically, the one lying on the opposite side of $v$ from $z$.  This completes the forward direction of the claim.

For the other direction, suppose $v$ does not lie between $u$ and $w$.  Let $\{m_u,m_v,m_w\}$ be a normal form representative for the simplex $\{u,v,w\}$.  Both $m_u$ and $m_w$ must lie on the same side of $m_v$ and each must have a component that is not homotopic into $m_v$.  We can thus find a vertex $u'$ of $\C_D(S_g)$ in the $\MCG(S_g)$-orbit of $u$ that is connected to $v$ but not to $u$ or $w$ (because $u'$ intersects $u$ and $w$ nontrivially).  This completes the proof.
\end{proof}

\begin{figure}
\includegraphics[scale=.5]{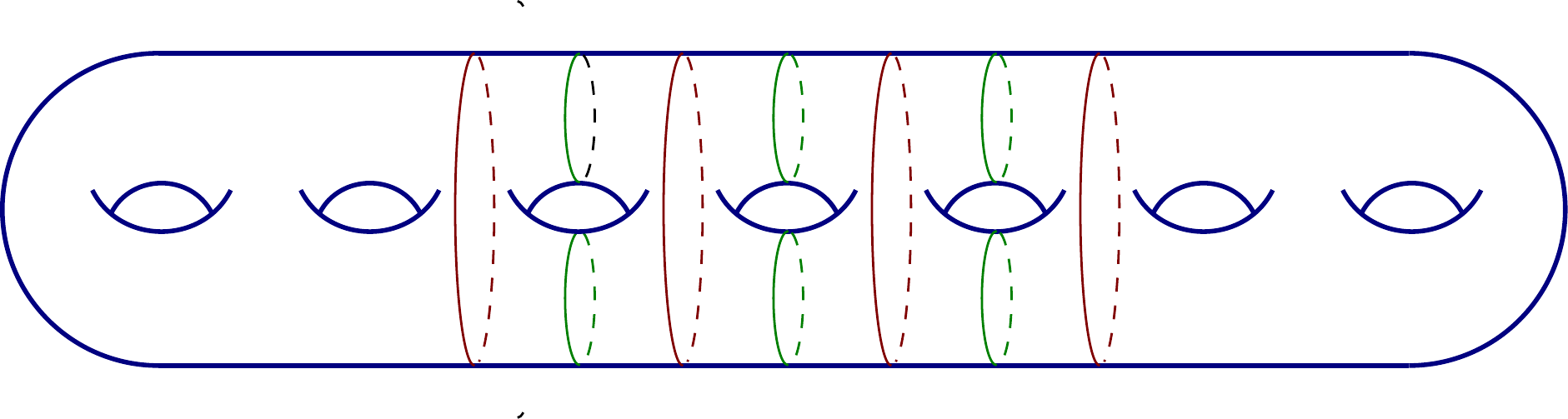}
\caption{A specific type of linear simplex}
\label{fig:linear}
\end{figure}

\begin{lemma}
\label{lemma:four}
Suppose that $D \subseteq \D(S_g)$ is an upper set that contains no elements of type N and suppose that $g \geq \max\{3 \check g(D) + 1,5\}$.  If $v$ is a vertex of $\C_D(S_g)$ of type S then $v$ is an extreme vertex of linear simplex of $\C_D(S_g)$ with five vertices.
\end{lemma}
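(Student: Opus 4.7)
The plan is to build a linear $5$-vertex simplex in $\C_D(S_g)$ whose extreme vertices are $v$ itself and a second type-S vertex $v^* \in D$ of minimum genus $\check g(D)$, with three interior vertices obtained by cutting up the piece of $S_g$ that lies between them. The mileage comes from Lemma~\ref{upper set}: since $D$ is closed under separations, any dividing set that lies between two elements of $D$ in a normal form representative automatically lies in $D$, so it suffices to produce the three interior vertices geometrically.

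First, realize $v$ as a single separating curve of genus $k$ with $\check g(D) \leq k \leq g/2$, and let $L$ and $R$ be the two sides of $v$, of genera $k$ and $g-k$. Position a representative of a type-S vertex $v^* \in D$ of genus $\check g(D)$ inside the larger side $R$; this cuts $R$ into a cap $R_*$ of genus $\check g(D)$ not meeting $v$, and a middle region $R_0$ with connected boundary $v \sqcup v^*$ and genus $G := g - k - \check g(D)$. A short case analysis using $k \leq g/2$ and the hypothesis $g \geq \max\{3\check g(D)+1,5\}$ forces $G \geq 2$, with tight cases arising, for instance, when $\check g(D) = 1$, $g = 5$, $k = 2$.

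The main construction is to find three pairwise-nested dividing sets $v_1, v_2, v_3 \subset R_0$ of $S_g$, each separating $v$ from $v^*$, linearly ordered between them. When $G \geq 3$, take $v_2$ to be a single separating curve in $R_0$ splitting it into subsurfaces $A \ni v$ and $B \ni v^*$ of genera $G_A, G_B \geq 1$ with $G_A + G_B = G$; then place $v_1 \subset A$ and $v_3 \subset B$ by the same scheme---a single separating curve when the ambient subsurface has genus at least $2$, or a two-curve ``pants--pants'' dividing set (two non-parallel curves bounding a pair of pants on each side) when it has genus exactly $1$. In the tight case $G = 2$, use a specific pants decomposition of $R_0$ into four pants $P_1, \ldots, P_4$ arranged linearly with $v \in \partial P_1$, $v^* \in \partial P_4$, and shared boundaries $\partial P_1 \cap \partial P_2 = c_1 \sqcup c_2$, $\partial P_2 \cap \partial P_3 = c_3$, $\partial P_3 \cap \partial P_4 = c_4 \sqcup c_5$; then set $v_1 := c_1 \cup c_2$, $v_2 := c_3$, $v_3 := c_4 \cup c_5$.

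All pairwise nestings and the ``between'' relations follow at once from the concentric geometric layout, and Lemma~\ref{upper set} places $v_1, v_2, v_3$ in $D$; thus $v_0 := v, v_1, v_2, v_3, v_4 := v^*$ is a linear $5$-simplex of $\C_D(S_g)$ with $v_0$ extreme. The main obstacle is the tight case $G = 2$: no chain of single-curve dividing sets has room to fit four nontrivial layers inside a genus-$2$ region, so one must commit to multicurve dividing sets with component counts alternating $1, 2, 1, 2, 1$ (matching the pants decomposition), and then verify carefully that each two-curve piece is actually a dividing set of $S_g$ (not only of $R_0$), has components that are non-parallel in $S_g$, and that all five vertices remain distinct as isotopy classes.
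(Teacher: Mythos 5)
Your approach is correct but takes a genuinely different route than the paper's. The paper constructs a single canonical linear simplex $\tau = \{v_0,\dots,v_n\}$ alternating type-S separating curves with two-component type-M multicurves, starting and ending at curves of genus $\check g(D)$; it then computes $n = 2g - 4\check g(D) \geq 6$ and observes that every type-S vertex is $\MCG(S_g)$-equivalent to some $v_i$ with $i$ even and $i \leq n/2$, whence $\{v_i,\dots,v_{i+4}\}$ is the required five-vertex linear simplex. You instead build a five-vertex simplex directly for the given $v$: a minimum-genus separating curve $v^*$ placed on the big side of $v$, a numerical check that the region between them has genus $G \geq 2$, and three nested dividing sets filling that region, with an alternating $1,2,1,2,1$ component pattern when $G = 2$. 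Both proofs hinge on the same key ingredient, Lemma~\ref{upper set}, to put the intermediate dividing sets into $D$. The paper's version trades your case analysis on $G$ (and the sub-cases on $G_A, G_B$) for a single counting argument plus the orbit observation; yours avoids any appeal to the uniqueness or orbit structure of the canonical $\tau$. (Indeed, your $G = 2$ chain is precisely a five-vertex subchain of the paper's $\tau$, and the paper's proof exploits that structure uniformly.) You flag rather than carry out the routine verifications at the end---distinctness of the five isotopy classes, validity of each two-curve piece as a dividing set of $S_g$ with non-parallel components---but these do go through as you expect, so the argument is sound.
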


\begin{proof}

There is a linear simplex $\tau = \{v_0,\dots,v_n\}$ in $\C_D(S_g)$ where $n$ is even, where each $v_i$ with $i$ even is of type $S$, and where $\tau$ is maximal with respect to these properties.  The simplex $\tau$ is unique up to the action of $\MCG(S_g)$; see Figure~\ref{fig:linear}.  Note that the existence of $\tau$ uses the assumption that $D$ is an upper set.

We claim that $n \geq 6$.  Indeed, since $\tau$ is maximal, both $v_0$ and $v_n$ have genus $\check g(D)$.  So the region between $v_0$ and $v_n$ has genus $g-2\check g(D)$.  Thus $\tau$ has $g-2\check g(D)+1$ vertices of type S and $g-2\check g(D)$ vertices of type $M$ (each with two components), for a total of $2g-4\check g(D)+1$ vertices, so $n=2g-4\check g(D)$.  Since $g \geq 3 \check g(D) + 1$ we have $n \geq 2\check g(D) + 2$.  It immediately follows that $n \geq 6$ when $\check g(D) \geq 2$.  When $\check g(D) = 1$ it follows from the assumption that $g \geq 5$ and the equality $n=2g-4\check g(D)$ that $n \geq 6$.

Every vertex of type S lies in the orbit of one of the vertices $v_i$ with $i$ even and $0 \leq i \leq n/2$.  And so it suffices to show that each such $v_i$ satisfies the statement of the lemma.  But since $n \geq 6$ it is the case that for all $i$ even with $0 \leq i \leq n/2$ we have $i + 4 \leq n$ and so $\{v_i,\dots,v_{i+4}\}$ is the desired linear simplex.
\end{proof}

\begin{figure}
  \labellist
\small\hair 2pt
 \pinlabel {$v$} [ ] at 198 -10
 \pinlabel {$w$} [ ] at 246 -10
 \endlabellist  
\includegraphics[scale=.45]{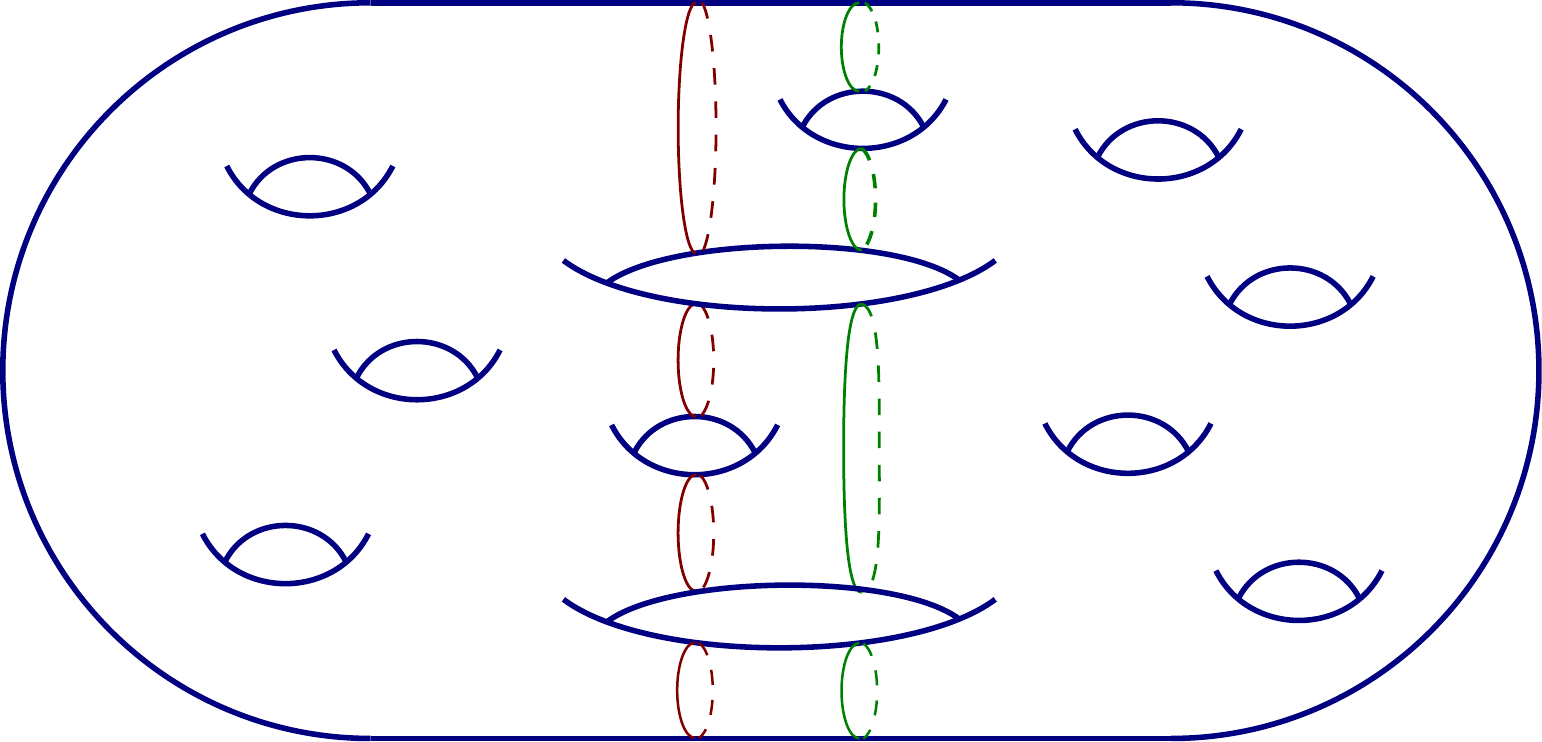}
\caption{A typical exceptional edge}
\label{fig:example}
\end{figure}

\p{Exceptional edges}  We now explain the third tool required for the proof of Theorem~\ref{theorem:multi k}.  We say that vertices $v_1$ and $v_2$ of $\C_D(S_g)$ form an \emph{exceptional edge} if they are connected by an edge in $\C_D(S_g)$ and---after taking a normal form representative for the edge---the subsurface between $v_1$ and $v_2$ is the disjoint union of some number of annuli and at least two pairs of pants.  Note that in an exceptional edge both vertices must have type M.  An example of an exceptional edge is shown in Figure~\ref{fig:example}.

\begin{lemma}
\label{lemma:tight edges}
Suppose $D \subseteq \D(S_g)$ is an upper set that contains no elements of type N.   The image of an exceptional edge under an automorphism of $\C_D(S_g)$ is an exceptional edge.
\end{lemma}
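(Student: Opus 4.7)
My approach is to give a combinatorial characterization of exceptional edges that is manifestly preserved by automorphisms of $\C_D(S_g)$. The plan proceeds in two stages.

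First, I would study the broader class of \emph{tight edges}: edges $\{v_1,v_2\}$ whose middle region $M$ (in the unique normal form provided by Lemma~\ref{lemma:div set normal form}) is a disjoint union of annuli and pairs of pants. Equivalently, $M$ contains no essential non-peripheral simple closed curve, since the only compact surfaces with that property are disks, annuli, and pairs of pants. I claim $\{v_1,v_2\}$ is tight if and only if no vertex $u$ of $\C_D(S_g)$ lies strictly between $v_1$ and $v_2$ in a linear $3$-simplex. The forward direction is immediate from the definition of normal form. For the reverse, any essential non-peripheral simple closed curve in $M$ extends, by adjoining appropriate parallel copies of components of $v_1$ or $v_2$, to a dividing set of $S_g$ that is nested between $v_1$ and $v_2$; Lemma~\ref{upper set} (upper sets are closed under separations) places this dividing set in $\C_D(S_g)$, and Lemma~\ref{lemma:linear} then yields automorphism-invariance of tightness.

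Second, among tight edges, the exceptional ones are those with at least two pair-of-pants components in $M$, which (given tightness) is equivalent to $\chi(M)\leq -2$. I would characterize this via a \emph{swap condition}: $\{v_1,v_2\}$ is exceptional if and only if there exist distinct vertices $v_1',v_1''\neq v_1$ with $\{v_1',v_2\}$ and $\{v_1'',v_2\}$ both tight and satisfying specified link-level relations with $v_1$ (for instance, each $v_1^{(j)}$ shares the joint star with $v_1$ off the region of $M$, but differs on one pants component). The point is that two independent pants components in $M$ admit two independent pants-local modifications of $v_1$, while zero or one pants components do not. All ingredients of this condition---tightness and the existence of suitable $v_1',v_1''$ distinguished by their links---are automorphism-invariant, so the condition is preserved by $\phi$.

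\textbf{Main obstacle.} I expect the main difficulty to lie in formulating the swap condition of the second stage so that it captures exactly the property $\chi(M)\leq -2$. Care is needed to rule out spurious swaps coming from the outer complements of $M$ (which could confuse the count) and to keep the condition symmetric in $v_1$ and $v_2$. My plan is to anchor the argument in the unique normal form representatives of Lemma~\ref{lemma:div set normal form}, restricting attention to modifications supported inside $M$; the outer-region contributions can then be controlled using that they correspond to $\MCG(S_g)$-translates not nested with $v_1$ in the required manner.
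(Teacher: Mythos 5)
Your first-stage characterization is incorrect, and the error is in the direction you call ``immediate.'' You claim that if $M$ (the middle region) is a disjoint union of annuli and pairs of pants, then no vertex of $\C_D(S_g)$ lies strictly between $v_1$ and $v_2$. This fails as soon as $M$ contains two or more pair-of-pants components. Concretely, suppose $M$ contains pants $P_1$ and $P_2$, where $P_i$ meets $v_1$ along $a_i$ and meets $v_2$ along $b_i, b_i'$. Form the multicurve $u$ consisting of $a_1$, $b_2$, $b_2'$, and the cores of the annular components of $M$. Then $u$ is a dividing set (it separates $S_g$ into $L\cup P_2$ and $R\cup P_1$), it is nested with both $v_1$ and $v_2$, and in any normal form $v_1$ and $v_2$ lie on opposite sides of $u$; since $D$ is an upper set, Lemma~\ref{upper set} puts $u$ in $D$. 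So $\{v_1,u,v_2\}$ is a linear $3$-simplex with $u$ strictly between. In fact, the vertices strictly between $v_1$ and $v_2$ that are supported in $M$ are precisely the $2^n-2$ mixed choices of pants-sides when $M$ has $n$ pants components, so ``no vertex strictly between'' actually picks out $n\leq 1$ --- the \emph{complement} of the exceptional condition $n\geq 2$, not tightness. The second stage (the ``swap condition'') is left unformulated, so the argument does not close.

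For comparison, the paper does not go through tightness or between-ness at all. It proves directly that an edge $\{v_1,v_2\}$ is exceptional if and only if its link in $\C_D(S_g)$ is the join of a \emph{nonempty finite} subgraph with some other (possibly empty) subgraph. For an exceptional edge, the $2^n-2$ mixed-pants vertices form the finite factor $F$. Conversely, one shows every vertex with a component non-peripheral in $L$, $R$, or $C$ has infinitely many $\MCG(S_g)$-translates in the link that it intersects, and that a peripheral-in-$C$ vertex $w$ that is not $v_1$ or $v_2$ can likewise be dodged by a suitable translate of a curve living in a non-pants, non-annulus component of $C$; so if the edge is not exceptional the finite join-factor is forced to be empty. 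If you want to salvage a two-stage plan, you would need the between-vertex count to detect $n\geq 2$ rather than exclude it, and you would still need to rule out the case where $M$ has a non-pants component --- which is exactly what the paper's translate-and-intersect argument handles in one stroke.
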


\begin{proof}

We will show that an edge in $\C_D(S_g)$ is exceptional if and only if its link is the join of a nonempty finite graph with some other (possibly empty) graph (see Section~\ref{sec:sep} for the definition of a join).  Since the latter property is clearly preserved by automorphisms, the lemma will follow.

Let $\{v_1,v_2\}$ be an edge in $\C_D(S_g)$.  Normal form representatives for $v_1$ and $v_2$ divide $S_g$ into three regions, $L$, $C$, and $R$ (for left, center, and right).  Say that $C$ is the region lying between $v_1$ and $v_2$.  The regions $L$ and $R$ are connected but $C$ may not be.  We think of $v_1$ as lying to the left of $v_2$ so that $L$ is bounded by $v_1$.  

Suppose first that $\{v_1,v_2\}$ is an exceptional edge, so the subsurface $C$ is the disjoint union of $n \geq 2$ pairs of pants $P_1,\dots,P_n$ and some number of annuli.  There are $2^n$ vertices of the star of $\{v_1,v_2\}$ supported in $C$: for each $i$ we make a choice between the left side of $P_i$ or the right and for each annulus we include the core curve.  If we choose the left side of each pair of pants, we obtain $v_1$ and if we choose the right side in each case we obtain $v_2$.  Thus, there are $2^n-2>0$ vertices of the link of $\{v_1,v_2\}$ supported in $C$; call this set of vertices $F$.  Since all other vertices of the link of $\{v_1,v_2\}$ are supported in $L$ or $R$, the link of $\{v_1,v_2\}$ is the join of $F$ with the graph spanned by those vertices.

Now suppose that $\{v_1,v_2\}$ is an edge of $\C_D(S_g)$ whose link is the join of a finite graph $F$ with some other (possibly empty) graph.  Assume that $\{v_1,v_2\}$ is not an exceptional edge.  We must show that $F$ is empty.  We will repeatedly use the following fact: to show that a vertex $w$ is not contained in $F$ it is enough to show that there are infinitely many vertices of the link of the edge $\{v_1,v_2\}$ to which $w$ is not connected by an edge.  

Suppose that $w$ is a vertex of the link of $\{v_1,v_2\}$ represented in $L$ (or $R$).  Then $w$ must have at least one component that is not peripheral in $L$ (or $R$).  But then there are infinitely many $\MCG(S_g)$-translates of $w$ that lie in the link of $\{v_1,v_2\}$ and have nonzero intersection with $w$.  As above, this implies $w$ does not lie in $F$.  Applying the same argument to the region $C$, we see that all elements of $F$ must be represented by dividing sets that are peripheral in $C$.

If $C$ has fewer than two components that are not annuli then the only vertices of the star of $\{v_1,v_2\}$ realized as peripheral dividing sets in $C$ are $v_1$ and $v_2$ and we have succeeded in showing that $F$ is empty.  So assume that $C$ has at least two components that are not annuli.  

Let $w$ be a vertex of $F$.  Again, we may assume that $w$ is peripheral in $C$.  Since $\{v_1,v_2\}$ is not exceptional, there is some component $C_0$ of $C$ that is not an annulus or a pair of pants.  Since $C_0$ is not an annulus or a pair of pants there is a vertex $u$ of the link of $\{v_1,v_2\}$ so that $u$ is supported in $C$ and some component of a representative of $u$ is non-peripheral in $C_0$.  In $C_0$ it makes sense to say that $u$ lies to the left or right of $w$.  Say it is to the left.  This means that in $C_0$ the vertex $w$ is parallel to the $v_2$-side (the right-hand side) of $C_0$.

Since $w$ is not equal to $v_1$ or $v_2$, it must have some component that lies on the $v_1$-side (the left-hand side) of some non-annular component $C_1$ of $C$.  After possibly changing $u$ we may assume that the intersection of $u$ with $C_1$  is the collection of curves parallel to the $v_2$-side (the right-hand side) of $C_1$.  By construction this $u$ is not connected to $w$ by an edge.  Also, since $u$ has a component in the interior of $C_0$ there are infinitely many $\MCG(S_g)$-translates of $u$ that are not connected to $w$ by an edge.  We have thus shown that $w$ cannot lie in $F$, and so $F$ is empty.
\end{proof}

We already said that the two vertices in an exceptional edge must be of type M.  The next lemma is a partial converse to this statement.  

\begin{lemma}
\label{lemma:char}
Suppose that $D \subseteq \D(S_g)$ contains no elements of type N.  Let $D \subseteq \D(S_g)$ be an upper set with $g \geq 3 \check g (D) + 1$.  Let $v$ be a vertex of $\C_D(S_g)$ that is of type M and is an extreme vertex of a linear simplex $\sigma$ with five vertices.  Then $v$ lies in an exceptional edge.
\end{lemma}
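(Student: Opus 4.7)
The plan is to exhibit an exceptional edge $\{v, w\}$ by constructing a companion vertex $w$ explicitly.  Let $\sigma = \{v_0, v_1, v_2, v_3, v_4\}$ be the given linear simplex with $v_0 = v$ extreme, and fix a normal form representative $m_0, \ldots, m_4$ provided by Lemma~\ref{lemma:div set normal form}.  Let $L$ and $C$ be the two complementary regions of $m_0$ in $S_g$, with $m_1, \ldots, m_4$ lying inside $C$.  Since $v$ is of type M, $m_0$ has $k \geq 2$ components $c_1, \ldots, c_k$, and $C$ is not an annulus.  Write $C'$ for the subsurface of $C$ obtained by removing an open regular neighborhood of $m_4$; it is a compact connected surface with boundary $m_0 \sqcup m_4$.

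I will construct $w$ by performing a pair-of-pants surgery on two distinct components of $m_0$.  Concretely, suppose one can find two disjoint embedded pairs of pants $P_1, P_2 \subset C'$ with $\partial P_i = c_i \cup c_i^a \cup c_i^b$ for $i = 1, 2$, where the new curves $c_i^a, c_i^b$ are essential in $C$, hence in $S_g$ by the incompressibility of $C$.  Take parallel push-offs $c_j'$ of each remaining component $c_j$ of $m_0$ into $C'$, chosen mutually disjoint and disjoint from $P_1 \cup P_2$, and set
\[
w = \{c_1^a, c_1^b, c_2^a, c_2^b\} \cup \{c_j' : 3 \leq j \leq k\}.
\]
A direct verification then shows that $w$ is a dividing set of $S_g$ disjoint from $v$, that the region of $S_g$ between $v$ and $w$ is $P_1 \sqcup P_2$ together with $k - 2$ annular collars (so that $\{v, w\}$ is an exceptional edge), and that $m_0$ and $m_4$ lie on opposite sides of $w$.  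This last property, combined with Lemma~\ref{upper set} and the fact that $v_0, v_4 \in D$, yields $w \in D$; in particular $w$ is a vertex of $\C_D(S_g)$, completing the argument.

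The principal obstacle is the existence of the two pants $P_1$ and $P_2$ inside $C'$, and this is where the hypothesis that $v$ is an extreme vertex of a \emph{five}-vertex linear simplex is used in an essential way.  For each $i = 1,\ldots,4$ the intermediate region $X_i$ between $m_{i-1}$ and $m_i$ must contain at least one non-annular component: otherwise $X_i$ would be a disjoint union of annuli, pairing off each component of $m_{i-1}$ with a parallel component of $m_i$, forcing $v_{i-1} = v_i$.  Since no vertex of $\C_D(S_g)$ is of type N, no complementary region of any $m_j$ is an annulus, so each $X_i$ is non-disk as well; hence $\chi(X_i) \leq -1$.  Gluing along the zero-Euler-characteristic circles $m_1, m_2, m_3$ gives $\chi(C') = \sum_{i=1}^{4}\chi(X_i) \leq -4$.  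Combined with the bound $g \geq 3\check g(D)+1$, this Euler characteristic deficit allows one to produce a separating simple closed curve $\beta$ in $C'$ such that $c_1$ and $c_2$ lie on opposite sides of $\beta$ and each side has Euler characteristic at most $-2$; on each side one then extracts the required pair of pants $P_i$ adjacent to $c_i$ with two essential new boundary curves.  The remaining verifications reduce to a short case analysis based on the possible topologies of $C'$ and the configuration of $m_4$ inside it.
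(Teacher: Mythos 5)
Your overall strategy coincides with the paper's: find two disjoint pairs of pants in the region $C'$ between $m_0$ and $m_4$, each adjacent to a distinct component of $m_0$, then ``slide'' $v$ across them to build the companion vertex $w$, using Lemma~\ref{upper set} to place $w$ in $D$.  Your Euler characteristic computation $\chi(C') \leq -4$ and the argument that $w\in D$ because it lies between $v_0$ and $v_4$ are both correct.  The gap is in the step you flagged as ``the principal obstacle.''  You claim that $\chi(C') \leq -4$ (plus the genus bound) lets you produce a separating curve $\beta$ in $C'$ with $c_1$ and $c_2$ on opposite sides, each side with $\chi \leq -2$, and then extract a pair of pants adjacent to $c_i$ on each side.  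This fails precisely when one of the two chosen components of $m_0$ lies on the boundary of an \emph{annular} component of $C'$.  In that situation there is no embedded pair of pants in $C'$ having that curve as a boundary component with both other boundary circles essential (an annulus contains no such pair of pants), and no choice of $\beta$ can give that side Euler characteristic $\leq -2$.  This scenario genuinely occurs: it is exactly the case in which the non-annular part $Q_0$ of $C'$ is connected and has only a single boundary component parallel to $v$; then every other component of $m_0$ is trapped in an annular collar pairing it with a component of $m_4$.  Subordinacy rules out zero boundary components parallel to $v$, but one is entirely possible.

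The paper's proof handles this by a trick your argument does not attempt: it first reduces to the normalized situation $\chi(Q)=-4$ (by replacing $\sigma$ with a linear simplex whose cobordisms are a pair of pants plus annuli), and then, in the problematic case where $Q_0$ is connected with a single boundary parallel to $v$, it \emph{replaces} $v_4$ by a different vertex $v_4'$ of the same type $(k', n')$, constructed so that the new region between $v$ and $v_4'$ has at least two non-annular components.  Because $v_4'$ has the same $\MCG(S_g)$-orbit type as $v_4$ it still lies in $D$, and because the new between-region is not connected one can now place the two pants in separate components.  Without some maneuver of this kind your assertion about $\beta$ is simply false in the hard case, so the ``short case analysis'' at the end of your write-up is doing all of the actual work, and it is not carried out.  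To repair the argument you would essentially need to reproduce the paper's replacement of $v_4$ by $v_4'$, which exploits the upper-set hypothesis a second time.
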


\begin{proof}

Denote the ordered vertices of $\sigma$ by $v=v_0,\dots,v_4$.  Choose a normal form representative for $\sigma$ and let $Q$ denote the region of $S_g$ lying between the dividing sets representing $v_0$ and $v_4$.  We will think of $Q$ as lying to the right of $v$.

We must find a vertex $w$ so that $v$ and $w$ form an exceptional edge.  To do this, we will find two disjoint pairs of pants $P_1$ and $P_2$ that lie in $Q$ and are adjacent to $v$; the vertex $w$ is then obtained from $v$ by replacing the left-hand side of each $P_i$ with the right-hand side.  It follows from Lemma~\ref{upper set} that $w$ will indeed represent a vertex of $\C_D(S_g)$.

We can think of $Q$ as the union of four cobordisms connecting the vertices of $\sigma$.  As such we have $\chi(Q) \leq -4$.  We can assume without loss of generality that $\chi(Q) = -4$: since $D$ is an upper set, we can by Lemma~\ref{upper set} replace $\sigma$ (if necessary) by a linear simplex with five vertices where each of the four corresponding cobordisms is the disjoint union of one pair of pants with some number of annuli.

Let $Q_0$ denote the union of the non-annular components of $Q$.  If $Q_0$ is not connected, then we can find the desired $w$ by choosing pair of pants in two different components of $Q_0$.

\begin{figure}
\includegraphics[scale=.25]{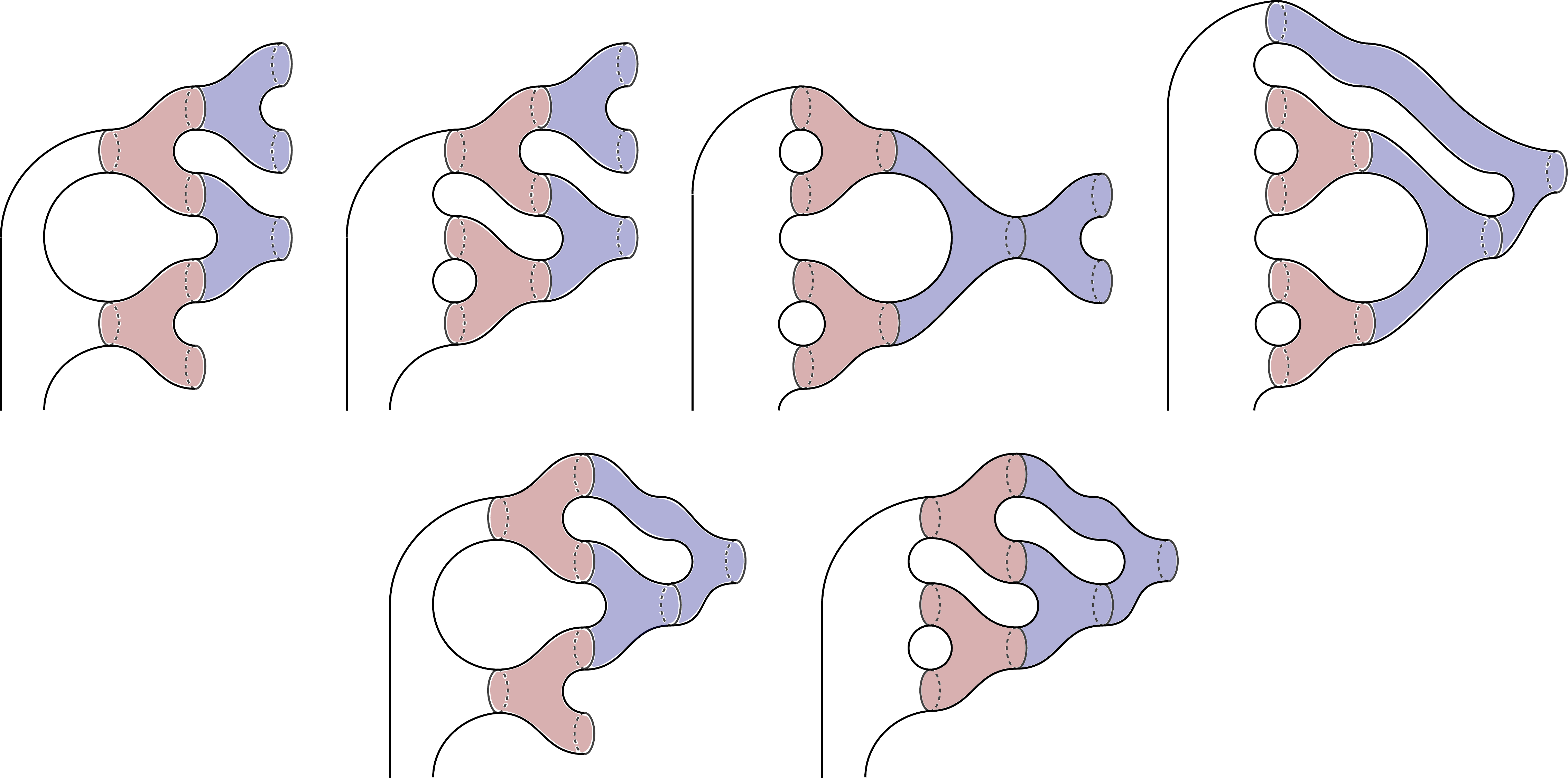}
\caption{\emph{Top:} finding $w$ when $Q$ is a sphere with six boundary components; \emph{Bottom:} finding $w$ when $Q$ is a torus with four boundary components}
\label{fig:w}
\end{figure}

Now suppose that $Q_0$ is connected.  There are three cases for $Q_0$: a surface of genus zero with six boundary components, a surface of genus one with four boundary components, or a surface of genus two with two boundary components.  If $Q_0$ has more than one boundary component parallel to $v$ then we can easily find the desired pairs of pants $P_1$ and $P_2$, hence the desired vertex $w$; see Figure~\ref{fig:w}.  (Note that we do note need to consider here the case where $Q_0$ has genus two since $Q_0$ does not have all of its boundary components parallel to $v$.)

So it remains to consider the case where $Q_0$ is one of the three surfaces described in the previous paragraph and $Q_0$ has a single component of its boundary parallel to $v$.  We treat this case by reducing to the previous cases.

Say that $v$ has type $(k,n)$ and that the region $R$ of $S_g$ that is determined by $v$ and does not contain $Q_0$ has genus $k$.  If $Q_0$ is a sphere with six boundary components, then $v_4$ has type $(k,n+4)$.  Since $n \geq 2$, we can form another vertex of type $(k,n+4)$ by gluing a sphere with five boundary components to one component of the boundary of $R$ and a pair of pants to another component of the boundary of $R$ (the new dividing set is the boundary of the union of $R$ and the two additional spheres).  This new dividing set represents a vertex $v_4'$ of $\C_D(S_g)$ since it has the same type as $v_4$.  Also, since the region between $v$ and $v_4'$ has more than one component with negative Euler characteristic, we have reduced to a previous case.

Similarly, if $Q_0$ is a surface of genus one with four boundary components, then $v_4$ has type $(k+1,n+2)$.  In this case we can obtain a vertex $v_4'$ of the same type by gluing a sphere with six boundary components to $R$ along two of the six boundary components.  Again, this is a case we have already dealt with.

Finally if $Q_0$ is a surface of genus two with two boundary components, then $v_4$ has type $(k+2,n)$, and in this case $v_4'$ is obtained by gluing a sphere with six boundary components to $R$ along three of its boundary components, another case we have previously treated.
\end{proof}

\p{Finishing the proof} For the proof of Theorem~\ref{theorem:multi k}, we say that a vertex $v$ of $\C_D(S_g)$ is \emph{1-sided} if all vertices in the link of $v$ lie to one side of $v$.  Similarly, we say that $v$ is \emph{2-sided} if there are vertices of the link of $v$ lying on different sides of $v$.

\begin{proof}[Proof of Theorem~\ref{theorem:multi k}]

By Proposition~\ref{prop:cd easy case} we may assume that $\C_D(S_g)$ does not contain vertices of type N.  And by Lemma~\ref{inj}, it suffices to show that the natural map $\MCG(S_g) \to \Aut \C_D(S_g)$ is surjective.  Let $\phi \in \Aut \C_D(S_g)$.  

We claim that $\phi$ preserves the set of vertices of type S.  First assume that $g \geq 5$.  By Lemma~\ref{lemma:four}, if a vertex of $\C_D(S_g)$ is not an extreme vertex of some linear simplex with five vertices then it is a vertex of type M.  The set of extreme vertices of linear simplices with five vertices is preserved by automorphisms (Lemma~\ref{lemma:linear}).  Therefore, it suffices to show that if a vertex of type M is an extreme vertex of a linear simplex with five vertices then its image under $\phi$ is a vertex of type M.  Let $v$ be such a vertex.  By Lemma~\ref{lemma:char}, $v$ is a vertex of an exceptional edge.  So by Lemma~\ref{lemma:tight edges} the vertex $\phi(v)$ is a vertex of an exceptional edge.  Thus $\phi(v)$ is of type M, giving the claim in the case $g \geq 5$.

It remains to prove the claim in the case where $g=4$ and $\check g(D) = 1$.  The first step is to show that automorphisms of $\C_D(S_4)$ preserve the set of vertices of type S that have genus one, that is, the ones of type $(1,1)$.  The only vertices than can be extreme vertices of linear simplices with five vertices are those of type $(0,3)$ and of type $(1,1)$.  By the argument of the previous paragraph, these two types are each preserved (although type $(0,3)$ may not be present in the complex).

The second step for the proof of the claim in the $g=4$ case is to show that automorphisms of $\C_D(S_4)$ preserve the set of vertices of type S that have genus two.  These vertices are distinguished by the following property: if we have a simplex with five vertices, four of which are vertices of type S that have genus one, then the fifth vertex must be a vertex of type S that has genus two.  This completes the proof of the claim.  

The claim implies that $\phi$ restricts to an automorphism $\bar \phi$ of $\C_{\check g(D)}(S_g)$, regarded as a subcomplex of $\C_D(S_g)$.  By Theorem~\ref{theorem:sep k}, there exists some $f \in \MCG(S_g)$ so that the automorphism of $\C_{\check g(D)}(S_g)$ induced by $f$ is $\bar \phi$.  We would like to show that the automorphism of $\C_D(S_g)$ induced by $f$ is $\phi$.  

To do this, it suffices to show that an automorphism of $\C_D(S_g)$ that restricts to the identity on the subcomplex $\C_{\check g(D)}(S_g)$ must itself be the identity.  We proceed by induction on distance in $\C_D(S_g)$ from $\C_{\check g(D)}(S_g)$; since $\C_D(S_g)$ is connected by assumption, this will prove the theorem.

We assume for induction that $\phi$ restricts to the identity on all vertices of $\C_D(S_g)$ that have distance at most $k$ from $\C_{\check g(D)}(S_g)$.  The base case $k=0$ is true by assumption.

We perform the inductive step in two stages, first for 1-sided vertices and then for 2-sided vertices.  Let $v$ be a 1-sided vertex of $\C_D(S_g)$ that has distance $k+1$ from $\C_{\check g(D)}(S_g)$.  Let $w$ be any vertex of $\C_D(S_g)$ that is connected by an edge to $v$ and has distance $k$ from $\C_{\check g(D)}(S_g)$.  There are $\MCG(S_g)$-translates of $w$ that fill the region of $S_g$ that lies to one side of $v$ and contains $w$.  Since $\C_{\check g(D)}(S_g)$ is invariant under the action of $\MCG(S_g)$, all of these vertices have distance $k$ from $\C_{\check g(D)}(S_g)$ and so by the inductive hypothesis they are fixed by $\phi$.  As $v$ is 1-sided, it is the unique vertex of $\C_D(S_g)$ connected by edges to all of these translates of $w$ and so $\phi$ must fix $v$ as well.

Now let $v$ be a 2-sided vertex of $\C_D(S_g)$ that has distance $k+1$ from $\C_{\check g(D)}(S_g)$.  Let $w$ be any vertex of $\C_D(S_g)$ that is connected by an edge to $v$ and has distance $k$ from $\C_{\check g(D)}(S_g)$.  The vertex $w$ lies to one side of $v$, and the $w$-side of $v$ is filled by $\MCG(S_g)$-translates of $w$.  As above, all of these vertices have distance $k$ from $\C_{\check g(D)}(S_g)$ and so are fixed by $\phi$.  Let $u$ be any 1-sided vertex of $\C_D(S_g)$ that lies on the other side of $v$.  Note that the distance from $u$ to $\C_{\check g(D)}(S_g)$ is at most $k+1$ since $u$ is connected by an edge to $w$.  By the previous paragraph, $u$ is fixed by $\phi$.  Moreover, there are $\MCG(S_g)$-translates of $u$ that fill the $u$-side of $v$, and all of these vertices are also fixed by $\phi$.  The vertex $v$ is the unique vertex connected by edges to all of these vertices, and so $v$ is also fixed by $\phi$, and we are done.
\end{proof}


\section{Complexes of regions}
\label{sec:reg}

In this section we apply our theorem about dividing sets (Theorem~\ref{theorem:multi k}) in order to prove Theorem~\ref{main:complex}, which states that if $\C_A(S_g)$ is a connected complex of regions with no holes or corks and with a small vertex then the automorphism group of $\C_A(S_g)$ is isomorphic to $\MCG(S_g)$.

The basic idea of the proof of Theorem~\ref{main:complex} is to relate maximal joins in $\C_A(S_g)$ to dividing sets in $S_g$ (Lemma~\ref{lemma:bijection}).  This relationship is somewhat strained in the case where $\C_A(S_g)$ has nonseparating annular vertices.  So similar to Section~\ref{sec:div} we will break this off as a special case.

We say that a vertex of $\C_A(S_g)$ has...
\begin{enumerate}
\item \emph{type N} if it is represented by a nonseparating annulus, 
\item \emph{type S} if it is represented by a separating annulus, and
\item \emph{type P} if it is represented by a nonseparating pair of pants.
\end{enumerate}

There is a partial order on the set of vertices of $\C_A(S_g)$ whereby $a \preceq b$ if the link of $b$ is contained in the link of $a$.  We say that a vertex is \emph{$\preceq$-minimal} if it is minimal with respect to this ordering. 

\subsection{Complexes with vertices of type N}

In this section, we prove Theorem~\ref{main:complex} in the special case where $\C_A(S_g)$ has vertices corresponding to nonseparating annuli.  

\begin{lemma}
\label{nonsep ann}
Let $\C_A(S_g)$ be a complex of regions with vertices of type N. 
\begin{enumerate}
\item Any automorphism of $\C_A(S_g)$  preserves the set of vertices of type N.
\item Any automorphism of $\C_A(S_g)$  preserves the set of vertices of type P.
\end{enumerate}
\end{lemma}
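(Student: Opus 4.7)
The plan is to give intrinsic combinatorial characterizations of types N and P in terms of the simplicial structure of $\C_A(S_g)$ and the partial order $\preceq$, each of which is preserved automatically by every automorphism.  A key preliminary observation is that, since $\C_A(S_g)$ is $\MCG(S_g)$-invariant and the nonseparating annuli in $S_g$ form a single $\MCG$-orbit, the hypothesis that type N vertices exist implies every nonseparating annulus is a vertex of $\C_A(S_g)$; the analogous observation applies to type P, so in proving part (2) we may assume every nonseparating pair of pants is a vertex (otherwise the statement is vacuous).

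For part (1), I claim a vertex $v$ is of type N if and only if $v$ is $\preceq$-minimal and the link of $v$ does not decompose as a non-trivial join $L_1 \ast L_2$ of two nonempty subcomplexes.  For the forward direction, a nonseparating annulus is $\preceq$-minimal (it has no proper non-peripheral sub-region) and its complement $S_g \setminus v$ is connected, so one easily exhibits two intersecting vertices in the link, ruling out any non-trivial join decomposition.  For the converse, type S vertices are $\preceq$-minimal but their links decompose as the non-trivial join of the two subcomplexes supported on their two sides.  Any non-annular vertex $v$ either has interior of positive genus---yielding a nonseparating simple closed curve whose annular neighborhood is a type N vertex $u \prec v$---or has a boundary component nonseparating in $S_g$, whose annular collar likewise gives $u \prec v$; either violates $\preceq$-minimality.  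The only remaining case is a planar region with all boundary components separating in $S_g$, whose complement is disconnected; its link then splits as the non-trivial join indexed by the complementary components.

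For part (2), I propose the analogous criterion: $v$ is of type P if and only if $v$ is not of type N, there exist three pairwise disjoint type N vertices $a_1, a_2, a_3$ with each $a_i \preceq v$, and $v$ is $\preceq$-minimal among vertices admitting such a triple.  A nonseparating pair of pants satisfies this with $a_1, a_2, a_3$ chosen as the three boundary annular neighborhoods---these are type N because all three boundary components of a nonseparating pair of pants are nonseparating in $S_g$---and it is $\preceq$-minimal with this property because its only $\preceq$-smaller vertices are annular and so cannot contain three pairwise disjoint non-peripheral sub-annuli.  Conversely, type S vertices admit no type N sub-annulus at all; handles of genus at most two with separating boundary admit at most two pairwise disjoint type N sub-annuli; and any other non-annular vertex admitting three pairwise disjoint type N sub-annuli contains a proper nonseparating pair of pants sub-region, which by the $\MCG$-orbit observation is itself a vertex of $\C_A(S_g)$, giving a strictly $\preceq$-smaller vertex with the same property.

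The main obstacle lies in the case analysis for the converse in part (2), particularly verifying that every non-pair-of-pants region admitting three pairwise disjoint type N sub-annuli contains a proper nonseparating pair of pants sub-region.  This reduces to a finite check covering handles of genus at least three, planar regions with four or more boundary components, and small-genus regions with multiple boundary components, but requires careful attention to the separation type of the induced sub-region's boundary curves.
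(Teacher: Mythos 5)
The central claim your proposal relies on in both parts---that the annular collar $u$ of a boundary component of a region $R$ satisfies $u \prec v_R$ (part (1)) or $u \preceq v_R$ (part (2))---is false. Recall that $a \preceq b$ is defined by $\Lk(b) \subseteq \Lk(a)$. Since $u$ is peripheral in $R$, it has a representative disjoint from $R$, so $u \in \Lk(v_R)$; but $u \notin \Lk(u)$ (a vertex never lies in its own link), so $\Lk(v_R) \not\subseteq \Lk(u)$, i.e.\ $u \not\preceq v_R$. The relation $u \prec v_R$ does hold when $u$ corresponds to a \emph{non-peripheral} sub-annulus of $R$ (this is exactly why your case with a nonseparating curve in the interior works), but it fails precisely at the boundary. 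You appear to be conflating the link-containment order $\preceq$ with ``sub-region of $R$, boundary collars included,'' which is a different relation.

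This sinks case (b) of part (1): for a planar region $R$ with a nonseparating boundary curve $c$, the collar of $c$ does not witness failure of $\preceq$-minimality. The paper handles exactly this situation (nonseparating $R$ with disconnected boundary) by the other clause of the characterization: the type N vertex at $c$ is a \emph{cone point} of $\Lk(v_R)$, so $\Lk(v_R)$ is a nontrivial join. For part (2) the damage is fatal: your characterization requires three pairwise disjoint type N vertices $a_i$ with $a_i \preceq v$, and for a nonseparating pair of pants you take the $a_i$ to be the three boundary collars; by the observation above none of them satisfies $a_i \preceq v$, so your criterion is not satisfied by the very vertices it is supposed to detect. A pair of pants has no non-peripheral sub-annuli at all, so this cannot be repaired within the $\preceq$ framework. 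The paper instead characterizes type P by a condition of a different flavor: $v$ is of type P if and only if there is a triangle $\sigma = \{w_1,w_2,w_3\}$ of type N vertices such that $v$ and $\sigma$ have \emph{equal stars}; this is automorphism-invariant and encodes the three-boundary-curve configuration directly, without invoking $\preceq$.

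A smaller issue in the forward direction of part (1): exhibiting two intersecting vertices in $\Lk(v)$ does not rule out a nontrivial join decomposition, since both could lie in one factor. You need the stronger statement, as in the paper, that for \emph{any} two vertices of $\Lk(v)$ there exists a third vertex of $\Lk(v)$ not joined by an edge to either.
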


\begin{proof}

We claim that a vertex of $\C_A(S_g)$ is of type N if and only if
\begin{enumerate}
\item it is a $\preceq$-minimal vertex of $\C_A(S_g)$ and
\item its link in $\C_A(S_g)$ is not a join.
\end{enumerate}
The first statement of the lemma follows from the claim since an automorphism of $\C_A(S_g)$ preserves these two properties.

For the forward direction, assume $v$ is of type N.  The vertex $v$ is $\preceq$-minimal because a representative of $v$ contains no proper subsurfaces.  Also, the link of $v$ is not a join because for any two vertices of the link of $v$ we can find a $\MCG(S_g)$-translate of $v$ that is not connected by an edge to either.

For the other direction of the claim, assume that $v$ is not of type N.  Let $R$ be a representative of $v$.  If $R$ is separating then each complementary region to $R$ supports at least one vertex of type N, and so the link of $v$ is a join.  So we may assume that $R$ is nonseparating.  If the boundary of $R$ is connected, then $v$ is not $\preceq$-minimal, as there are vertices of type N that are smaller in the partial order.  So we may further assume that $R$ has disconnected boundary.  If $R$ is the complementary region to a nonseparating annulus, then clearly $v$ is not $\preceq$-minimal.  The remaining case is that $R$ is nonseparating with disconnected boundary and no two components of the boundary are parallel.  In this case, the link of $v$ is a join.  Indeed, any vertex of type N corresponding to a component of the boundary of $R$ is a cone point.  This completes the proof of the claim and hence the first statement of the lemma.

For the second statement, we claim that a vertex $v$ of $\C_A(S_g)$ is of type P if and only if there is a triangle $\sigma = \{w_1,w_2,w_3\}$ so that each $w_i$ is of type N and so that $v$ and $\sigma$ have equal stars.  The second statement follows from this and the first statement.

For the forward direction, let $v$ be a vertex of type P.  We take $\sigma$ to be the triangle corresponding to the boundary of a representative of $v$.  Since the only proper regions in this pair of pants are those corresponding to the vertices of $\sigma$, it follows that $\sigma$ and $v$ have equal stars.  

For the other direction, assume we have a vertex $v$ and a triangle $\sigma$ as in the claim.  Let $Q$ and $R$ be representatives of $\sigma$ and $v$.  Since $v$ and $\sigma$ have equal stars, we can take $Q$ and $R$ to be disjoint.  Also, each component of $Q$ must be parallel to the boundary of $R$, for otherwise we could find a vertex in the star of $v$ but not $\sigma$.  Conversely, each component of the boundary of $R$ must be parallel to some component of $Q$, for otherwise we could find a vertex in the star of $\sigma$ but not $v$.  Thus, $R$ has exactly three boundary components, each of which is a nonseparating curve in $S_g$.  It follows that $R$ is nonseparating.  If $R$ is not a pair of pants, then we could find a vertex of type N that is in the star of $\sigma$ but not $v$.  This completes the proof of the claim, hence the lemma.
\end{proof}

\p{Finishing the proof in the easier case} We are now ready to prove Theorem~\ref{main:complex} in the case where $\C_A(S_g)$ contains vertices of type N.  In the proof we say that a region of $S_g$ is of type P if it is a nonseparating pair of pants (we make the distinction between a region and a vertex here because $\C_A(S_g)$ may or may not have vertices of type P).

Let $\N(S_g)$ denote the \emph{complex of nonseparating curves} for $S_g$, that is, the subcomplex of the complex of curves $\C(S_g)$ spanned by vertices of type N.

\begin{prop}
\label{prop:ca easy}
Let $g \geq 3$ and suppose that $\C_A(S_g)$ contains vertices of type N.  Then the natural map
\[ \MCG(S_g) \to \Aut \C_A(S_g) \]
is an isomorphism.
\end{prop}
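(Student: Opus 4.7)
The plan is to reduce to Irmak's theorem on the complex of nonseparating curves $\N(S_g)$. Since $\MCG(S_g)$ acts transitively on nonseparating annuli, the hypothesis that $\C_A(S_g)$ contains a vertex of type N forces every nonseparating annulus to be a vertex, and these span a full subcomplex of $\C_A(S_g)$ naturally isomorphic to $\N(S_g)$. By Lemma~\ref{nonsep ann}(1), this subcomplex is preserved by every automorphism of $\C_A(S_g)$, so restriction defines a homomorphism $r\colon \Aut \C_A(S_g) \to \Aut \N(S_g)$.

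By Irmak's theorem, the natural map $\MCG(S_g) \to \Aut \N(S_g)$ is an isomorphism for $g \geq 3$, so the composition $\MCG(S_g) \to \Aut \C_A(S_g) \xrightarrow{r} \Aut \N(S_g)$ is an isomorphism. Since the first arrow is injective by Lemma~\ref{inj}, the proposition reduces to showing that $r$ is injective---equivalently, that any $\phi \in \Aut \C_A(S_g)$ fixing every type N vertex pointwise must be the identity. I would establish this by showing $\phi$ fixes successively larger classes of vertices.

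First, by Lemma~\ref{nonsep ann}(2), $\phi$ sends type P vertices to type P vertices. For a type P vertex $v$ with representative pair of pants $R$, the three boundary curves of $R$ are type N vertices of $\C_A(S_g)$ adjacent to $v$, hence fixed by $\phi$; so $\phi(v)$ has the same three boundary curves as $R$. Provided $g \geq 4$---which follows from $g \geq 3\bar g(A)+1$ together with the fact that a type N vertex satisfies $\bar g = 1$, forcing $\bar g(A) = 1$---a nonseparating pair of pants is determined by its boundary, so $\phi(v) = v$. For an arbitrary vertex $v$ with representative $R$, the type N vertices adjacent to $v$ correspond exactly to the nonseparating simple closed curves of $S_g$ admitting representatives disjoint from $R$, and this collection fills every positive-genus component of $S_g \setminus R$. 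Hence the positive-genus parts of the complements of $R$ and $\phi(v)$ coincide; any remaining ambiguity concerning the sphere-with-holes components is resolved by combining the no-holes-no-corks hypothesis of Theorem~\ref{main:complex} with Theorem~\ref{theorem:suff rich}, which together imply that distinct vertices of $\C_A(S_g)$ have distinct stars.

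The main obstacle is this last step: inferring $\phi(v) = v$ when the type N link of $v$ is only sensitive to the positive-genus parts of $S_g \setminus R$. The subtlety is that sphere-with-holes components of the complement are invisible to the type N link, so the argument must invoke the rigidity from Theorem~\ref{theorem:suff rich} and the absence of holes and corks to eliminate the resulting ambiguity.
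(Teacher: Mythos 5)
Your overall framework matches the paper's: reduce to Irmak's theorem via the restriction map $r\colon \Aut\C_A(S_g)\to\Aut\N(S_g)$, then show $r$ is injective by proving that an automorphism fixing every type N vertex is the identity. The type P step is also essentially the paper's argument (though the restriction to $g\geq 4$ is unnecessary---for $g\geq 3$ a multicurve of three nonseparating curves bounding a nonseparating pair of pants cannot have both complementary pieces be pairs of pants, so the pair of pants is determined by its boundary; and more importantly Proposition~\ref{prop:ca easy} as stated assumes only $g\geq 3$ and does not carry the $g\geq 3\bar g(A)+1$ hypothesis, so your derivation of $g\geq 4$ imports a hypothesis you do not have).

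The gap is in the final step, and it stems from a false premise: you assert that the type N vertices adjacent to $v$ fill only the positive-genus components of $S_g\setminus R$, leaving the sphere-with-holes components ``invisible.'' This is not so. Since $R$ is connected, \emph{every} complementary region $Q$ of $R$ is nonseparating in $S_g$, and consequently every essential simple closed curve in $Q$ is nonseparating in $S_g$ (if $c$ separates $Q$ into $Q_1$ and $Q_2$, each piece still meets the connected set $S_g\setminus Q$, so $S_g\setminus c$ is connected). Thus every complementary region that is not a pair of pants is filled by type N vertices in the link of $v$---genus zero or not. For pair-of-pants complementary regions, the three boundary curves are themselves nonseparating curves of $S_g$ and hence type N vertices in the link of $v$. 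The upshot, which is the actual content of the paper's argument, is that if $F$ denotes the subsurface filled by the type N vertices in the link of $v$, then the complementary regions of $F$ are $R$ together with pairs of pants, and since $\phi(v)$ is adjacent to exactly the same type N vertices and is not of type P (by Lemma~\ref{nonsep ann}), its support must lie in $R$; applying the same reasoning to $\phi^{-1}$ gives the reverse containment and hence $\phi(v)=v$.

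Your proposed patch---invoking Theorem~\ref{theorem:suff rich} together with the no-holes/no-corks hypothesis to conclude that distinct vertices have distinct stars---does not close the gap even if those hypotheses were available (and they are not: the proposition assumes neither). Knowing that $\phi$ fixes all type N vertices tells you that $v$ and $\phi(v)$ have identical type N links, not identical stars in $\C_A(S_g)$; asserting that they have equal stars presupposes precisely what is being proved. What is needed instead is the support-containment-and-invertibility argument, which requires only the observation that complementary regions of a connected region are nonseparating.
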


\begin{proof}

By Lemma~\ref{inj} the map $\Psi : \MCG(S_g) \to \Aut \C_A(S_g)$ is injective.  So it suffices to show that $\Psi$ is surjective.  We will construct a right inverse.  

The subcomplex of $\C_A(S_g)$ spanned by the vertices of type N is naturally isomorphic to the complex of nonseparating curves $\N(S_g)$ and so we may regard $\N(S_g)$ as a subcomplex of $\C_A(S_g)$.  By Lemma~\ref{nonsep ann}, there is a map
\[
\Aut \C_A(S_g) \to \Aut \N(S_g)
\]
given by restriction.  Also Irmak proved that the natural map
\[
\MCG(S_g) \to \Aut \N(S_g)
\]
is an isomorphism \cite{irmak}.  Consider the composition
\[
\Omega: \Aut \C_A(S_g) \to \Aut \N(S_g) \stackrel{\cong}{\to} \MCG(S_g).
\]
We would like to show that $\Psi \circ \Omega$ is the identity.  Let $\phi \in \Aut \C_A(S_g)$.  Then $\Omega(\phi)$ is a mapping class $f$ whose action on $\N(S_g)$ agrees with the restriction of $\phi$.  To show that $\Psi \circ \Omega(\phi) = \phi$ we must show that $\phi$ is determined by its restriction, or, that any element of $\Aut \C_A(S_g)$ restricting to the identity in $\Aut \N(S_g)$ must itself be the identity.

So suppose $\phi \in \Aut \C_A(S_g)$ restricts to the identity in $\Aut \N(S_g)$.  Let $v$ be a vertex of $\C_A(S_g)$ that is not of type N.  We must show that $\phi(v)=v$.

First suppose that $v$ is of type P.  By the second statement of Lemma~\ref{nonsep ann}, $\phi(v)$ is also of type P.  But since $g \geq 3$ such a vertex is clearly determined by the vertices of type N in its link, and so we are done in this case.

Now suppose that $v$ is not of type P.  Again by Lemma~\ref{nonsep ann} the same is true for $\phi(v)$.  Let $R$ be a representative of $v$.  Each complementary region to $R$ is nonseparating.  If a complementary region is not of type P, then it is filled by vertices of type N.  If a complementary region is of type P, then its three boundary components all correspond to vertices of type N.  Thus, if we take the subsurface of $S_g$ filled by vertices of type N in the link of $v$, then $R$ is the unique complementary region that is not of type P.  Thus $\phi(v)$ is represented in $R$.  In other words, the support of $v$ cannot become larger under $\phi$.  Since $\phi$ is invertible, we conclude that the support of $v$ cannot become smaller under $\phi$ either.  This means that $\phi(v)$ is represented by $R$, that is, $\phi(v)=v$, as desired.
\end{proof}

\subsection{Complexes without vertices of type N}

To complete the proof of Theorem~\ref{main:complex} it remains to treat the case where $\C_A(S_g)$ does not have any vertices of type N.  

\p{From regions to dividing sets.} Let $A \subseteq \R(S_g)$ and let $D \subseteq \D(S_g)$.  We say that vertices of $\C_A(S_g)$ and $\C_D(S_g)$ are \emph{compatible} if they have disjoint representatives.  Also, we say that an element $d$ of $\D(S_g)$ is \emph{compatible} with $A$ if both regions of $S_g$ determined by a representative of $d$ contain representatives of elements of $A$ (the representatives are allowed to be peripheral).  We then define
\[ \delta A = \{ d \in \D(S_g) \mid d \text{ is compatible with } A \}. \]
For any $A$, the set $\delta A$ is clearly closed under separations and so by Lemma~\ref{upper set} it is an upper set.
Most of our work here is devoted to showing that there is a well-defined map 
\[ \Aut  \C_A(S_g) \to \Aut \C_{\delta A} (S_g), \]
so that we may apply Theorem~\ref{theorem:multi k}.

We now define a function
\[ \Phi : \{ \text{vertices of } \C_{\delta A}(S_g) \} \to \{ \text{subcomplexes of } \C_A(S_g) \}. \]
For a vertex $v$ of $\C_{\delta A}(S_g)$, the image $\Phi(v)$ is defined to be the full subcomplex of $\C_A(S_g)$ spanned by all vertices that are compatible with $v$.

\p{Two join decompositions.} For a vertex $v$ of $\C_{\delta A}(S_g)$, we will now define two different join decompositions of $\Phi(v)$, one topological and one combinatorial.  In the next lemma we will show that the two decompositions are the same.

First, we define the \emph{left/right decomposition} $V_L \ast V_M \ast V_R$ of $\Phi(v)$ as follows. A representative of $v$ divides $S_g$ into two complementary regions, which we arbitrarily label as $L$ and $R$ (for left and right).  The complexes $V_L$ and $V_R$ are the subcomplexes of $\C_A(S_g)$ spanned by the vertices corresponding to non-peripheral subsurfaces of $L$ and $R$.  The complex $V_M$ is the subcomplex spanned by the vertices corresponding to annuli parallel to a representative of $v$; since $\C_A(S_g)$ does not have vertices of type N, the complex $V_M$ is either empty or is a single vertex of type S.  It is clear that $V_L \ast V_M \ast V_R$ is a join decomposition of $\Phi(v)$.

Next, the \emph{complete join decomposition} of $\Phi(v)$ is a decomposition $V_1 \ast \cdots \ast V_n$ with the property that no $V_i$ can be decomposed as a join.  We have $n \leq 3g-3$ since there is an upper bound of $3g-4$ to the dimension of a simplex in $\C_A(S_g)$.

We say that a subcomplex of a simplicial complex is a \emph{maximal join} if it is a join that is not contained in any other join.

\begin{lemma}
\label{lemma:bijection}
Let $\C_A(S_g)$ be a complex of regions without vertices of type N and without isolated vertices.  
\begin{enumerate}
\item The map $\Phi$ gives a bijection
\[ \Phi : \{ \text{vertices of } \C_{\delta A}(S_g) \} \to \{ \text{maximal joins in } \C_A(S_g) \}. \]
\item For each vertex $v$ of $\C_{\delta A}(S_g)$ the left/right decomposition of $\Phi(v)$ is the same as the complete join decomposition.
\end{enumerate}
\end{lemma}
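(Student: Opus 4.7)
The plan is to prove both parts simultaneously by relating joins in $\C_A(S_g)$ to dividing sets in $S_g$. First, $\Phi(v) = V_L \ast V_M \ast V_R$ is clearly a join: non-peripheral subsurfaces of $L$, non-peripheral subsurfaces of $R$, and annuli parallel to $v$ have pairwise disjoint representatives. The key technical input is a \emph{separation principle}: any join $W_1 \ast W_2 \subseteq \C_A(S_g)$ with both factors nonempty is ``separated'' by a dividing set $d$ of $S_g$, meaning $W_1$-supports lie in one complementary region of $d$ and $W_2$-supports in the other (up to annular vertices parallel to components of $d$). Applied to a maximal join $J$, iterating this principle produces a dividing set $v \in \delta A$ with $J \subseteq \Phi(v)$; since $\Phi(v)$ is itself a join, maximality of $J$ forces $J = \Phi(v)$, giving surjectivity of $\Phi$. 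Maximality of $\Phi(v)$ among joins follows from the same principle: a vertex $w$ in a proper superjoin of $\Phi(v)$ would have to be disjoint from a filling collection of vertices in $V_L \cup V_R$, forcing $w$ to be an annulus parallel to a component of $v$, hence in $V_M \subseteq \Phi(v)$.

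For part (2), I need to show that $V_L$ and $V_R$ are join-irreducible, so that the left/right decomposition coincides with the complete decomposition. I would argue via an $\MCG(L)$-orbit argument: if $V_L = X \ast Y$ is a nontrivial join, pick a non-annular $x \in X$; any $\MCG(L)$-translate of $x$ that essentially intersects $x$ must lie in $X$ (it cannot lie in $Y$, since $Y$-vertices are disjoint from $x$). Then any $y \in Y$ must be disjoint from every $\MCG(L)$-translate of $x$; since such translates essentially fill $L$, $y$ must be peripheral in $L$, contradicting $y \in V_L$. The edge case where one factor consists only of annular vertices is handled similarly, since such annuli must be parallel to $\partial L$ and hence peripheral. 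Combined with the observation, already made in the text, that $V_M$ is either a single vertex or empty, this proves part (2). Injectivity of $\Phi$ in part (1) then follows, since $v$ is recoverable from $\Phi(v)$ as the boundary between the two non-annular factors of the complete decomposition.

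The hardest step will be the separation principle. Producing a multicurve that is a genuine dividing set of $S_g$ (exactly two complementary regions, with each side containing representatives of $A$) rather than an arbitrary separating collection requires a careful choice of representatives for the vertices of $W_1$ and $W_2$ and an analysis of the boundary of the filling of $W_1$. The absence of type N vertices is crucial here, since nonseparating annular vertices would allow configurations in which the filling has disconnected boundary components that obstruct the extraction of a single dividing set.
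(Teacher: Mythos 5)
Your high-level plan---relating joins in $\C_A(S_g)$ to dividing sets---is the right one and is the paper's plan as well, but the ``separation principle'' you lean on is false in the generality you state it. Take a pair of pants $P$ in $S_g$ whose three boundary curves are all separating, so the complement of $P$ has three components $Q_1$, $Q_2$, $Q_3$. If $W_1$ consists of the vertex represented by $P$ and $W_2$ is spanned by vertices supported inside the $Q_i$, then $W_1 \ast W_2$ is a join in $\C_A(S_g)$, yet no dividing set $d$ separates them: the complementary region of $d$ containing the $Q_i$ would have to be connected and hence would have to meet $P$. No choice of representatives repairs this; the obstruction is that this join is not \emph{maximal}. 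For maximal joins the claim you want is essentially the content of the lemma, and the only way to get it is through maximality. The paper's surjectivity argument takes the complete join decomposition $V_1 \ast \cdots \ast V_n$ of a maximal join, shows the supports $R_i$ are connected, pairwise disjoint, and not all annular, and then argues that a non-annular $R_i$ must be nonseparating precisely because a separating $R_i$ would allow the join to be strictly enlarged, contradicting maximality. That step is where your separation principle would have to come from; it is a \emph{consequence} of maximality, not an independent fact about joins that you can iterate. As stated, you have identified the hard point but have not supplied the argument, and the argument you gesture at (filling $W_1$ carefully) does not work.

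There is a secondary gap in your argument for part (2). You assert that any $y \in Y$ must be disjoint from \emph{every} $\MCG(L)$-translate of $x$, but what you have actually shown is only that translates of $x$ that essentially intersect $x$ lie in $X$. To conclude that all translates lie in $X$ you need the intersection graph on the $\MCG(L)$-orbit of $x$ to be connected---a nontrivial connectivity statement that needs to be proved (and which becomes delicate in the edge case you mention, where every vertex of a factor is a separating annulus in $L$). The paper avoids this entirely: because the non-annular support $R_1$ chosen in its surjectivity argument was arbitrary, every non-annular factor of the complete join decomposition has the same boundary $v$, so there are at most two such factors, and they are the two sides of $v$. Part (2) then falls out for free rather than requiring a separate irreducibility argument.
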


\begin{proof}

We start with the first statement.  First we will show that each $\Phi(v)$ is a maximal join and then we will show that each maximal join is in the image of $\Phi$ (it is clear that $\Phi$ is injective).  

Let $v$ be a vertex of $\C_{\delta A}(S_g)$.  We will begin by showing that the left/right decomposition of $\Phi(v)$ is a nontrivial join decomposition.  Assume for contradiction that the left/right decomposition of $\Phi(v)$ is a trivial join.  By the definition of $\delta A$ there are vertices of $\C_A(S_g)$ corresponding to (possibly peripheral) subsurfaces of both sides of $v$, and so the assumption implies that $\Phi(v)$ consists only of vertices corresponding to components of $v$.  Since $\C_A(S_g)$ has no nonseparating annular vertices, it follows that $v$ is represented by a separating curve (it is of type S).  It follows that $\Phi(v)$ consists of a single vertex, and this vertex is isolated in $\C_A(S_g)$, the desired contradiction.

Now that we know $\Phi(v)$ is a join, we would like to show that it is maximal.  Let $V_1 \ast V_2$ be a join in $\C_A(S_g)$ that properly contains $\Phi(v)$.  We would like to show that one of the $V_i$ is trivial.  First note that $V_L$ must be contained in one of $V_1$ or $V_2$; this is because for any two vertices of $V_L$ there is a third vertex of $V_L$ that is not connected by an edge to either.  Similar for $V_R$.  Since $V_M$ has at most one vertex (as mentioned above), it also must be contained in $V_1$ or $V_2$.  Suppose now for contradiction that $w$ is a vertex of $V_1 \ast V_2$ that does not lie in $\Phi(v)$.  If $V_L$ is nonempty, then $w$ has nonempty intersection with some vertex of $V_L$, and similarly for $V_M$ and $V_R$.  Thus $w$ must be contained in the same $V_i$ as each nonempty element of the set $\{V_L,V_M,V_R\}$.  Since $w$ was arbitrary, it follows that one of the $V_i$ is empty, as desired.

We now must show that each maximal join in $\C_A(S_g)$ lies in the image of $\Phi$.  Let $X$ be a maximal join in $\C_A(S_g)$.  Let $V_1 \ast \cdots \ast V_n$ be the complete join decomposition of $X$.  For each $i$ let $R_i$ be the subsurface of $S_g$ determined by $V_i$, that is, $R_i$ is the smallest subsurface containing a representative of each vertex of $V_i$.

We claim that:
\begin{enumerate}
\item the $R_i$ are connected,
\item the $R_i$ are disjoint, and
\item at least one $R_i$ is not an annulus.
\end{enumerate}
The $R_i$ are connected because otherwise the corresponding $V_i$ could be written as the join of the complexes corresponding to the components.  If the $R_i$ were not disjoint, then since each $R_i$ is filled by the vertices of the corresponding $V_i$ we could find vertices of $V_i$ and $V_j$ that had essential intersection.  For the last statement, suppose for contradiction that each $R_i$ were an annulus.  Consider the subgraph of $\C_A(S_g)$ spanned by the vertex corresponding to $R_1$ and the vertices represented in the complement of $R_1$.  This graph is a join (it is a cone on the $R_1$-vertex) and it clearly contains $X$.  It is strictly larger than $X$ because it contains $\MCG(S_g)$-translates of $R_2$ that do not lie in $X$.  This is a contradiction and the claim is proven.

Suppose then that $R_1$ is non-annular.  We claim that $R_1$ is nonseparating.  Suppose to the contrary that $R_1$ has complementary regions $P_1,\dots,P_k$ with $k \geq 2$.  Since $n \geq 2$, there is an $R_i$ contained in some $P_j$, say $P_1$.  The complement of $P_1$ is a region $Q$ containing $R_1$ as a proper subsurface.  Consider the subcomplex of $\C_A(S_g)$ spanned by the vertices represented in either $Q$ or $P_1$.  This is the join of the subcomplexes corresponding to $Q$ and to $P_1$, and it is a nontrivial join since both are nonempty by assumption.  Moreover this subcomplex properly contains $X$ since there are $\MCG(S_g)$-translates of vertices of $X$ that are not contained in any $R_i$.  This is a contradiction, and we conclude that $R_1$ is indeed nonseparating.

Since $R_1$ is nonseparating, its boundary is a dividing set.  Also, since $X$ is a nontrivial join, this dividing set represents a vertex $v$ of $\C_{\delta A} (S_g)$.  Since $R_1$ is compatible with $v$, we must have that $\Phi(v)$ contains $X$.  Since $X$ is maximal, it follows that $\Phi(v)$ is equal to $X$, and so $X$ is in the image of $\Phi$.  This completes the proof of the first statement of the lemma.

Since we chose $R_1$ to be an arbitrary non-annular $R_i$, it follows that $v$ corresponds to the boundary of each non-annular $R_i$.  In particular, there are at most two non-annular $R_i$ and if there are two then they are complementary in $S_g$.  In other words, it makes sense to think of the non-annular $R_i$ as the left and right sides of $v$.  The second statement of the lemma follows.
\end{proof}

\p{Compatible subcomplexes.} For us, the main consequence of Lemma~\ref{lemma:bijection} is that an automorphism of $\C_A(S_g)$ induces an automorphism of the 0-skeleton of $\C_{\delta A}(S_g)$: the image of a vertex of $\C_{\delta A}(S_g)$ is determined by the image of the corresponding maximal join in $\C_A(S_g)$.  The next lemma tells us that this automorphism extends to the 1-skeleton of $\C_{\delta A}(S_g)$.  We say that a subcomplex $V$ of a complex is \emph{compatible} with a subcomplex $W$ if $V$ can be written as $V_1 \ast V_2$ with $V_1$ nonempty and $V_1 \subseteq W$.  

\begin{lemma}
\label{lemma:ca edges}
Suppose $\C_A(S_g)$ has no isolated vertices.  Let $v$ and $w$ be vertices of $\C_{\delta A}(S_g)$.  Then $v$ and $w$ are connected by an edge in $\C_{\delta A}(S_g)$ if and only if $\Phi(v)$ is compatible with $\Phi(w)$.  In particular, compatibility of maximal joins in $\C_A(S_g)$ is a symmetric relation.
\end{lemma}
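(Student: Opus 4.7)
The plan is to characterize edges in $\C_{\delta A}(S_g)$ using the left/right join decomposition $\Phi(v) = V_L^v \ast V_M^v \ast V_R^v$ from Lemma~\ref{lemma:bijection}, where $V_L^v$ and $V_R^v$ correspond to the two complementary regions $L^v$ and $R^v$ of $v$ in $S_g$ and $V_M^v$ consists of annular vertices parallel to components of $v$.

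For the forward direction, assume $v$ and $w$ have nested representatives, with $v$ lying in a region of $w$. Then one complementary region of $v$, say $L^v$, is contained in that region, so every vertex of $V_L^v$ has a representative disjoint from $w$, giving $V_L^v \subseteq \Phi(w)$. Any annular vertex in $V_M^v$ can also be realized inside $L^v$ and hence lies in $\Phi(w)$. Since $\C_A(S_g)$ has no isolated vertices, the proof of Lemma~\ref{lemma:bijection} forces the left/right decomposition of $\Phi(v)$ to have at least two nonempty factors, so at least one of $V_L^v, V_M^v$ is nonempty. Setting $V_1 = V_L^v \ast V_M^v$ and $V_2 = V_R^v$ exhibits $\Phi(v)$ as compatible with $\Phi(w)$.

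For the reverse direction, suppose $\Phi(v) = V_1 \ast V_2$ with $V_1$ nonempty and $V_1 \subseteq \Phi(w)$. By uniqueness of the complete join decomposition, $V_1$ is a join of a nonempty subset $S \subseteq \{V_L^v, V_M^v, V_R^v\}$. If $V_L^v \in S$ (the case $V_R^v \in S$ is symmetric), we claim $w$ admits a representative disjoint from $L^v$, which gives that $v$ and $w$ are nested. Otherwise some essential arc or curve of $w$ sits in $L^v$; since $V_L^v$ is nonempty, $L^v$ is more complex than a pair of pants and so carries essential simple closed curves. Choose $u \in V_L^v$ with representative $U \subseteq L^v$ and an essential curve $d \subseteq L^v$ meeting both a non-peripheral boundary component of $U$ and an essential piece of $w \cap L^v$. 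A sufficiently high power of the Dehn twist about $d$, extended by the identity outside $L^v$, sends $u$ to a vertex of $V_L^v$ meeting $w$ essentially, contradicting $V_L^v \subseteq \Phi(w)$. If instead $S = \{V_M^v\}$, then since $\C_A(S_g)$ has no vertices of type N, $V_M^v$ is a single annular vertex of type S; as any dividing set containing a separating component must consist of that single component (no other component could lie on the boundary of both complementary regions), $v$ is a single separating curve, and $V_M^v \subseteq \Phi(w)$ yields that $v$ is disjoint from $w$, so $v$ and $w$ are nested.

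The main technical obstacle is the Dehn twist step, which requires $L^v$ to carry enough essential curves; this is ensured by the assumption $V_L^v \neq \emptyset$, which rules out disks, annuli, and pairs of pants. The closing assertion of symmetry for compatibility of maximal joins in $\C_A(S_g)$ is immediate from the main equivalence, the bijection of Lemma~\ref{lemma:bijection}, and the symmetry of the nesting relation on dividing sets.
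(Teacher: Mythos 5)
Your proof is correct and takes essentially the same approach as the paper's, resting in both directions on Lemma~\ref{lemma:bijection}'s identification of the left/right decomposition $V_L \ast V_M \ast V_R$ of $\Phi(v)$ with its complete join decomposition. The only minor difference is in the reverse direction, where the paper argues the contrapositive briefly (non-nested implies each complementary region of $v$ essentially intersects each of $w$, so no join factor can lie in $\Phi(w)$) while you construct an explicit Dehn-twist witness; both arguments ultimately rely on the same change-of-coordinates principle to produce a vertex of $V_L^v$ (or $V_R^v$) that essentially intersects $w$.
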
 

\begin{proof}

Denote the left/right decompositions of $\Phi(v)$ and $\Phi(w)$ by $V_L \ast V_M \ast V_R$ and $W_L \ast W_M \ast W_R$.  By Lemma~\ref{lemma:bijection} these are both nontrivial join decompositions.

Suppose first that $v$ and $w$ are connected by an edge in $\C_{\delta A}(S_g)$.  This means that there are representatives of $v$ and $w$ in $S_g$ that are nested dividing sets.  We can choose the left and right subsurfaces of $S_g$ associated to $v$ and $w$ in a compatible way and so that $v$ lies to the left of $w$.  It then follows from the definition of the left/right decomposition that $V_L \ast V_M$ is contained in $W_L$.  Since the left/right decomposition is the same as the complete join decomposition (Lemma~\ref{lemma:bijection}) it must be that $V_L \ast V_M$ is nonempty.  So $\Phi(v)$ is compatible with $\Phi(w)$, as desired.

For the other direction, suppose that $v$ and $w$ are not connected by an edge in $\C_{\delta A}(S_g)$.  Any pair of representatives for $v$ and $w$ are dividing sets that are not nested.  Thus each complementary subsurface for the $v$-dividing set has essential intersection with each complementary subsurface for $w$.  It follows that no component of the left/right decomposition of $\Phi(v)$ can be contained in $\Phi(w)$.  But by Lemma~\ref{lemma:bijection} the left/right decomposition is the same as the complete join decomposition and so $\Phi(v)$ is not compatible with $\Phi(w)$, as desired.
\end{proof}

\p{Finishing the proof.} We are almost ready to prove Theorem~\ref{main:complex}.  

\begin{lemma}
\label{lemma:sr punchline}
Let $A$ be a subset of $\R(S_g)$ so that $\C_A(S_g)$ is connected and has no holes.
\begin{enumerate}
\item\label{bound} Let $R$ be a representative of a vertex of $\C_A(S_g)$; then $\partial R$ represents a simplex in $\C_{\delta A}(S_g)$.
\item The complex $\C_{\delta A}(S_g)$ is connected.
\end{enumerate}
\end{lemma}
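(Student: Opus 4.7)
For part~\eqref{bound}, let $Q_1, \dots, Q_m$ denote the connected components of $S_g \setminus R$. The boundary $\partial R$ partitions into the disjoint sets $\partial Q_j \subseteq \partial R$. My first claim is that each $\partial Q_j$ is a dividing set representing a vertex of $\C_{\delta A}(S_g)$: it separates $S_g$ into $Q_j$ and the complementary region $S_g \setminus \overline{Q_j}$, which is connected because it contains $R$ together with each other $Q_i$, all attached along~$R$; and the no-holes hypothesis provides a vertex of $\C_A$ inside $Q_j$, while $R$ itself realizes a vertex of $\C_A$ on the opposite side. Since the $\partial Q_j$ are pairwise disjoint and each $\partial Q_i$ lies in a single complementary region of $\partial Q_j$ for $i \neq j$, the vertices $\partial Q_1, \dots, \partial Q_m$ are pairwise nested and therefore span the desired simplex $\sigma(R)$ of $\C_{\delta A}(S_g)$, whose vertex representatives union to $\partial R$.

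For part~(2), the plan is to transfer connectedness from $\C_A(S_g)$ to $\C_{\delta A}(S_g)$ using the simplices $\sigma(R)$. The first step is to show that every vertex $d$ of $\C_{\delta A}(S_g)$ is connected by an edge (or equal) to some vertex of a simplex $\sigma(R)$: by compatibility, a representative $R$ of some vertex of $\C_A$ sits on one side of $d$; the opposite side of $d$ is connected and disjoint from $R$, so it lies entirely in a single complementary region $Q_{j_0}$ of $R$, and a small isotopy places $d$ inside the interior of $Q_{j_0}$, exhibiting $d$ as nested with $\partial Q_{j_0} \in \sigma(R)$.

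The second step is that whenever $v$ and $v'$ are vertices of $\C_A(S_g)$ joined by an edge, the simplices $\sigma(v)$ and $\sigma(v')$ lie in the same component of $\C_{\delta A}(S_g)$: if $R, R'$ are disjoint representatives and $Q$ is the complementary region of $R$ containing $R'$, then $\partial Q \in \sigma(v)$, and the first step applied to $d = \partial Q$ with $R_d = R'$ produces a vertex of $\sigma(v')$ edge-connected to $\partial Q$. Concatenating along a path in $\C_A(S_g)$ joining representatives associated to any two chosen vertices $d_1, d_2$ of $\C_{\delta A}(S_g)$ then yields a path in $\C_{\delta A}(S_g)$ from $d_1$ to $d_2$, proving connectedness. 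The main subtlety I anticipate is verifying the nested-representative condition when $d$ shares homotopic curves with $\partial Q_{j_0}$, which is handled cleanly by the isotopy into the interior of $Q_{j_0}$.
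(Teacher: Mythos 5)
Your proposal is correct and follows essentially the same route as the paper: part~(1) is obtained from the no-holes hypothesis applied to each complementary region (and is automatic for annular $R$), and part~(2) transfers connectedness from $\C_A(S_g)$ to $\C_{\delta A}(S_g)$ by following a path in $\C_A(S_g)$ and using the boundary simplices $\sigma(a_i)$ as stepping stones. The only cosmetic difference is that the paper shows the stronger fact that $\delta a_i \cup \delta a_{i+1}$ spans a simplex, whereas you only exhibit a single edge between the two simplices; both suffice.
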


\begin{proof}

We begin with the first statement.  Clearly $\partial R$ is a disjoint union of dividing sets; the individual dividing sets are in bijection with the regions of $S_g$ complementary to $R$.  If $R$ is not an annulus, then the statement follows from the no holes condition.  If $R$ is an annulus, then the statement is automatic since there are representatives of the $R$-vertex on both sides of $\partial R$.  This completes the proof of the first statement.  (In the case where $R$ is a separating annulus, $\partial R$ is two parallel separating curves that both represent the same vertex of $\C_{\delta A}(S_g)$.)

We now proceed to the second statement.  Let $v$ and $w$ be vertices of $\C_{\delta A}(S_g)$.  We would like to find a path between $v$ and $w$ in $\C_{\delta A}(S_g)$.  By the definition of $\delta A$ we can choose a vertices $a$ and $b$ of $\C_A(S_g)$ so that $v$ is compatible with $a$ and $w$ is compatible with $b$.  Let $a=a_0,\dots,a_n=b$ be a path in $\C_A(S_g)$.  By the first statement of the lemma the boundary of each $a_i$ represents a simplex $\delta a_i$ of $\C_{\delta A}(S_g)$.  The vertices $v$ and $w$ are connected by edges to $\delta a_0$ and $\delta a_n$, respectively.  Moreover, since each $a_i$ corresponds to a connected subsurface of $S_g$, each component of $\delta a_{i+1}$  lies in a single region of $S_g$ determined by any given component of $\delta a_i$; this is to say that each $\delta a_i \cup \delta a_{i+1}$ is also a simplex of $\C_{\delta A}(S_g)$.  The second statement now follows, as the desired path between $v$ and $w$ lies in the sequence of simplices $v,\delta a_0,\dots, \delta a_n,w$.
\end{proof}

In the proof of Theorem~\ref{main:complex} we will use the partial order on vertices of $\C_A(S_g)$ defined above.  Also, we say that a vertex $v$ of $\C_A(S_g)$ is \emph{1-sided} if for any representative of $v$ in $S_g$ there is exactly one complementary region that contains a representative of a vertex of the link of $v$ in $\C_A(S_g)$.

\begin{proof}[Proof of Theorem~\ref{main:complex}]

As in the statement of the theorem, we have a connected complex of regions $\C_A(S_g)$ with a small vertex and no holes or corks.  By Proposition~\ref{prop:ca easy} we may further assume that $\C_A(S_g)$ has no vertices of type N.  Let $\eta$ denote the natural map $\MCG(S_g) \to \Aut \C_A(S_g)$.  We would like to show that $\eta$ is an isomorphism.  By Lemma~\ref{inj}, $\eta$ is injective.  It remains to show that $\eta$ is surjective.

By Lemmas~\ref{lemma:bijection} and~\ref{lemma:ca edges} there is a well-defined map 
\[
\delta : \Aut \C_A(S_g) \to \Aut \C_{\delta A}(S_g);
\]
for any $\phi \in \Aut \C_A(S_g)$ the image under $\delta \phi$ of a dividing set is determined by the image under $\phi$ of the corresponding maximal join.  The main step in the proof is to show that $\delta$ is injective.

Let $\phi$ be an automorphism of $\C_A(S_g)$ and suppose that $\delta\phi$ is the identity.  We would like to show that $\phi$ is the identity.  To this end, let $v$ be a vertex of $\C_A(S_g)$.  We would like to show that $\phi(v)=v$.  We proceed in three cases.
\begin{enumerate}
\item the case where $v$ is a 1-sided annular vertex,
\item the case where $v$ is a non-annular, $\preceq$-minimal, 1-sided vertex, and
\item the general case.
\end{enumerate}

\medskip \noindent {\em Case 1.} First assume that $v$ is a 1-sided annular vertex.  A representative of $v$ divides $S_g$ into two regions; let $R$ be a region of smallest  genus, and let $Q$ be the other region.  We would like to show that there is a vertex of $\C_{\delta A}(S_g)$ represented by a multicurve in $Q$ that is not peripheral in $Q$ (in other words, this vertex should not be the dividing set that is parallel to $v$).  Since $\C_A(S_g)$ is connected there must be some vertex $w$ of $\C_A(S_g)$ represented by a subsurface of $Q$.  If $w$ is annular then it is not parallel to $v$ and so the vertex of $\C_{\delta A}(S_g)$ parallel to $w$ is the desired vertex.  If $w$ is not annular and is not represented by the entire region $Q$ then we can apply Lemma~\ref{lemma:sr punchline}\eqref{bound} in order to find the desired vertex of $\C_{\delta A}(S_g)$.  So the only remaining possibility is that $w$ corresponds to $Q$ and no other subsurface of $Q$ represents a vertex of $\C_A(S_g)$.  But this pair would be a cork pair, a contradiction.

Since there is one vertex of $\C_{\delta A}(S_g)$ corresponding to a non-peripheral dividing set in $Q$, it follows that $Q$ is filled by representatives of vertices of $\C_{\delta A}(S_g)$.  By assumption, all of these vertices are fixed by $\delta\phi$.  By the definition of $\delta\phi$, the vertex $\phi(v)$ must be disjoint from all of these vertices of $\C_{\delta A}(S_g)$, and hence $\phi(v)$ must be represented by a subsurface of $R$.  Since $v$ is 1-sided it must be that $\phi(v)=v$, as desired.

\medskip \noindent {\em Case 2.} Next assume that $v$ is non-annular, $\preceq$-minimal, and 1-sided.  The argument is very similar to the previous case.  Let $R$ be a representative of $v$.  Since $\C_A(S_g)$ has no holes and $v$ is 1-sided, the complement of $R$ in $S_g$ is connected; denote this region by $Q$.  We again would like to show that there is a vertex of $\C_{\delta A}(S_g)$ represented by a dividing set in $Q$ that is not peripheral in $Q$.  Let $w$ be a vertex of $\C_A(S_g)$ in the link of $v$, so $w$ is represented by a subsurface of $Q$.  If $w$ corresponds to a separating annulus that is not peripheral in $Q$ then the vertex of $\C_{\delta A}(S_g)$ parallel to $w$ is the desired vertex.  If $w$ corresponds to an annulus parallel to the boundary of $Q$, then since $\C_A(S_g)$ has no corks there must be a vertex of $\C_A(S_g)$ represented by a subsurface of $R$.  And since $\C_A(S_g)$ has no holes there must be a further vertex of $\C_A(S_g)$ that is represented by a subsurface of $R$ and that is smaller than $v$ in the partial order, a contradiction.  If $w$ is a non-annular vertex corresponding to a proper subsurface of $Q$ then again we apply Lemma~\ref{lemma:sr punchline}\eqref{bound} to find the desired vertex of $\C_{\delta A}(S_g)$.  So the only remaining case to consider is where $w$ corresponds to $Q$ and no other vertex of $\C_A(S_g)$ corresponds to a subsurface of $Q$.  By the minimality of $v$ it follows that $Q$ and $R$ are homeomorphic and hence that $v$ and $w$ span an isolated edge in $\C_A(S_g)$, a contradiction.

Again, we must have that $Q$ is filled by vertices of $\C_{\delta A}(S_g)$ and so again $\phi(v)$ must correspond to a subsurface of $R$.  By the minimality of $v$ and the assumption that $\C_A(S_g)$ has no holes or corks, it must be that $\phi(v)=v$.

\medskip \noindent {\em Case 3.} We now attack the case of an arbitrary vertex $v$ of $\C_A(S_g)$.  Let $Q$ be a region of $S_g$ that is complementary to some representative of $v$.  We would like to analyze the set of vertices of the link of $v$ represented in $Q$. Since we already dealt with the case of 1-sided annular vertices and since $\C_A(S_g)$ has no holes we may assume that this set of vertices is nonempty.

We put a partial order on this set of vertices of the link of $v$ represented in $Q$.  For any such vertex $u$, we write $\hat u$ for the smallest nonseparating region of $Q$ that contains a representative of $u$.  Then we say $u \leq w$ if $\hat u \subseteq \hat w$.  Since there are finitely many vertices of $\C_A(S_g)$ represented in $Q$ up to homeomorphism of $Q$ the set of $\leq$-minimal elements is nonempty.  We would like to show that each $\leq$-minimal element is either a 1-sided annular vertex or a non-annular, $\preceq$-minimal, 1-sided vertex.

Let $u$ be a vertex of the link of $v$ that is represented in $Q$.  Assume that $u$ is neither a 1-sided annular vertex or a non-annular, $\preceq$-minimal, 1-sided vertex.   We must show that $u$ is not $\leq$-minimal.  

Suppose first that $u$ is non-annular and 2-sided.  Let $P$ be a complementary region to $u$ that does not contain the boundary of $Q$ (there is such a region since the complement of $Q$ is connected).  Since $\C_A(S_g)$ has no holes, there exists a vertex $w$ of $\C_A(S_g)$ supported in $P$.  We have $w < u$, as desired.

Next suppose that $u$ is non-annular, 1-sided vertex that is not $\preceq$-minimal.  Since $u$ is non-annular and $\C_A(S_g)$ has no holes, $u$ is represented by a nonseparating subsurface $P$ of $S_g$.  Since $u$ is not $\preceq$-minimal and is 1-sided and since $\C_A(S_g)$ has no holes, it again follows that there is a vertex $w$ with $w < u$.

Finally suppose that $u$ is annular and 2-sided.  Let $P$ denote the region of $Q$ that is determined by $u$ and does not contain the boundary of $Q$.  Since $\C_A(S_g)$ has no corks, there is a vertex $w$ of $\C_A(S_g)$ represented by a proper subsurface of $P$.  If we do not have $w < u$ then $w$ is non-annular and has one boundary component parallel to $u$.  Since $\C_A(S_g)$ has no holes, there is a vertex $x$ of $\C_A(S_g)$ with $x < u$ and $u$ is again not $\leq$-minimal.  We have succeeded in characterizing the $\leq$-minimal elements.

We will now show that one of the following conditions holds:
\begin{enumerate}
\item $Q$ is filled by the 1-sided annular vertices and non-annular, $\preceq$-minimal, 1-sided vertices of $\C_A(S_g)$ represented in $Q$, or
\item the boundary of $Q$ is parallel to a 1-sided annular vertex of $\C_A(S_g)$ and no non-peripheral subsurfaces of $Q$ represent vertices of $\C_A(S_g)$.
\end{enumerate}
If there is a vertex of $\C_A(S_g)$ that is $\leq$-minimal and is non-peripheral in $Q$ then by our classification of $\leq$-minimal elements, we are in the first case.  If all $\leq$-minimal elements are peripheral in $Q$, then since there are no vertices of type N it must be that the boundary of $Q$ is connected.   Then since there are no holes or corks we must be in the second case.

We are finally ready to complete the proof that $\phi(v)=v$, and hence that $\delta$ is injective.   From our analysis of the vertices of $\C_A(S_g)$ represented in the complementary regions for $v$, we conclude that $v$ is completely determined as follows: it is the unique vertex of $\C_A(S_g)$ represented by a region that is complementary to the supports of the 1-sided annular vertices and non-annular, $\preceq$-minimal, 1-sided vertices of $\C_A(S_g)$ that lie in the link of $v$.  Indeed, no other complementary region contains a representative of a vertex of $\C_A(S_g)$ and---since $\phi$ is invertible---$\phi(v)$ cannot correspond to a proper subregion of $v$.  Since we already showed that 1-sided annular vertices and non-annular, $\preceq$-minimal, 1-sided vertices are fixed by $\phi$, it follows that $\phi(v)=v$.  We have thus proven $\delta$ is injective.

\medskip

Having shown that $\delta$ is injective, we now proceed to complete the proof of the theorem.  Since $\bar g(A) < g/3$, it follows that $\check g(\delta A) < g/3$.  Also it follows from Lemma~\ref{lemma:sr punchline} and the assumption that $\C_A(S_g)$ is connected that $\C_{\delta A}(S_g)$ is connected.  As we already mentioned, Lemma~\ref{upper set} implies that $\delta A$ is an upper set.  Thus by Theorem~\ref{theorem:multi k} the natural map $\MCG(S_g) \to \Aut \C_{\delta A}(S_g)$ is an isomorphism.  We thus have the following diagram:
\[
\xymatrix{
\Aut \C_A(S_g)\  \ar@{^{(}->}[rr]^\delta & & \ \Aut \C_{\delta A}(S_g) \\
& \MCG(S_g) \ar[ur]_{\cong} \ar@{_{(}->}[ul]^{\eta} & 
}
\]
It follows from the definition of $\delta$ that the diagram is commutative.  It follows then from the injectivity of $\delta$ and $\eta$ that both are isomorphisms.  This completes the proof of the theorem.
\end{proof}


\section{Normal subgroups}
\label{section:normal}

In this section we prove Theorem~\ref{main:normal}, which describes the automorphism group and the abstract commensurator group of a normal subgroup of $\Mod(S_g)$ or $\MCG(S_g)$ that has a pure element with a small component.  We will define a complex of regions where the vertices correspond to the supports of certain subgroups of $G$, called basic subgroups.  Theorem~\ref{main:normal} will then be derived from Theorem~\ref{main:complex}, our theorem about automorphisms of complexes of regions.

It might seem more intuitive to relate regions of $S_g$ to elements as opposed to subgroups.  One immediate advantage of using subgroups is that typically the centralizer of a subgroup has its support disjoint from that of the subgroup; this is not true for individual elements.  We were inspired to take the subgroup approach after reading a paper by Hensel \cite{Hensel}, although this idea already appears in the work of  Ivanov, cf. \cite[Section 7.20]{ivanovshiny}.

\subsection{Nielsen--Thurston normal forms and pure elements}

Before getting to the proof of Theorem~\ref{main:normal}, we must recall some ideas related to the Nielsen--Thurston classification for elements of $\Mod(S_g)$.  For basics on this theory, including the definition of a pseudo-Anosov mapping class, see the book by Farb and the second author of this paper \cite[Chapter 13]{primer}.

For a compact surface $R$ with marked points, let $\PMod(R)$ denote the subgroup of $\Mod(R)$ consisting of elements that induce the trivial permutation of the marked points (for us the mapping class group of a surface with boundary is the group of homotopy classes of orientation-preserving homeomorphisms that restrict to the identity on the boundary, so the components of the boundary are automatically not permuted).

A \emph{partial pseudo-Anosov} element of $\Mod(S_g)$ is the image of a pseudo-Anosov element of $\PMod(R)$ under the map $\PMod(R) \to \Mod(S_g)$ induced by the inclusion of a region $R$ of $S_g$; here $R$ has no marked points and we allow $R=S_g$.  By the work of Thurston and Birman--Lubotzky--McCarthy \cite{blm} the region $R$---the \emph{support}---is canonically defined up to isotopy.  Similarly, the support of a power of a Dehn twist is well defined.  

Following Ivanov \cite{ivanovshiny}, a mapping class $f \in \MCG(S_g)$ is \emph{pure} if it is equal to a product $f_1 \cdots f_k$ where 
\begin{enumerate}
 \item each $f_i$ is a partial pseudo-Anosov element or a power of a Dehn twist,
 \item for $i \neq j$ the supports of $f_i$ and $f_j$ have disjoint representatives, and
 \item for $i \neq j$ the support of $f_i$ is not homotopic into the support of $f_j$.
\end{enumerate}
The $f_i$ are called the \emph{Nielsen--Thurston components}, or simply \emph{components}, of $f$.  Each component can be characterized as a \emph{pseudo-Anosov component} or a \emph{Dehn twist component}.  Note that the third condition is vacuous if $f_i$ is a pseudo-Anosov component.  Also note that pure elements of $\MCG(S_g)$ lie in $\Mod(S_g)$.  

Under these definitions, the Nielsen--Thurston components of a pure mapping class are not canonical when the supports of pseudo-Anosov components $f_i$ and $f_j$ have a pair of parallel boundary components.  Indeed, if the homotopy class of these boundary components is $c$ then we may replace $f_i$ and $f_j$ with $f_iT_c$ and $f_jT_c^{-1}$.  Still, the support of a pure element is a well-defined subsurface, invariant under taking nontrivial powers.

A subgroup of $\Mod(S_g)$ is \emph{pure} if each element is pure.  Ivanov proved that there is a subgroup of finite index in $\Mod(S_g)$ that is pure \cite[Corollary 1.8]{ivanovshiny}.  
The support of a pure subgroup of $\Mod(S_g)$ is again a well-defined subsurface of $S_g$ (sometimes called the active subsurface; cf. \cite{Mosher}).  The support of a pure subgroup is invariant under passage to finite-index subgroups.

If $R$ is a component of the support of the pure subgroup $H$, then there is a well-defined \emph{reduction homomorphism}
\[ 
H \to \PMod(R^\circ), 
\]
where $R^\circ$ is the surface obtained from $R$ by collapsing each component of the boundary to a marked point.  By definition, the image is an irreducible subgroup of $\PMod(R^\circ)$.  (The `P' in $\PMod$ also stands for pure, but in general $\PMod(R)$ is not pure in the sense of Ivanov.) 

If we have two partial pseudo-Anosov elements with equal support $R$, we say that the elements are \emph{independent} if their corresponding pseudo-Anosov foliations are distinct from each other.  If a partial pseudo-Anosov mapping class $f$ has support $R$ then its image under the reduction homomorphism $\langle f \rangle \to \Mod(R^\circ)$ is pseudo-Anosov, and so we can say that two partial pseudo-Anosov elements with support $R$ are independent if their reductions are independent pseudo-Anosov elements.

We will repeatedly use the following fact, usually without mention. 

\begin{fact}
\label{commute fact}
Two elements of a pure subgroup of $\Mod(S_g)$ commute if and only if
\begin{enumerate}
\item the supports of their components are pairwise disjoint or equal and
\item in the case where two pseudo-Anosov components have equal support $R$ the components are dependent.
\end{enumerate}
In particular if two pure elements commute then all of their nontrivial powers commute and if two pure elements do not commute then all of their nontrivial powers fail to commute.
\end{fact}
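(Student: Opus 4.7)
The plan is to verify both directions by cutting along a canonical multicurve and applying standard facts about the Nielsen--Thurston classification on each complementary piece. Two classical inputs will be used without further comment: (i) the centralizer of a pseudo-Anosov in $\PMod(R^\circ)$ is virtually cyclic, generated by a pseudo-Anosov with the same unordered pair of invariant projective measured foliations, and (ii) no nontrivial power of a Dehn twist commutes with a pseudo-Anosov element of $\PMod(R^\circ)$. Before starting, note the following consequence of the definition of purity: if $h = h_1 \cdots h_r$ is pure and preserves a multicurve $\mu$ setwise, then $h$ fixes $\mu$ pointwise. Indeed, each component of $\mu$ is either contained in the support of some $h_j$ or disjoint from all supports; partial pseudo-Anosovs preserve no isotopy class of simple closed curve in their support, and Dehn twists fix setwise-invariant curves only if these are parallel to the twisting curve, so the claim follows.

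For the backward direction, write $f = f_1 \cdots f_k$ and $g = g_1 \cdots g_\ell$ as in the definition of purity. It suffices to show that each $f_i$ commutes with each $g_j$. When the supports of $f_i$ and $g_j$ are disjoint, choose representing homeomorphisms with disjoint support; they commute on the nose. When the supports coincide and are annular, both $f_i$ and $g_j$ are powers of a common Dehn twist. When the supports coincide and are non-annular, both $f_i$ and $g_j$ are partial pseudo-Anosov with the same invariant foliations by hypothesis, so by (i) their reductions to $\PMod(R^\circ)$ lie in a common virtually cyclic, hence abelian, subgroup, and so $f_i$ and $g_j$ commute.

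For the forward direction, suppose $f$ and $g$ commute. Since $f$ and $g$ are pure, each fixes its own canonical reduction system pointwise. Because $g$ commutes with $f$, the mapping class $g$ preserves the canonical reduction system of $f$ setwise, and hence pointwise by the preliminary observation. Symmetrically for $f$. Therefore both $f$ and $g$ fix pointwise the union $\sigma$ of the two canonical reduction systems. Cutting $S_g$ along $\sigma$, on any component $R$ of the result the induced mapping classes $\bar f, \bar g \in \PMod(R^\circ)$ commute. By construction, the restriction of $f$ to $R^\circ$ is either trivial, a nontrivial power of the Dehn twist about the core (when $R$ is annular), or pseudo-Anosov (when $R$ is non-annular); similarly for $g$. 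Applying (i) and (ii) case by case, we conclude that on each $R$ the pair $(\bar f, \bar g)$ either has at least one trivial entry, consists of two nontrivial Dehn twist powers on an annular $R$, or consists of two dependent pseudo-Anosov elements on a non-annular $R$. Reassembling the pieces yields the two stated conditions on the Nielsen--Thurston components of $f$ and $g$.

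The \emph{in particular} clause is immediate: the support of a nontrivial power of a pure mapping class coincides with that of the mapping class, and the invariant projective foliations of a pseudo-Anosov component are preserved under nontrivial powers, so the two criteria hold for $(f, g)$ if and only if they hold for $(f^m, g^n)$ whenever $m, n \neq 0$. The main obstacle is the forward direction, specifically the verification that commutation descends cleanly to each piece of the cut surface; here one must also handle carefully the mild non-uniqueness of the Nielsen--Thurston decomposition when two pseudo-Anosov components of $f$ (or of $g$) have parallel boundary components, using that the subsurface support is canonical as noted in the excerpt.
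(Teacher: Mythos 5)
Your proposal follows exactly the route the paper indicates: the paper does not write out a proof of this Fact but simply cites Birman--Lubotzky--McCarthy (commuting elements have compatible canonical reduction systems) and Ivanov (in a pure subgroup the centralizer of a pseudo-Anosov element is \emph{cyclic}), and your argument is precisely a fleshed-out version of that outline---pass to the common reduction system, cut, and appeal to the structure of centralizers of pseudo-Anosov classes.

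One step of your backward direction is a genuine non-sequitur as written. You argue that two dependent pseudo-Anosov reductions ``lie in a common virtually cyclic, hence abelian, subgroup.'' Virtually cyclic does not imply abelian (the infinite dihedral group is the standard counterexample), so input (i) in the form you stated it is too weak to finish this case. What rescues the argument is precisely the sharper statement the paper cites: Ivanov's Lemma 5.10 says the centralizer of a pseudo-Anosov in a \emph{pure} subgroup is cyclic, hence abelian, so two dependent pseudo-Anosov components commute. (Alternatively, one can realize both as affine stretch maps for the same flat structure, so they commute on the nose; but some argument beyond ``virtually cyclic'' is needed.) There is also a minor bookkeeping slip in the forward direction: if $R$ is an annular component of the cut surface, then $\PMod(R^\circ)$ is trivial, so $\bar f$ and $\bar g$ are both trivial there; the twisting degree lives in the kernel of the reduction homomorphism, not in $\PMod(R^\circ)$. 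This does not affect the conclusion---Dehn twists about the same curve commute---but the phrasing ``a nontrivial power of the Dehn twist about the core'' for an element of $\PMod(R^\circ)$ is not correct.
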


Fact~\ref{commute fact} follows from the fact that commuting elements have compatible canonical reduction systems \cite[Lemma 3.1(1)]{blm} and the fact that in a pure subgroup the centralizer of a pseudo-Anosov element is cyclic \cite[Lemma 5.10]{ivanovshiny}.  

\begin{lemma}
\label{irred}
Let $H$ be a non-abelian pure subgroup of $\Mod(S_g)$.  Then there is a component $R$ of the support of $H$ so that the reduction homomorphism $H \to \PMod(R^\circ)$ 
has non-abelian image.  For any such $R$, the centralizer of $H$ is supported in the complement of $R$.
\end{lemma}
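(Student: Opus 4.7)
The first statement is proved by contrapositive. Suppose $\rho_{R_i}(H)$ is abelian for every component $R_i$ of the support of $H$; I will show $H$ is abelian. Given $f,g \in H$, any failure of commutation is witnessed, by Fact~\ref{commute fact}, by a pair of Nielsen--Thurston components---one from $f$, one from $g$---whose supports either intersect essentially without being equal, or coincide as supports of independent partial pseudo-Anosov elements. In either case the two supports are connected, meet essentially or coincide, and therefore lie in a single component $R_i$ of the support of $H$. Their images in $\PMod(R_i^\circ)$ retain the same configuration and so still fail to commute by Fact~\ref{commute fact} applied in $R_i^\circ$, contradicting the abelianity of $\rho_{R_i}(H)$. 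Hence every pair commutes and $H$ is abelian.

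For the second statement, let $z$ lie in the centralizer of $H$; replacing $z$ by a suitable nonzero power, we may assume $z$ is pure (the support is invariant under nontrivial powers). Since $hzh^{-1}=z$ for every $h\in H$, conjugation by $h$ permutes the Nielsen--Thurston components of $z$, and Fact~\ref{commute fact} applied to the commuting pair $(z,h)$ forces each component support $V$ of $z$ to be disjoint from or equal to every component support of $h$. Both alternatives yield $h(V)=V$ as an isotopy class, so each such $V$ is $H$-invariant. Next I argue that $V$ cannot straddle $\partial R$: if a point $p\in V\cap R^\circ$ existed with $V\not\subseteq R$, then $p$ would lie in some component support $W\subseteq R$ of an element of $H$, and the disjoint-or-equal alternative would force $V=W\subseteq R$, a contradiction. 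So every component support of $z$ either lies inside $R$ or is disjoint from $R^\circ$.

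It remains to rule out $V\subseteq R$. The restriction $r_R(z)\in\PMod(R^\circ)$ centralizes $\rho_R(H)$, which is pure, non-abelian (by the choice of $R$), and has support all of $R^\circ$ (since $R$ is a component of the support of $H$). The plan is to show that $r_R(z)$ can have no Nielsen--Thurston component supported in $R^\circ$, via a centralizer principle proved by induction on the complexity of $R^\circ$. In the irreducible base case, $\rho_R(H)$ contains two independent partial pseudo-Anosov elements with full support, and the classical fact that centralizers of pseudo-Anosov elements in $\PMod(R^\circ)$ are virtually cyclic---with trivial intersection for independent pairs---forces $r_R(z)$ to act trivially. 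In the reducible case, $r_R(z)$ preserves the canonical reduction system of $\rho_R(H)$ and restricts to each piece of the complement, where induction applies on any piece with non-abelian restriction. The main obstacle is this reducible case: one must use Fact~\ref{commute fact} to reassemble Nielsen--Thurston data from the pieces back to $R^\circ$, verifying that abelian restrictions on every piece would upgrade to $\rho_R(H)$ being abelian globally and thereby ruling that subcase out and closing the induction.
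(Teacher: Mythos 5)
Your proof of the first statement is essentially the paper's argument, only cast as a contrapositive; both hinge on the observation that the two noncommuting Nielsen--Thurston components witnessed by Fact~\ref{commute fact} live in a single support component and still fail to commute after reduction. That part is fine.

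The second part has a genuine gap. The paper's proof is short because it uses a fact you have not invoked: when $R$ is a component of the support of a \emph{pure} subgroup $H$, the image $\rho_R(H) \subseteq \PMod(R^\circ)$ is \emph{irreducible by definition} of the support of a pure subgroup (see the paragraph in Section~\ref{section:normal} introducing the reduction homomorphism). You only record the weaker observation that $\rho_R(H)$ has full support in $R^\circ$, and as a result you set up an induction on complexity with a ``reducible case'' that cannot actually occur --- and you acknowledge in your final sentence that you do not know how to close that case. This leaves the argument incomplete. Once you note irreducibility, your ``irreducible base case'' is the entire proof: a non-abelian irreducible pure subgroup of $\PMod(R^\circ)$ contains two independent pseudo-Anosov elements (take any pseudo-Anosov in the image --- such exist by Ivanov's theorem for irreducible subgroups --- and conjugate by an element that does not commute with it). Pull these back to $H$ to get two elements with independent partial pseudo-Anosov components supported on $R$, and then Fact~\ref{commute fact}, applied to each of these against an arbitrary centralizing element, forces every Nielsen--Thurston component of that centralizing element to be supported off $R$. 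This also sidesteps the somewhat fraught ``no straddling'' argument in your second paragraph, whose disjoint-or-equal dichotomy only constrains \emph{essential} intersections of component supports and does not directly prevent an annular component of $z$ from being isotoped across $\partial R$.
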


\begin{proof}

For the first statement, let $f$ and $h$ be two elements of $H$ that do not commute.  By Fact~\ref{commute fact} they have components whose supports $R_1$ and $R_2$ have essential intersection and if $R_1=R_2$ then these components are independent partial pseudo-Anosov elements.  The regions $R_1$ and $R_2$ must lie in the same component $R$ of the support of $H$.  The images of $R_1$ and $R_2$ in $R^\circ$ still have essential intersection and if $R_1=R_2$ then the images of the chosen components of $f$ and $h$ are still independent.  By Fact~\ref{commute fact} the images of $f$ and $h$ in $\PMod(R^\circ)$ do not commute.

We now prove the second statement.  Let $R$ be as in the statement.  By the definition of a component of the support of $H$, the image of $H$ under the reduction homomorphism is irreducible; since the image is also non-abelian it contains a pseudo-Anosov element \cite[Theorem 5.9]{ivanovshiny}.  Since (the image of) $H$ is not abelian, it follows that its image in $\Mod(R^\circ)$ contains two independent pseudo-Anosov elements (conjugate the first one by any element that does not commute with it).  The preimages in $H$ of these pseudo-Anosov elements of  $\Mod(R^\circ)$ are two elements of $H$ that have independent partial pseudo-Anosov components with support $R$.  The second statement now follows from Fact~\ref{commute fact}.
\end{proof}


\subsection{The commutator trick}

Fix $g$ and let $N$ be some fixed pure, normal subgroup of $\Mod(S_g)$ that contains a pure element with a small component.  Also let $G$ be a subgroup of $N$ of finite index. 

We would like to define a complex of regions where the vertices correspond to the supports of certain subgroups of $N$.  One of the basic difficulties we need to overcome is that a typical element (or subgroup) of $N$ has disconnected support, but in a complex of regions the vertices must correspond to connected subsurfaces.  Further, if an element with multiple Nielsen--Thurston components lies in $N$ then the individual components may or may not lie in $N$.  The next lemma deals with this problem.  The key point is that if $N$ contains an element $f$ so that one component of the support of $f$ is a non-annular region $R$, then there is a different element $f'$ of $N$---not equal to a component of $f$---whose support is $R$.  The element $f'$ is obtained as a commutator of $f$ with an appropriately chosen element of $\Mod(S_g)$.    

\begin{lemma}
\label{comm trick}
Let $g \geq 0$, let $N$ be a pure, normal subgroup of $\Mod(S_g)$ and let $G$ be a finite-index subgroup of $N$.

Let $f$ be an element of $G$ and let $R$ be a region of $S_g$ so that some component of $f$ has support contained as a non-peripheral subsurface of $R$ and all other components of $f$ have support that is either contained in or is  disjoint from $R$.  Let $J$ be the subgroup of $G$ consisting of all elements supported in $R$.  Then
\begin{enumerate}
\item\label{comm trick 1} $J$ is not abelian,  
\item\label{comm trick 2} $J$ contains an element with support $R$, and
\item\label{comm trick 3} the centralizer $C_G(J)$ is supported in the complement of $R$.
\end{enumerate}
\end{lemma}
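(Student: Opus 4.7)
The main tool is the \emph{commutator trick}. Decompose $f = f_R \cdot f_{R^c}$, where $f_R$ (resp.\ $f_{R^c}$) is the product of those Nielsen--Thurston components of $f$ whose supports lie in $R$ (resp.\ are disjoint from $R$); the hypothesis on $R$ ensures this decomposition is well defined. For any $h \in \Mod(S_g)$ supported in $R$, the element $h$ commutes with $f_{R^c}$ (their supports being disjoint), and a direct expansion gives $[f, h] = [f_R, h]$. Since $N$ is normal in $\Mod(S_g)$, we have $hf^{-1}h^{-1} \in N$, and hence $[f, h] = f \cdot (hf^{-1}h^{-1}) \in N$. Thus $[f_R, h]$ is a pure element of $N$ supported in $R$; since $G$ has finite index in $N$ and pure elements retain their support under nonzero powers, a positive power of $[f, h]$ lies in $J$ with the same support.

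To prove (2), we produce $h$ for which $[f, h]$ has support equal to $R$. The reduction homomorphism $\Mod(R, \partial R) \to \PMod(R^\circ)$ sends $f_R$ to an element $\bar f_R$ that is non-trivial, because $R_0$ is non-peripheral in $R$ (in particular $R$ is not an annulus, and $R^\circ$ carries pseudo-Anosov elements). Choose $h_0 \in \Mod(S_g)$ supported in $R$ whose reduction $\bar h_0 \in \PMod(R^\circ)$ is a pseudo-Anosov whose stable and unstable foliations lie outside the (proper closed) $\bar f_R$-fixed set in $\PMF(R^\circ)$. For sufficiently large $n$, setting $h = h_0^n$, the element $[\bar f_R, \bar h] = (\bar f_R \bar h \bar f_R^{-1}) \cdot \bar h^{-1}$ is a product of two pseudo-Anosovs with four distinct foliations, and a standard ping-pong argument on $\PMF(R^\circ)$ shows this product is itself pseudo-Anosov on $R^\circ$. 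Since an element of $\Mod(R, \partial R)$ whose reduction in $\PMod(R^\circ)$ is pseudo-Anosov has support equal to $R$ in $S_g$, the commutator $[f, h]$ has support $R$, and a suitable power lies in $J$.

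For (1), repeat the construction of (2) using two pseudo-Anosovs $\bar h_1, \bar h_2$ whose foliations are mutually independent and generic relative to $\bar f_R$; the resulting pair of elements of $J$ have independent pseudo-Anosov reductions in $\PMod(R^\circ)$, and hence do not commute by Fact~\ref{commute fact}. For (3), by (2) the support of the subgroup $J$ is the connected region $R$; since $J$ is non-abelian by (1), Lemma~\ref{irred} applied with $H = J$ shows that the centralizer of $J$ in $\Mod(S_g)$---and in particular $C_G(J)$---is supported in the complement of $R$. The main obstacle will be the ping-pong step in (2): verifying that for large enough $n$ the commutator $[\bar f_R, \bar h_0^n]$ is pseudo-Anosov on $R^\circ$. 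This is a standard application of pseudo-Anosov dynamics on $\PMF(R^\circ)$, using that the set of $\bar f_R$-invariant foliations is a proper closed subset so that generic pseudo-Anosov foliations for $\bar h_0$ avoid it.
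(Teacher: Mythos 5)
Your proof is essentially correct but takes a genuinely different route from the paper at the key step. Both arguments begin the same way: use the commutator $[f,h]=[f_R,h]$ with $h$ supported in $R$, together with normality of $N$ in $\Mod(S_g)$ and finite index of $G$, to land a pure element of $J$ with non-peripheral support in $R$. Where you diverge is in upgrading this to \emph{full} support and to a non-abelian $J$. You construct the witness directly: pick $\bar h_0$ pseudo-Anosov on $R^\circ$ with foliations generic relative to $\bar f_R$, and argue by north--south dynamics/ping-pong on $\PMF(R^\circ)$ that $[\bar f_R,\bar h_0^n]$ is pseudo-Anosov for large $n$, hence $[f,h_0^n]$ has support exactly $R$. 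The paper avoids the ping-pong entirely: having produced any single nontrivial $j\in J$ with non-peripheral reduction, it observes that conjugates of $j$ by elements of $\Mod(S_g)$ preserving $R$ have powers in $J$, so the image of $J$ in $\PMod(R^\circ)$ is irreducible and non-abelian, and then simply quotes Ivanov's theorem (\cite[Theorem 5.9]{ivanovshiny}) that an irreducible non-abelian pure subgroup contains two independent pseudo-Anosov elements. The paper's bootstrapping is shorter and pushes the dynamical content into a black box; your construction is more explicit but the ping-pong step (that a high-power commutator of two pseudo-Anosovs with four distinct invariant foliations is itself pseudo-Anosov) deserves a sentence of justification or a citation rather than being called ``standard''---it is true but not a one-liner, and one must also ensure $\bar f_R$ does not swap the invariant foliations of $\bar h_0$. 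Two small cosmetic points: you refer to $R_0$ without defining it (it should be the support of the distinguished component of $f$), and the paper's statement ``Since $N$ is normal in $\MCG(S_g)$'' is a slip---normality in $\Mod(S_g)$ suffices since $h\in\Mod(S_g)$, which is the hypothesis you correctly use.
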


\begin{proof}

The first step is to show that $J$ contains a nontrivial element $j$ whose support is a non-peripheral subsurface of $R$.  To this end, consider the reduction homomorphism $G \to \PMod(R^\circ)$.  Since the support of $f$ is not peripheral in $R$, the image of $f$ is nontrivial.  Also, since $f$ is pure it follows that the image of $f$ is not central in $\PMod(R^\circ)$ (cf. \cite[page 77]{primer}).  Thus there is an $\bar h \in \PMod(R^\circ)$ that does not commute with the image of $f$.  Let $h$ be any element of the preimage of $\bar h$ in $\PMod(R)$ (the reduction homomorphism $\PMod(R) \to \PMod(R^\circ)$ is surjective).  As above we may identify $h$ as an element of $\Mod(S_g)$.  Since $N$ is normal in $\MCG(S_g)$, the commutator $[f,h]$ lies in $N$. By construction $[f,h]$ is supported in $R$.  Let $j$ be a nontrivial power of $[f,h]$ that lies in $G$.  The kernel of the map $\PMod(R) \to \PMod(R^\circ)$ is precisely the set of elements with peripheral support, so $j$ is the desired element.    

The second step is to show that $J$ contains two independent partial pseudo-Anosov elements with support $R$.  All three statements of the lemma follow from this and Fact~\ref{commute fact}.  Let $j$ be the element found in the first step.  Any conjugate of $j$ by an element of $\Mod(S_g)$ has a power in $G$.  It follows that the image of $J$ under the reduction homomorphism $\PMod(R) \to \PMod(R^\circ)$ is irreducible and not abelian (apply Fact~\ref{commute fact}).  Any such subgroup contains two independent pseudo-Anosov elements (irreducible subgroups contain pseudo-Anosov elements \cite[Theorem 5.9]{ivanovshiny} and any nontrivial conjugate of a pseudo-Anosov element is an independent pseudo-Anosov element and has a power in the image of $J$).  Any preimages of these elements in $J$ are the desired elements and the proof is complete.
\end{proof}

\subsection{Basic subgroups} 

As in the previous section let $N$ be a pure, normal subgroup of $\Mod(S_g)$ that contains a pure element with a small component, and let $G$ be a finite-index subgroup of $N$.  We now define basic subgroups of $N$.  We will show that these subgroups have connected supports, and so they can used to build a complex of regions for $N$. 

We define a strict partial order on subgroups of $G$ by the following rule:
\[
H \prec H' \ \ \text{ if } \ \ C_G(H') \subsetneq C_G(H).
\]
(by a strict partial order we mean a binary relation that is irreflexive---meaning that no element is related to itself---and transitive).  A subgroup of $G$ is \emph{basic} if among non-abelian subgroups of $G$ it is minimal with respect to this strict partial order; specifically $B$ is basic if there is no non-abelian subgroup $B'$ of $G$ with $B' \preceq B$.

Consider for example the case where $N$ is the Torelli group $\I(S_g)$.  Let $R$ be a sphere with four separating boundary components in $S_g$.  Then the subgroup $B$ of $N$ consisting of elements that are supported in $R$ is a basic subgroup.  Indeed, it is not abelian since it contains Dehn twists about curves that intersect and it is minimal because all proper subsurfaces of $R$ have abelian mapping class group.

\begin{lemma}
\label{lemma:basic}
Let $g \geq 0$, let $N$ be a pure, normal subgroup of $\Mod(S_g)$ that contains a pure element with a small component, and let $G$ be a finite-index subgroup of $N$.
\begin{enumerate}
\item\label{basic region} The support of a basic subgroup of $G$ is a non-annular region of $S_g$.
\item\label{basic passage} If $B$ is a basic subgroup of $N$ then $B \cap G$ is a basic subgroup of $G$; similarly, any basic subgroup of $G$ is also a basic subgroup of $N$.
\item\label{basic small} $N$ contains a basic subgroup with small support.
\item\label{basic action} $\MCG(S_g)$ acts on the set of supports of basic subgroups of $G$.
\end{enumerate}
\end{lemma}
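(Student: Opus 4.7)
The plan is to handle the four parts in turn, each relying on Fact~\ref{commute fact}, Lemma~\ref{irred}, and Lemma~\ref{comm trick}.

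For part~(\ref{basic region}), given $B$ basic in $G$ with support $R$, I would first show $R$ is connected by contradiction. If $R$ has two or more components, Lemma~\ref{irred} supplies a component $R_0$ on which the reduction $B \to \PMod(R_0^\circ)$ is non-abelian, and Lemma~\ref{comm trick} produces a non-abelian subgroup $J \leq G$ supported in $R_0$ alone. Then $C_G(J)$ contains elements of $G$ supported anywhere in $S_g \setminus R_0 \supsetneq S_g \setminus R$, giving $J \prec B$ and contradicting basic-ness. Non-annularity of $R$ is automatic since $\PMod$ of an annulus is cyclic while $B$ is non-abelian.

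For part~(\ref{basic passage}), the key point is that Fact~\ref{commute fact} implies $C_N(H \cap G) = C_N(H)$ for any pure $H \leq N$, since commuting with $H$ is equivalent to commuting with a sufficient set of powers of the generators, all of which lie in $H \cap G$. Thus $C_G(H \cap G) = C_N(H) \cap G$. If $B$ is basic in $N$, then $B \cap G$ is non-abelian (same reason), and a hypothetical $H' \prec (B \cap G)$ in $G$ immediately yields $H' \prec B$ in $N$ via any witness $x \in C_G(H') \setminus C_G(B \cap G) \subseteq C_N(H') \setminus C_N(B)$. The converse direction is symmetric, replacing $H$ by $H \cap G$ throughout.

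Part~(\ref{basic small}) is the main technical step. By the remark following Theorem~\ref{main:normal}, which itself appeals to Lemma~\ref{comm trick}, $N$ contains a pure element whose entire support is nonperipheral in some region $R_0$ with connected boundary and $\bar g(R_0) < g/3$. I would pick $R^* \subseteq R_0$ of minimal complexity among connected, non-annular subregions that support some non-abelian subgroup of $G$; this is well-defined by finiteness of isotopy types of subregions of $R_0$. Let $B$ consist of all elements of $G$ supported in $R^*$; Lemma~\ref{comm trick} ensures $B$ is non-abelian with support exactly $R^*$. To show $B$ is basic, suppose $B' \prec B$ is non-abelian. Lemma~\ref{irred} gives a component $R''$ of $\Supp(B')$ on which the reduction is non-abelian. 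The containment $C_G(B) \subseteq C_G(B')$ together with the abundance of Dehn twist powers in $C_G(B)$ filling $S_g \setminus R^*$ forces $R'' \subseteq R^*$; equality is excluded by Lemma~\ref{irred}, which would give $C_G(B') \subseteq C_G(B)$, contradicting the strict inclusion. Thus $R'' \subsetneq R^*$, and applying Lemma~\ref{comm trick} to an element of $B'$ with a partial pseudo-Anosov component non-peripheral in $R''$ produces a non-abelian subgroup of $G$ supported in $R''$, contradicting minimality of $R^*$. This is the step I expect to be the main obstacle, since it requires tying the $\prec$-order tightly to topological shrinking of support via the commutator and reduction lemmas.

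For part~(\ref{basic action}), conjugation by $\phi \in \Mod(S_g)$ is an automorphism of $N$ (by normality of $N$ in $\Mod(S_g)$) and respects both non-abelian-ness and centralizers, hence the relation $\prec$. So if $B$ is basic in $N$ then $\phi B \phi^{-1}$ is too, and part~(\ref{basic passage}) yields that $\phi B \phi^{-1} \cap G$ is basic in $G$ with support $\phi \cdot \Supp(B)$. For $\phi \in \MCG(S_g) \setminus \Mod(S_g)$, the same argument applies verbatim when $N \triangleleft \MCG(S_g)$, and otherwise one uses that the condition of being the support of a basic subgroup, as captured by part~(\ref{basic small}), depends only on the homeomorphism type of the region in $S_g$ and is therefore preserved by the full $\MCG(S_g)$-action on isotopy classes of regions.
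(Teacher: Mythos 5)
The overall structure matches the paper's: connectedness via Lemma~\ref{irred} and Lemma~\ref{comm trick} in part~\eqref{basic region}, passage between $N$ and $G$ using the compatibility of centralizers with finite-index subgroups in part~\eqref{basic passage}, a descent inside a small region in part~\eqref{basic small}, and conjugation in part~\eqref{basic action}. Your part~\eqref{basic passage} is a pleasant variant: you go straight through Fact~\ref{commute fact} via the power argument to get $C_N(H\cap G)=C_N(H)$, whereas the paper routes through the support characterization of Lemma~\ref{irred}; both are correct. Part~\eqref{basic action} is fine in substance, though the paper's reduction to $\Mod(S_g)$ by transitivity on each $\MCG(S_g)$-orbit of regions is cleaner than your closing sentence.

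There are two real gaps. First, in part~\eqref{basic region} you never show the support is a \emph{proper} subsurface of $S_g$. This is not automatic: if $\Supp(B)=S_g$ then Lemma~\ref{irred} gives $C_G(B)=\{1\}$, and to contradict minimality one must exhibit a non-abelian $J\le G$ with $C_G(J)\ne\{1\}$, which is exactly where the small-component hypothesis enters (apply Lemma~\ref{comm trick} to the element of small support and use normality to place a disjoint conjugate in $C_G(J)$). Your sketch omits this entirely. Relatedly, in your connectedness step, the inclusion ``$C_G(J)$ contains elements of $G$ supported anywhere in $S_g\setminus R_0\supsetneq S_g\setminus R$'' does not by itself give $J\prec B$: Lemma~\ref{irred} only bounds $C_G(B)$ by the complement of $R_0$, not of all of $R$, so you still need to produce an element of $C_G(J)\setminus C_G(B)$. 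The paper does this with a second application of Lemma~\ref{comm trick}, choosing a region $Q$ disjoint from $R_0$ that contains a component of the support of some $h\in B$ non-peripherally, and taking an element of $J_Q$ with full support. Second, in part~\eqref{basic small} your appeal to ``the abundance of Dehn twist powers in $C_G(B)$'' is unjustified: $N$ (hence $G$) is an arbitrary pure normal subgroup with a small element and need contain no twist powers at all. What fills the complement of $R^*$ are $G$-conjugates (via normality of $N$, then passing to powers) of the small-support subgroups furnished by Lemma~\ref{comm trick}; this is also what makes the containment $R''\subseteq R^*$ work. The ``partial pseudo-Anosov'' qualifier when invoking Lemma~\ref{comm trick} is likewise unnecessary and would be wrong if the relevant component were a twist.
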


\begin{proof}

We begin with the first statement.   Let $H$ be a basic subgroup of $G$.  Since $H$ is not abelian, the support of $H$ is clearly not empty and not an annulus.  It is also easy to see that the support of $H$ is proper.  Indeed if the support of $H$ were $S_g$, then by Lemma~\ref{irred} the centralizer $C_G(H)$ would be trivial.  On the other hand, since $N$---hence $G$---contains an element $f$ with a small component, we can apply  Lemma~\ref{comm trick} to $f$ and a small region $Q$ in order to produce a non-abelian subgroup $J$ with nontrivial centralizer $C_G(J)$ (since $Q$ is small there is an $h \in \Mod(S_g)$ so that  $h J h^{-1} \cap G$ lies in $C_G(J)$).  Any such $J$ would be strictly smaller than $H$ in the strict partial order.

Suppose for contradiction that the support of $H$ is not connected.  By Lemma~\ref{irred} there is a component $R$ of the support of $H$ so that the reduction homomorphism $H \to \PMod(R^\circ)$ has non-abelian image.  In particular, there must be an element $h$ of $H$ with a Nielsen-Thurston component whose support is non-peripheral in $R$.   By Lemma~\ref{comm trick}\eqref{comm trick 1}, the subgroup 
\[ J = \{ h \in G \mid \Supp(h) \subseteq R \}. \]
 of $G$ is not abelian.

We will show that $J \prec H$.  By Lemma~\ref{irred}  the centralizer $C_G(H)$ of $H$ is supported in the complement of $R$.  It follows that $C_G(H) \subseteq C_G(J)$.  We must now produce an element of $C_G(J) \setminus C_G(H)$.  Let $h$ be an element of $H$ whose support is not contained in $R$.  Let $Q$ be a non-annular region of $S_g$ that is disjoint from $R$, that contains a component of the support of $h$ as a non-peripheral subsurface, and is disjoint from all other components of the support of $h$.  Applying Lemma~\ref{comm trick}\eqref{comm trick 1} to $h$ and $Q$ we find the desired element of $C_G(J) \setminus C_G(H)$.  Thus $J \prec H$, a contradiction. This completes the proof of the first statement.  

The second statement has two parts.  For the first part, let $B$ be a basic subgroup of $N$.  We would like to show that $B \cap G$ is a basic subgroup of $G$.  It follows from Fact~\ref{commute fact} that $B \cap G$ is non-abelian, and it follows from the first statement and Lemma~\ref{irred} that the centralizer of $B \cap G$ in $G$ consists of all elements whose support lies outside the support of $B \cap G$.  Thus, if $B \cap G$ were not basic in $G$ there would be a non-abelian subgroup $H$ of $G$ whose centralizer contains the centralizer of $B \cap G$ and also contains at least one extra element.  The support of this extra element would have to intersect the support of $B \cap G$ and hence by the first statement and Lemma~\ref{irred} the support of $H$ would be a proper subsurface of the support of $B \cap G$.  Since $B \cap G$ has finite index in $B$ the latter is equal to the support of $B$ and we can conclude that $H$ is smaller than $B$ in the strict partial order on subgroups of $N$, contradicting the assumption that $B$ was basic in $N$.

For the second part of the second statement, let $B$ be a basic subgroup of $G$.  We would like to show that $B$ is basic in $N$.  Suppose $B'$ is a non-abelian subgroup of $N$ that is smaller than $B$ in the strict partial order on subgroups of $N$.  Consider the subgroup $B' \cap G$ of $G$.  Again by Fact~\ref{commute fact} the subgroup $B' \cap G$ is not abelian.  Since $B' \prec B$ (in $N$) the centralizer of $B$ in $N$ is strictly contained in the centralizer of $B'$ in $N$.  Thus the centralizer of $B$ in $G$ is contained in the centralizer of $B'$ in $G$.  Since the containment is strict there is an element $f$ of $N$ that lies in the centralizer in $N$ of $B'$ but not of $B$.  Some power of $f$ lies in $G$.  It then follows from Fact~\ref{commute fact} that this power of $f$ lies in the centralizer of $B'$ in $G$ but not the centralizer of $B$ in $G$, contradicting the assumption that $B$ is basic in $G$. 

We now treat the third statement.  By assumption, $N$---hence $G$---contains a nontrivial pure element $f$ with a small component $f_1$.  
Let $R_1$ be a fitting region for $f$ corresponding to $f_1$, as per the definition of $\hat g(f)$ in Section~\ref{sec:complex}; the component $f_1$ and the region $R_1$ satisfy the hypotheses of Lemma~\ref{comm trick}.  There is a region $Q$ of $S_g$ that has genus less than $g/3$, has connected boundary, and contains $R_1$.  We define
\[ 
J_{R_1} = \{ h \in G \mid \Supp h \subseteq R_1 \}.
\]
It follows from parts \eqref{comm trick 1} and \eqref{comm trick 3} of Lemma~\ref{comm trick} that $J_{R_1}$ is not abelian and that $C_G(J_{R_1})$ is the set of elements of $G$ with support in the complement of $R_1$.  We would like to show that $J_{R_1}$ contains a basic subgroup.  If $J_{R_1}$ itself is not minimal, then it contains a non-abelian subgroup $J_{R_1}'$ with $J_{R_1}' \prec J_{R_1}$.  It follows that $C_G(J_{R_1}')$ contains an element whose support intersects $R_1$.  By Lemma~\ref{irred} the support of $J_{R_1}'$ has a component that is a proper subsurface $R_2$ of $R_1$.  If $J_{R_2}$ is the subgroup of $G$ consisting of all elements with support in $R_2$ we still have $J_{R_2} \prec J_{R_1}$.  If $J_{R_2}$ is not minimal we can repeat the process.  Since the Euler characteristics of the $R_i$ are strictly increasing and negative, the process must eventually terminate at a basic subgroup $H$ with support contained in $Q$.

For the fourth statement, suppose that $R$ is the support of a basic subgroup $B$ in $G$ and let $f \in \MCG(S_g)$.  We would like to show that $f(R)$ is the support of a basic subgroup of $G$.  Since $\Mod(S_g)$ acts transitively on each $\MCG(S_g)$-orbit of regions in $S_g$, we may assume without loss of generality that $f$ lies in $\Mod(S_g)$.

Let $J_R$ denote the subgroup of $G$ consisting of all elements with support in $R$.  Let $B' = (f J_R f^{-1}) \cap G$.  As $N$ is normal in $\Mod(S_g)$ and $G$ has finite index in $N$ the subgroup $B'$ has finite index in $f J_R f^{-1}$.  Since the support of a pure subgroup of $\Mod(S_g)$ is invariant under taking finite index subgroups, the support of $B'$ is $f(R)$.  We would like to show that $B'$ is basic.  It follows from Fact~\ref{commute fact} that $B'$ is not abelian.  Finally we must show that $B'$ is minimal.  By part \eqref{comm trick 3} of Lemma~\ref{comm trick} the support of $J_R$ is $R$ (not a proper subsurface of $R$) and so the support of $f J_R f^{-1}$, hence $B'$, is $f(R)$.  Since $B'$ is not abelian it then follows from Fact~\ref{commute fact} that $C_G(B')$ consists of exactly the elements of $G$ with support outside $f(R)$.  If there were a subgroup $B''$ of $G$ with $B'' \prec B'$ then there would be an element of $C_G(B'')$ whose support has essential intersection with $f(R)$ and so the support of $B''$ would be contained in $f(R)$.   But then---again using Fact~\ref{commute fact}---the subgroup $(f^{-1}B''f) \cap G$ would be strictly smaller than $B$ in the strict partial order, contradicting the minimality of $B$.
\end{proof}

\subsection{The complex}

Again, let $N$ be some fixed pure, normal subgroup of $\Mod(S_g)$ that contains a pure element with a small component.  We are now ready to construct the desired complex of regions $\C_N(S_g)$ for $N$.  By statements~\eqref{basic region} and~\eqref{basic action} of Lemma~\ref{lemma:basic} there is a complex of regions $\C_N^\sharp(S_g)$ whose set of vertices is in bijection with the set of supports of basic subgroups of $N$.  One point to note here is that there are many basic subgroups of $N$ corresponding to a given vertex of $\C_N^\sharp(S_g)$.  

By Lemma~\ref{lemma:basic}\eqref{basic region} the complex of regions $\C_N^\sharp(S_g)$ has no annular vertices and so it has no corks.  Also by Lemma~\ref{lemma:basic}\eqref{basic small} it has a small vertex.  On the other hand, $\C_N^\sharp(S_g)$ does not necessarily satisfy the other hypotheses of Theorem~\ref{main:complex}: it may have holes and it may be disconnected.  

To illustrate the first point, we again consider the above example where $N=\I(S_g)$.  Again, let $R$ be a four-holed sphere where each component of the boundary is separating in $S_g$.  Also assume that one of the complementary regions $Q$ is a handle.  We already explained why there is a basic subgroup with support $R$, and so $R$ represents a vertex of $\C_N^\sharp(S_g)$.  But in this particular case $R$ represents a vertex of $\C_N^\sharp(S_g)$ with a hole.  Indeed, the subgroup of $\I(S_g)$ consisting of elements supported in $Q$ is cyclic (it is generated by the Dehn twist about the boundary of $Q$) and so there are no basic subgroups of $N$ supported in $Q$.  Thus $Q$ represents a hole for the $R$-vertex.

We can also imagine an example where $\C_N^\sharp(S_g)$ is not connected.  Let $N$ be the normal closure in $\MCG(S_g)$ of two elements $f$ and $h$ of $\Mod(S_g)$, where $f$ is a partial pseudo-Anosov element supported on a handle $Q$ and $h$ is a partial pseudo-Anosov element supported on a subsurface $R$ of genus zero with $g+1$ boundary components.  Using Lemma~\ref{comm trick} we can find non-abelian subgroups of $N$ with supports $Q$ and $R$.  For the typical choices of $f$ and $h$ we would expect these subgroups to be basic, and so $Q$ and $R$ will represent vertices $v$ and $w$ of $\C_N^\sharp(S_g)$.  It is also possible that all vertices of $\C_N^\sharp(S_g)$ lie in the orbit of $v$ and $w$.  Since no vertex in the orbit of $v$ is connected to $w$ the complex $\C_N^\sharp(S_g)$ is disconnected in this case.

In light of these issues, we now set about modifying $\C_N^\sharp(S_g)$ so that it satisfies all of the hypotheses of Theorem~\ref{main:complex}.  First, let $\C_N^\flat(S_g)$ be the filling of $\C_N^\sharp(S_g)$  (cf. Section~\ref{sec:sr}).  By Lemma~\ref{filling}, the complex $\C_N^\flat(S_g)$ has no holes and by Lemma~\ref{small filling} it has a small vertex.  Since the filling of a non-annular vertex is non-annular, and since $\C_N^\sharp(S_g)$ has no annular vertices, $\C_N^\flat(S_g)$ has no corks.  

In summary, the complex $\C_N^\flat(S_g)$ has no holes or corks and it has a small vertex, but it might be disconnected.  We have the following fact, which is a straightforward application of the Putman trick from the proof of Lemma~\ref{lemma:sp graph connected}.  

\begin{lemma}
\label{small component}
Let $\C_A(S_g)$ be a complex of regions.  The small vertices of $\C_A(S_g)$ all lie in the same connected component of $\C_A(S_g)$.  
\end{lemma}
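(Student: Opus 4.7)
The plan is to exhibit, for any two small vertices $v, w$ of $\C_A(S_g)$, a common neighbor $u \in \C_A(S_g)$, giving a path $v, u, w$ of length two.

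Let $R_v, R_w$ be representatives of $v, w$, each contained in a subsurface $Q_v, Q_w \subseteq S_g$ with connected boundary and of genera $k_v, k_w < g/3$ respectively. The first main step is the topological claim that $S_g \setminus (Q_v \cup Q_w)$ contains a subsurface $Q$ of genus $k_v$ with connected boundary. To prove it, observe that $S_g \setminus Q_v$ is a connected subsurface of genus $g - k_v > 2g/3$ with one boundary component. The intersection $Q_w \cap (S_g \setminus Q_v)$ is a subsurface of genus at most $k_w$, and after removing it from $S_g \setminus Q_v$ some component has genus at least $g - k_v - k_w$. The hypothesis $k_v, k_w < g/3$ gives $g - k_v - k_w > k_v$, leaving room to embed a subsurface $Q$ of genus $k_v$ with a single boundary component, as needed.

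Next, by the change of coordinates principle, $\MCG(S_g)$ acts transitively on isotopy classes of subsurfaces of $S_g$ of a fixed topological type, including genus-$k_v$ subsurfaces with connected boundary. Hence there exists $h \in \MCG(S_g)$ with $h(Q_v) = Q$ up to isotopy. Because $\C_A(S_g)$ is $\MCG(S_g)$-invariant, $u := h \cdot v$ is a vertex of $\C_A(S_g)$, and its representative $h(R_v)$ lies inside $Q \subseteq S_g \setminus (Q_v \cup Q_w)$. Hence $u$ has a representative disjoint from both $R_v$ and $R_w$, making it adjacent to both $v$ and $w$ in $\C_A(S_g)$.

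The main obstacle is the topological claim in the second paragraph: since $Q_v$ and $Q_w$ need not be disjointly embedded, one must analyze the possibly complicated subsurface $Q_w \cap (S_g \setminus Q_v)$ and check that removing a genus-$\leq k_w$ piece from a connected genus-$(g - k_v)$ surface always leaves a component of genus at least $k_v$. The strict inequality $k_v, k_w < g/3$, giving $g - k_v - k_w > k_v$, is the crucial numerical slack that ensures a component of sufficient genus always remains available.
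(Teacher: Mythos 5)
The proposal has a genuine gap: the topological claim underpinning the length-two path is false, and in fact two small vertices of $\C_A(S_g)$ need not have a common neighbor at all.

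For a concrete failure, take $\C_A(S_g)$ with $g\geq 4$ to be the complex of handles (genus-one regions with connected boundary), so $\bar g(A)=1<g/3$. Let $v$ be a handle with representative $R_v$, let $\psi$ be a pseudo-Anosov mapping class, and set $w=\psi^n(v)$ with $n$ large so that the separating curves $\partial R_v$ and $\partial R_w$ fill $S_g$. Then $S_g\setminus(R_v\cup R_w)$ is a union of disks, so \emph{no} region whatsoever is disjoint from both $R_v$ and $R_w$; in particular no vertex of $\C_A(S_g)$ is adjacent to both $v$ and $w$, and the length-two path does not exist. No choice of the auxiliary handles $Q_v\supseteq R_v$, $Q_w\supseteq R_w$ can salvage this, since the obstruction already lives at the level of $R_v,R_w$. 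The unjustified step in your second paragraph is ``after removing $Q_w\cap(S_g\setminus Q_v)$ some component has genus at least $g-k_v-k_w$'': cutting out a subsurface $W$ depresses the genus of what remains not only by $\mathrm{genus}(W)$ but also by a term controlled by the number of boundary components of $W$ and the connectivity of the pieces, and in the filling scenario what remains is a union of disks.

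The paper instead proves the lemma by the Putman trick, as in Lemma~\ref{lemma:sp graph connected}, which sidesteps the issue by requiring simultaneous disjointness only from two \emph{nearby} small vertices, never from two arbitrary ones. Briefly: to see that a single $\MCG(S_g)$-orbit of small vertices lies in one component, one fixes a base vertex $v_0$ and checks, for each Dehn twist generator $T_c$ of $\MCG(S_g)$, that $v_0$ and $T_c(v_0)$ lie in the same component---both are supported in a common handle of genus at most one greater than that of $Q_{v_0}$, and since $g>3\bar g(A)$ there is room in the complement of that handle for a translate of $v_0$ adjacent to both. Two small vertices $v,w$ in different orbits are then joined by replacing $v$ with a translate $v'$ disjoint from $Q_w$ (one small handle always fits in the complement of another, since $k_v+k_w<2g/3<g$) and connecting $v$ to $v'$ within its orbit. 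The resulting paths may be long; that flexibility is exactly what your length-two argument gives up, and what makes the claim hold in the filling example above.
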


Finally, we may define $\C_N(S_g)$ as the connected component of $\C_N^\flat(S_g)$ containing the small vertices.  Clearly $\C_N(S_g)$ is connected and has a small vertex.  Also since $\C_N^\flat(S_g)$ has no annular vertices, $\C_N(S_g)$ has no corks.  To check that $\C_N(S_g)$ satisfies the hypotheses of Theorem~\ref{main:complex}, it remains to check that $\C_N(S_g)$ has no holes.  

Suppose that $R$ is a region of $S_g$ that represents a vertex of $\C_N(S_g)$.  Then $R$ also represents a vertex of $\C_N^\flat(S_g)$.  Since the latter has no holes, each complementary region of $R$ supports a vertex of $\C_N^\flat(S_g)$.  Each of these vertices clearly lies in the connected component of $\C_N^\flat(S_g)$ containing the $R$-vertex and so they all correspond to vertices of $\C_N(S_g)$.  So $\C_N(S_g)$ has no holes.  We thus have the following consequence of Theorem~\ref{main:complex}.

\begin{prop}
\label{prop:defining cn}
Let $N$ be a pure, normal subgroup of $\Mod(S_g)$ that contains a pure element with a small component.  Then the natural map
\[
\MCG(S_g) \to \Aut \C_N(S_g)
\]
is an isomorphism.
\end{prop}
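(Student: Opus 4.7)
The plan is to apply Theorem~\ref{main:complex} directly to $\C_N(S_g)$; almost all of the work has been absorbed into the construction preceding the proposition. That construction has already verified that $\C_N(S_g)$ is a complex of regions (via Lemma~\ref{lemma:basic}\eqref{basic region} and~\eqref{basic action}), that it is connected (by definition, as the component containing the small vertices of $\C_N^\flat(S_g)$), that it has no corks (since supports of basic subgroups are non-annular, so $\C_N^\sharp$, and hence $\C_N^\flat$, has no annular vertices), and that it has no holes (checked in the paragraph just before the proposition). So all that remains is to verify the numerical hypothesis $g \geq 3\bar g(A)+1$ where $A$ is the vertex set of $\C_N(S_g)$, and to identify the natural map.

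For the numerical hypothesis, I would apply Lemma~\ref{lemma:basic}\eqref{basic small} with $G = N$ to produce a basic subgroup of $N$ whose support is a non-peripheral subsurface of some region $Q$ of $S_g$ with connected boundary and genus $k = \hat g(N) < g/3$. The $\MCG(S_g)$-orbit of this support is a vertex of $\C_N^\sharp(S_g)$; by Lemma~\ref{small filling}, its filling is still contained in $Q$ and is therefore a small vertex of $\C_N^\flat(S_g)$; and by Lemma~\ref{small component} every small vertex of $\C_N^\flat(S_g)$ lies in the component that is $\C_N(S_g)$. Hence $A$ contains an element with $\bar g \leq k < g/3$, giving $\bar g(A) < g/3$ as required.

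Finally, one must check that the natural map $\MCG(S_g) \to \Aut \C_N(S_g)$ of the statement coincides with the natural map of Theorem~\ref{main:complex}. This is immediate from Lemma~\ref{lemma:basic}\eqref{basic action} together with the fact that both the filling operation on complexes of regions and passage to a connected component are $\MCG(S_g)$-equivariant. Theorem~\ref{main:complex} then delivers the isomorphism. I do not expect any real obstacle here: the substantive content lies in Lemma~\ref{lemma:basic} (which forces supports of basic subgroups to be connected regions, so that a complex of regions is available in the first place), in the purification of $\C_N^\sharp(S_g)$ to $\C_N(S_g)$ via filling and restriction to the small-vertex component, and in Theorem~\ref{main:complex} itself.
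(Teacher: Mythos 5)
Your proposal is correct and takes essentially the same approach as the paper: the paper's proof is precisely the construction preceding the proposition, where $\C_N(S_g)$ is built by filling $\C_N^\sharp(S_g)$ and passing to the component containing the small vertices, and the hypotheses of Theorem~\ref{main:complex} (connectedness, no holes, no corks, small vertex hence $\bar g(A) < g/3$) are checked one by one exactly as you describe. Your only addition is spelling out explicitly how the small vertex of $\C_N(S_g)$ yields the numerical bound, which the paper leaves implicit.
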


\subsection{Action of the commensurator groups on the complex}

For the complex of regions $\C_N(S_g)$ to be useful, we would like to know that an automorphism---or an abstract commensurator---of $N$ gives rise to an automorphism of $\C_N(S_g)$.  In order to obtain a well-defined action, we must deal with the issue that there are many basic subgroups of $N$ giving rise to the same vertex of $\C_N(S_g)$.  

In what follows, we will denote by $v_B$ the vertex of $\C_N(S_g)$ arising from the basic subgroup $B$ of $N$.  As mentioned, we may have two basic subgroups $B$ and $B'$ with $v_B = v_{B'}$.  Also, for $G$ a finite-index subgroup of $N$ and $B$ a basic subgroup of $G$ we define the \emph{basic centralizer} of $B$ in $G$ to be the subgroup of $G$ generated by the basic subgroups of $G$ in the centralizer of $B$; we denote this group by $BC_G(H)$.  

\begin{lemma}
\label{lemma:bc}
Let $N$ be a pure, normal subgroup of $\Mod(S_g)$ that contains an element with a small component.  Let $G$ be a subgroup of $N$ of finite index.  Let $h \in \MCG(S_g)$ and let $h_\star$ denote its image under the natural map $\MCG(S_g) \to \Aut \C_N(S_g)$.  Let $B$ and $B'$ be two basic subgroups of $G$.  Then
\begin{enumerate}
\item\label{bc v} $v_B = v_{B'}$ if and only if $BC_G(B) = BC_G(B')$, 
\item\label{bc wd} $v_B = v_{B'}$ if $B'$ is a finite index subgroup of $B$,
\item\label{bc e} $v_B$ is connected by an edge to $v_{B'}$ if and only if $B' \leqslant BC_G(B)$, and
\item\label{bc act}  $h_\star(v_B) = v_{hBh^{-1}}$.
\end{enumerate}
\end{lemma}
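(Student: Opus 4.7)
The plan is to establish the four statements in the order (4), (2), (3), (1), relying on the $\MCG(S_g)$-equivariance of the construction of $\C_N(S_g)$, on Fact \ref{commute fact}, and on Theorem \ref{theorem:suff rich}. Statement (4) follows from the naturality of the action: the map $\MCG(S_g) \to \Aut \C_N(S_g)$ sends $h$ to the automorphism induced by $h$ acting on isotopy classes of regions. Since $h$ permutes the supports of basic subgroups of $G$ by Lemma \ref{lemma:basic}\eqref{basic action} and hence preserves the property of a complementary region being a hole, $h$ carries the filling of $\Supp(B)$ to the filling of $h(\Supp(B)) = \Supp(hBh^{-1})$. In the case where $N$ is only normal in $\Mod(S_g)$, the conjugate $hBh^{-1}$ need not lie in $N$, but its support is still that of a basic subgroup of $G$ by Lemma \ref{lemma:basic}\eqref{basic action}, so $v_{hBh^{-1}}$ is a well-defined vertex of $\C_N(S_g)$. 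Statement (2) is immediate from the invariance of the support of a pure subgroup under passage to finite-index subgroups: $\Supp(B) = \Supp(B')$, so the fillings coincide and $v_B = v_{B'}$.

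For statement (3), I would first observe that $BC_G(B) \leqslant C_G(B)$, since the product of two elements commuting with $B$ still commutes with $B$. Consequently, a basic subgroup $B''$ of $G$ lies in $BC_G(B)$ exactly when $B''$ commutes with $B$. Using Lemma \ref{irred} to produce, inside any basic subgroup of $G$, an element whose pseudo-Anosov component has support equal to the full basic support, Fact \ref{commute fact} yields that two basic subgroups commute if and only if their supports are disjoint. Thus $B' \leqslant BC_G(B)$ is equivalent to $\Supp(B) \cap \Supp(B') = \emptyset$. It remains to upgrade this to disjointness of fillings: if the basic supports are disjoint, then $\Supp(B')$ lies in some complementary region of $\Supp(B)$, and this region is not a hole since it contains the basic support $\Supp(B')$, so $\Supp(B') \subseteq S_g \setminus v_B$. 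Since $v_B$ is connected and disjoint from $\Supp(B')$, it lies in a single complementary region $Q'$ of $\Supp(B')$; because $Q'$ contains the basic support $\Supp(B)$, it is not a hole for $\Supp(B')$, giving $v_B \subseteq Q' \subseteq S_g \setminus v_{B'}$. The converse direction is trivial, since $\Supp(B) \subseteq v_B$ and $\Supp(B') \subseteq v_{B'}$.

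Statement (1) then follows from (3). For the forward direction, if $v_B = v_{B'}$, then for every basic subgroup $B''$ of $G$ we have $B'' \leqslant BC_G(B)$ iff $v_{B''}$ is edge-connected to $v_B$ iff $v_{B''}$ is edge-connected to $v_{B'}$ iff $B'' \leqslant BC_G(B')$; since both $BC_G(B)$ and $BC_G(B')$ are generated by their basic subgroups, they are equal. For the reverse direction, $BC_G(B) = BC_G(B')$ gives via (3) that $v_B$ and $v_{B'}$ have the same neighbors in $\C_N(S_g)$. The complex $\C_N(S_g)$ is connected with infinitely many vertices (hence no isolated vertices or edges), and it has no holes or corks by its construction, so by Theorem \ref{theorem:suff rich} it admits no exchange automorphisms. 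Since two distinct vertices with the same neighbors would be swappable by an exchange automorphism, we conclude $v_B = v_{B'}$.

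The main obstacle is statement (3), where one must translate carefully between algebraic data (centralizers of subgroups) and topological data, and in particular upgrade disjointness of basic supports to disjointness of their fillings. The crucial observation is that a complementary region of a basic support cannot be a hole once it itself contains a basic support, which ensures that the filling of one basic support stays clear of any other basic support whenever the supports themselves are disjoint. Once (3) is in place, both directions of (1) are essentially formal: the forward direction because $BC_G(\cdot)$ is generated by its basic subgroups, and the reverse because the rigidity of $\C_N(S_g)$ guaranteed by Theorem \ref{theorem:suff rich} rules out pairs of distinct vertices with equal neighborhoods.
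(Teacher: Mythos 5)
Your proof is correct and follows essentially the same approach as the paper's: establish that $v_B$ is represented by the filling of $\Supp(B)$ and that $BC_G(B)$ is the subgroup generated by the basic subgroups supported in the complement of that filling, and then read off all four statements from this dictionary. The paper is terse ("all statements of the lemma follow"); your explicit handling of the backward direction of (1), using (3) to get equal links and then invoking Theorem~\ref{theorem:suff rich} (no holes, no corks) to rule out distinct vertices with equal links, is a clean way to supply the detail the paper leaves implicit.
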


\begin{proof}

Let $R$ denote the support of $B$.  By Lemma~\ref{lemma:basic}\eqref{basic region}, the subsurface $R$ is a non-annular region of $S_g$.  Denote by $P_1,\dots,P_m$ the complementary regions that do contain the supports of other basic subgroups of $G$ and denote by $Q_1,\dots,Q_n$ the complementary regions that do not.  By the definition of $\C_N(S_g)$, the vertex $v_B$ is represented by the union of $R$ with the $Q_i$; call this region $R'$.  By Lemma~\ref{irred}, we have that $BC_G(B)$ is the subgroup of $G$ generated by the basic subgroups of $G$ with support in the complement of $R'$.  All statements of the lemma follow (for the second statement apply Fact~\ref{commute fact}).  
\end{proof}

\begin{prop}
\label{prop:comm to aut}
Let $N$ be a pure, normal subgroup of $\Mod(S_g)$ that contains an element with a small component.  There is a map
\[
\Comm N \to \Aut \C_N(S_g)
\]
defined as follows: if $\alpha : G_1 \to G_2$ is an isomorphism between finite-index subgroups of $N$ and $[\alpha]_\star$ is the image in $\Aut \C_N(S_g)$ of the equivalence class of $\alpha$, then for any basic subgroup $B$ of $N$ we have
\[
[\alpha]_\star (v_B) = v_{\alpha(B \cap G_1)}.
\]
\end{prop}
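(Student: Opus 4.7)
The plan is to verify, in this order, that the formula $[\alpha]_\star(v_B)=v_{\alpha(B\cap G_1)}$ produces a vertex of $\C_N(S_g)$, that the resulting vertex depends only on $v_B$ and on the commensurability class of $\alpha$, that the assignment extends to a simplicial automorphism, and that the induced map $\Comm N \to \Aut\C_N(S_g)$ respects composition. The engine of the whole argument is a single observation: the notions ``basic subgroup of $G$'' and ``basic centralizer $BC_G(B)$'' are defined purely in group-theoretic terms inside $G$—being basic is minimality, among non-abelian subgroups, in a partial order built from centralizers, and $BC_G(B)$ is the subgroup generated by those basic subgroups of $G$ contained in $C_G(B)$. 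Consequently an isomorphism $\alpha:G_1\to G_2$ between finite-index subgroups of $N$ carries basic subgroups to basic subgroups and intertwines the basic-centralizer operations.

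First I would check that $\alpha(B\cap G_1)$ is indeed basic in $N$: Lemma~\ref{lemma:basic}\eqref{basic passage} gives that $B\cap G_1$ is basic in $G_1$, the engine observation gives that $\alpha(B\cap G_1)$ is basic in $G_2$, and a second application of Lemma~\ref{lemma:basic}\eqref{basic passage} upgrades this to basic in $N$. Next, to eliminate the dependence on the representative $\alpha$, I would pass to a common finite-index subgroup $H$ on which two equivalent isomorphisms agree; then $B\cap H$ has finite index in both $B\cap G_1$ and $B\cap G_1'$, and Lemma~\ref{lemma:bc}\eqref{bc wd} applied in $G_1$, $G_1'$, $G_2$, and $G_2'$ identifies all of the relevant vertices.

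The main step is to show independence from the choice of basic representative of the vertex. Suppose $v_B=v_{B'}$ for basic $B,B'$ of $N$. Lemma~\ref{lemma:bc}\eqref{bc wd} gives $v_{B\cap G_1}=v_{B'\cap G_1}$, and Lemma~\ref{lemma:bc}\eqref{bc v} applied in $G_1$ then yields
\[
BC_{G_1}(B\cap G_1)=BC_{G_1}(B'\cap G_1).
\]
Applying $\alpha$ and invoking the engine observation transports this equality to
\[
BC_{G_2}(\alpha(B\cap G_1))=BC_{G_2}(\alpha(B'\cap G_1)),
\]
and Lemma~\ref{lemma:bc}\eqref{bc v} inside $G_2$ delivers $v_{\alpha(B\cap G_1)}=v_{\alpha(B'\cap G_1)}$. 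This is the step where the need for the purely group-theoretic reformulations of ``same vertex'' given in Lemma~\ref{lemma:bc} is essential, and it is the subtlest part of the argument.

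Finally, Lemma~\ref{lemma:bc}\eqref{bc e} characterizes edges of $\C_N(S_g)$ through the basic-centralizer relation, so the same transport argument shows that $[\alpha]_\star$ preserves adjacency; running the construction on $\alpha^{-1}$ supplies the inverse, so $[\alpha]_\star$ is a simplicial automorphism of the full subcomplex of $\C_N^\flat(S_g)$ spanned by vertices $v_B$. It must then restrict to an automorphism of the small-vertex component $\C_N(S_g)$: any automorphism permutes components, and one can pin down the $\C_N(S_g)$ component by noting that the construction is equivariant with respect to the natural map $\MCG(S_g)\to\Comm N$ via conjugation, combined with Lemma~\ref{small component} and the fact that the $\MCG(S_g)$-action fixes the small-vertex component setwise. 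Multiplicativity $[\alpha\circ\beta]_\star=[\alpha]_\star\circ[\beta]_\star$ then follows by unwinding the defining formula after intersecting with the domain of the composition. The only real obstacle beyond Step~3 is this last point about remaining inside $\C_N(S_g)$, which is handled by the component-preservation argument just sketched.
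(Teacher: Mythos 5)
Your proof follows essentially the same route as the paper's: show the formula lands on a vertex (via Lemma~\ref{lemma:basic}\eqref{basic passage} applied twice and the group-theoretic invariance of ``basic''), then verify independence of the representative $\alpha$, independence of the basic subgroup representing $v_B$ via Lemma~\ref{lemma:bc}\eqref{bc v} and the invariance of basic centralizers under isomorphisms, and finally preservation of edges via Lemma~\ref{lemma:bc}\eqref{bc e}. Your ``engine observation''---that basicness and basic centralizers are purely group-theoretic and hence transported by $\alpha$---is exactly the implicit mechanism in the paper's proof. Your handling of the representative-independence step via Lemma~\ref{lemma:bc}\eqref{bc wd} is a small streamlining relative to the paper, which argues directly that the two images have equal supports because one has finite index in the other; both work.

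The one place where you go beyond the paper is the final paragraph, where you worry about $[\alpha]_\star$ restricting to the small-vertex component $\C_N(S_g)\subseteq\C_N^\flat(S_g)$. This is a legitimate subtlety that the paper glosses over: the proof there asserts ``Thus $v_{\alpha(B\cap G_1)}$ is indeed a vertex of $\C_N(S_g)$'' after only establishing that $\alpha(B\cap G_1)$ is basic in $N$, which a priori gives a vertex of $\C_N^\flat(S_g)$. However, your proposed resolution---invoking equivariance with respect to the natural map $\MCG(S_g)\to\Comm N$---does not actually deliver the conclusion: general elements of $\Comm N$ need not commute with the $\MCG(S_g)$-action on vertices, and the $\MCG(S_g)$-invariance of the small-vertex component says nothing about where $[\alpha]_\star$ sends it. To make this step rigorous you would need some additional structural fact (for instance, that $\C_N^\flat(S_g)$ is actually connected for the $N$ under consideration, or a characterization of the small-vertex component in purely group-theoretic terms) rather than the equivariance argument you sketch. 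Since the paper's own proof does not address this point, I regard your attempt as at least as complete as the original, but you should not present the equivariance argument as closing the gap.
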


\begin{proof}

Our first objective is to show that the formula given in the statement of the proposition makes sense.  Let $\alpha : G_1 \to G_2$ be an isomorphism between finite-index subgroups of $N$ and let $B$ be a basic subgroup of $N$.  By Lemma~\ref{lemma:basic}\eqref{basic passage}, the group $B \cap G_1$ is a basic subgroup of $G_1$.  Since $\alpha$ is an isomorphism from $G_1$ to $G_2$ it follows that $\alpha(B \cap G_1)$ is a basic subgroup of $G_2$.  Again by Lemma~\ref{lemma:basic}\eqref{basic passage} the group $\alpha(B \cap G_1)$ is a basic subgroup of $N$.  Thus $v_{\alpha(B \cap G_1)}$ is indeed a vertex of $\C_N(S_g)$.  

Next we must show that the formula in the statement gives a well-defined action of $\Comm N$ on the set of vertices of $\C_N(S_g)$.  There are two issues, namely, that an element of $\Comm N$ has many representatives and also that there are many basic subgroups of $N$ giving rise to the same vertex of $\C_N(S_g)$.

Let  $\alpha : G_1 \to G_2$ be an isomorphism between finite-index subgroups of $N$ and let $B$ be a basic subgroup of $N$.  Let $\alpha' : G_1' \to G_2'$  be another isomorphism of finite-index subgroups of $N$ that represents the same element of $\Comm N$ as $\alpha$.  We must show that 
\[ 
v_{\alpha(B \cap G_1)} = v_{\alpha'(B \cap G_1')}.
\]
Since $\alpha$ and $\alpha'$ agree on a finite-index subgroup of $N$ it suffices to treat the case where $G_1'$ is a finite-index subgroup of $G_1$ and $\alpha|(B \cap G_1') = \alpha'|(B \cap G_1')$.  In this case $B \cap G_1'$ has finite index in $B \cap G_1$ and so $\alpha'(B \cap G_1')=\alpha(B \cap G_1')$ has finite index in $\alpha(B \cap G_1)$.  It follows that the supports of $\alpha'(B \cap G_1')$ and $\alpha(B \cap G_1)$ are equal, which is to say that $v_{\alpha(B \cap G_1)} = v_{\alpha'(B \cap G_1')}$, as desired.

To deal with the second ambiguity, suppose that $B'$ is another basic subgroup of $N$ with $v_B = v_{B'}$.  With $\alpha$ as above, we must show that
\[
v_{\alpha(B \cap G_1)} = v_{\alpha(B' \cap G_1)}.
\]
By Lemma~\ref{irred} the centralizer of a basic subgroup is invariant under passage to finite-index subgroups.  It follows from this and Lemma~\ref{lemma:basic}\eqref{basic passage} that $v_{B \cap G_1} = v_{B' \cap G_1}$.  It further follows from Lemma~\ref{lemma:bc}\eqref{bc v} that $BC_{G_1}(B \cap G_1)$ is equal to $BC_{G_1}(B' \cap G_1)$.  As basic centralizers are preserved by isomorphisms, we have that $BC_{G_2}(\alpha(B \cap G_1))$ is equal to $BC_{G_2}(\alpha(B' \cap G_1))$.  Again by Lemma~\ref{lemma:bc}\eqref{bc v} we have the desired equality $v_{\alpha(B \cap G_1)} = v_{\alpha(B' \cap G_1)}$.  

Having now shown that $[\alpha]$ induces a well-defined permutation of the set of vertices of $\C_N(S_g)$, it remains to check that this permutation preserves the set of edges.  To this end, we claim that if $B$ and $B'$ are basic subgroups of $N$ then $v_B$ and $v_{B'}$ are connected by an edge if and only if $B$ and $B'$ commute.  The subgroups $B$ and $B'$ commute if and only if the subgroups $B \cap G_1$ and $B' \cap G_1$ commute, and the latter holds if and only if $\alpha(B \cap G_1)$ and $\alpha(B' \cap G_1)$ commute.  It then follows from Lemma~\ref{lemma:bc}\eqref{bc e} that the given permutation of vertices extends to an automorphism of $\C_N(S_g)$.
\end{proof}


\subsection{Proof of the theorem}
\label{sec:finishing}

We are almost ready to prove Theorem~\ref{main:normal}.  Let us first introduce some notation.  For $f \in \MCG(S_g)$ denote by $\alpha_f$ the automorphism of $\MCG(S_g)$ given by conjugation by $f$, that is, $\alpha_f(h) = fhf^{-1}$ for all $h \in \MCG(S_g)$.  If $f$ lies in the normalizer of $N$ then we may consider $\alpha_f$ as an element of $\Aut N$ (technically, the restriction of $\alpha_f$ to $N$ gives an element of $\Aut N$).  Similarly, if there is a restriction of $\alpha_f$ that is an isomorphism between finite-index subgroups of $N$ then we may regard $[\alpha_f]$ as an element of $\Comm N$ (this is an abuse of notation: we should more properly write $[\bar \alpha_f]$ where $\bar \alpha_f$ is the restriction).

\begin{proof}[Proof of Theorem~\ref{main:normal}]

For simplicity we first deal with the case where $N$ is normal in $\MCG(S_g)$ (this is the first statement of Theorem~\ref{main:normal}).  Let $P$ be a pure normal subgroup of finite index in $\MCG(S_g)$.  We will begin by describing a sequence of five maps $\Phi_1,\dots,\Phi_5$ as follows:
\begin{align*}
\MCG(S_g) \stackrel{\Phi_1}{\to}& \Aut N \stackrel{\Phi_2}{\to} \Comm N \stackrel{\Phi_3}{\to} \Comm N \cap P  \\
&  \stackrel{\Phi_4}{\to} \Aut \C_{N \cap P}(S_g) \stackrel{\Phi_5}{\to} \MCG(S_g).
\end{align*}
Here are the definitions of the maps:
\begin{itemize}
\item $\Phi_1$ is the conjugation map, that is, $\Phi_1(f)= \alpha_f$, 
\item $\Phi_2$ maps an element of $\Aut N$ to its equivalence class in $\Comm N$, 
\item $\Phi_3$ maps the equivalence class of an isomorphism between finite index subgroups of $N$ to the equivalence class of any restriction that is an isomorphism between finite-index subgroups of $N \cap P$,
\item $\Phi_4$ is the map from Proposition~\ref{prop:comm to aut}, and
\item $\Phi_5$ is the isomorphism from Proposition~\ref{prop:defining cn}.
\end{itemize}
To prove the theorem in the case where $N$ is normal in $\MCG(S_g)$ we will show that $\Phi_1$, $\Phi_2$, $\Phi_3$, $\Phi_4$, and $\Phi_5$ are all injective and that the composition 
\[
\Phi_5 \circ \Phi_4 \circ \Phi_3 \circ \Phi_2 \circ \Phi_1
\]
is the identity.  The injectivity of the $\Phi_i$ and the surjectivity of the composition together imply that the $\Phi_i$ are surjective, and hence are isomorphisms.  That $\Phi_1$ and $\Phi_2$ are isomorphisms is the content of the first statement of  Theorem~\ref{main:normal}.  The map $\Phi_5 \circ \Phi_4 \circ \Phi_3$ is the natural map $\Comm N \to \MCG(S_g)$ from the statement of the theorem.  

We begin by showing that the $\Phi_i$ are injective.  The map $\Phi_1$ is injective by an argument similar the one given in Lemma~\ref{inj}; indeed, if $f \in \MCG(S_g)$ commutes with $h \in N$ then $f$ fixes the canonical reduction system of $h$.  Any element of $N$ with a small component has a nonempty canonical reduction system, and as in the proof of Lemma~\ref{inj} the orbit under $\MCG(S_g)$ of this canonical reduction system is dense in $\PMF(S_g)$.  Since $N$ is normal in $\MCG(S_g)$ the canonical reduction system for $khk^{-1}$ is the $k$-image of the canonical reduction system for $h$, the injectivity follows.
  
We now show that $\Phi_2$ is injective.  Let $\alpha \in \Aut N$ be an element of the kernel.  Let $f$ be an element of $N$.  We would like to show that $\alpha(f)=f$.  Let $h$ be a pseudo-Anosov element of $N$ (all infinite normal subgroups of $\Mod(S_g)$ contain such elements).  Since $\Phi_2(\alpha)$ is the identity, there is a finite-index subgroup $G$ of $N$ so that $\alpha|G$ is the identity.  There is an $m > 0$ so that $h^{m}$ and $(fhf^{-1})^{m}$ lie in $G$ and so we have $\alpha(h^{m})=h^{m}$ and $\alpha((fhf^{-1})^{m}) = (fhf^{-1})^{m}$.  We have
\begin{align*}
f h^m f^{-1} = (fhf^{-1})^m = \alpha((fhf^{-1})^m) = \alpha(f) \alpha(h^m) \alpha(f)^{-1} = \alpha(f) h^m \alpha(f)^{-1}.
\end{align*}
In other words $f^{-1}\alpha(f)$ commutes with $h^m$ and so $f^{-1}\alpha(f)$ fixes the point in $\PMF(S_g)$ corresponding to the unstable foliation of $h$.  Since $h$ was arbitrary, $f^{-1}\alpha(f)$ fixes all the points in $\PMF(S_g)$ corresponding to the unstable foliations of pseudo-Anosov elements of $N$.  But since $N$ is normal in $\Mod(S_g)$, these points are dense in $\PMF(S_g)$.  As in the proof of Lemma~\ref{inj} we conclude that $f^{-1}\alpha(f)$ is the identity, which is to say that $\alpha(f)=f$.  Thus, $\alpha$ is the identity, as desired. 

The map $\Phi_3$ is an isomorphism since a finite-index subgroup of $N \cap P$ also has finite index in $N$.

Next, we will show that $\Phi_4$ is injective.   To this end, fix some isomorphism $\alpha : G_1 \to G_2$ representing an element $[\alpha]$ of $\Comm N \cap P$.  As in Proposition~\ref{prop:comm to aut} denote the image of $[\alpha]$ in $\Aut \C_{N \cap P}(S_g)$ by $\alpha_\star$.  Assume that $\alpha_\star$ is the identity.  We must show that $[\alpha]$ is the identity.  We will show that in fact $\alpha$ is the identity (in particular $G_1=G_2$).  So let $h \in G_1$.  We would like to show that $\alpha(h)=h$.  Let $h_\star$ and $\alpha(h)_\star$ denote the images of $h$ and $\alpha(h)$ under natural map $\MCG(S_g) \to \Aut \C_{N \cap P}(S_g)$.  Since the latter is injective (Lemma~\ref{inj}) it suffices to show that $h_\star = \alpha(h)_\star$.

So let $B$ be an arbitrary basic subgroup of $N$.  By Lemma~\ref{lemma:basic}\eqref{basic passage} and Lemma~\ref{lemma:bc}\eqref{bc wd} we may assume without loss of generality that $B$ is contained in $G_1$.    We have
\begin{align*}
h_\star (v_B) &= v_{hBh^{-1}} = \alpha_\star v_{hBh^{-1}} = v_{\alpha(hBh^{-1})}  = v_{\alpha(h)\alpha(B)\alpha(h)^{-1}}  \\
&\quad = \alpha(h)_\star v_{\alpha(B)} = \alpha(h)_\star \alpha_\star v_{B} = \alpha(h)_\star v_{B}.
\end{align*}
In order, the equalities use Lemma~\ref{lemma:bc}\eqref{bc act}, the assumption that $\alpha_\star$ is the identity, Proposition~\ref{prop:comm to aut}, the assumptions that $B$ and $h$ both lie in $G_1$, again Lemma~\ref{lemma:bc}\eqref{bc act}, again Proposition~\ref{prop:comm to aut}, and again the assumption that $\alpha_\star$ is the identity.  It follows that $h_\star = \alpha(h)_\star$ and so $\alpha(h)=h$, as desired.

The fifth and final map $\Phi_5 : \Aut \C_{N \cap P}(S_g) \to \MCG(S_g)$ is an isomorphism by Proposition~\ref{prop:defining cn}; in particular it is injective.

As in Theorem~\ref{main:complex} the isomorphism $\Phi_5$ is the inverse of the natural map $\MCG(S_g) \to \Aut \C_{N \cap P}(S_g)$.  It follows that the composition $\Phi_5 \circ \Phi_4 \circ \Phi_3 \circ \Phi_2 \circ \Phi_1$ is the identity.  Indeed, given $f \in 
\Mod(S_g)$ the image in $\Comm N \cap P$ is the element given by conjugation by $f$.  Thus the image in $\Aut \C_{N \cap P}(S_g)$ is $f_\star$ and so the image in $\MCG(S_g)$ must again be $f$.  This completes the proof of the first statement of the theorem.

We now prove the second statement of the theorem.  Assume that $N$ is normal in $\Mod(S_g)$ but not in $\MCG(S_g)$.  Again let $P$ be a pure normal subgroup of finite index in $\MCG(S_g)$.  We will consider a collection of homomorphisms $\Phi_i$ analogous to the ones from the proof of the first statement:
\[ 
\xymatrix{
& \MCG(S_g) \ar@{-->}[ddd]^{\Phi_6} & \\
\Aut \C_{N \cap P}(S_g) \ar[ur]^{\Phi_5}_{\cong} & &  \Mod(S_g) \ar@{_{(}->}[ul] \ar[d]^{\Phi_1} \\
\Comm N \cap P \ar[u]^{\Phi_4} & & \Aut N \ar[dl]^{\Phi_2} \\
 & \Comm N \ar[ul]^{\Phi_3}_{\cong} & 
}
\]
The maps $\Phi_1,\dots,\Phi_5$ are all defined in the same way as in the first case, except that the domain of $\Phi_1$ has changed.  The map $\Phi_6$ is the natural map $\MCG(S_g) \to \Comm N$ from the statement of the theorem.  It maps $f \in \MCG(S_g)$ to the element of $\Comm N$ given by conjugation by $f$.  For $f$ not in $\Mod(S_g)$ there may be no subgroup of finite index $H$ in $N$ so that $fHf^{-1}$ has finite index in $N$; if such $f$ exists the map $\Phi_6$ is not well defined.\footnote{One is tempted to think that since $N$ is normal in $\Mod(S_g)$ and since $\Mod(S_g)$ has index two in $\MCG(S_g)$, there is a subgroup of finite index in $N$ that is normal in $\MCG(S_g)$.  If this were true it would imply that $\Phi_6$ is always well defined.  However, it is not true.  Consider for example the group $A = \Z^2 \rtimes \Z/2$ where $\Z/2$ acts on $\Z^2$ by interchanging the two factors; the subgroup $N$ of $A$ corresponding to the first factor of $\Z^2$ is normal in $\Z^2$ but there is a conjugate of $N$ in $A$ with which $N$ has trivial intersection.}

By the same arguments as in the proof of the first statement of the theorem, the maps $\Phi_1$, $\Phi_2$, $\Phi_3$, $\Phi_4$, and $\Phi_5$ are all injective and the composition
\[
\Phi_5 \circ \Phi_4 \circ \Phi_3 \circ \Phi_2 \circ \Phi_1
\]
is the inclusion map.

We consider two cases, according to whether the image of $\Phi_5 \circ \Phi_4 \circ \Phi_3$ is $\Mod(S_g)$ or $\MCG(S_g)$.  Let us first assume that the image is $\Mod(S_g)$.  In this case $\Comm N$ is isomorphic to $\Mod(S_g)$ under the natural map $\Phi_5 \circ \Phi_4 \circ \Phi_3$.  Further $\Phi_1$ and $\Phi_2$ are isomorphisms since $\Phi_1$, $\Phi_2$, and $\Phi_5 \circ \Phi_4 \circ \Phi_3$ are injective and their composition is the inclusion, as desired.

We now proceed to the case where the image of $\Phi_5 \circ \Phi_4 \circ \Phi_3$ is $\MCG(S_g)$.  Since we already know that $\Phi_1$, $\Phi_2$, and $\Phi_5 \circ \Phi_4 \circ \Phi_3$ are injective, and that their composition is the inclusion, the only remaining statements to prove are that $\Phi_6$ is the inverse to $\Phi_5 \circ \Phi_4 \circ \Phi_3$ and that $\Phi_1$ is surjective, hence an isomorphism.  We treat each of these statements in turn.

We first show that $\Phi_6$ is a left inverse to $\Phi_5 \circ \Phi_4 \circ \Phi_3$ (hence is the inverse).  The proof of this statement follows along similar lines as in the proof of the injectivity of $\Phi_4$.  We fix some isomorphism $\alpha : G_1 \to G_2$ representing an element $[\alpha]$ of $\Comm N \cap P$ (which under $\Phi_3$ is canonically isomorphic to $\Comm N$).  As above we denote the $\Phi_4$-image of $[\alpha]$ in $\Aut \C_{N \cap P}(S_g)$ by $\alpha_\star$.  Assume that $\alpha_\star$ maps to $f \in \MCG(S_g)$ under $\Phi_5$.  We would like to show that $[\alpha]$ is given by the restriction of the conjugation map $\alpha_f$.  This is the same as saying that $[\alpha]=\Phi_6 \circ \Phi_5 \circ \Phi_4 \circ \Phi_3 ([\alpha])$, as desired.

Let $h \in G_1$.  We would like to show that $\alpha(h)=fhf^{-1}$.  For a mapping class $j \in \MCG(S_g)$ denote by $j_\star$ the image of $j$ under natural map $\Phi_5^{-1} : \MCG(S_g) \to \Aut \C_{N \cap P}(S_g)$ (so by definition $\alpha_\star=f_\star$).  Since $\Phi_5^{-1}$ is injective (Lemma~\ref{inj}) it suffices to show that $(fhf^{-1})_\star = \alpha(h)_\star$, or $(fh)_\star = (\alpha(h)f)_\star$.  Let $B$ be an arbitrary basic subgroup of $N$.  Again we may assume without loss of generality that $B$ is contained in $G_1$.    We have
\begin{align*}
(fh)_\star v_B &= f_\star(h_\star v_B) = \alpha_\star(h_\star v_B) = \alpha_\star v_{hBh^{-1}}  = v_{\alpha(hBh^{-1})} \\ &=   v_{\alpha(h)\alpha(B)\alpha(h^{-1})}  = \alpha(h)_\star v_{\alpha(B)} = \alpha(h)_\star \alpha_\star (v_B) \\ &= \alpha(h)_\star f_\star v_B = (\alpha(h)f)_\star v_B.
\end{align*}
In order, the equalities use the fact that $\Phi_5^{-1}$ is a homomorphism, the assumption that $f_\star= \alpha_\star$, Lemma~\ref{lemma:bc}\eqref{bc act}, Proposition~\ref{prop:comm to aut}, the assumptions that $B$ and $h$ both lie in $G_1$, again Lemma~\ref{lemma:bc}\eqref{bc act}, again Proposition~\ref{prop:comm to aut}, again the assumption that $f_\star= \alpha_\star$, and again the fact that $\Phi_5^{-1}$ is a homomorphism.  It follows that $(fh)_\star = (\alpha(h)f)_\star$ and so $\alpha(h) = fhf^{-1}$, as desired.

We have proven that the left inverse of the natural map $\Phi_5 \circ \Phi_4 \circ \Phi_3 : \Comm N \to \MCG(S_g)$ is the natural map $\Phi_6 : \MCG(S_g) \to \Comm N$, as in the statement of the theorem.

To complete the proof of the theorem, it remains to show that $\Phi_1$ is an isomorphism, in other words that $\Phi_1$ is surjective.  Let $\alpha \in \Aut N$.  As usual, denote $\Phi_2(\alpha)$ by $[\alpha]$.  Let $f$ denote $\Phi_5 \circ \Phi_4 \circ \Phi_3([\alpha])$.  Since $\Phi_6$ is the inverse of $\Phi_5 \circ \Phi_4 \circ \Phi_3$, we have $\Phi_6(f) = [\alpha]$.  In other words $[\alpha] = [\alpha_f]$.  To show that $\alpha$ lies in the image of $\Phi_1$ we will show that $\alpha = \alpha_f$.  The proof will be similar to the proof of the injectivity of $\Phi_2$.  

Let $n \in N$.  We would like to show that $\alpha(n) = fnf^{-1}$.  Let $h$ be a pseudo-Anosov element of $N$.  Since $[\alpha] = [\alpha_f]$, there is a finite-index subgroup $G$ of $N$ so that $\alpha|G = \alpha_f|G$.  There is an $m > 0$ so that $h^{m}$ and $nh^mn^{-1}$ lie in $G$.  So $\alpha(h^{m})=fh^{m}f^{-1}$ and $\alpha(nh^mn^{-1}) = fnh^mn^{-1}f^{-1}$.  We have
\begin{align*}
fnh^mn^{-1}f^{-1} = \alpha(nh^mn^{-1}) = \alpha(n)\alpha(h^m)\alpha(n)^{-1} = \alpha(n)fh^mf^{-1}\alpha(n)^{-1}.
\end{align*}
From the equality of the first and last expressions we deduce that
\[
f^{-1}\alpha(n)^{-1}fnh^m = h^mf^{-1}\alpha(n)^{-1}fn,
\]
in other words that $f^{-1}\alpha(n)^{-1}fn$ commutes with $h^m$.  Since $h$ was arbitrary, it follows as in the proof of the injectivity of $\Phi_2$ that $f^{-1}\alpha(n)^{-1}fn$ is the identity, which is to say that $\alpha(n) = f nf^{-1}$, as desired.  This completes the proof of the theorem.
\end{proof}

\bibliographystyle{plain}
\bibliography{meta}

\end{document}